\documentclass[12pt]{amsart}
\usepackage{amsfonts,amssymb,latexsym,amsmath,verbatim,url,color,mathrsfs,tikz,extarrows,enumerate,amscd,verbatim}
\usepackage[margin=1in]{geometry}
\usepackage[pdfpagelabels,bookmarksopen=true,hidelinks=true]{hyperref}
\usepackage[nameinlink]{cleveref}
\usetikzlibrary{matrix,arrows}
    \DeclareFontFamily{U}{wncy}{}
    \DeclareFontShape{U}{wncy}{m}{n}{<->wncyr10}{}
    \DeclareSymbolFont{mcy}{U}{wncy}{m}{n}
    \DeclareMathSymbol{\Sh}{\mathord}{mcy}{"58} 
\def\N{\mathbf{N}}
\def\Z{\mathbf{Z}}

\def\C{\mathbf{C}}
\def\F{\mathbf{F}}
\def\A{\mathbf{A}}
\def\P{\mathbf{P}}
\def\G{\mathbf{G}}
\def\H{\mathrm{H}}
\frenchspacing

\usepackage[cal=boondoxo, scr=boondoxo]{mathalfa}

\usepackage{baskervald}

\DeclareMathOperator{\Hom}{Hom}
\DeclareMathOperator{\End}{End}
\DeclareMathOperator{\Spec}{Spec}
\DeclareMathOperator{\Sch}{Sch}
\DeclareMathOperator{\Grp}{Grp}
\DeclareMathOperator{\op}{op}
\DeclareMathOperator{\GL}{GL}
\DeclareMathOperator{\PGL}{PGL}
\DeclareMathOperator{\fppf}{fppf}
\DeclareMathOperator{\Br}{Br}

\DeclareMathOperator{\LPBr}{LPBr}
\DeclareMathOperator{\LFBr}{LFBr}
\DeclareMathOperator{\Ext}{Ext}
\DeclareMathOperator{\Pic}{Pic}
\DeclareMathOperator{\im}{im}
\DeclareMathOperator{\Mat}{Mat}
\DeclareMathOperator{\HomS}{\mathcal{H\hspace{-0.3ex}o\hspace{-0.3ex}m}}%\!{\it om}}
\DeclareMathOperator{\EndS}{\mathcal{E\hspace{-0.1ex}n\hspace{-0.3ex}d}}%\!{\it nd}}
\DeclareMathOperator{\ExtS}{\mathcal{E\hspace{-0.1ex}x\hspace{-0.2ex}t}}%\!{\it xt}}
\DeclareMathOperator{\colim}{colim}
\DeclareMathOperator{\Aut}{Aut}
\DeclareMathOperator{\id}{id}
\DeclareMathOperator{\coker}{coker}
\DeclareMathOperator{\Sym}{Sym}
\DeclareMathOperator{\rk}{rank}
\DeclareMathOperator{\Set}{Set}

\DeclareMathOperator{\Proj}{Proj}
\newcommand{\mf}[1]{\mathfrak{#1}}
\newcommand{\ms}[1]{\mathscr{#1}}
\newcommand{\mc}[1]{\mathcal{#1}}
\newcommand{\mb}[1]{\mathbf{#1}}
\newcommand{\ml}[1]{\mathsf{#1}}
\newcommand{\mr}[1]{\mathrm{#1}}
\newcommand{\DEF}[1]{\textcolor{blue}{#1}}

\newcommand{\et}{\operatorname{\acute et}}
\newcommand{\BM}[1]{\begin{bmatrix} #1 \end{bmatrix}}
\setlength{\parskip}{2pt}

\newtheoremstyle{mystyle}%                % Name
  {}%                                     % Space above
  {}%                                     % Space below
  {}%                                     % Body font
  {0pt}%                                     % Indent amount
  {}%                            % Theorem head font
  {.}%                                    % Punctuation after theorem head
  { }%                                    % Space after theorem head, ' ', or \newline
  {\textbf{\thmname{#1}\thmnumber{ #2}}\thmnote{ (#3)}}%                                     % Theorem head spec (can be left empty, meaning `normal')

\theoremstyle{theorem}
\numberwithin{equation}{subsubsection}
\newtheorem{theorem}[subsubsection]{Theorem}
\newtheorem{corollary}[subsubsection]{Corollary}
\newtheorem{proposition}[subsubsection]{Proposition}
\newtheorem{lemma}[subsubsection]{Lemma}

\theoremstyle{definition}
\newtheorem{definition}[subsubsection]{Definition}
\newtheorem{remark}[subsubsection]{Remark}
\newtheorem{question}[subsubsection]{Question}
\newtheorem{example}[subsubsection]{Example}

\newtheorem{principle}[subsubsection]{Principle}

\newtheoremstyle{pgstyle} {} {} {} {} {} {.} { } {\textbf{\thmname{#1}\thmnumber{#2}}\thmnote{ (#3)}}
\theoremstyle{pgstyle}
\newtheorem{pg}[subsubsection]{}

\newtheoremstyle{spgstyle} {} {} {} {} {} {.} { } {\textit{\thmname{#1}\thmnumber{#2}}\thmnote{ (#3)}}
\theoremstyle{spgstyle}
\newtheorem{spg}{}[subsubsection]

\begin{document}
\title{Locally free twisted sheaves of infinite rank}
\author{Aise Johan de Jong}
\author{Max Lieblich}
\author{Minseon Shin}
\begin{abstract} 
We study twisted vector bundles of infinite rank on gerbes, giving a new spin on Grothendieck's famous problem on the equality of the Brauer group and cohomological Brauer group. We show that the relaxed version of the question has an affirmative answer in many, but not all, cases, including for any algebraic space with the resolution property and any algebraic space obtained by pinching two closed subschemes of a projective scheme. We also discuss some possible theories of infinite rank Azumaya algebras, consider a new class of ``very positive'' infinite rank vector bundles on projective varieties, and show that an infinite rank vector bundle on a curve in a surface can be lifted to the surface away from finitely many points. 
\end{abstract}
\date{\today}
\maketitle
\setcounter{tocdepth}{2}
\tableofcontents

\section{Introduction} \label{sec01}

One of the fundamental invariants of a scheme is its Brauer group. By analogy with the Brauer group of a field, which classifies central simple algebras up to Morita equivalence, the \emph{Brauer group} $\Br(X)$ of a scheme $X$ classifies equivalence classes of \emph{Azumaya algebras}, which are \'etale twists of matrix algebras $\Mat_{n \times n}(\mc{O}_X)$. Just as with fields, any Azumaya algebra $\mc{A}$ has an associated \'etale cohomology class $[\mc{A}]$ in the \emph{cohomological Brauer group} $\Br'(X) := \H_{\et}^2(X,\G_m)_{\text{\rm tors}}$. This defines a functorial embedding \[ \Br(X) \subseteq \Br'(X) \] called the \emph{Brauer map}. 

This paper originated with the problem of determining whether the Brauer map is an isomorphism, which is one of the central problems at the interface of modern algebra and algebraic geometry. Whereas the Brauer map is an isomorphism when $X$ is the spectrum of a field (i.e. every torsion \'etale cohomology class over a field is represented by some central division algebra), for arbitrary schemes this is not necessarily true. By a theorem of Gabber \cite{DEJONG-AROG2003}, the Brauer map is known to be an isomorphism when $X$ admits an ample line bundle, but the question of surjectivity of the Brauer map remains open more generally, e.g. for schemes admitting an ample family of line bundles (which include smooth separated schemes over a field). 

Using the language of \emph{twisted sheaves} \cite{LIEBLICH-MOTS2007}, the $\Br = \Br'$ question may be rephrased as follows. By Giraud's theory of non-abelian cohomology \cite{GIRAUD-CNA1971}, the cohomological Brauer group $\Br'(X)$ classifies $\G_m$-gerbes $\ms X\to X$ of finite order. For a $\G_{m}$-gerbe $\ms{X}$ over a scheme $X$, the condition that the class $[\ms{X}]$ comes from an Azumaya algebra of rank $r^{2}$ is equivalent to saying that $\ms{X}$ admits a twisted vector bundle of rank $r$. The existence of such vector bundles has strong consequences: for example, if there is a twisted vector bundle of rank $r$ on $\ms{X}$, then $[\ms{X}]$ is $r$-torsion in $\H_{\et}^{2}(X,\G_{m})$.

If $[\ms{X}]$ is arbitrary (not necessarily torsion), we can ask whether $\ms{X}$ admits twisted locally free $\mc{O}_{\ms{X}}$-modules of \emph{infinite\/} rank. Let \[ \LPBr(X) \subseteq \H_{\et}^{2}(X,\G_{m}) \] be the submonoid consisting of classes $[\ms{X}]$ such that $\ms{X}$ admits a twisted locally free $\mc{O}_{\ms{X}}$-module (not necessarily of finite rank); this is roughly equivalent to saying that $[\ms{X}]$ comes from an Azumaya algebra of possibly infinite rank. Our main theorem asserts the existence of twisted vector bundles of infinite rank on all $\G_{m}$-gerbes over a large class of algebraic spaces (in greater generality than the known answers to the $\Br = \Br'$ question).

\begin{theorem} \label{MAINTHM} For an algebraic space $X$, the inclusion $\LPBr(X) \subseteq \H_{\et}^{2}(X,\G_{m})$ is an equality if at least one of the following hold. \begin{enumerate} \item[(i)] $X$ has the resolution property. \item[(ii)] $X$ admits an ample family of line bundles. \item[(iii)] $X$ is a separated algebraic surface. \item[(iv)] $X$ is a quotient $Y/G$ over some base scheme $S$, where $Y$ is an $S$-algebraic space satisfying at least one of the conditions (i) -- (iii), and $G$ is an $S$-group scheme acting freely on $Y$ and such that $G \to S$ is an affine, flat, finitely presented morphism with geometrically irreducible fibers. \end{enumerate} \end{theorem}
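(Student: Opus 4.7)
The plan is to treat the four cases separately, with case (ii) reducing to case (i) via the standard fact that algebraic spaces with an ample family of line bundles have the resolution property. The unifying approach is first to produce a twisted quasi-coherent sheaf on the gerbe $\pi \colon \ms{X} \to X$ — which always exists, since the weight-$1$ component of $\mr{QCoh}(\ms{X})$ is a nonzero Grothendieck abelian category — and then to upgrade it to a twisted locally free sheaf of possibly infinite rank.

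For case (i), my strategy is to pass from a twisted quasi-coherent sheaf $\mc{F}$ to a twisted flat sheaf by writing $\mc{F}$ as a filtered colimit of finitely presented objects in Lazard fashion, and then to apply the resolution property of $X$ to weight-$0$ invariants such as $\pi_{*}\EndS(\mc{F})$ in order to replace each stage by an object built from finite locally free sheaves on $X$ tensored with small twisted pieces. The resulting countable colimit should be a countably generated flat twisted sheaf of constant infinite local rank, and a Kaplansky-type theorem — countably generated flat modules of constant infinite local rank are free — then upgrades it to a locally free sheaf. The principal obstacle here is promoting the resolution property from $X$ to a twisted analogue on $\ms{X}$ in a way compatible with the weight-$1$ grading.

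For case (iii) the argument exploits low dimension: any $\G_{m}$-gerbe on a separated surface should admit a twisted reflexive sheaf on a dense open (e.g., by spreading out from an affine chart), and reflexive sheaves on a regular surface are locally free outside a finite set. Combining such a construction with an infinite-rank direct-sum or swindle argument to absorb residual singularities should produce a twisted locally free sheaf of infinite rank globally. Case (iv) follows by descent along the $G$-torsor $q \colon Y \to X = Y/G$: by the inductive hypothesis applied to (i)–(iii), the pullback $\ms{Y} := \ms{X} \times_{X} Y$ admits a twisted locally free sheaf $\mc{F}$, and since $q$ is affine, flat, and has geometrically irreducible fibers, the pushforward of $\mc{F}$ along $\ms{Y} \to \ms{X}$ should be a twisted quasi-coherent sheaf on $\ms{X}$ that is locally free of constant infinite rank; the geometric irreducibility of the $G$-fibers is essential both to pin down the rank and to rule out monodromy obstructions to descent.

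The principal technical hurdle is case (i): extracting a twisted locally free sheaf from the bare input of the resolution property requires a careful Lazard--Kaplansky argument in the twisted setting, and one must verify that the passage from flat to locally free in infinite rank carries over in the gerbe context. Once (i) is established, the other cases should follow from the geometric and descent techniques sketched above.
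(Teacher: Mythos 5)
Your case (i) has two genuine gaps. First, you propose to build each stage of the colimit from ``finite locally free sheaves on $X$ tensored with small twisted pieces,'' but finite-rank twisted locally free sheaves are exactly what may fail to exist --- that is the $\Br=\Br'$ problem --- and the resolution property of $X$ does not pass to a weight-one (twisted) resolution property on $\ms{X}$; the ``obstacle'' you flag is not a technical wrinkle but essentially the original problem in disguise. Second, the ``Kaplansky-type theorem'' you invoke (countably generated flat modules of constant infinite local rank are free) is false: over $\Z_{(p)}$ the module $\Z_{(p)}^{\oplus \N}\oplus \Q$ is countably generated, flat, and of infinite fibre rank at both primes, yet not free, since otherwise $\Q$ would be a projective direct summand. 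Freeness in infinite rank requires projectivity (Bass), equivalently flat plus Mittag--Leffler and countably generated (Raynaud--Gruson); this is precisely why the paper's invariant $\LPBr$ is built from locally \emph{projective} twisted sheaves, not flat ones. Your case (iii) sketch is also obstructed: the surface need not be regular, and extending infinite-rank (twisted) bundles across a finite set of points is genuinely impossible in general --- the paper shows the pullback of the very positive bundle on $\P^1$ to $\A^2\setminus\{0\}$ is not free, and its pinched-cone examples have $\LPBr\neq \H_{\et}^{2}(X,\G_m)$ exactly because such an extension fails; no swindle absorbs the bad locus.

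The paper's route is different and avoids all of this. It introduces \emph{affine-pure} morphisms ($f$ affine with $f_*\mc{O}$ locally projective), proves that membership in $\LPBr$ descends along surjective, affine-pure, finitely presented maps (the pushforward of a countably generated locally projective twisted sheaf of positive rank stays of that type), and then shows each of (i)--(iv) admits such a cover by an \emph{affine} scheme: Jouanolou--Thomason for an ample family, the Totaro--Gross presentation $X=(\text{quasi-affine})/\GL_N$ for the resolution property, Gross's theorem that separated surfaces have the resolution property for (iii), and, for (iv), purity of the torsor $Y\to X$ coming from SGA3 (affine, flat, finitely presented group schemes have Cohen--Macaulay fibers, which with irreducibility gives purity) --- your case (iv) is closest in spirit, but the content you would need is this purity statement, not an absence of ``monodromy obstructions.'' Finally, in the affine case with the class Zariski-locally trivial (arranged by Gabber's finite locally free trick plus Noetherian approximation), the paper glues the local twisted line bundles using a twisted Bass theorem via the Eilenberg swindle, $\mc{F}^{\oplus\N}\simeq\mc{G}^{\oplus\N}$, which in turn rests on showing that countably generated locally projective twisted sheaves of positive rank are projective generators of the twisted quasi-coherent category. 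Your proposal contains neither this reduction to the affine case nor any substitute for the gluing step, so as it stands it does not yield the theorem.
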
 

At first glance, it seems vector bundles of infinite rank would be more difficult to understand than those of finite rank. In fact, infinite rank vector bundles have a rather simple local structure (often by arguments involving the Eilenberg swindle), enough to suggest the following as the key guiding principle of our main results:

\begin{principle}\label{main principle} 
  Vector bundles of infinite rank
  have simpler structure than vector bundles of finite rank.
\end{principle}

The prime example of this is Bass's theorem on the triviality of projective modules of countably infinite rank over Noetherian rings. \begin{theorem}[Bass] \label{BASSTHM} \cite{BASS-BPMAF1963} Let $A$ be a ring such that $A/J(A)$ is Noetherian, where $J(A)$ is the Jacobson radical. If $M$ is a projective $A$-module of countably infinite rank, then $M$ is free. \end{theorem}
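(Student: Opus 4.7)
The plan is to follow Bass's classical strategy, combining Kaplansky's decomposition theorem for projective modules with a finite extension lemma that produces free direct summands absorbing prescribed finite subsets.

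First, by Kaplansky's theorem every projective module decomposes as a direct sum of countably generated projective modules; since $M$ has countably infinite rank (hence is not finitely generated), after regrouping these summands compatibly with the rank function we may reduce to the case where $M$ is countably generated, with a fixed generating sequence $x_1, x_2, \ldots \in M$.

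The technical heart is the extension lemma: if $F \subseteq M$ is a finite-rank free direct summand and $y \in M$, then there exists a finite-rank free direct summand $F' \subseteq M$ with $F$ a direct summand of $F'$ and $y \in F'$. Granting this, one constructs inductively a chain $F_0 = 0 \subseteq F_1 \subseteq F_2 \subseteq \cdots$ of finite-rank free direct summands of $M$ with $x_n \in F_n$ for each $n$, whence $M = \bigcup_n F_n \cong A^{(\mathbf{N})}$.

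To prove the extension lemma, write $M = F \oplus N$, where $N$ is countably generated, projective, and (since $F$ is finitely generated while $M$ is not) non-finitely-generated; let $\bar y$ denote the $N$-component of $y$. It suffices to find a rank-one free direct summand $Ae$ of $N$ with $\bar y \in Ae$. For this I would pass to the quotient $\overline{N} := N / J(A) N$, a projective module of infinite rank over the Noetherian ring $A/J(A)$; using the Noetherian structure together with the infinite rank, one locates an element of $N$ whose reduction is unimodular in $\overline{N}$ and whose $A$-span contains the reduction of $\bar y$, and lifting via standard idempotent-lifting modulo $J(A)$ produces the needed summand $Ae$. Adjusting $e$ by an appropriate element of the complementary summand arranges $\bar y \in Ae$ exactly.

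The main obstacle is the splitting step over $A/J(A)$ inside the extension lemma. The Noetherian hypothesis is precisely what allows one to extract a rank-one free direct summand of an infinite-rank projective $A/J(A)$-module in the possible presence of infinitely many maximal ideals; the passage to and from $J(A)$ is then handled by the idempotent-lifting formalism. Once the extension lemma is in hand, the ascending union construction gives an isomorphism $M \cong A^{(\mathbf{N})}$, completing the proof.
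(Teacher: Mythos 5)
The paper itself gives no proof of this statement --- it is quoted from Bass's paper --- so the only comparison available is with Bass's original argument, whose shape (reduce to the countably generated case, prove an extension lemma producing finite-rank free direct summands that absorb a prescribed element while containing a given free summand as a direct summand, then exhaust $M$ by an increasing chain) your sketch does follow. The gap is in the technical heart. Your reduction ``it suffices to find a rank-one free direct summand $Ae$ of $N$ with $\bar y \in Ae$'' is false in general: a generator $e$ of a rank-one free direct summand is unimodular, so if $\bar y = ce$ then the order ideal $\{f(\bar y) : f \in \Hom_A(N,A)\}$ would be the principal ideal $(c)$. Take $A = k[x,y]$ (Noetherian, $J(A)=0$), $N = A^{\oplus \N}$, and $\bar y = (x,y,0,0,\dotsc)$: its order ideal is $(x,y)$, which is not principal, so $\bar y$ lies in no rank-one free direct summand, and for the same reason there is no element of $N$ ``whose reduction is unimodular in $\overline{N}$ and whose $A$-span contains the reduction of $\bar y$.'' What one actually needs is a \emph{finite-rank} free direct summand of $N$ containing $\bar y$, and producing it is exactly where the two hypotheses --- infinite rank at every maximal ideal and Noetherianity of $A/J(A)$ --- do their work in Bass's argument (splitting off free summands compatible with a given finitely generated submodule and then absorbing by automorphisms). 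Your proposal black-boxes this step and replaces it with an impossible one, so the essential content of the theorem is missing.

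Three smaller points. The passage through $J(A)$ is not idempotent lifting (which in general needs $J(A)$ nil or completeness); what is used is Nakayama for the Jacobson radical: a map to a finite free module that is surjective modulo $J(A)$ is surjective, and it then splits by projectivity. Second, the conclusion ``$M = \bigcup_n F_n \cong A^{\oplus \N}$'' requires each $F_n$ to have a \emph{free} complement in $F_{n+1}$: otherwise $M \cong F_1 \oplus \bigoplus_n C_n$ with the $C_n$ only stably free, and the relation $(\bigoplus_n C_n) \oplus A^{\oplus \N} \cong A^{\oplus \N}$ holds for \emph{every} countably generated projective by the Eilenberg swindle, so freeness does not follow; your intended $F' = F \oplus Ae$ would give free complements, but the lemma as stated does not. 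Third, the Kaplansky ``regrouping'' reduction is not automatic: a countable regrouping of the Kaplansky summands need not have infinite rank at every maximal ideal, so the hypothesis of the theorem can fail for the pieces; in Bass's formulation countable generation (uniform $\aleph_0$-bigness) is part of the hypothesis, as it effectively is in this paper's applications.
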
 

As we will see, local uniqueness theorems such as these yield global existence results that are quite strong in comparison to their finite-rank analogues (which are often false). 

\begin{remark}
One can also read Bass's theorem as telling us that the various naïve notions of ``infinite rank vector bundle'' -- that is, locally free sheaves or simply locally projective sheaves -- coincide.
\end{remark}

To prove \Cref{MAINTHM}, we first consider the affine case, for which we prove a twisted version of Bass's theorem. Then we show that in each case $X$ admits a cover $\pi : X' \to X$ where $X'$ is affine and $\pi$ is a surjective \emph{affine-pure\/} morphism, which is roughly characterized by the property that the pushforward of a locally free module by such a morphism remains locally free (in case (ii), Jouanolou's trick provides an example of such a map). The cases (i), (ii), (iii) are special cases of (iv), which reduces to the claim that $G \to S$ is affine-pure under the stated assumptions. This is enough to conclude since the $\LPBr(X) = \H_{\et}^{2}(X,\G_{m})$ property descends under affine-pure morphisms.

For non-affine schemes, it is not true that all infinite rank vector bundles are trivial (in fact, we give examples of infinite rank vector bundles on $\P^{1}$ that are not decomposable as direct sums of line bundles). In response to this difficulty, we introduce a class of infinite rank vector bundles which we call \emph{very positive vector bundles}. We show that, for projective schemes over an infinite field, very positive vector bundles exist and are unique up to isomorphism; we view this as another example of \Cref{main principle}. This uniqueness forces them to descend through simple pushouts, allowing us to prove that $\LPBr(X) = \H_{\et}^{2}(X,\G_{m})$ for a class of schemes not included in \Cref{MAINTHM}.

\begin{theorem} Let $X$ be the colimit of a diagram $Z\rightrightarrows Y$ where $Z$ and $Y$ are smooth and projective over an infinite field and the two arrows are closed immersions with disjoint images. Then $\LPBr(X) = \H_{\et}^{2}(X,\G_{m})$. \end{theorem}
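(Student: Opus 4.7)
The plan is to produce a twisted locally free sheaf on any $\G_m$-gerbe $\ms{X} \to X$ by descending a very positive twisted vector bundle along the pushout presentation of $X$. Writing $q : Y \to X$ for the canonical morphism, $\ms{Y} := \ms{X} \times_X Y$, and $i_0, i_1 : Z \to Y$ for the two closed immersions, the coequalizer property forces $q \circ i_0 = q \circ i_1$, so the two pullbacks $i_0^* \ms{Y}$ and $i_1^* \ms{Y}$ are canonically identified with a single $\G_m$-gerbe $\ms{Z}$ over $Z$. By Ferrand-type descent for the pinching $X = Y \sqcup_{Z \sqcup Z} Z$, a twisted quasi-coherent sheaf on $\ms{X}$ is the same datum as a pair $(\mc{V}, \varphi)$ consisting of a twisted quasi-coherent sheaf $\mc{V}$ on $\ms{Y}$ together with an isomorphism $\varphi : i_0^* \mc{V} \xrightarrow{\sim} i_1^* \mc{V}$ of twisted sheaves on $\ms{Z}$.

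Since $Y$ is smooth and projective over a field, \Cref{MAINTHM}(i) provides some twisted locally free sheaf of infinite rank on $\ms{Y}$; we first upgrade this to a \emph{very positive} twisted vector bundle $\mc{V}$ on $\ms{Y}$, using the theory developed in the sections preceding this theorem. The crucial input is the promised uniqueness for such bundles: since $Z$ is projective over an infinite field, and since both $i_0^* \mc{V}$ and $i_1^* \mc{V}$ are very positive twisted vector bundles on $\ms{Z}$, they are isomorphic, and we choose any $\varphi$ realizing such an isomorphism. The descent then produces a twisted sheaf $\mc{F}$ on $\ms{X}$ whose pullback to $\ms{Y}$ is $\mc{V}$.

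It remains to verify local freeness of $\mc{F}$. Away from the image of $Z$ in $X$ this is immediate, since $q$ is an isomorphism there. At a pinch point $x \in X$ with preimages $y_0, y_1 \in Y$ lying over $z \in Z$, the local ring $\mc{O}_{X,x}$ is the fibre product $\mc{O}_{Y,y_0} \times_{\mc{O}_{Z,z}} \mc{O}_{Y,y_1}$, and the stalk of $\mc{F}$ is the corresponding fibre product of locally free modules glued along the chosen $\varphi$. Combined with the affine case (an infinite-rank twisted analogue of Bass's theorem, \Cref{BASSTHM}) applied to the strictly henselian local rings, this should suffice to conclude local freeness of $\mc{F}$ at $x$.

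The main obstacle we anticipate is establishing the required properties of very positive twisted vector bundles on $\G_m$-gerbes over projective schemes, and in particular that their restriction along smooth closed immersions remains very positive so that uniqueness applies. Granted those inputs, the remainder is a formal descent calculation in the spirit of \Cref{main principle}: infinite-rank vector bundles are rigid enough to cross the pinching that finite-rank ones generally cannot.
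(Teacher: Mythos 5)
Your overall descent strategy — produce something on $\ms{Y}$ whose two restrictions to $\ms{Z}$ are canonically isomorphic, then glue across the pinching — is exactly the shape of the paper's argument, and you have correctly located the crux in a rigidity statement for very positive bundles. But the gap you flag at the end is real and is precisely the issue: the paper never defines, constructs, or proves uniqueness of ``very positive \emph{twisted} vector bundles,'' and the ``upgrade'' step in your second paragraph (passing from the locally projective twisted sheaf supplied by \Cref{MAINTHM}(i) to a very positive twisted sheaf) is not an available operation. A locally projective twisted sheaf has no preferred presentation as a colimit of finite-rank twisted bundles satisfying $(\mathbf{G})$ and $(\mathbf{V}_0)$, so there is nothing to upgrade.

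The paper sidesteps this by never leaving the untwisted theory of \Cref{sec04}. Since $Y$ and $Z$ are smooth projective, $\Br=\Br'$ gives a \emph{finite rank} $1$-twisted bundle $\mc{E}$ on $\ms{Y}$. One then tensors $\mc{E}$ with the pullback of an ordinary very positive bundle $\mc{W}$ on $Y$, and the whole content is to show $a_1^*(\mc{E}\otimes\mc{W}) \simeq a_2^*(\mc{E}\otimes\mc{W})$ over $\ms{Z}$. This is proved not by a twisted uniqueness theorem but by passing to a Brauer--Severi scheme $f:P\to Z$ splitting the class on $Z$: over $\ms{X}_P$ the gerbe is trivial, so $f^*a_i^*\mc{E}$ becomes an honest finite rank bundle, and tensoring a very positive bundle $\mc{V}$ on $P$ by any finite rank bundle gives back $\mc{V}$ (\Cref{0020}). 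Pushing forward by $f$ (using that $f_*\mc{V}$ is again very positive, \Cref{20210518-15}) and invoking uniqueness (\Cref{0016}) to identify $f_*\mc{V}$ with $a_i^*\mc{W}$ yields the needed gluing datum. In short: you want to build a twisted analogue of \Cref{0016} and \Cref{0013}; the paper instead trivializes the gerbe on a Brauer--Severi cover and only ever uses the untwisted results. If you wish to pursue your route you would need to (i) define twisted very positive bundles (e.g.\ as $\mc{E}\otimes\mc{W}$ for $\mc{E}$ a finite rank twisted bundle and $\mc{W}$ very positive, with $(\mathbf{G})$, $(\mathbf{V}_0)$ tested against $(-1)$-twisted coherent sheaves), (ii) reprove \Cref{0016} and \Cref{0013} in that setting, and (iii) still input $\Br=\Br'$ on $Y$ to get existence — at which point the Brauer--Severi detour is arguably shorter. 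Your final paragraph about local projectivity at pinch points is a reasonable extra check, though once the descent datum exists it reduces to Milnor patching plus Kaplansky/Bass over strictly henselian local rings and is not where the real difficulty lies.
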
 

In our investigations, one essential subtlety lies in what precisely one means by ``vector bundle of infinite rank''. The bundles we consider here are essentially the discrete parts of the types of infinite vector bundles described by Drinfeld in \cite{DRINFELD-IDVBIAG2006}. A consequence of this restriction is that we lose dualizability; this is why the subset $\LPBr(X)$ of $\H_{\et}^2(X,\G_m)$ that we study is only a submonoid. It is an interesting question to consider what we would happen if one allows the compact duals as in Drinfeld's theory; then one seems to lose tensor products, so one would no longer have a submonoid, but merely a subset closed under inversion!

The $\Br = \Br'$ and $\LPBr(X) = \H_{\et}^{2}(X,\G_{m})$ problems are two instances of a broader desire in algebraic geometry to find appropriate geometric or ring-theoretic objects corresponding to cohomology classes. For $\H^{2}_{\et}(X,\G_{m})$, there is an earlier, different approach due to Taylor \cite{TAYLOR-ABBG1982} who defined the \emph{bigger Brauer group} $\widetilde{\Br}(X)$. These are the classes representable by certain quasi-coherent $\mc{O}_{X}$-algebras that are neither unital nor locally free in general. By work of Heinloth--Schr{\"o}er \cite{HEINLOTHSCHROER-TBBGATS2009}, it is known that $\widetilde{\Br}(X) = \H_{\et}^{2}(X,\G_{m})$ for Noetherian algebraic stacks $X$ whose diagonal is quasi-affine; they show that all $\G_{m}$-gerbes over such $X$ admit a twisted coherent sheaf that locally contains an invertible summand.

\begin{pg}[Outline] We provide an outline of the paper. In \Cref{sec02} we discuss twisted sheaves and define the submonoid $\LPBr(X)$ over an arbitrary locally ringed site. We define infinite rank Azumaya algebras (\Cref{sec02-01}) and Brauer-Severi varieties (\Cref{sec02-03}) and discuss their relationship to twisted vector bundles. After giving some sufficient conditions for a morphism to be affine-pure (\Cref{sec03-01}), we prove the main theorem in \Cref{sec03-02}. We modify the example of a scheme for which $\Br \ne \Br'$ (due to Edidin--Hassett--Kresch--Vistoli \cite{EDIDINHASSETTKRESCHVISTOLI-BGAQS2001}) to produce a scheme for which $\LPBr(X) = 0$ but $\H_{\et}^{2}(X,\G_{m}) = \Z$. In \Cref{sec04}, we introduce very positive vector bundles and prove that the proper schemes admitting a very positive vector bundle are exactly the projective ones (\Cref{sec04-02}) and that very positive vector bundles are unique up to isomorphism (\Cref{sec04-03}); we also prove that every finite rank vector bundle over a projective scheme admits a finite-length ``forward resolution'' by the very positive vector bundle (\Cref{sec04-04}). In \Cref{sec05}, we prove that infinite-dimensional invertible matrices lift under any surjective ring map, which implies in particular that all infinite rank vector bundles lift from a curve to an ambient surface away from finitely many points (again supporting \Cref{main principle}). In \Cref{sec06}, we prove infinite rank analogues of some results used in the definition of the Brauer map, including the Skolem-Noether theorem (generalizing a result of Courtemanche--Dugas \cite{COURTEMANCHEDUGAS-AOTEAOAFM2016}) and the fact that endomorphism algebras of projective modules are central algebras. \end{pg}

\begin{pg}[Acknowledgements] We are grateful to Jarod Alper, Giovanni Inchiostro, and Will Sawin for helpful conversations. \end{pg}

\section{Twisted sheaves and Azumaya algebras of possibly infinite rank} \label{sec02}

\subsection{Terminology about modules} In this paper we will use the following terminology regarding modules (which may be different from existing definitions).

\begin{definition} \label{0027} Let $\mc{C}$ be a locally ringed site, let $\mc{F}$ be an $\mc{O}_{\mc{C}}$-module. \par We say that $\mc{F}$ is \DEF{locally free} if for every object $U$ of $\mc{C}$ there is a covering $\{U_i \to U\}_{i \in I}$ such that for every $i$ the restriction $\mc{F}|_{U_i}$ is a free $\mc{O}_{U_i}$-module.
\par We say that $\mc{F}$ is \DEF{locally projective}
if for every object $U$ of $\mc{C}$ there is a covering
$\{U_i \to U\}_{i \in I}$ such that for every $i$ the restriction $\mc{F}|_{U_i}$ is a
direct summand of a free $\mc{O}_{U_i}$-module.
\par We say a locally projective $\mc{O}_{\mc{C}}$-module $\mc{F}$ has \DEF{positive rank} if for every object $U$ of $\mc{C}$ there is a covering $\{U_i \to U\}_{i \in I}$ such that for every $i \in I$ there is a surjection
$\mc{F}|_{U_i} \to \mc{O}_{U_i}$.
\par We say a locally projective $\mc{O}_{\mc{C}}$-module $\mc{F}$ is \DEF{countably generated}
for every object $U$ of $\mc{C}$ there is a covering $\{U_i \to U\}_{i \in I}$ such that
for every $i \in I$ there is a countable set $I_{i}$ and a surjection $\mc{O}_{U_{i}}^{\oplus I_{i}} \to \mc{F}|_{U_i}$ of $\mc{O}_{U_i}$-modules. \end{definition}

\begin{pg} We note that every locally projective $\mc{O}_{\mc{C}}$-module $\mc{F}$ is quasi-coherent. Indeed, after localizing on $\mc{C}$ if necessary, we may assume that $\mc{O}_{\mc{C}}^{\oplus I} \simeq \mc{F} \oplus \mc{G}$ for some $\mc{O}_{\mc{C}}$-module $\mc{G}$; then $\mc{F}$ is the cokernel of the composition $\mc{O}_{\mc{C}}^{\oplus I} \to \mc{G} \to \mc{O}_{\mc{C}}^{\oplus I}$. \end{pg}

\begin{remark} For an algebraic space $X$, we are primarily interested in classes in the \'etale cohomology group $\H_{\et}^{2}(X,\G_{m})$. Thus whenever we discuss modules, cohomology, and gerbes on an algebraic space $X$, we will assume without further mention that the underlying site $\mc{C}$ is the small \'etale site $X_{\et}$ of $X$. (Recall that we have $\H_{\et}^{2}(X,\G_{m}) \simeq \H_{\fppf}^{2}(X,\G_{m})$ by \cite[(11.7) (1)]{Gro68c}.) \end{remark}

\begin{pg} For a scheme $X$ and a quasi-coherent $\mc{O}_{X}$-module $\mc{F}$, the following \Cref{0006} shows that $\mc{F}$ is locally projective for the fpqc topology if and only if it is locally projective for the Zariski topology. \end{pg}

\begin{theorem}[Raynaud-Gruson] \label{0006} \cite{RAYNAUDGRUSON-CDPEDP1971}, \cite[05A9]{SP} Let $A$ be a ring, let $M$ be an $A$-module, let $A \to B$ be a faithfully flat ring map. If $M \otimes_{A} B$ is a projective $B$-module, then $M$ is a projective $A$-module. \end{theorem}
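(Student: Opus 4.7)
The plan is to invoke the Raynaud--Gruson characterization of projective modules, according to which an $A$-module $M$ is projective if and only if $M$ is flat, $M$ is Mittag--Leffler, and $M$ admits a direct-sum decomposition into countably generated submodules. Granting this criterion, the task reduces to verifying that each of these three properties descends along the faithfully flat map $A \to B$, since by hypothesis all three hold for $M \otimes_A B$ over $B$ (the first two automatically, the third by Kaplansky's theorem).

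Descent of flatness is standard: given an injection $N' \hookrightarrow N$ of $A$-modules, injectivity of $M \otimes_A N' \to M \otimes_A N$ can be tested after applying $- \otimes_A B$, where it becomes the injectivity of $(M \otimes_A B) \otimes_B (N' \otimes_A B) \to (M \otimes_A B) \otimes_B (N \otimes_A B)$, which holds since $M \otimes_A B$ is $B$-flat. For the Mittag--Leffler property, I would use the reformulation that $M$ is Mittag--Leffler if and only if the canonical map $\phi_M \colon M \otimes_A \prod_{i \in I} N_i \to \prod_{i \in I} (M \otimes_A N_i)$ is injective for every family $\{N_i\}_{i \in I}$ of $A$-modules. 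Given such a family, I would tensor $\phi_M$ with $B$ and compare with $\phi_{M \otimes_A B}$ applied to the family $\{N_i \otimes_A B\}$; using flatness of $B$ (which makes the natural map $(\prod_i N_i) \otimes_A B \to \prod_i (N_i \otimes_A B)$ injective) together with faithful flatness to pull back injectivity, one deduces the injectivity of $\phi_M$ from the assumed Mittag--Leffler property of $M \otimes_A B$.

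The main obstacle is descending the decomposition into countably generated direct summands. While Kaplansky's theorem ensures that $M \otimes_A B$ is such a direct sum over $B$, there is no mechanism to simply pull back this decomposition, since summand decompositions are not themselves preserved and recovered by faithfully flat descent in any naive way. The resolution, which is the technical core of Raynaud--Gruson, is to use the Mittag--Leffler condition on $M$ (now established) together with the assumed decomposition for $M \otimes_A B$ to build by transfinite induction an increasing filtration of $M$ whose successive quotients are countably generated and split off as direct summands. This is exactly the argument carried out in Stacks tag \texttt{05A9} and the supporting results on Mittag--Leffler modules; since the paper cites the Stacks project explicitly, I would invoke this step directly rather than reproduce the transfinite construction.
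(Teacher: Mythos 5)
The paper offers no proof of this statement at all---it is quoted with citations to Raynaud--Gruson and Stacks tag \texttt{05A9}---and your overall architecture (projectivity $=$ flat $+$ Mittag--Leffler $+$ direct sum of countably generated modules, descend each property, delegate the transfinite d\'evissage to the Stacks project) is precisely the architecture of the cited proof. The problem lies in the one step you chose to argue in detail: descent of the Mittag--Leffler property. Your argument requires the natural map $(\prod_i N_i)\otimes_A B \to \prod_i (N_i\otimes_A B)$ to be injective, and you assert that this follows from flatness of $B$ over $A$. It does not: injectivity of this map for all families is exactly the statement that $B$ is a Mittag--Leffler $A$-module, which is strictly stronger than (faithful) flatness. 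Concretely, $A=\Z$, $B=\Z\times\Q$ is a faithfully flat $A$-algebra, and for the family $N_i=\Z/2^i\Z$ the map in question has kernel $(\prod_i N_i)\otimes_{\Z}\Q\neq 0$, since $\prod_i \Z/2^i\Z$ contains elements of infinite order while each $N_i\otimes_{\Z}\Q=0$. Consequently the composite you factor through $\prod_i\bigl((M\otimes_A N_i)\otimes_A B\bigr)$ can fail to be injective for reasons having nothing to do with $\phi_M$, and the faithfully flat reduction collapses at that point.

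The fact you want (Mittag--Leffler descends along faithfully flat ring maps) is true, but it is Stacks tag \texttt{05A5}, proved via the other characterization of Mittag--Leffler: write $M=\colim_{i} M_i$ as a directed colimit of finitely presented modules and use domination, i.e.\ for each $i$ there is $j$ with $\ker(M_i\otimes_A Q\to M_j\otimes_A Q)=\ker(M_i\otimes_A Q\to M\otimes_A Q)$ for all $Q$. These kernels commute with the flat base change $-\otimes_A B$, one applies the Mittag--Leffler property of $M\otimes_A B$ to the $B$-modules $Q\otimes_A B$, and the resulting equality of submodules descends by faithful flatness. With that step repaired (or simply cited, in the same spirit as your appeal to \texttt{05A9} for the decomposition step), your outline is a correct account of the argument the paper is referencing; the flatness-descent step and the use of Kaplansky's theorem upstairs are fine as written.
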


\begin{question}[Infinite-rank Hilbert Theorem 90] Let $X$ be a scheme, let $\mc{E}$ be a quasi-coherent $\mc{O}_{X}$-module. If there exists an fppf (resp. \'etale) cover $X' \to X$ such that $\mc{E}|_{X'}$ is a free $\mc{O}_{X'}$-module, then does there exist an \'etale (resp. Zariski) cover $X'' \to X$ such that $\mc{E}|_{X''}$ is a free $\mc{O}_{X''}$-module? In other words, are the maps \[ \H_{\mr{Zar}}^{1}(X,\GL_{I}) \to \H_{\et}^{1}(X,\GL_{I}) \to \H_{\fppf}^{1}(X,\GL_{I}) \] isomorphisms for any index set $I$? By \Cref{BASSTHM}, we know that if $X$ is Noetherian and $I$ is countable, then the answer is ``yes''. See also \cite[05VF]{SP}. \end{question}

\subsection{Twisted sheaves} \label{sec02-02} We briefly recall some background about gerbes and twisted sheaves; see \cite[3.1.1]{LIEBLICH-TSAPIP2008} for a reference.

\begin{pg} Let $\mc{C}$ be a locally ringed site, let $\alpha \in \H^{2}(\mc{C},\G_{m})$ be a class, and let $\ms{X} \to \mc{C}$ be a $\G_{m}$-gerbe with $[\ms{X}] = \alpha$. An $\mc{O}_{\ms{X}}$-module $\ms{F}$ is said to be \DEF{1-twisted} if the natural inertial action $\G_{m} \times \mc{F} \to \mc{F}$ equals the action obtained by restricting the module action $\mc{O}_{\ms{X}} \times \mc{F} \to \mc{F}$ \cite[3.1.1.1]{LIEBLICH-TSAPIP2008}. \end{pg}

\begin{pg} Let $f : Y \to X$ be a morphism of algebraic spaces and let $\ms{X} \to X$ be a $\G_{m}$-gerbe. If $\mc{G}$ is a 1-twisted $\mc{O}_{Y \times_{X} \mc{X}}$-module, then the pushforward $f_{\ast}\mc{G}$ is a 1-twisted $\mc{O}_{\mc{X}}$-module. If $\mc{F}$ is a 1-twisted $\mc{O}_{\mc{X}}$-module, then the pullback $f^{\ast}\mc{F}$ is a 1-twisted $\mc{O}_{Y \times_{X} \mc{X}}$-module. \end{pg}

\begin{pg} We may equivalently describe a twisted sheaf using a hypercovering $K$ on which $\alpha$ is representable.
If the underlying site $\mc{C}$ has fibre products and (enough) disjoint unions, then we can assume our
hypercovering $K$ is just a simplicial object $U_{\bullet}$ of $\mc{C}$, see \cite[0DB1]{SP}.
Then $\alpha$ is given by an element $\alpha_{2} \in \Gamma(U_{2},\G_{m})$ satisfying
the cocycle condition on $U_{3}$, and an \DEF{$\alpha$-twisted} module is a pair
$(\mc{F}_{0}, \varphi_{1})$ where $\mc{F}_{0}$ is an $\mc{O}_{U_{0}}$-module and $\varphi_{1} : p_{1}^{\ast}\mc{F}_{0} \to p_{2}^{\ast}\mc{F}_{0}$ is an isomorphism of $\mc{O}_{U_{1}}$-modules satisfying the condition $p_{23}^{\ast}\varphi_{1} \circ p_{12}^{\ast}\varphi_{1} = \alpha_{2} \cdot p_{13}^{\ast}\varphi_{1}$ on $U_{2}$. Letting the hypercovering vary, we obtain an equivalence of categories $\mc{F} \mapsto (\mc{F}_{0},\varphi_{1})$ between these two viewpoints, and for each of the properties $\mr{P}$ in \Cref{0027}, we have that $\mc{F}$ satisfies $\mr{P}$ if and only if $\mc{F}_{0}$ satisfies $\mr{P}$. \end{pg}

\begin{definition} \label{0004} For a locally ringed site $\mc{C}$, we define \DEF{$\LPBr(\mc{C})$} (resp. \DEF{$\LFBr(\mc{C})$}) to be the subset of classes $\alpha \in \H^{2}(\mc{C},\G_{m})$ such that any $\G_{m}$-gerbe $\ms{X} \to \mc{C}$ with $[\ms{X}] = \alpha$ admits a countably generated locally projective (resp. locally free) 1-twisted $\mc{O}_{\ms{X}}$-module of positive rank. \end{definition}

\begin{lemma} \label{0002} For a locally ringed site $\mc{C}$, the subsets $\LPBr(\mc{C})$ and $\LFBr(\mc{C})$ of $\H^{2}(\mc{C},\G_{m})$ are additive submonoids. \end{lemma}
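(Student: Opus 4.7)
The plan is to check two things: (i) the zero class lies in $\LFBr(\mc{C})$ (hence also in $\LPBr(\mc{C})$), and (ii) both subsets are closed under addition. I would use the cocycle description of twisted sheaves recalled in the paragraph preceding \Cref{0004}.

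For (i), the trivial class is represented by the cocycle $\alpha_2 = 1$ on any simplicial hypercovering $U_\bullet$, and the pair $(\mc{O}_{U_0}, \id)$ then trivially satisfies the twisted cocycle condition, giving a free rank-$1$ twisted module. It is in particular locally free (a fortiori locally projective), countably generated, and of positive rank, so $0 \in \LFBr(\mc{C}) \subseteq \LPBr(\mc{C})$.

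For (ii), given $\alpha, \beta \in \LPBr(\mc{C})$ (resp. $\LFBr(\mc{C})$), I would construct a twisted module on a single $\G_m$-gerbe of class $\alpha + \beta$ via a tensor product; this suffices, since any two $\G_m$-gerbes with a common class are equivalent as $\G_m$-gerbes in a way that preserves $1$-twisted modules together with the properties in \Cref{0027}. Refine to a common simplicial hypercovering $U_\bullet$ carrying cocycles $\alpha_2, \beta_2 \in \Gamma(U_2, \G_m)$ for $\alpha, \beta$, together with twisted modules $(\mc{F}_0, \varphi_1)$ and $(\mc{G}_0, \psi_1)$ of the prescribed type. Then the pair $(\mc{F}_0 \otimes_{\mc{O}_{U_0}} \mc{G}_0, \varphi_1 \otimes \psi_1)$ is a twisted module for the cocycle $\alpha_2 \beta_2$, which represents the class $\alpha + \beta$. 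Each of the four properties from \Cref{0027} is routinely preserved by the tensor product (after refining once more so that both factors have the required local form simultaneously): free $\otimes$ free is free, summand of a free $\otimes$ summand of a free is a summand of a free, countable generation is preserved via a bijection $\N \times \N \simeq \N$, and positive rank is preserved because the tensor product of two surjections to $\mc{O}$ remains a surjection to $\mc{O}$.

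I do not foresee any serious obstacle. The only mildly delicate point is the bookkeeping of simultaneous refinements of hypercoverings so that the cocycles and twisted modules all live on a single $U_\bullet$, and the verification that the construction is well-defined under equivalences of gerbes with a given class; both are standard applications of the hypercovering machinery.
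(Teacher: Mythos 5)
Your proposal is correct and follows essentially the same route as the paper: the paper also notes that $\mc{O}_{\mc{C}}$ is a $0$-twisted line bundle and then tensors cocycle-level twisted modules $(\mc{F}_{1,0},\varphi_{1,1})\otimes(\mc{F}_{2,0},\varphi_{2,1})$ on a common hypercovering, checking that local projectivity, local freeness, and positive rank are preserved. Your extra remarks on common refinements, invariance under gerbe equivalence, and preservation of countable generation (via $\N\times\N\simeq\N$) are exactly the routine points the paper leaves implicit.
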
 \begin{proof} The structure sheaf $\mc{O}_{\mc{C}}$ is a 0-twisted line bundle, hence $0 \in \LFBr(\mc{C})$. Let $\alpha_{1},\alpha_{2} \in \H^{2}(\mc{C},\G_{m})$ be classes; we may assume that both $\alpha_{1},\alpha_{2}$ are representable on the hypercovering $U_{\bullet}$. If $(\mc{F}_{i,0},\varphi_{i,1})$ is an $\alpha_{i}$-twisted sheaf, then $(\mc{F}_{1,0} \otimes_{\mc{O}_{U_{0}}} \mc{F}_{2,0} , \varphi_{1,1} \otimes \varphi_{2,1})$ is an $(\alpha_{1} + \alpha_{2})$-twisted sheaf. If $\mc{F}_{1,0},\mc{F}_{2,0}$ are locally projective (resp. locally free, resp. of positive rank), then so is $\mc{F}_{1,0} \otimes_{\mc{O}_{U_{0}}} \mc{F}_{2,0}$. \end{proof}

\begin{remark} Suppose $(\mc{F}_{0},\varphi_{1})$ is a locally projective $\alpha$-twisted $\mc{O}_{\mc{C}}$-module. If $\mc{F}_{0}$ has finite rank, then $(\HomS_{\mc{O}_{\mc{C}}}(\mc{F}_{0},\mc{O}_{\mc{C}}),\varphi_{1}^{\vee})$ is a locally projective $(-\alpha)$-twisted $\mc{O}_{\mc{C}}$-module. However, if $\mc{F}_{0}$ does not have finite rank, then $\HomS_{\mc{O}_{\mc{C}}}(\mc{F}_{0},\mc{O}_{\mc{C}})$ need not be quasi-coherent (in particular, not locally projective). \end{remark}

\begin{question} \label{0012} Does there exist an algebraic space $X$ such that $\LPBr(X)$ is not a subgroup of $\H_{\et}^{2}(X,\G_{m})$ (i.e. does not contain additive inverses)? \end{question}

\begin{question} \label{0025}\label{infinite brauer problem} For which algebraic spaces $X$ is the inclusion $\LPBr(X) \to \H_{\et}^{2}(X,\G_{m})$ an equality? \end{question}

\begin{pg} \label{0005} For any algebraic space $X$, let $\Br(X)$ (resp. $\Br'(X)$) be the Brauer group (resp. cohomological Brauer group) of $X$. We have inclusions \begin{center} \begin{tikzpicture}[>=angle 90] 
\matrix[matrix of math nodes,row sep=2em, column sep=2em, text height=2ex, text depth=0.5ex] { 
|[name=11]| \Br(X) & |[name=12]|  & |[name=13]| \Br'(X) \\ 
|[name=21]| \LFBr(X) & |[name=22]| \LPBr(X) & |[name=23]| \H^{2}_{\et}(X,\G_{m}) \\
}; 
\draw[->,font=\scriptsize] (11) edge (13) (21) edge (22) (22) edge (23) (11) edge (21) (13) edge (23); \end{tikzpicture} \end{center} of additive submonoids of $\H^{2}_{\et}(X,\G_{m})$. The classes in $\Br(X)$ correspond to $\G_{m}$-gerbes $\ms{X}$ admitting a 1-twisted locally free $\mc{O}_{\ms{X}}$-module of finite rank. By a theorem of Gabber \cite{DEJONG-AROG2003}, the inclusion $\Br(X) \subseteq \Br'(X)$ is known to be an equality if $X$ is a quasi-compact scheme admitting an ample line bundle. \end{pg}

\begin{lemma} \label{LemmaB} If $X$ is a locally Noetherian algebraic space, we have $\LFBr(X) = \LPBr(X)$. \end{lemma}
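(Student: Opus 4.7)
The inclusion $\LFBr(X) \subseteq \LPBr(X)$ is immediate from the definitions, so the content is the reverse containment. Given $\alpha \in \LPBr(X)$ and a $\G_{m}$-gerbe $\ms{X} \to X$ with $[\ms{X}] = \alpha$, by hypothesis there is a countably generated locally projective $1$-twisted $\mc{O}_{\ms{X}}$-module $\mc{F}$ of positive rank, and my task is to upgrade $\mc{F}$ to a sheaf with the same properties that is furthermore locally free.

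The plan is to apply an Eilenberg-swindle-style replacement: pass from $\mc{F}$ to the countable direct sum $\mc{F}^{\oplus \N} := \bigoplus_{n \in \N} \mc{F}$. One checks directly that $\mc{F}^{\oplus \N}$ remains $1$-twisted, countably generated (a countable union of countable generating sets is countable), locally projective (a direct sum of summands of free modules is a summand of a free module), and of positive rank (it contains $\mc{F}$ as a direct summand). The point of this replacement is that the local rank has been forced to be countably infinite, which is exactly what Bass's theorem \Cref{BASSTHM} requires in order to promote projectivity to freeness.

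To verify local freeness I would then work \'etale-locally on $X$: after passing to a sufficiently small affine \'etale chart $U = \Spec(A)$, the ring $A$ is Noetherian (since $X$ is locally Noetherian and \'etale morphisms preserve this) and the gerbe trivializes as $\ms{X}|_{U} \cong B\G_{m} \times U$. Under the standard dictionary, a $1$-twisted $\mc{O}_{\ms{X}|_U}$-module is the weight-$1$ piece of a $\G_{m}$-representation and thus corresponds to an ordinary $A$-module, so $\mc{F}|_U$ corresponds to a countably generated projective $A$-module $M$; after possibly shrinking $U$ the positive rank condition furnishes a surjection $M \to A$, and hence a splitting $M \cong A \oplus M'$. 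Then
\[ M^{\oplus \N} \cong A^{\oplus \N} \oplus (M')^{\oplus \N} \]
is countably generated, projective, and not finitely generated, since its summand $A^{\oplus \N}$ is not. Bass's theorem over the Noetherian ring $A$ forces $M^{\oplus \N}$ to be free; ranging over such charts shows that $\mc{F}^{\oplus \N}$ is \'etale-locally free on $\ms{X}$, yielding $\alpha \in \LFBr(X)$.

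The whole argument is essentially bookkeeping around Bass's theorem; the closest thing to an obstacle is checking that each of the four properties in \Cref{0004} is stable under the swindle and transports cleanly across the \'etale-local identification of $1$-twisted sheaves with ordinary modules, but none of this is deep.
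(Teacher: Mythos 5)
Your proof is correct and follows the same essential skeleton as the paper's: trivialize the gerbe \'etale-locally over a Noetherian affine, translate a $1$-twisted module into an ordinary projective module, and invoke Bass's theorem \Cref{BASSTHM}. The one real difference is that you first perform the Eilenberg-swindle replacement $\mc{F} \rightsquigarrow \mc{F}^{\oplus\N}$, whereas the paper works with $\mc{E}$ unchanged and handles a case split: ``if $\Gamma(U,\mc{E}|_U)$ has finite rank, then $\mc{E}|_U$ is Zariski-locally free; otherwise $\ldots$ free by Bass.'' Your swindle step normalizes the local rank to exactly $\aleph_0$ at every prime (the summand $A^{\oplus\N}$ forces the rank to be at least $\aleph_0$, while countable generation bounds it above), so Bass applies without any dichotomy. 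This costs you nothing at the level of $\LFBr$, since only existence of \emph{some} locally free twisted module of the required type is needed, and it arguably sidesteps the implicit claim in the paper that on a Noetherian affine the rank of a countably generated projective module is uniformly finite or uniformly $\aleph_0$ after shrinking. In short: same route, with a small precaution the paper doesn't take, at the cost of replacing the given sheaf rather than proving it is itself locally free.

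One small imprecision to tidy up if you were to finalize this: you invoke ``Bass's theorem over the Noetherian ring $A$ forces $M^{\oplus\N}$ to be free'' on the grounds that $M^{\oplus\N}$ is ``not finitely generated,'' but the hypothesis in \Cref{BASSTHM} as stated is ``of countably infinite rank,'' i.e.\ rank $\aleph_0$ at every prime. Your argument in fact establishes exactly this (via the $A^{\oplus\N}$ summand plus countable generation), so the proof is sound; just phrase the invocation in terms of the uniform infinite rank rather than mere non-finite-generation.
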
 \begin{proof} Let $\ms{X} \to X$ be a $\G_{m}$-gerbe admitting a countably generated locally projective 1-twisted $\mc{O}_{\ms{X}}$-module $\mc{E}$ of positive rank. There is a locally Noetherian scheme $X'$ with an \'etale surjection $X' \to X$ such that $\ms{X}_{X'}$ is trivial; for any section $s : X' \to \ms{X}_{X'}$, the restriction $s^{\ast}(\mc{E}|_{\ms{X}_{X'}})$ is a locally projective $\mc{O}_{X'}$-module of positive rank. For an affine open subscheme $U = \Spec A$ of $X'$, the $A$-module $\Gamma(U,\mc{E}|_{U})$ is projective by \Cref{0006}. If $\Gamma(U,\mc{E}|_{U})$ has finite rank, then $\mc{E}|_{U}$ is Zariski-locally free; otherwise, since $A$ is Noetherian, $\Gamma(U,\mc{E}|_{U})$ is free by Bass' theorem \cite{BASS-BPMAF1963}. \end{proof}

\subsection{Azumaya algebras} \label{sec02-01}

\begin{definition}[Azumaya algebras] \label{20180329-aahg} Let $\mc{C}$ be a locally ringed site. Given an $\mc{O}_{\mc{C}}$-algebra $\mc{A}$, a \DEF{trivialization} of $\mc{A}$ is a pair \[ (\mc{E},\varphi) \] where $\mc{E}$ is a locally free $\mc{O}_{\mc{C}}$-module of positive rank, and $\varphi : \mc{A} \to \EndS_{\mc{O}_{\mc{C}}}(\mc{E})$ is an $\mc{O}_{U}$-algebra isomorphism. We say that $\mc{A}$ is an \DEF{Azumaya $\mc{O}_{\mc{C}}$-algebra} if for every object $U \in \mc{C}$ there exists a covering $\{U_{i} \to U\}_{i \in I}$ such that for each $i \in I$ the restriction $\mc{A}|_{U_{i}}$ admits a trivialization. \end{definition}

\begin{remark} According to our definition, an Azumaya $\mc{O}_{X}$-algebra $\mc{A}$ is not quasi-coherent unless it has finite rank. \end{remark}

\begin{definition}[The Brauer map] \label{20180329-aahl} Let $\mc{A}$ be an Azumaya $\mc{O}_{\mc{C}}$-algebra. We define the \DEF{gerbe of trivializations} of $\mc{A}$ to be the category $\mc{G}_{\mc{A}}$ whose objects are tuples $(U,\mc{E},\varphi)$ where $U \in \mc{C}$ is an object and $(\mc{E},\varphi)$ is a trivialization of $\mc{A}|_{U}$; a morphism $(U_{1},\mc{E}_{1},\varphi_{1}) \to (U_{2},\mc{E}_{2},\varphi_{2})$ is a pair $(f,\rho)$ where $f : U_{1} \to U_{2}$ is a morphism in $\mc{C}$ and $\rho : f^{\ast}\mc{E}_{2} \to \mc{E}_{1}$ is an $\mc{O}_{U_{1}}$-module isomorphism such that the diagram \begin{center} \begin{tikzpicture}[>=angle 90] 
\matrix[matrix of math nodes,row sep=3em, column sep=2em, text height=1.7ex, text depth=0.5ex] { 
|[name=11]| \mc{A}|_{U_{1}} & |[name=12]| f^{\ast}\mc{A}|_{U_{2}} \\ 
|[name=21]| \EndS_{\mc{O}_{U_{1}}}(\mc{E}_{1}) & |[name=22]| \EndS_{\mc{O}_{U_{1}}}(f^{\ast}\mc{E}_{2}) \\
}; 
\draw[-,font=\scriptsize,transform canvas={yshift= 1pt}](11) edge (12);
\draw[-,font=\scriptsize,transform canvas={yshift=-1pt}](11) edge (12);
\draw[->,font=\scriptsize] (21) edge node[below=0pt] {$c_{\rho}$} (22) (11) edge node[left=0pt] {$\varphi_{1}$} (21) (12) edge node[right=0pt] {$f^{\ast}\varphi_{2}$} (22); \end{tikzpicture} \end{center} commutes, where $c_{\rho}$ is induced by conjugation-by-$\rho$. Then $\mc{G}_{\mc{A}}$ is a stack fibered in groupoids over $\mc{C}$, and it is a $\G_{m}$-gerbe by \Cref{20180329-aaga} (Skolem-Noether) and \Cref{20210804-12}. The association $\mc{A} \mapsto \mc{G}_{\mc{A}}$ defines an extension of the usual Brauer map. \end{definition}

\begin{proposition} \label{20180329-aahf} Let $X$ be an algebraic space, let $\alpha \in \H_{\et}^{2}(X,\G_{m})$ be a class. The following conditions are equivalent. \begin{enumerate} \item[(i)] We have $\alpha = [\mc{G}_{\mc{A}}]$ for some Azumaya $\mc{O}_{X}$-algebra $\mc{A}$. \item[(ii)] The class $\alpha$ is in $\LFBr(X)$. \end{enumerate} \end{proposition}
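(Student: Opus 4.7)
Both implications hinge on the tautological 1-twisted sheaf attached to an Azumaya algebra on its gerbe of trivializations. I will prove each direction separately.

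For (ii) $\Rightarrow$ (i), fix a $\G_{m}$-gerbe $\pi : \ms{X} \to X$ with $[\ms{X}] = \alpha$ and a countably generated locally free 1-twisted $\mc{O}_{\ms{X}}$-module $\mc{F}$ of positive rank, supplied by the hypothesis. Since the two twists cancel, $\EndS_{\mc{O}_{\ms{X}}}(\mc{F})$ is 0-twisted and descends along $\pi$ to an $\mc{O}_{X}$-algebra $\mc{A} := \pi_{\ast}\EndS_{\mc{O}_{\ms{X}}}(\mc{F})$. The algebra $\mc{A}$ is Azumaya: étale-locally on $X$, pick a section $s : U \to \ms{X}|_{U}$ of $\ms{X}$, set $\tilde{\mc{F}} := s^{\ast}\mc{F}$ (locally free of positive rank on $U$), and use the canonical isomorphism $\mc{A}|_{U} \cong \EndS_{\mc{O}_{U}}(\tilde{\mc{F}})$ as a trivialization. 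To match Brauer classes, the assignment $s \mapsto (U, s^{\ast}\mc{F}, \psi_{s})$, where $\psi_{s} : \mc{A}|_{U} \to \EndS_{\mc{O}_{U}}(s^{\ast}\mc{F})$ is the induced isomorphism, defines a $\G_{m}$-equivariant morphism of $\G_{m}$-gerbes $\ms{X} \to \mc{G}_{\mc{A}}$ over $X$; any such morphism is automatically an equivalence, so $[\mc{G}_{\mc{A}}] = \alpha$.

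For (i) $\Rightarrow$ (ii), let $\mc{A}$ be an Azumaya algebra with $\alpha = [\mc{G}_{\mc{A}}]$. The tautological assignment $(U, \mc{E}, \varphi) \mapsto \mc{E}$ defines a 1-twisted $\mc{O}_{\mc{G}_{\mc{A}}}$-module $\mc{E}^{\mr{taut}}$: it is 1-twisted because the inertial $\G_{m}$-action on $\mc{G}_{\mc{A}}$ acts by scaling $\varphi$, which corresponds to the scalar action on $\mc{E}$; and it is locally free of positive rank because each $\mc{E}$ in a trivialization is. To produce the countably generated sheaf required by $\LFBr(X)$, étale-locally on $X$ I will pick a rank-$\aleph_{0}$ idempotent $e_{U} \in \Gamma(U, \mc{A}|_{U})$ --- which exists because $\mc{A}|_{U} \cong \EndS(\mc{E}_{U})$ admits the projection onto any countable direct summand of $\mc{E}_{U}$ --- and consider the countably generated locally free 1-twisted sub-sheaf $e_{U} \cdot \mc{E}^{\mr{taut}}|_{U}$ of positive rank on the corresponding open substack. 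Gluing these local pieces into a global 1-twisted sub-sheaf completes the argument.

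The principal obstacle is the gluing step in (i) $\Rightarrow$ (ii): when $\mc{A}$ has uncountable local rank, the tautological sheaf $\mc{E}^{\mr{taut}}$ itself is not countably generated, and the local summands $e_{U} \cdot \mc{E}^{\mr{taut}}|_{U}$ need not be compatible on overlaps with the twisted transition cocycle. Making them compatible calls for an Eilenberg swindle-style argument, selecting a countably infinite stable direct summand that is preserved by the twisted gluing --- precisely the kind of phenomenon the paper's \Cref{main principle} captures, that infinite-rank vector bundles have simpler local structure than finite-rank ones.
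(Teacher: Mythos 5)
Your two directions are, at their core, the same as the paper's own proof. For (i) $\Rightarrow$ (ii) the paper likewise uses the tautological assignment $(U,\mc{E},\varphi)\mapsto\mc{E}$ on the gerbe of trivializations; for (ii) $\Rightarrow$ (i) it likewise observes that $\EndS_{\mc{O}_{\ms{X}}}(\mc{F})$ is $0$-twisted, descends it to an $\mc{O}_{X}$-algebra $\mc{A}$, and produces trivializations after passing to an \'etale cover --- the paper untwists by a $1$-twisted line bundle $\mc{L}'$ (taking the $\mc{O}_{X'}$-module whose pullback is $\mc{F}\otimes\mc{L}'^{-1}$) where you pull back along a local section $s$, which is the same manoeuvre; your explicit check that $s\mapsto(U,s^{\ast}\mc{F},\psi_{s})$ defines a morphism of $\G_{m}$-gerbes $\ms{X}\to\mc{G}_{\mc{A}}$, hence an equivalence, makes explicit a point the paper leaves implicit, and is correct.

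Where you diverge is the final paragraph of (i) $\Rightarrow$ (ii), and there your argument is not complete. The paper's proof stops at the tautological sheaf: it does not attempt to cut it down to a countably generated module, i.e.\ it does not engage the countable-generation clause of \Cref{0004} (you are right that for an Azumaya algebra of uncountable local rank the tautological sheaf fails that clause, so you have identified a real looseness between \Cref{20180329-aahg}, \Cref{0004} and the statement). But your proposed repair --- choose rank-$\aleph_{0}$ idempotents $e_{U}$ \'etale-locally and glue $e_{U}\cdot\mc{E}^{\mr{taut}}|_{U}$ --- names the missing construction rather than supplying it: nothing guarantees a choice of countable summands compatible with the twisted transition data, and the swindle of \Cref{LemmaKS4}, which identifies $\mc{F}^{\oplus\N}\simeq\mc{G}^{\oplus\N}$ over a Noetherian affine, does not by itself produce a globally defined countably generated twisted summand over a general algebraic space. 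As written this last step is a gap; either carry out that descent argument (which would be genuinely new content beyond the paper), or do what the paper does and conclude with the tautological $1$-twisted locally free module of positive rank, noting that when $\mc{A}$ is \'etale-locally trivialized by countably generated modules the tautological sheaf is already countably generated and nothing further is needed.
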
 \begin{proof} Let $\mc{G}$ be the $\G_{m}$-gerbe corresponding to $\alpha$. \par (i)$\Rightarrow$(ii): Let $\mc{A}$ be an Azumaya algebra on $X$ and suppose that $\mc{G}$ is the gerbe of trivializations of $\mc{A}$. The assignment $(U,\mc{E},\varphi) \mapsto (U,\mc{E})$ and $(f,\rho) \mapsto (f,\rho)$ defines a 1-twisted locally free $\mc{O}_{\mc{G}}$-module. \par (ii)$\Rightarrow$(i): Suppose there exists a 1-twisted locally free $\mc{O}_{\mc{G}}$-module $\mc{E}$. The endomorphism sheaf $\EndS_{\mc{O}_{\mc{G}}}(\mc{E})$ is a $0$-twisted $\mc{O}_{\mc{G}}$-algebra, hence is isomorphic to the pullback of some $\mc{O}_{X}$-algebra $\mc{A}$. There exists an \'etale cover $X' \to X$ such that $\mc{G}' := X' \times_{X} \mc{G}$ admits a 1-twisted line bundle $\mc{L}'$. If we denote by $\mc{E}'$ the $\mc{O}_{X'}$-module whose pullback to $\mc{G}'$ is $\mc{E}|_{\mc{G}'} \otimes_{\mc{O}_{\mc{G}'}} \mc{L}'^{-1}$, then we have an $\mc{O}_{X'}$-algebra isomorphism $\mc{A}|_{X'} \simeq \EndS_{\mc{O}_{X'}}(\mc{E}')$, hence $\mc{A}$ is an Azumaya algebra. \end{proof}

\subsection{Brauer-Severi varieties} \label{sec02-03}

Let $I$ be an index set, let $\Z^{\oplus I}$ be a free abelian group with basis indexed by $I$, and define \DEF{projective $I$-space} to be $\DEF{\P_{\Spec \Z}^{I}} := \Proj_{\Spec \Z} \Sym_{\Z}^{\bullet} \Z^{\oplus I}$. As in Bass' theorem, from which we know that infinite-rank vector bundles on finite-type schemes are simple, finite-rank vector bundles on infinite-dimensional varieties are also simple: by work of Barth--van de Ven \cite{BARTHVANDEVEN-ADCFA2BOPS1974}, Tyurin \cite{TYURIN-FDVBOIV1976}, Sato \cite{SATO-OTDOIEVBOPSAGV1977}, it is known that finite rank vector bundles on $\P_{\C}^{\N}$ are direct sums of line bundles.

We define a Brauer-Severi variety to be an \'etale twist of $\P^{I}$ for some $I$:

\begin{definition} \label{20180329-aaht} Let $X \to S$ be a morphism of schemes. We say that $X$ is a \DEF{Brauer-Severi scheme} over $S$ if there exists an \'etale surjection $S' \to S$, an index set $I$, and an isomorphism $X \times_{S} S' \simeq \P_{S'}^{I}$ of $S'$-schemes. \end{definition}

\begin{pg} \label{20180329-aahu} Define $\GL_{I} : \Sch^{\op} \to \Grp$ to be the sheaf of groups sending $S \mapsto \Aut_{\mc{O}_{S}\text{-mod}}(\mc{O}_{S}^{\oplus I})$ and let \DEF{$\PGL_{I}$} be the quotient $\GL_{I}/\G_{m}$. A module automorphism of $\mc{O}_{S}^{\oplus I}$ induces a graded algebra automorphism of $\Sym_{\mc{O}_{S}}^{\bullet} \mc{O}_{S}^{\oplus I}$, hence a scheme automorphism of $\P_{S}^{I}$; thus we obtain a natural homomorphism \begin{align} \label{20180329-aahu-01} \PGL_{I} \to \Aut_{\text{sch}}(\P^{I}) \end{align} of sheaves of groups on $\Sch$. (Note that neither $\GL_{I}$ nor $\PGL_{I}$ are representable if $I$ is infinite.) \end{pg}

\begin{lemma} \label{20180329-aahv} The map \labelcref{20180329-aahu-01} is an isomorphism. \end{lemma}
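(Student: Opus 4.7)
The plan is to construct a two-sided inverse to \labelcref{20180329-aahu-01}, generalizing the classical argument $\Aut(\P^n) = \PGL_{n+1}$. Since both sides of \labelcref{20180329-aahu-01} are sheaves of groups on $\Sch$, it suffices to check injectivity on sections and surjectivity after passage to an \'etale cover.

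First I would record two computations on $\P^I = \Proj \Sym_{\Z}^{\bullet} \Z^{\oplus I}$, both \v{C}ech calculations for the affine cover $\{D_+(x_i)\}_{i \in I}$ (with $D_+(x_i) \simeq \A^{I \setminus \{i\}}$): (a) $\Gamma(\P^I_R, \mc{O}(1)) = R^{\oplus I}$ with canonical basis $(x_i)_{i \in I}$, by a direct calculation showing that compatibility across the overlaps $D_+(x_i x_j) \simeq \A^{I \setminus \{i,j\}} \times \G_m$ forces a global section to involve only finitely many coordinates; and (b) \'etale-locally on the base $S$, the map $\Pic(S) \oplus \Z \to \Pic(\P^I_S)$, $(M,n) \mapsto \mc{O}(n) \otimes \pi^{\ast} M$, is an isomorphism. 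For (b) the units on pairwise overlaps are $\mc{O}_S^\times \cdot \langle x_j/x_i \rangle^{\Z}$, the nerve of the cover is the infinite simplex on vertex set $I$ (contractible), and the cocycle condition on triple overlaps forces the integer exponent $n_{ij}$ to be independent of $(i,j)$.

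Next I would establish injectivity of \labelcref{20180329-aahu-01}: if $g \in \GL_I(R)$ acts trivially on $\P^I_R$, then $g$ must fix each coordinate point $[e_i]$ and each sum $[e_i + e_j]$, so $g(e_i) = \lambda e_i$ for a common scalar $\lambda \in R^{\times}$, whence $g \in \G_m(R)$. For surjectivity, given $\varphi \in \Aut_S(\P^I_S)$, set $L := \varphi^{\ast}\mc{O}(1)$ and $s_i := \varphi^{\ast} x_i$. On each geometric fiber $\P^I_{\bar{s}}$, $\varphi$ acts on $\Pic(\P^I_{\bar s}) \simeq \Z$ by $\pm 1$, with sign $+1$ because the $s_i|_{\P^I_{\bar s}}$ are generating. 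Together with (b) this produces an \'etale cover $S' \to S$ and an isomorphism $L|_{\P^I_{S'}} \simeq \mc{O}(1)$, unique up to a scalar in $\G_m(S')$. Any such identification makes $\varphi^{\ast}$ a linear automorphism of $\Gamma(\P^I_{S'},\mc{O}(1)) = \Gamma(S',\mc{O}_{S'})^{\oplus I}$, i.e., an element of $\GL_I(S')$ well-defined modulo $\G_m(S')$. That the resulting class in $\PGL_I(S')$ maps to $\varphi|_{S'}$ under \labelcref{20180329-aahu-01} follows by tracing the construction through the moduli description of $\P^I$ as parametrizing line-bundle quotients of $\mc{O}^{\oplus I}$.

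The main obstacle is step (b): controlling $\Pic(\P^I_S)$ when $I$ is infinite. Although the \v{C}ech computation runs formally parallel to the finite case, one must restrict to an \'etale cover of $S$ on which $\Pic(\A^{I \setminus \{i\}}_S) = \Pic(S)$ (a subtle statement for non-regular bases with infinitely many variables), and verify that the infinite nerve-of-cover argument genuinely forces the degree data on triple overlaps to be constant---not merely compatible on each finite subsimplex.
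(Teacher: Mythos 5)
Your outline follows the same skeleton as the paper's proof: compute sections of $\mc{O}(1)$, control $\Pic(\P_{S}^{I})$, deduce that an automorphism pulls $\mc{O}(1)$ back to itself locally on $S$, and then read off a matrix in $\GL_{I}$ (with the inverse automorphism supplying the inverse matrix up to a unit). The injectivity step and the final matrix step are fine, modulo the small point that you should note each $\varphi^{\ast}x_{i}$ is a \emph{finite} linear combination of the $x_{j}$, so the matrix is column-finite, and that invertibility comes from composing with $\varphi^{-1}$ as in the paper.

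The genuine gap is exactly the step you label ``the main obstacle,'' and your proposed route through it does not work. You assert that, \'etale-locally on $S$, the \v{C}ech computation goes through because the units on $D_{+}(x_{i}x_{j})$ are $\mc{O}_{S}^{\times}\cdot\langle x_{j}/x_{i}\rangle^{\Z}$ and because $\Pic(\A_{S}^{I\setminus\{i\}})=\Pic(S)$. Both statements fail for general bases: if $S$ has nilpotents the unit group of $R[\{x_{k}/x_{i}\}][(x_{j}/x_{i})^{-1}]$ contains elements like $1+\epsilon f$; if $S$ is disconnected the exponent is only locally constant (whence $\Gamma(S,\underline{\Z})$ in \Cref{20180329-aahw}); and for non-seminormal rings (e.g.\ the cuspidal cubic) one has $\Pic(\A_{R}^{1})\ne\Pic(R)$, a defect that passage to an \'etale cover does not repair, since \'etale covers remove neither nilpotents nor non-seminormality. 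The paper avoids all of this in \Cref{20180329-aahw} by proving only \emph{surjectivity} of $\Pic(S)\oplus\Gamma(S,\underline{\Z})\to\Pic(\P_{S}^{I})$, Zariski-locally, which is all that the automorphism argument needs: one restricts $\mc{L}$ to the finite-dimensional linear subspaces $\P_{S}^{\lambda}$, uses the linearity of the transition maps to extract a single pair $(\mc{M},n)$ and twists it away; then the chart $D_{+}(x_{i})$ is written as a filtered union of finite-variable polynomial subrings each admitting a retraction, and \Cref{20180329-aahx} (descent of triviality along such a colimit) trivializes $\mc{L}|_{D_{+}(x_{i})}$ without ever computing $\Pic$ of an infinite-dimensional affine space; finally each transition unit involves only finitely many variables, hence factors through a finite-dimensional projective space where the bundle is already trivial, and a short exact-sequence argument normalizes all cocycles to $1$. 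Without an argument of this kind (or some substitute), your step (b) is unproved, and with it the surjectivity half of the lemma.
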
 \begin{proof} Let $S$ be a scheme and let $\varphi : \P_{S}^{I} \to \P_{S}^{I}$ be an $S$-scheme automorphism. By \cite[II, (4.2.3)]{EGA}, a morphism $T \to \P_{\Spec \Z}^{I}$ is given by an invertible $\mc{O}_{T}$-module $\mc{L}$ and an $I$-indexed collection of global sections $\sigma_{i} \in \Gamma(T,\mc{L})$ which are globally generating, i.e. $T = \bigcup_{i \in I} T_{\sigma_{i}}$. By \Cref{20180329-aahw}, after taking a Zariski cover of $S$, we may assume that $S = \Spec A$ is affine and that $\varphi$ is the morphism corresponding to the line bundle $\mc{O}_{\P_{S}^{I}}(n)$ and globally generating sections $\{\sigma_{i}\}_{i \in I}$. Since $\varphi$ induces an automorphism of $\Pic(\P_{S}^{I})$, we have $n = 1$ (note that $\Gamma(\P_{S}^{I},\mc{O}_{\P_{S}^{I}}(-1)) = 0$). Let $\{\sigma_{i}\}_{i \in I}$ and $\{\tau_{i}\}_{i \in I}$ be the $I$-indexed collections of sections corresponding to $\varphi$ and $\varphi^{-1}$, respectively. For any $I$-indexed collection of sections $v_{i} \in \Gamma(\P_{S}^{I},\mc{O}_{\P_{S}^{I}}(1))$, let $\BM{\{v_{i}\}_{i \in I}}$ denote the $I \times 1$ column vector whose $i$th entry is $v_{i}$; then there exist matrices $\ml{M},\ml{N} \in \Mat_{I \times I}(A)$ such that $\BM{\{\sigma_{i}\}_{i \in I}} = \ml{M} \BM{\{x_{i}\}_{i \in I}}$ and $\BM{\{\tau_{i}\}_{i \in I}} = \ml{N} \BM{\{x_{i}\}_{i \in I}}$. Since $\ml{N}\ml{M}\BM{\{x_{i}\}_{i \in I}}$ corresponds to the identity morphism, by \cite[II, (4.2.3)]{EGA} there exists a unit $u \in A^{\times}$ such that $\ml{N}\ml{M} = u \cdot \ml{id}_{I}$, thus $\ml{M} \in \GL_{I}(A)$. \end{proof}

\begin{pg} In \Cref{20180329-aahw}, we compute the Picard group of infinite-dimensional projective spaces over an arbitrary scheme (the case $S = \Spec k$ for a field $k$ was proved by Tyurin \cite[Proposition 1.1 (1)]{TYURIN-FDVBOIV1976}). \end{pg}

\begin{lemma} \label{20180329-aahw} For any scheme $S$, the map \begin{align} \label{20180329-aahw-eqn-01} \Pic(S) \oplus \Gamma(S,\underline{\Z}) \to \Pic(\P_{S}^{I}) \end{align} sending $(\mc{L},n) \mapsto \mc{L} \otimes \mc{O}_{\P_{S}^{I}}(n)$ is surjective. \end{lemma}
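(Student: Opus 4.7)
The plan is to reduce to the case $S = \Spec A$ affine (since the statement is Zariski local on $S$) and study any line bundle $\mc{M}$ on $\P^{I}_{S}$ via its restrictions to finite-dimensional linear subspaces. For each finite $J \subseteq I$ with $|J| \geq 2$, the closed immersion $\iota_{J} : \P^{J}_{S} \hookrightarrow \P^{I}_{S}$ restricts $\mc{M}$ to a line bundle on finite-dimensional projective space, which by the classical Picard computation decomposes uniquely as $\iota_{J}^{*}\mc{M} \simeq p_{J}^{*}\mc{L}_{J} \otimes \mc{O}_{\P^{J}_{S}}(n_{J})$ for some $\mc{L}_{J} \in \Pic(S)$ and $n_{J} \in \Gamma(S,\underline{\Z})$. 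For $J \subseteq J'$, comparing the two decompositions on $\P^{J}_{S}$ via uniqueness forces $\mc{L}_{J} = \mc{L}_{J'}$ and $n_{J} = n_{J'}$, yielding invariants $\mc{L}, n$ independent of $J$. After replacing $\mc{M}$ by $\mc{M} \otimes p^{*}\mc{L}^{-1} \otimes \mc{O}_{\P^{I}_{S}}(-n)$, it suffices to show that a line bundle on $\P^{I}_{S}$ whose restriction to every $\P^{J}_{S}$ (for finite $J$, $|J| \geq 2$) is trivial must itself be trivial.

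Cover $\P^{I}_{S}$ by the standard affines $U_{i} = D_{+}(x_{i}) \simeq \Spec A[x_{k}/x_{i} : k \neq i]$. Each defining ring is the filtered colimit of its finitely generated sub-polynomial rings $A[x_{k}/x_{i} : k \in J \setminus \{i\}]$ over finite $J \ni i$. Since $\Pic$ commutes with filtered colimits of commutative rings, we have $\Pic(U_{i}) = \colim_{J \ni i} \Pic(U_{i} \cap \P^{J}_{S})$, with transition maps induced by the projections $U_{i} \to U_{i} \cap \P^{J}_{S}$. The closed immersions $U_{i} \cap \P^{J}_{S} \hookrightarrow U_{i}$ are sections of these projections, so any class in $\Pic(U_i)$ descending from $\Pic(U_{i} \cap \P^{J}_{S})$ can be identified with its restriction along the section. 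Applied to $[\mc{M}|_{U_{i}}]$, we obtain $[\mc{M}|_{U_{i} \cap \P^{J}_{S}}] = 0$ by hypothesis, and hence $\mc{M}|_{U_{i}}$ is trivial for every $i$.

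The main obstacle is assembling these local trivializations $\tau_{i} : \mc{O}_{U_{i}} \xrightarrow{\sim} \mc{M}|_{U_{i}}$ into a global one; equivalently, to show the cocycle $g_{ij} := \tau_{j}^{-1}\tau_{i} \in \Gamma(U_{i} \cap U_{j}, \mc{O}^{\times})$ representing $[\mc{M}]$ in $\check{\H}^{1}(\{U_{i}\}, \mc{O}^{\times}) = \Pic(\P^{I}_{S})$ is a coboundary. Each $g_{ij}$ is a Laurent polynomial involving only finitely many variables, hence pulled back from some finite $\P^{J_{ij}}_{S}$ where triviality of $\mc{M}|_{\P^{J_{ij}}_{S}}$ yields local coboundary witnesses. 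We proceed by Zorn's lemma on pairs $(T, \{h_{i}\}_{i \in T})$ with $T \subseteq I$ and units $h_{i} \in \Gamma(U_{i}, \mc{O}^{\times})$ satisfying $g_{ij} = h_{j}/h_{i}$ for $i, j \in T$; a starting pair with $|T| \geq 2$ is obtained by pulling back the trivialization of $\mc{M}|_{\P^{\{j_0,j_1\}}_{S}}$ along the projections $U_{j_k} \to U_{j_k} \cap \P^{\{j_0,j_1\}}_S$. For a maximal $T \neq I$ and any $i_{0} \in I \setminus T$, the forced formula $h_{i_{0}} = h_{j}/g_{i_{0}j}$ is consistent on triple intersections by the cocycle condition and defines $h_{i_{0}}$ on the open $U_{i_{0}} \setminus V(x_{j}/x_{i_{0}} : j \in T)$; the key technical step is extending across the complement, which has codimension $\geq 2$ (indeed infinite codimension when $T$ is infinite) inside the smooth infinite-dimensional affine space $U_{i_{0}}$, by a Hartogs-type extension argument. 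This extension step, which makes essential use of the infinite-dimensionality of $\P^{I}_{S}$, is the main technical hurdle; it contradicts maximality and forces $T = I$, completing the gluing.
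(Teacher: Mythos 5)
Your first two steps coincide with the paper's argument: restricting to the finite-dimensional subspaces $\P_S^{J}$, using uniqueness of the finite-rank decomposition to get a single pair $(\mc{L},n)$, twisting it away, and then trivializing $\mc{M}|_{U_i}$ by combining ``$\Pic$ commutes with filtered colimits'' with the retraction $U_i \to U_i \cap \P_S^{J}$ --- this is exactly the paper's \Cref{20180329-aahx}. The gap is in the final gluing. Your key step is to extend the unit $h_{i_0}$, defined on $U_{i_0} \setminus \mr{V}(x_j/x_{i_0} : j \in T)$, to a unit on all of $U_{i_0}$, and you justify this by ``codimension $\ge 2$'' and ``a Hartogs-type extension argument.'' Over an arbitrary scheme $S$ this justification is not valid: Hartogs-type extension across a codimension-$2$ closed subset requires hypotheses such as normality or Serre's condition $(S_2)$, and here $A = \Gamma(S,\mc{O}_S)$ may be non-reduced, non-normal, non-Noetherian; ``infinite codimension'' by itself buys nothing. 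What actually makes the extension true is the very special coordinate structure: the removed locus is cut out by at least two of the \emph{variables} of the polynomial ring $A[x_k/x_{i_0} : k \ne i_0]$, and one checks directly (a monomial/\v{C}ech computation with the cover by the $D(x_j/x_{i_0})$, $j \in T$, valid over any base ring) that sections over the complement are exactly polynomials; applying this to $h_{i_0}$ and to $h_{i_0}^{-1}$ and using injectivity of the restriction maps then gives a unit. As written, the central step of your proof is asserted rather than proved, and the reason offered for it would fail in the stated generality, so this is a genuine gap --- though a repairable one.

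It is worth comparing with how the paper organizes this last stage, since it avoids both Zorn's lemma and all but the easiest case of the extension statement. One fixes two anchor indices $i_1^{\circ}, i_2^{\circ}$, uses the fact that $u_{i_1^{\circ},i_2^{\circ}}$ involves only finitely many variables together with triviality on a finite $\P_S^{\lambda}$ to normalize $u_{i_1^{\circ},i_2^{\circ}} = 1$, and then invokes only the exactness of
\[ 0 \to \Gamma(U_{i'},\mc{O}) \to \Gamma(U_{\{i',i_1^{\circ}\}},\mc{O}) \oplus \Gamma(U_{\{i',i_2^{\circ}\}},\mc{O}) \to \Gamma(U_{\{i',i_1^{\circ},i_2^{\circ}\}},\mc{O}) \]
--- i.e.\ precisely the two-variable instance of your extension claim --- to produce units making $u_{i',i_1^{\circ}} = u_{i',i_2^{\circ}} = 1$ for every $i'$; the remaining $u_{i,i'}$ are then forced to be $1$ by the cocycle identity and injectivity of restriction to the triple overlap with an anchor. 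If you prove the coordinate-subspace extension statement honestly (even just for two removed variables) and route your bookkeeping through fixed anchors as above, your argument closes up and is essentially the paper's.
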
 \begin{proof} Let $\mc{L}$ be a line bundle on $\P_{S}^{I}$. Let $\Lambda$ denote the set of finite subsets of $I$, so that $I$ is the filtered union $I = \varinjlim_{\lambda \in \Lambda} \lambda$. For each $\lambda \in \Lambda$, we have a projection map $\Z^{\oplus I} \to \Z^{\oplus \lambda}$ which induces a closed immersion $\P_{S}^{\lambda} \to \P_{S}^{I}$. We know that \labelcref{20180329-aahw-eqn-01} is an isomorphism when $I$ is finite, thus there exist invertible $\ms{O}_{S}$-modules $\mc{M}_{\lambda}$ and integers $n_{\lambda}$ such that \[ \mc{L}|_{\P_{S}^{\lambda}} \simeq \mc{M}_{\lambda}|_{\P_{S}^{\lambda}} \otimes_{\mc{O}_{\P_{S}^{\lambda}}} \mc{O}_{\P_{S}^{\lambda}}(n_{\lambda}) \] on $\P_{S}^{\lambda}$. Since we have linear transition maps $\P_{S}^{\lambda} \to \P_{S}^{\lambda'}$, we have in fact $n_{\lambda} = n_{\lambda'}$ and $\mc{M}_{\lambda} \simeq \mc{M}_{\lambda'}$ for any two $\lambda,\lambda' \in \Lambda$; thus, after taking a Zariski cover of $S$ and replacing $\mc{L}$ by $\mc{L} \otimes_{\mc{O}_{\P_{S}^{I}}} (\mc{M}^{-1}|_{\P_{S}^{I}} \otimes_{\mc{O}_{\P_{S}^{I}}} \mc{O}_{\P_{S}^{I}}(-n))$ for this common invertible $\mc{O}_{S}$-module $\mc{M}$ and integer $n$, we may assume that in fact $\mc{L}|_{\P_{S}^{\lambda}}$ is trivial for all $\lambda$, and that $S = \Spec A$ is affine. Let $U_{i} := \mr{D}_{+}(x_{i})$ denote the distinguished affine open subscheme of $\P_{S}^{I}$ associated to $i \in I$ and set $U_{\lambda} := \bigcap_{i \in \lambda} U_{i}$ for all $\lambda$. By \Cref{20180329-aahx}, the restriction $\mc{L}|_{U_{i}}$ is trivial for all $i \in I$; then $\mc{L}$ is described by a collection of units $u_{i_{1},i_{2}} \in \Gamma(U_{\{i_{1},i_{2}\}},\G_{m})$ for $i_{1},i_{2} \in I$, corresponding to the transition map between trivializations of $\mc{L}$ on $U_{\{i_{1},i_{2}\}}$. Fix two distinct indices $i_{1}^{\circ},i_{2}^{\circ} \in I$ and let $\lambda$ be a finite subset of $I$ containing the indices of all variables $x_{i}$ that appear in $u_{i_{1}^{\circ},i_{2}^{\circ}}$; then $\mc{L}|_{U_{\{i_{1}^{\circ},i_{2}^{\circ}\}}}$ is isomorphic to the pullback of a line bundle on $\P_{S}^{\lambda}$ via the projection map $\P_{S}^{I} \setminus \mr{V}_{+}(\{x_{i}\}_{i \in I \setminus \lambda}) \to \P_{S}^{\lambda}$. Since $\mc{L}|_{\P_{S}^{\lambda}}$ is trivial by the above, we have that $\mc{L}|_{U_{\{i_{1}^{\circ},i_{2}^{\circ}\}}}$ is trivial; hence we may assume that $u_{i_{1}^{\circ},i_{2}^{\circ}} = 1$. For all $i' \in I \setminus \{i_{1}^{\circ},i_{2}^{\circ}\}$, the sequence \[ 0 \to \Gamma(U_{i'},\mc{O}_{\P_{S}^{I}}) \to \Gamma(U_{\{i',i_{1}^{\circ}\}},\mc{O}_{\P_{S}^{I}}) \oplus \Gamma(U_{\{i',i_{2}^{\circ}\}},\mc{O}_{\P_{S}^{I}}) \to \Gamma(U_{\{i',i_{1}^{\circ},i_{2}^{\circ}\}},\mc{O}_{\P_{S}^{I}}) \] is exact, so we may assume that $u_{i',i_{1}^{\circ}} = u_{i',i_{2}^{\circ}} = 1$ for all $i' \in I$, and (by the same argument) that $u_{i,i'} = 1$ for all $i,i' \in I$. \end{proof}

\begin{lemma} \label{20180329-aahx} Let $A$ be a filtered colimit of subrings $A = \varinjlim_{\lambda \in \Lambda} A_{\lambda}$ such that, for all $\lambda$, the inclusion $A_{\lambda} \to A$ admits a (ring-theoretic) retraction $A \to A_{\lambda}$. Suppose $M$ is an invertible $A$-module such that $M \otimes_{A} A_{\lambda}$ is a trivial $A_{\lambda}$-module for all $\lambda$. Then $M$ is trivial. \end{lemma}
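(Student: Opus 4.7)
The plan is to combine standard finite-presentation descent for invertible modules along the filtered colimit $A = \varinjlim_\lambda A_\lambda$ with the existence of retractions, which allows us to pull back the triviality hypothesis from $A$ to a single $A_\lambda$ at which $M$ is already defined.

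First, I would observe that an invertible $A$-module is finitely presented. By the standard theory of finitely presented modules over filtered colimits of rings (see Stacks, Tag 00HA/05N7), there exists some index $\lambda_0 \in \Lambda$ and a finitely presented $A_{\lambda_0}$-module $M_{\lambda_0}$ together with an $A$-module isomorphism $M_{\lambda_0} \otimes_{A_{\lambda_0}} A \simeq M$. After enlarging $\lambda_0$ if necessary, one can likewise descend the inverse $M^{-1}$ and the isomorphism $M \otimes_A M^{-1} \simeq A$, so that $M_{\lambda_0}$ is itself an invertible $A_{\lambda_0}$-module.

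Next, I would exploit the retraction $r_{\lambda_0} : A \to A_{\lambda_0}$ to compute $M \otimes_A A_{\lambda_0}$. Since the composition $A_{\lambda_0} \hookrightarrow A \xrightarrow{r_{\lambda_0}} A_{\lambda_0}$ is the identity on $A_{\lambda_0}$, the associativity of tensor product gives a chain of isomorphisms
\[
M \otimes_A A_{\lambda_0} \;\simeq\; M_{\lambda_0} \otimes_{A_{\lambda_0}} A \otimes_A A_{\lambda_0} \;\simeq\; M_{\lambda_0} \otimes_{A_{\lambda_0}} A_{\lambda_0} \;\simeq\; M_{\lambda_0},
\]
where the middle tensor uses that $A_{\lambda_0}$ becomes an $A_{\lambda_0}$-algebra through $r_{\lambda_0}$ and this agrees with its original ring structure on the image of the inclusion. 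By the hypothesis, the left-hand side is a trivial $A_{\lambda_0}$-module, hence $M_{\lambda_0}$ is trivial, and therefore $M = M_{\lambda_0} \otimes_{A_{\lambda_0}} A$ is a trivial $A$-module.

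The only nontrivial input is the descent of an invertible module along the filtered colimit; after that, the retraction makes the argument essentially tautological. So the main point is recognizing that one only needs to invoke the triviality hypothesis at a single index $\lambda_0$, namely one at which $M$ is already defined, rather than trying to glue together trivializations at varying $\lambda$.
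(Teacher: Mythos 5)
Your argument is correct and is essentially the paper's own proof: descend $M$ to a finitely presented $M_{\lambda_0}$ over some $A_{\lambda_0}$ via \cite[05N7]{SP}, then use the retraction to identify $M \otimes_A A_{\lambda_0} \simeq M_{\lambda_0}$ and invoke the triviality hypothesis at that single index. The extra step of descending the invertibility structure of $M$ is harmless but unnecessary, since triviality of $M_{\lambda_0}$ alone already gives $M \simeq M_{\lambda_0} \otimes_{A_{\lambda_0}} A \simeq A$.
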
 \begin{proof} By \cite[05N7]{SP}, there exists some $\lambda \in \Lambda$ and a finitely presented $A_{\lambda}$-module $M_{\lambda}$ such that $M_{\lambda} \otimes_{A_{\lambda}} A \simeq M$. Tensoring by $- \otimes_{A} A_{\lambda}$ then implies that $M_{\lambda}$ is trivial, hence $M$ is trivial. \end{proof}

\begin{pg} Using \Cref{20180329-aahv}, we may extend the dictionary between Azumaya algebras and twisted vector bundles (\Cref{20180329-aahf}) to include Brauer-Severi varieties. \end{pg}

\begin{proposition} For any scheme $S$ and index set $I$, there is a bijective correspondence between isomorphism classes of Azumaya $\mc{O}_{S}$-algebras of rank $I$ and Brauer-Severi schemes of relative dimension $I$. \end{proposition}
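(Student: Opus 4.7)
The plan is to exhibit both sides as classified by the same pointed cohomology set $\H^{1}_{\et}(S,\PGL_{I})$. On the geometric side, \Cref{20180329-aaht} declares a Brauer-Severi scheme of relative dimension $I$ to be an \'etale twist of $\P^{I}_{S}$, and \Cref{20180329-aahv} identifies the sheaf of $S$-scheme automorphisms of $\P^{I}_{S}$ with $\PGL_{I}$; hence the standard twisted-object formalism (descent of \'etale-locally trivial objects) shows that isomorphism classes of such Brauer-Severi schemes are classified by $\H^{1}_{\et}(S,\PGL_{I})$. On the algebraic side, an Azumaya $\mc{O}_{S}$-algebra of rank $I$ is by definition \'etale-locally isomorphic to $\EndS_{\mc{O}_{S}}(\mc{O}_{S}^{\oplus I}) \simeq \Mat_{I \times I}(\mc{O}_{S})$, and the infinite-rank Skolem-Noether theorem cited as \Cref{20180329-aaga} identifies the sheaf of $\mc{O}_{S}$-algebra automorphisms of $\Mat_{I \times I}(\mc{O}_{S})$ with $\PGL_{I}$ (every local automorphism is conjugation by a $\GL_{I}$-element, uniquely determined up to $\G_{m}$); the same descent argument then classifies isomorphism classes of Azumaya $\mc{O}_{S}$-algebras of rank $I$ by $\H^{1}_{\et}(S,\PGL_{I})$.

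To make the bijection explicit I would use the twisting construction. Given a $\PGL_{I}$-torsor $T \to S$, the group $\PGL_{I}$ acts both on $\Mat_{I\times I}(\mc{O}_{S})$ (by conjugation, via its presentation as $\GL_{I}/\G_{m}$) and on $\P^{I}_{S}$ (by \Cref{20180329-aahv}), so the contracted products $\mc{A}_{T} := T \wedge^{\PGL_{I}} \Mat_{I \times I}(\mc{O}_{S})$ and $X_{T} := T \wedge^{\PGL_{I}} \P^{I}_{S}$ are respectively an Azumaya algebra of rank $I$ and a Brauer-Severi scheme of relative dimension $I$ with common class $[T] \in \H^{1}_{\et}(S,\PGL_{I})$. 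Conversely, given an Azumaya algebra $\mc{A}$ (resp.\ a Brauer-Severi scheme $X$), the sheaf of $\mc{O}_{S}$-algebra isomorphisms $\Mat_{I \times I}(\mc{O}_{S}) \xrightarrow{\sim} \mc{A}$ (resp.\ of $S$-scheme isomorphisms $\P^{I}_{S} \xrightarrow{\sim} X$) is a $\PGL_{I}$-torsor whose class in $\H^{1}_{\et}(S,\PGL_{I})$ recovers the datum, and matching torsors yield matching pairs $(\mc{A},X)$.

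The main obstacle is the input from the infinite-rank Skolem-Noether theorem (\Cref{20180329-aaga}, proved in \Cref{sec06}), which is needed to identify the sheaf of $\mc{O}_{S}$-algebra automorphisms of $\Mat_{I \times I}(\mc{O}_{S})$ with $\PGL_{I}$. In the infinite-rank setting this is a genuine theorem rather than a formality, since one must verify that every local algebra automorphism of the infinite matrix algebra is inner, despite the abundance of nontrivial endomorphisms and the failure of dualizability that usually trivializes the finite-rank argument. Once this identification is available, the proposition is formal: both sides become $\PGL_{I}$-torsor classifying sets by descent, and the bijection is the identity on $\H^{1}_{\et}(S,\PGL_{I})$, realized concretely by the two functorial twisting constructions above.
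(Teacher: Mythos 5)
Your proof follows the paper's argument exactly: the paper's proof is the single sentence that both sides are étale $\PGL_{I}$-torsors on $S$, citing \Cref{20180329-aaga} (infinite-rank Skolem--Noether, identifying $\Aut_{\mc{O}_S\text{-alg}}(\Mat_{I\times I}^{\mr{cf}}(\mc{O}_S))$ with $\PGL_I$) and \Cref{20180329-aahv} (identifying $\Aut_{\text{sch}}(\P^I_S)$ with $\PGL_I$). Your elaboration of the torsor classification and the twisting construction just spells out what the paper leaves implicit.
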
 \begin{proof} Both are \'etale $\PGL_{I}$-torsors on $S$, by \Cref{20180329-aaga} and \Cref{20180329-aahv} respectively. \end{proof}

\section{Pushing forward twisted sheaves via affine-pure morphisms} \label{sec03}

\subsection{Affine-pure morphisms} \label{sec03-01}

\begin{definition} \label{0007} A morphism $f : X \to Y$ of algebraic spaces is said to be \DEF{affine-pure} if $f$ is affine and $f_{\ast}\mc{O}_{X}$ is a locally projective $\mc{O}_{Y}$-module. \end{definition}

\begin{lemma} \label{0009} \cite[3.5 $(\gamma)$]{BATTISTONROMAGNY-ROAGSOGR2008} Let $B \to A$ be a ring map such that $A$ is a projective $B$-module, and let $M$ be a projective $A$-module. Then $M$ is projective as a $B$-module. \end{lemma}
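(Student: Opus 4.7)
The plan is to use the most elementary characterization of projectivity: a module is projective if and only if it is a direct summand of a free module. The argument will consist of nothing more than combining the two given splittings.

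First I would choose an $A$-module $N$ and an index set $I$ together with an $A$-module isomorphism $M \oplus N \simeq A^{\oplus I}$, exploiting the projectivity of $M$ over $A$. Next I would use the projectivity of $A$ as a $B$-module to obtain a $B$-module $P$ and an index set $J$ with $A \oplus P \simeq B^{\oplus J}$ as $B$-modules.

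Then I would assemble these: taking the $I$-fold direct sum of the latter splitting, which is still a direct-summand relation of $B$-modules, I get
\[ A^{\oplus I} \oplus P^{\oplus I} \simeq B^{\oplus (I \times J)} \]
as $B$-modules. Combining with the first splitting, $M$ is a direct summand (as a $B$-module, since restriction of scalars preserves direct sums) of $B^{\oplus(I \times J)}$. Therefore $M$ is a projective $B$-module.

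There is essentially no obstacle here; the only thing to be careful about is that restriction of scalars along $B \to A$ is an exact functor commuting with arbitrary direct sums, so that all the splittings above remain valid as $B$-module splittings when we forget the $A$-action. No finiteness assumption on $I$ or $J$ is needed.
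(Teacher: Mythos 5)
Your proof is correct and is essentially the same argument as the paper's: the paper composes a split $B$-linear surjection $B^{\oplus I} \to A$ with a split $A$-linear surjection $A^{\oplus J} \to M$ to get a split $B$-linear surjection $(B^{\oplus I})^{\oplus J} \to M$, which is just the split-surjection phrasing of your direct-summand argument. Nothing is missing, and as you note no finiteness on the index sets is needed.
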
 \begin{proof} There exist a split $B$-linear surjection $B^{\oplus I} \to A$ and a split $A$-linear surjection $A^{\oplus J} \to M$, hence a split $B$-linear surjection $(B^{\oplus I})^{\oplus J} \to M$. \end{proof}

\begin{lemma} \label{0028} Let $A$ be a ring, let $M$ be a projective $A$-module, set $S := \Spec A$ and let $\mc{E}$ be the quasi-coherent $\mc{O}_{S}$-module corresponding to $M$. Then $\mc{E}$ has positive rank if and only if $M \otimes_{A} \kappa(\mf{p}) \ne 0$ for all primes $\mf{p}$ of $A$. \end{lemma}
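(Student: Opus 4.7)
The plan is to prove the two implications separately, with the forward direction being immediate and the reverse using the dual basis lemma for projective modules.

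For the direction $(\Rightarrow)$, I would apply the positive rank condition to the object $U = S$ itself, obtaining a cover $\{U_{i} \to S\}$ equipped with surjections $\mc{E}|_{U_{i}} \to \mc{O}_{U_{i}}$. Given any prime $\mf{p}$, joint surjectivity of the cover produces some $i$ and a point $x \in U_{i}$ mapping to $\mf{p}$. Pulling the surjection $\mc{E}|_{U_{i}} \to \mc{O}_{U_{i}}$ back to $\Spec \kappa(x)$ yields a nonzero surjection $M \otimes_{A} \kappa(x) \to \kappa(x)$; since $M \otimes_{A} \kappa(x) = (M \otimes_{A} \kappa(\mf{p})) \otimes_{\kappa(\mf{p})} \kappa(x)$, this forces $M \otimes_{A} \kappa(\mf{p}) \ne 0$.

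For $(\Leftarrow)$, the key input is the dual basis lemma for projective modules: there exist families $\{m_{j}\}_{j \in J} \subseteq M$ and $\{\varphi_{j}\}_{j \in J} \subseteq \Hom_{A}(M,A)$ such that for every $m \in M$ one has $\varphi_{j}(m) = 0$ for cofinitely many $j$ and $m = \sum_{j} \varphi_{j}(m)\, m_{j}$. Given a prime $\mf{p}$ with $M \otimes_{A} \kappa(\mf{p}) \ne 0$, pick $m \in M$ whose image in $M \otimes_{A} \kappa(\mf{p})$ is nonzero. Reducing the dual basis expansion modulo $\mf{p}$ shows that at least one value $\varphi_{j}(m) =: s_{\mf{p}}$ lies outside $\mf{p}$; then the localization $(\varphi_{j})_{s_{\mf{p}}} : M_{s_{\mf{p}}} \to A_{s_{\mf{p}}}$ has the unit $s_{\mf{p}}$ in its image and is therefore surjective. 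This gives a surjection $\mc{E}|_{D(s_{\mf{p}})} \to \mc{O}_{D(s_{\mf{p}})}$, and letting $\mf{p}$ vary the opens $D(s_{\mf{p}})$ form a Zariski cover of $S$. For an arbitrary object $U$ of the ambient site (Zariski, \'etale, fppf, or fpqc), the pullback $\mc{E}|_{U}$ is quasi-coherent and locally of the form $M \otimes_{A} B$ on an affine chart $\Spec B \to S$; projectivity and the fiber-nonvanishing hypothesis are both preserved by base change, so the same argument produces a Zariski refinement of $U$ with surjections onto $\mc{O}_{U}$, which is in particular a covering in any of the finer topologies.

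The main delicacy is that $M$ is not assumed finitely generated, so $\Hom_{A}(M,-)$ need not commute with localization and one cannot just lift a local surjection $M_{\mf{p}} \to A_{\mf{p}}$ to a neighborhood by a clearing-denominators argument on some finite set of generators. The dual basis lemma is precisely the substitute for a finite presentation: it packages the projectivity of $M$ into a concrete global datum that lets a single element of $M^{*}$ witness the nonvanishing at $\mf{p}$ and automatically extends to the principal open $D(s_{\mf{p}})$, upgrading the pointwise hypothesis into a Zariski-local surjection.
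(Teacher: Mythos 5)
Your proof is correct and follows essentially the same route as the paper: the dual basis family $\{m_j,\varphi_j\}$ is exactly the data the paper extracts from a section $\xi$ of a surjection $A^{\oplus I}\to M$ (with $\varphi_j = p_j\circ\xi$), and in both arguments one finds a value of some $\varphi_j$ outside $\mf{p}$, inverts it, and obtains a surjection $\mc{E}\to\mc{O}$ on a principal open, covering $\Spec A$ as $\mf{p}$ varies. Your extra care about general objects $U$ of the site and about base change is a harmless elaboration of what the paper leaves implicit.
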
 \begin{proof} The ``only if'' implication is clear. For the ``if'' implication, choose an index set $I$ and a $A$-linear surjection $\pi : A^{\oplus I} \to M$; then $\pi$ admits a $A$-linear section $\xi : M \to A^{\oplus I}$; denoting by $\{\ml{e}_{i}\}_{i \in I}$ the basis of $A^{\oplus I}$, set $\alpha_{i} := \xi(\pi(\ml{e}_{i})) = (\alpha_{i,j})_{i \in I}$, so that $\{\alpha_{i}\}_{i \in I}$ constitutes a set of generators for $M$ viewed as a $A$-submodule of $A^{\oplus I}$. Let $\mf{p}$ be a prime ideal of $A$; since $M \otimes_{A} \kappa(\mf{p})$ is the image of $\xi \pi \otimes \id_{\kappa(\mf{p})}$, by hypothesis there exists some $(i,j) \in I^{2}$ such that $\alpha_{i,j} \not\in \mf{p}$. After inverting $\alpha_{i,j}$, we may assume that $\alpha_{i,j}$ is a unit; then the composition $p_{i} \xi \pi : A^{\oplus I} \to A$ is surjective, where $p_{i}$ is the projection onto the $i$th coordinate of $A^{\oplus I}$; hence $p_{i} \xi : M \to A$ is surjective.  \end{proof}

\begin{lemma} \label{0008} Let $f : X \to Y$ be an affine-pure morphism of algebraic spaces, let $\mc{E}$ be a locally projective $\mc{O}_{X}$-module. Then $f_{\ast}\mc{E}$ is a locally projective $\mc{O}_{Y}$-module. If in addition $f$ is surjective and $\mc{E}$ has positive rank, then $f_{\ast}\mc{E}$ has positive rank. \end{lemma}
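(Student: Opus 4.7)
The plan is to reduce étale-locally on $Y$ to an affine setting and then combine the two commutative algebra lemmas \Cref{0009} and \Cref{0028}. Both conclusions (``locally projective'' and ``positive rank'') are étale-local on $Y$, and pushforward along the affine morphism $f$ commutes with étale base change on $Y$; so I will assume $Y = \Spec B$ is an affine scheme. Since $f$ is affine, we then have $X = \Spec A$ for some $B$-algebra $A$, and $\mc{E}$ is quasi-coherent (as noted after \Cref{0027}), so $\mc{E}$ corresponds to an $A$-module $M$.

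The key preparatory step is to upgrade both local-projectivity hypotheses to honest projectivity of the relevant modules. After a further étale localization of $Y$, the affine-pure hypothesis makes $A$ a direct summand of a free $B$-module, hence a projective $B$-module. For $M$ I would invoke Raynaud--Gruson (\Cref{0006}): pick an affine étale cover of $X$ (refined to be finite by quasi-compactness) over which $\mc{E}$ becomes a direct summand of a free module, and let $A'$ be the product; then $M\otimes_A A'$ is projective and $A\to A'$ is faithfully flat, so $M$ is projective as an $A$-module. With $M$ projective over $A$ and $A$ projective over $B$, \Cref{0009} yields that $M$ is projective as a $B$-module. A split surjection $B^{\oplus I}\to M$ then sheafifies to a split surjection $\mc{O}_Y^{\oplus I}\to f_*\mc{E}$, giving local projectivity of $f_*\mc{E}$.

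For the positive rank statement, by \Cref{0028} the hypothesis is that $M\otimes_A \kappa(\mf{p})\ne 0$ for every prime $\mf{p}\subset A$. Given a prime $\mf{q}\subset B$, surjectivity of $f$ produces some $\mf{p}\subset A$ with $\mf{p}\cap B=\mf{q}$, and the factorization $B\to \kappa(\mf{q})\hookrightarrow \kappa(\mf{p})$ together with the standard identity
\[ M\otimes_A \kappa(\mf{p}) \;\simeq\; \bigl(M\otimes_B \kappa(\mf{q})\bigr) \otimes_{A\otimes_B \kappa(\mf{q})} \kappa(\mf{p}) \]
forces $M\otimes_B \kappa(\mf{q})\ne 0$. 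Applying \Cref{0028} in reverse to the $B$-module $M$ then gives that $f_*\mc{E}$ has positive rank.

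The closest thing to an obstacle is the bookkeeping of the étale localization: one must arrange a single reduction in which $A$ is actually projective over $B$ and $M$ is actually projective over $A$ simultaneously. This is not a real obstruction since the affine-pure property persists under further étale refinement of $Y$ and Raynaud--Gruson is applied on $X$ itself; after this reduction, the rest is mechanical given \Cref{0006}, \Cref{0009}, and \Cref{0028}.
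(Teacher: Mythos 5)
Your proof is correct and follows essentially the same route as the paper's: reduce étale-locally to affine rings, combine Raynaud--Gruson (\Cref{0006}) with \Cref{0009} for local projectivity, and apply \Cref{0028} with a fiber-wise base-change argument for positive rank. You spell out more explicitly how the two lemmas combine (making both $M$ projective over $A$ and $A$ projective over $B$) and use the clean base-change identity $M\otimes_A \kappa(\mf{p}) \simeq (M\otimes_B\kappa(\mf{q}))\otimes_{A\otimes_B\kappa(\mf{q})}\kappa(\mf{p})$ where the paper invokes an induced surjection; the logic is the same.
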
 \begin{proof} We may work \'etale-locally on $Y$ so that we may assume that $X = \Spec A$ and $Y = \Spec B$ are affine; then the first claim follows from \Cref{0006} and \Cref{0009}. For the second claim, we use \Cref{0028}. Let $\mf{p}$ be a prime of $A$ lying over the prime $\mf{q}$ of $B$. The surjection $A \otimes_{B} \kappa(\mf{q}) \to \kappa(\mf{p})$ induces a surjection $M \otimes_{B} \kappa(\mf{q}) \to M \otimes_{A} \kappa(\mf{p})$ of $A$-modules, so if $M \otimes_{A} \kappa(\mf{p}) \ne 0$ then $M \otimes_{B} \kappa(\mf{q}) \ne 0$ as well. \end{proof}

\begin{lemma} \label{LemmaCDE} Let $f : X \to Y$ be a morphism of algebraic spaces.
    \begin{enumerate}
        \item[(i)] For any morphism $Y' \to Y$, let $f' : X' \to Y'$ be the base change of $f$. If $f$ is affine-pure, then $f'$ is affine-pure.
        \item[(ii)] Suppose there is an fpqc cover $\{Y_{i} \to Y\}_{i \in I}$ such that the base change $f_{i} : X_{i} \to Y_{i}$ is affine-pure for each $i \in I$. Then $f$ is affine-pure.
        \item[(iii)] Let $g : Y \to Z$ be a morphism of algebraic spaces. If $f$ and $g$ are affine-pure, then $g \circ f$ is affine-pure.
    \end{enumerate}
\end{lemma}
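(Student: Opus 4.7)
The plan is to reduce each of the three statements to a ring-theoretic fact about projective modules by working \'etale-locally on the target. The ``affine'' half of affine-pure is standard in every case (affineness is stable under arbitrary base change, under fpqc descent, and under composition), so all the real work lies in checking that the pushforward of the structure sheaf remains locally projective.

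For (i), I would first use that $f$ being affine gives the base-change identification $f'_{\ast}\mc{O}_{X'} \cong g^{\ast}f_{\ast}\mc{O}_{X}$, where $g : Y' \to Y$ is the given morphism. Since local projectivity is preserved under arbitrary pullback (a direct summand of a free module pulls back to a direct summand of a free module), this immediately yields that $f'$ is affine-pure. For (ii), I would use fpqc descent of affineness in the usual way. For local projectivity, part (i) identifies each $(f_i)_{\ast}\mc{O}_{X_i}$ with the pullback of $f_{\ast}\mc{O}_{X}$ to $Y_i$, and each such pullback is locally projective by hypothesis; then Raynaud--Gruson (\Cref{0006}) descends local projectivity along the fpqc cover, so $f_{\ast}\mc{O}_{X}$ itself is locally projective on $Y$.

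For (iii), I would work \'etale-locally on $Z$ to reduce to the ring-theoretic setup $C \to B \to A$, with $A$ a projective $B$-module (since $f$ is affine-pure) and $B$ a projective $C$-module (since $g$ is affine-pure). A direct application of \Cref{0009} then shows that $A$ is projective as a $C$-module, which is exactly what is needed for $g \circ f$ to be affine-pure.

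None of the steps is deep. The one place that requires a moment of care is the base-change compatibility in (i): the claim that the pushforward of the structure sheaf under the base change of an affine morphism is genuinely the pullback of $f_{\ast}\mc{O}_{X}$. This follows from the description of affine morphisms via the relative Spec construction, since base change of relative Spec corresponds to pullback of the defining quasi-coherent sheaf of algebras. Once that is in hand, (i) is formal, (ii) reduces to \Cref{0006}, and (iii) reduces to \Cref{0009}.
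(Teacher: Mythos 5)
Your proof is correct and takes essentially the same route as the paper: (i) and (ii) are the paper's argument verbatim (affineness under base change/fpqc descent, the isomorphism $(f_{\ast}\mc{O}_{X})|_{Y'} \simeq (f')_{\ast}\mc{O}_{X'}$ since $f$ is affine, and \Cref{0006} for descent), while your (iii) simply inlines the paper's appeal to \Cref{0008}, reducing \'etale-locally to \Cref{0009}. The only step you leave implicit in (iii) is that passing from ``$f$ affine-pure'' to ``$A$ is a projective $B$-module'' on an affine chart again uses \Cref{0006}, exactly as in the paper's proof of \Cref{0008}.
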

\begin{proof} (i): The property ``affine'' is stable under arbitrary base change. The map $(f_{\ast}\mc{O}_{X})|_{Y'} \to (f')_{\ast}\mc{O}_{Y'}$ is an isomorphism since $f$ is affine. \par (ii): The property ``affine'' is fpqc local on the base. For each $i$, we have that $(f_{i})_{\ast}\mc{O}_{X_{i}}$ is a locally projective $\mc{O}_{Y_{i}}$-module. As above, the map $(f_{\ast}\mc{O}_{X})|_{Y_{i}} \to (f_{i})_{\ast}\mc{O}_{X_{i}}$ is an isomorphism, thus $f_{\ast}\mc{O}_{X}$ is a locally projective $\mc{O}_{Y}$-module by \Cref{0006}. \par (iii): The property ``affine'' is stable under composition. We have that $f_{\ast}\mc{O}_{X}$ is a locally projective $\mc{O}_{Y}$-module by assumption, so $g_{\ast}f_{\ast}\mc{O}_{X}$ is a locally projective $\mc{O}_{Z}$-module by \Cref{0008}. \end{proof}

\begin{pg} We will use the following lemma, which is an analogue of \cite[II, Lemma 4]{GABBER-STOAA1978} in the infinite rank case: \end{pg}

\begin{lemma} \label{LemmaG} Let $f : X \to Y$ be a surjective, affine-pure, finitely presented morphism. If $\alpha \in \H^2_{\et}(Y, \G_m)$
is an element such that $f^{\ast}\alpha$ is in $\LPBr(X)$, then $\alpha$ is in $\LPBr(Y)$. \end{lemma}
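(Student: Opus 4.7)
Representing $\alpha$ by a $\G_m$-gerbe $\mathcal{Y}\to Y$, the plan is to form the pullback gerbe $\mathcal{X} := \mathcal{Y}\times_Y X \to \mathcal{Y}$, which is a $\G_m$-gerbe on $X$ of class $f^\ast\alpha$, use the hypothesis $f^\ast\alpha \in \LPBr(X)$ to obtain a countably generated locally projective 1-twisted $\mc{O}_{\mathcal{X}}$-module $\mathcal{E}$ of positive rank, and then exhibit $\tilde{f}_\ast\mathcal{E}$ as the witness that $\alpha \in \LPBr(Y)$, where $\tilde{f}:\mathcal{X}\to\mathcal{Y}$ is the base change of $f$.

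The verifications that $\tilde{f}_\ast\mathcal{E}$ has all the required properties split into four parts. First, $\tilde{f}$ is itself surjective, finitely presented, and affine-pure: the last point is \Cref{LemmaCDE}(i). Second, pushforward along a morphism of $\G_m$-gerbes carries 1-twisted sheaves to 1-twisted sheaves (as recorded in \Cref{sec02-02}), so $\tilde{f}_\ast\mathcal{E}$ is 1-twisted. Third, applying \Cref{0008} to $\tilde{f}$ (affine-pure and surjective) and to the locally projective $\mathcal{E}$ of positive rank yields at once that $\tilde{f}_\ast\mathcal{E}$ is locally projective of positive rank. The remaining point, which I expect to be the technical heart of the argument, is countable generation of $\tilde{f}_\ast\mathcal{E}$.

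For countable generation, the idea is to work \'etale-locally on $Y$: after \'etale base change we may assume $Y = \Spec B$, that $\mathcal{Y}$ is trivialized, and that $X = \Spec A$ with $A$ a finitely presented, projective $B$-algebra; pulling back the trivialization of $\mathcal{Y}$ to $\mathcal{X}$, the sheaf $\mathcal{E}$ corresponds to an $A$-module $M$ that is projective by \Cref{0006} and countably generated on some \'etale cover of $\Spec A$, while $\tilde{f}_\ast\mathcal{E}$ corresponds to $M$ viewed as a $B$-module. The crucial ring-theoretic input is the finite presentation of $f$: it implies $A$ is a quotient of $B[x_1,\dots,x_n]$, so $A$ is countably generated as a $B$-module via the monomials, and hence any countably $A$-generated $A$-module is automatically countably $B$-generated.

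The main obstacle is the descent step reconciling the two \'etale topologies: countable $A$-generation of $M$ is only known on some \'etale cover of $\Spec A$, whereas the definition of countable generation for $\tilde{f}_\ast\mathcal{E}$ demands countable $B$-generation on an \'etale cover of $\Spec B$. I would address this by observing that the pointwise rank function of a projective $A$-module becoming countably generated after an \'etale cover is at most countable everywhere on $\Spec A$ (since rank is preserved by \'etale base change), and then combining this with the affine-pure structure of $A$ over $B$ — in the spirit of Raynaud--Gruson (\Cref{0006}) — to transfer countable generation from an \'etale cover of $X$ down to an \'etale cover of $Y$, at which point the quotient-of-polynomial-ring observation applied fibrewise completes the argument.
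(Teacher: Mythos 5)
Your overall approach matches the paper's exactly: pull back the gerbe, obtain the twisted module $\mathcal{E}$ on the pullback, and push it forward along the base-changed morphism $\tilde{f}$, using \Cref{LemmaCDE}(i) for affine-purity of $\tilde{f}$ and \Cref{0008} for local projectivity and positive rank of $\tilde{f}_\ast\mathcal{E}$. The paper dispatches countable generation in one clause (``is countably generated since $f$ is finitely presented''), whereas you correctly identify a gap it glosses over: the hypothesis gives countable $A$-generation of $M := \Gamma(\tilde{f}^{-1}(V), \mathcal{E})$ only after an \'etale cover of $\Spec A$, not directly, so some descent is required before the ``$A$ is countably $B$-generated'' observation can be combined with it.

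However, your proposed resolution of this step is the weak point. Appealing to pointwise ranks and ``the spirit of Raynaud--Gruson'' is vague and does not obviously close the gap: \Cref{0006} is a statement about projectivity, not countable generation, and for projective modules of infinite rank the pointwise rank function does not control the isomorphism class or generation cardinality in general (Bass's theorem needs \emph{constant} countably infinite rank and Noetherian hypotheses). There is a much more direct fact that you should use instead, valid for arbitrary modules: \emph{countable generation descends along any faithfully flat ring map}. Indeed, if $A \to A'$ is faithfully flat, $\pi : A^{\oplus K} \to M$ is any surjection, and $M \otimes_A A'$ has a countable generating set $\{m_\ell\}$, then lifting each $m_\ell$ to $A'^{\oplus K}$ and collecting the (finite) supports produces a countable $K_0 \subseteq K$ with $\pi(A^{\oplus K_0}) \otimes_A A' = M \otimes_A A'$; faithful flatness then forces $\pi(A^{\oplus K_0}) = M$. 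With this in hand, $M$ is genuinely countably $A$-generated, and since finite presentation of $f$ makes $A$ a quotient of $B[x_1,\dots,x_n]$ and hence countably $B$-generated, $M$ is countably $B$-generated, completing the argument. So your diagnosis of the obstacle is right, but replace the rank/Raynaud--Gruson detour with this elementary generating-set descent.
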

\begin{proof} Let $\ms{Y} \to Y$ be a $\G_{m}$-gerbe corresponding to $\alpha$, set $\ms{X} := \ms{Y} \times_{Y} X$ with projection map $\pi : \ms{X} \to \ms{Y}$. We have that $\pi$ is surjective, affine-pure (by \Cref{LemmaCDE} (i)), and finitely presented. Let $\mc{E}$ be a countably generated locally projective 1-twisted $\mc{O}_{\ms{X}}$-module of positive rank; then $\pi_{\ast}\mc{E}$ is a locally projective 1-twisted $\mc{O}_{\ms{Y}}$-module which is of positive rank by \Cref{0008} and is countably generated since $f$ is finitely presented. \end{proof}

\begin{lemma} \label{LemmaH} Let $S$ be an algebraic space and let $f : X \to Y$ be a morphism of algebraic spaces over $S$. If $f$ satisfies at least one of the following conditions, then $f$ is affine-pure. \begin{enumerate} 
\item[(a)] $f$ is affine, flat, of finite presentation and all geometric fibers are integral,
\item[(b)] $f$ is affine, flat, of finite presentation and all geometric fibers are Cohen-Macaulay and irreducible,
\item[(c)] $f$ is an fppf $G$-torsor for an $S$-group scheme $G$ such that $G \to S$ is an affine-pure morphism,
\item[(d)] $f$ is an fppf $G$-torsor for an $S$-group scheme $G$ such that $G \to S$ is an affine, flat, finitely presented morphism, all of whose geometric fibers are irreducible.
\end{enumerate} \end{lemma}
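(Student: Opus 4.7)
The plan is to reduce cases (c) and (d) to cases (a) and (b), and then prove (a) and (b) via a ring-theoretic projectivity statement using Raynaud--Gruson's theory of flat modules.

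For (c), one exploits the fact that $f : X \to Y$ is itself an fppf cover of $Y$, and that $G$-torsors trivialize after pullback along themselves: we have $X \times_Y X \simeq G \times_S X$, under which the second projection $X \times_Y X \to X$ (i.e., the base change of $f$ along $f$) is identified with the projection $G \times_S X \to X$, which is in turn the base change of $G \to S$ along $X \to S$. Since $G \to S$ is assumed affine-pure, this base change is affine-pure by \Cref{LemmaCDE}(i); descent along the fppf (hence fpqc) cover $X \to Y$ via \Cref{LemmaCDE}(ii) then yields affine-pureness of $f$. For (d), by (c) it is enough to show that $G \to S$ itself is affine-pure, which I would deduce from (b) applied to $G \to S$: irreducibility of geometric fibers is given, and Cohen--Macaulayness of fibers is automatic since group schemes of finite type over a field are Cohen--Macaulay (all local rings are isomorphic under left translation, reducing to Cohen--Macaulayness at the identity, which holds by the standard structure theory of connected affine group schemes over a field).

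For cases (a) and (b), by \Cref{LemmaCDE}(ii) I reduce to $Y = \Spec B$ and $X = \Spec A$, with $B \to A$ flat, of finite presentation, and geometric fibers as hypothesized; the task is to show $A$ is a projective $B$-module. My plan is to invoke the Raynaud--Gruson projectivity criterion \cite{RAYNAUDGRUSON-CDPEDP1971}: a flat module is projective if and only if it is Mittag--Leffler and a direct sum of countably generated submodules. Since $A$ is of finite presentation as a $B$-algebra, it is automatically countably generated as a $B$-module (monomials in any finite set of algebra generators furnish a countable $B$-module generating set), so the countable generation is for free. The fiber hypotheses -- geometric integrality in (a), geometric Cohen--Macaulay-plus-irreducibility in (b) -- should yield the Mittag--Leffler (or purity, in the sense of Raynaud--Gruson) property of $A$ over $B$.

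The main obstacle is establishing the Mittag--Leffler condition from the fiber hypotheses, particularly in case (b) where fibers may be non-reduced and the standard ``geometrically integral fibres imply pure'' argument does not directly apply. I expect the proof to proceed by finding, \'etale-locally on $\Spec B$, a finite flat quasi-section of $f$ (using the Cohen--Macaulay assumption to extract a regular sequence cutting $A$ down to a finite flat $B$-subalgebra), thereby reducing Mittag--Leffler-ness of $A$ to that of finite flat $B$-algebras, where the property is immediate since such algebras are finitely presented and projective as $B$-modules.
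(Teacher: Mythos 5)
Your handling of (c) and (d) is essentially the paper's: trivialize the torsor over a cover (you use $X \to Y$ itself, the paper an arbitrary trivializing fppf cover) and apply \Cref{LemmaCDE}(i),(ii); for (d) you quote Cohen--Macaulayness of group schemes over a field, where the paper cites \cite[Exp. $\text{VI}_{\text{A}}$, 1.1.1]{DEMAZUREGROTHENDIECK-SGA3}. For (a) no argument is really needed, since the paper simply cites \cite[05FT]{SP}. The genuine gap is in (b), i.e. exactly in the step you leave open: how the fiber hypotheses yield the Mittag--Leffler property. Your proposed mechanism does not work. Slicing by a regular sequence in the fiber produces a finite flat \emph{quotient} $A/(f_1,\dotsc,f_d)$, not a subalgebra, and there is no transfer of Mittag--Leffler-ness (or projectivity) from such a quotient, or from the mere existence of a finite flat quasi-section, back to $A$ itself. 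Moreover your sketch of this step never uses irreducibility of the fibers, and without it the conclusion is false: take $B = \Z_{(p)}$ and $A = (B[x]/(x^2-x))[1/g]$ with $g = px + (1-x)$, so that $A \simeq B \times \Q$ as a $B$-algebra. This $A$ is flat, finitely presented, quasi-finite with Cohen--Macaulay (but reducible) generic fiber, and it admits the finite flat quasi-section $V(x) \simeq \Spec B$, which is even free over $B$; yet $A$ is not Mittag--Leffler over $B$, since it is flat and countably generated while $\Q$ is not projective over $\Z_{(p)}$ (its fiber $\Q \otimes \F_{p}$ vanishes). So any correct argument must invoke irreducibility precisely where your outline omits it.

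For comparison, the paper's proof of (b) reduces by \Cref{LemmaCDE}(ii) to a henselian local base and applies the Raynaud--Gruson criterion \cite[05MD]{SP}, which converts projectivity of $\mc{O}_X$ into purity along the closed fiber; Cohen--Macaulayness guarantees the fibers have no embedded associated points \cite[031Q]{SP}, so the relative assassin consists only of the generic points of the fibers, and going-down for flatness together with irreducibility of the fibers shows each such generic point specializes into the closed fiber, i.e. there are no impurities. If you want to salvage the quasi-section idea, it should be routed through purity rather than Mittag--Leffler-ness: over the henselian local base a finite quasi-section is closed, so its points over any $y'$ specialize into the closed fiber, and irreducibility lets you pass from those points to the generic point of $X_{y'}$; but note this is no shorter than the paper's direct going-down argument, which needs no quasi-section at all.
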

\begin{proof}
(a): This is a restatement of \cite[05FT]{SP}.

(b): By \Cref{LemmaCDE}(ii), we may assume that $Y = \Spec A$ for a henselian local ring $A$; let $y \in Y$ be the closed point. By \cite[05MD]{SP} it is enough to verify that $\mc{O}_{X}$ is pure along the special fiber $X_{y}$. By \cite[031Q]{SP}, the fibers of $f$ do not have embedded associated points; hence, since the fibers of $f$ are irreducible, the only relative assassins of $\mc{O}_{X}$ over $Y$ are the generic points of fibers. Let $y' \in Y$ be a point and let $x$ be the generic point of $X_{y}$. By going-down for flatness, there exists some $x' \in X_{y'}$ such that $x' \leadsto x$; we may replace $x'$ by the generic point of $X_{y'}$. This shows that the generic point of $X_{y'}$ specializes to the generic point of $X_{y}$, hence there are no impurities of $\mc{O}_{X}$ over $y$.

(c): By definition, there exists an fppf cover $Y' \to Y$ and an isomorphism of $Y'$-schemes $X|_{Y'} \simeq G_{Y'}$; thus $f$ is affine-pure by \Cref{LemmaCDE} (i) and (ii).

(d): By (c), it suffices to show that $G \to S$ is affine-pure. By \cite[Exp. $\text{VI}_{\text{A}}$, 1.1.1]{DEMAZUREGROTHENDIECK-SGA3}, the geometric fibers of $G \to S$ are Cohen-Macaulay, so this follows from (b). 
\end{proof}

\begin{lemma} \label{LemmaI} Let $X$ be an algebraic space having at least one of the following
properties: \begin{enumerate}
\item[(a)] $X$ has an ample family of invertible modules \cite[II, 2.2.4]{SGA6} \cite[0FXR]{SP},
\item[(b)] $X$ has an ample invertible module,
\item[(c)] $X$ is quasi-projective over an affine,
\item[(d)] $X$ is quasi-affine,
\item[(e)] $X$ is quasi-compact and quasi-separated and has the resolution property,
\item[(f)] $X$ is a separated algebraic surface,
\item[(g)] there exists a base scheme $S$, an $S$-algebraic space $Y$ satisfying at least one of the
conditions (a) -- (f), an $S$-group scheme $G$ acting freely on $Y$ and such that $G \to S$ is affine-pure and finitely presented, and $X$ is the quotient $Y/G$. \end{enumerate}
Then there exists an affine scheme $T$ and a surjective, affine-pure, finitely presented morphism $T \to X$. \end{lemma}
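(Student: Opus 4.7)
The plan is to reduce every case to a single construction: producing a Jouanolou--Thomason torsor $\pi : T \to X$ whose total space $T$ is affine. Cases (b), (c), (d) are immediate specializations of (a), since an ample line bundle on a scheme (quasi-)projective over an affine, or the trivial bundle on a quasi-affine scheme, furnishes a one-element ample family of invertible modules in the sense of \cite[II, 2.2.4]{SGA6}.

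For cases (a) and (e), I would invoke the Jouanolou--Thomason construction, as extended by Gross to algebraic spaces having an ample family of invertible modules, or more generally the resolution property: this produces a torsor $\pi : T \to X$ under some vector bundle $\mc{V}$ on $X$ with $T$ an affine scheme. Such a $\pi$ is surjective, affine (being a vector bundle torsor), flat, and of finite presentation, and its geometric fibers are affine spaces --- in particular integral --- so Lemma \ref{LemmaH}(a) delivers affine-purity. This disposes of (a)--(e) uniformly.

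Case (f) is the most serious external input: one needs to know that every separated algebraic surface has the resolution property, which then reduces (f) to case (e). This is the place where I would rely on a black box from the literature (the resolution property theorem for separated algebraic spaces of dimension at most two, due to Gross), and verifying it applies in the precise generality the lemma asks for is the main obstacle to a completely self-contained argument.

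Finally, case (g) is handled by composition. The quotient map $q : Y \to X = Y/G$ is an fppf $G$-torsor for $G \to S$ affine-pure and finitely presented, so Lemma \ref{LemmaH}(c) gives that $q$ itself is affine-pure; it is also surjective and of finite presentation because $G \to S$ is. Applying one of the previous cases to $Y$ produces an affine scheme $T$ with a surjective, affine-pure, finitely presented morphism $T \to Y$. Composing with $q$ and using Lemma \ref{LemmaCDE}(iii) (together with the stability of surjectivity and finite presentation under composition) yields the morphism $T \to X$ that the lemma requires. Everything beyond the Jouanolou--Thomason / Gross input is a formal consequence of the lemmas already established in Section~\ref{sec03-01}.
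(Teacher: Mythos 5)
Your handling of (a)--(d), (f), and (g) matches the paper's argument in substance: reduce (b)--(d) to an ample family, apply the Jouanolou--Thomason device in case (a) (your verification of affine-purity via \Cref{LemmaH}(a), using that the fibers are affine spaces, is a harmless variant of the paper's route via Zariski-local triviality and \Cref{LemmaCDE}(i),(ii)), quote Gross's theorem that separated surfaces have the resolution property for (f), and compose the $G$-torsor $Y \to X$ (affine-pure by \Cref{LemmaH}(c)) with the cover of $Y$ using \Cref{LemmaCDE}(iii) for (g).

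The genuine gap is in case (e). You assert that the Jouanolou--Thomason construction extends to quasi-compact quasi-separated algebraic spaces with the resolution property, producing a torsor under a vector bundle on $X$ with affine total space. No such result is available: the Jouanolou--Thomason device requires an ample family of invertible modules, and the resolution property does not (as far as is known) yield an affine vector-bundle torsor over $X$; indeed, spaces with the resolution property need not carry an ample family, which is exactly why this case needs separate treatment. The paper's route is: by Gross's theorem, write $X = X'/\GL_N$ with $X'$ quasi-affine; the $\GL_N$-torsor $X' \to X$ is affine-pure by \Cref{LemmaH}(d) (since $\GL_N \to \Spec \Z$ is affine, flat, finitely presented with integral fibers); then apply case (d) to $X'$ to get an affine $T$ with $T \to X'$ surjective affine-pure, and compose via \Cref{LemmaCDE}(iii). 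Note that the resulting cover $T \to X$ is \emph{not} a vector-bundle torsor over $X$, only a composite of such a torsor with a $\GL_N$-torsor --- which is all the lemma needs. Since you already use exactly this composition mechanism in case (g), the repair is immediate, but as written your case (e) rests on an unavailable black box rather than on the quotient presentation.
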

\begin{proof} In each case we use \Cref{LemmaH} to get the morphism to be affine-pure. \par (a): This follows from Thomason's extension of the Jouanolou trick \cite[Proposition 4.4]{WEIBEL-HAKT1989} which implies the existence of an affine scheme $T$ and a smooth surjection $T \to X$ such that there exists a Zariski cover $U \to X$ and an isomorphism $T \times_{X} U \simeq \A_{U}^{n}$ of $U$-schemes. The map $\A_{\Z}^{n} \to \Spec \Z$ is affine-pure so $T \to X$ is affine-pure by \Cref{LemmaCDE} (i), (ii). \par (b): This is a special case of (a). \par (c): This is a special case of (b). \par (d): This is a special case of (c). \par (e): By \cite[1.1]{GROSS-TGOSAS2013} we can write $X = X'/\GL_N$ for a quasi-affine scheme $X'$. Then $X' \to X$ is surjective and affine-pure by \Cref{LemmaH} (d). By case (d), we can find an affine scheme $T$ with a surjective, affine-pure morphism $T \to X'$; then \Cref{LemmaCDE} (iii) implies that $T \to X' \to X$ is affine-pure. \par (f): This follows from (e) by \cite[5.3]{GROSS-TRPOAS2012}. \par (g): This follows from \Cref{LemmaH}(d) and \Cref{LemmaCDE}(iii). \end{proof}

\begin{lemma} \label{LemmaJ} For any algebraic space $X$ as in \Cref{LemmaI} and $\alpha$ in $\H^2_{\et}(X, \G_m)$, there
exists an affine scheme $T$ and a surjective, affine-pure, finitely presented morphism $T \to X$ such that
$\alpha$ is Zariski-locally trivial on $T$. \end{lemma}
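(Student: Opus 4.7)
The plan is to first invoke \Cref{LemmaI} to reduce to the affine case, and then to trivialize $\alpha$ on a further affine-pure cover obtained from Gabber's theorem together with the fact (via \Cref{LemmaH}(d)) that $\PGL_{n}$-torsors are affine-pure.

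First I would apply \Cref{LemmaI} to obtain an affine scheme $T_{0}$ and a surjective, affine-pure, finitely presented morphism $T_{0}\to X$. By \Cref{LemmaCDE}(iii), it then suffices to produce a surjective, affine-pure, finitely presented morphism $T\to T_{0}$ from an affine scheme $T$ on which $\alpha|_{T_{0}}$ becomes Zariski-locally trivial; the composition $T\to T_{0}\to X$ will then have all the required properties.

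Next I would represent $\alpha|_{T_{0}}$ by an Azumaya $\mc{O}_{T_{0}}$-algebra $\mc{A}$ of finite rank $n^{2}$. The trivial invertible module on the affine scheme $T_{0}$ is ample, so Gabber's theorem \cite{DEJONG-AROG2003} gives $\Br(T_{0}) = \Br'(T_{0}) = \H^{2}_{\et}(T_{0},\G_{m})_{\mathrm{tors}}$; granting that $\alpha|_{T_{0}}$ is torsion, one obtains such an $\mc{A}$. After restricting to the clopen components of $T_{0}$ on which the rank is constant (and replacing $T_{0}$ with a finite disjoint union), I may assume $n$ is constant.

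Then I would set $T$ to be the $\PGL_{n}$-torsor $P\to T_{0}$ of trivializations of $\mc{A}$, whose existence is built into the definition of $\mc{A}$ being Azumaya and whose structure as a $\PGL_{n}$-torsor follows from \Cref{20180329-aahv}. By construction $\mc{A}|_{P}$ is isomorphic to an endomorphism algebra, so $\alpha|_{P} = 0$ in $\H^{2}_{\et}(P,\G_{m})$. Since $\PGL_{n}$ is an affine, flat, finitely presented group scheme over $\Z$ with geometrically irreducible fibers, \Cref{LemmaH}(d) shows that $P\to T_{0}$ is affine-pure; as $T_{0}$ is affine and $P\to T_{0}$ is affine, so is $P$. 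Setting $T := P$, the composition $T \to T_{0} \to X$ is surjective, affine-pure by \Cref{LemmaCDE}(iii), finitely presented, with $\alpha|_{T}=0$, and therefore a fortiori Zariski-locally trivial.

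The main obstacle I expect is justifying that $\alpha|_{T_{0}}$ is torsion, since \Cref{LemmaI} can produce non-Noetherian $T_{0}$ in cases such as (a)--(b) where $X$ itself is only required to have an ample family of invertible modules. The expected fix is a filtered-colimit reduction: write $T_{0}$ as a limit of affine schemes of finite type over $\Z$ (using \cite[05N7]{SP}-style continuity for cohomology with affine transition maps), apply the classical torsion result for the finite-type case, and pass to the colimit. Handling the descent of the resulting Azumaya algebra and of the associated $\PGL_{n}$-torsor through the colimit system is the single delicate point of the argument.
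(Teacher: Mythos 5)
Your plan diverges from the paper's proof at the step after producing the affine cover $T_0 \to X$, and it runs into a gap there that the paper avoids entirely. You propose to trivialize $\alpha|_{T_0}$ by representing it as the class of a finite-rank Azumaya $\mc{O}_{T_0}$-algebra $\mc{A}$ and passing to the $\PGL_n$-torsor $P$ of trivializations. But this requires $\alpha|_{T_0}$ to lie in $\Br(T_0) = \H^2_{\et}(T_0,\G_m)_{\mr{tors}}$, and there is no reason for $\alpha|_{T_0}$ to be torsion. Indeed, the whole point of \Cref{MAINTHM} and \Cref{LemmaK} is to handle \emph{non-torsion} classes (see \Cref{0024}, where $\H^2_{\et}(X,\G_m)=k/\Z$), and $\H^2_{\et}$ of a Noetherian affine scheme can contain non-torsion elements (e.g.\ one can pinch $\A^2_{\C}$ along two disjoint copies of a smooth affine curve of positive genus; Mayer--Vietoris for the conductor square then feeds the non-torsion part of $\Pic$ of that curve into $\H^2_{\et}$ of the resulting affine scheme, and such an $X$ is quasi-affine, hence in the scope of \Cref{LemmaI}(d) with $T_0 = X$). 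You correctly flag this as the weak point, but you misdiagnose it as a non-Noetherianness issue: Noetherian approximation will not help, because the claim ``$\H^2_{\et}(\Spec A,\G_m)$ is torsion for $A$ of finite type over $\Z$'' is itself not available (and is presumably false).

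The paper sidesteps torsion entirely with a purely cohomological device. After producing $T' \to X$ affine, affine-pure, surjective, finitely presented (via \Cref{LemmaI}), one picks an étale surjection $T'' \to T'$ trivializing $\alpha$ (\cite[01FW]{SP}), refines so $T''$ is affine, and then invokes \cite[02LH]{SP} to produce a \emph{surjective finite locally free} $T \to T'$ factoring Zariski-locally through $T''$. Finite locally free surjections are affine-pure, and $\alpha|_T$ is Zariski-locally trivial by construction. This requires only that $\alpha$ be an arbitrary class in $\H^2_{\et}$, no torsion hypothesis, no Gabber's theorem, and no Azumaya algebras. Note also that the lemma only asks that $\alpha$ be \emph{Zariski-locally} trivial on $T$ -- not globally trivial -- which is exactly what the finite-locally-free refinement delivers and what \Cref{LemmaK} needs downstream. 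If you want to pursue an Azumaya route, you would first have to prove that $\H^2_{\et}$ of any affine scheme as produced by \Cref{LemmaI} is torsion, which is an additional (and likely false) hypothesis; the paper's argument is both stronger and more elementary.
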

\begin{proof} By \Cref{LemmaI}, there exists an affine scheme $T'$ and a surjective affine-pure morphism $T' \to X$. By \cite[01FW]{SP} there is an \'etale surjection $T'' \to T'$ such that $\alpha|_{T''}$ is trivial; we refine the cover $T'' \to T'$ so that $T''$ is affine. Then \cite[02LH]{SP} implies that there exists a surjective, finite locally free $T \to T'$ which factors through $T'' \to T$ Zariski-locally on $T$, i.e. $\alpha|_{T}$ is Zariski-locally trivial. We have that surjective finite locally free morphisms are affine-pure, and the composite $T \to T' \to X$ is surjective affine-pure by \Cref{LemmaCDE} (iii). \end{proof}

\begin{question} Given an affine scheme $X$ and a class $\alpha \in \H_{\et}^{2}(X,\G_{m})$, does there exist a surjective, affine-pure morphism $T \to X$ such that $\alpha|_{T}$ is trivial? \end{question}

\subsection{Proof of the main theorem} \label{sec03-02}

In this section we prove \Cref{LemmaK} and give some applications and questions.

\begin{proposition} \label{LemmaKS5} Let $A$ be a Noetherian ring, set $X := \Spec A$, and let $\ms{X} \to X$ be a $\G_{m}$-gerbe. Let $\mc{F}$ be a countably generated locally projective 1-twisted $\mc{O}_{\ms{X}}$-module of positive rank. Then $\mc{F}$ is a projective object of the category of quasi-coherent 1-twisted $\mc{O}_{\ms{X}}$-modules. Moreover, every quasi-coherent 1-twisted $\mc{O}_{\ms{X}}$-module is a quotient of a direct sum of copies of $\mc{F}$. \end{proposition}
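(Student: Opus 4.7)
The plan is to mimic the strategy for showing a module is a projective generator: prove $\mc{F}$ is projective in the category of quasi-coherent 1-twisted $\mc{O}_{\ms{X}}$-modules, and separately show $\Hom_{\mc{O}_{\ms{X}}}(\mc{F}, \mc{H}) \neq 0$ for every nonzero such $\mc{H}$. Granting both facts, the canonical evaluation $\mc{F}^{(\Hom(\mc{F}, \mc{G}))} \to \mc{G}$ is surjective for any $\mc{G}$: by projectivity, $\Hom(\mc{F}, \mc{F}^{(\Hom(\mc{F}, \mc{G}))}) \to \Hom(\mc{F}, \mc{G})$ is surjective, so the cokernel of evaluation admits no nonzero map from $\mc{F}$ and therefore vanishes.

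First I would establish a local model for $\mc{F}$. Choose an \'etale cover $X' = \Spec A' \to X$ that trivializes the gerbe, equipped with a 1-twisted line bundle $\mc{L}'$ on $\ms{X}_{X'}$, and let $\pi_{X'} : \ms{X}_{X'} \to X'$ be the structure map. Writing $\mc{F}|_{\ms{X}_{X'}} \simeq \mc{L}' \otimes \pi_{X'}^{\ast} \mc{E}'$, the hypotheses on $\mc{F}$ translate (via \Cref{0006}) to $\mc{E}'$ being a countably generated projective $A'$-module of positive rank. Since $A'$ is Noetherian, Bass's theorem (\Cref{BASSTHM}) yields $\mc{E}' \simeq (A')^{(J)}$ for a nonempty index set $J$, after a Zariski refinement handling the finitely generated case; thus $\mc{F}|_{\ms{X}_{X'}} \simeq (\mc{L}')^{(J)}$ is a free twisted module.

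For projectivity, I would apply the local-to-global spectral sequence $\H^{p}(\ms{X}, \ExtS^{q}(\mc{F}, \mc{G})) \Rightarrow \Ext^{p+q}(\mc{F}, \mc{G})$. By the local model $\mc{F}$ is \'etale-locally projective, so $\ExtS^{q}(\mc{F}, \mc{G}) = 0$ for $q \geq 1$ and $\Ext^{n}(\mc{F}, \mc{G}) \simeq \H^{n}(\ms{X}, \HomS(\mc{F}, \mc{G}))$. Since $\HomS(\mc{F}, \mc{G})$ is $0$-twisted and higher direct images along the $\G_{m}$-gerbe $\pi : \ms{X} \to X$ vanish on $0$-twisted sheaves, this equals $\H^{n}(X, \pi_{\ast} \HomS(\mc{F}, \mc{G}))$. \'Etale-locally on $X$ the pushforward is a sheaf-theoretic product $\prod_{J} \mc{H}$, where $\mc{H}$ is the quasi-coherent sheaf on $X'$ corresponding to $\mc{L}'^{-1} \otimes \mc{G}|_{\ms{X}_{X'}}$. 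Since $R\Gamma$ commutes with arbitrary products (products of injectives being injective) and each factor is quasi-coherent on the affine Noetherian $X'$, the higher cohomology of each factor vanishes, and a \v{C}ech-cohomology argument for the cover $X' \to X$ propagates the vanishing to $X$.

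For the nonvanishing statement, if $\mc{H}$ is a nonzero quasi-coherent 1-twisted $\mc{O}_{\ms{X}}$-module, faithful flatness gives $\mc{H}|_{\ms{X}_{X'}} \neq 0$, so the associated quasi-coherent sheaf on the affine $X'$ has a nonzero global section; this yields a nonzero local map $\mc{L}' \to \mc{H}|_{\ms{X}_{X'}}$, and composing with a projection $(\mc{L}')^{(J)} \to \mc{L}'$ onto one factor gives a nonzero local map $\mc{F}|_{\ms{X}_{X'}} \to \mc{H}|_{\ms{X}_{X'}}$. Promoting this local map to a global map $\mc{F} \to \mc{H}$ is the main obstacle: it requires solving a descent/cocycle equation for the gerbe, and the argument should combine the cohomological vanishing of the preceding step with the ``infinite room'' provided by Bass's countable index set $J$ to produce a compatible tuple.
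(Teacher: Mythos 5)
Your overall strategy (prove $\mc{F}$ is projective and that $\Hom(\mc{F},\mc{H})\ne 0$ for every nonzero quasi-coherent $1$-twisted $\mc{H}$, then run the evaluation-map argument) is a legitimate route to the generator statement, but it is genuinely different from the paper's, and as written it has one real gap and one soft spot.

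The real gap is the step you yourself flag: globalizing the nonzero local map $\mc{F}|_{\ms{X}_{X'}}\to\mc{H}|_{\ms{X}_{X'}}$ to a nonzero $\mc{O}_{\ms{X}}$-linear map $\mc{F}\to\mc{H}$. Nonvanishing of a section sheaf over an \'etale cover of course does not give a nonzero global section, and neither the $\H^1$-vanishing you establish nor the ``infinite room'' in $J$ furnishes such a section by themselves; you would still need to solve the cocycle problem in some way. The paper sidesteps this entirely: it reduces to coherent $\mc{G}$ (every quasi-coherent twisted module is a filtered colimit of coherent ones), fixes a closed point $u$, uses the already-proved projectivity to lift along $\mc{G}\to\mc{G}/\mc{I}_u\mc{G}$, and then invokes Wedderburn over the gerbe $\ms{X}_u$ (twisted vector bundles over a gerbe over a field are direct sums of the minimal-rank one) together with Nakayama to manufacture a surjection $\mc{F}^{\oplus I}\to\mc{G}$ near $u$. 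That is the missing idea; you should not expect a ``global nonzero Hom'' argument to go through without something of this type.

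The soft spot is in your $\H^1$ vanishing. You write $\HomS(\mc{F},\mc{G})$ \'etale-locally as $\prod_J\mc{H}$ and invoke ``$R\Gamma$ commutes with arbitrary products (products of injectives being injective).'' Products of injectives are injective, but that alone is not enough: one also needs the product functor to be exact on abelian sheaves (AB4*), which is false on general sites and is in any case only addressed by you for the Zariski topology on an affine, whereas you are working on $X_{\et}$; the ``\v{C}ech propagation'' at the end is also not spelled out. The paper takes a cleaner route: write $\mc{F}=\colim\mc{F}_i$ over $\N$ with $\mc{F}_i$ coherent twisted, show the inverse system $\HomS(\mc{F}_i,\mc{H})$ is Mittag-Leffler (using that $\mc{F}$, un-twisted on an \'etale cover, is free by Bass, hence Mittag-Leffler), and cite the standard vanishing of $\H^1$ of a Mittag-Leffler limit on a quasi-compact space. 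You should replace the product argument with this, or at minimum supply a reference that arbitrary products of quasi-coherent sheaves on the small \'etale site of an affine Noetherian scheme are acyclic and that the \'etale \v{C}ech argument terminates.

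The local model you describe (Bass $\Rightarrow$ $\mc{F}|_{\ms{X}_{X'}}\simeq(\mc{L}')^{(J)}$) and the use of the local-to-global $\Ext$ spectral sequence agree with the paper and are fine. What you are missing is precisely the paper's Nakayama--Wedderburn mechanism for generation.
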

\begin{proof} For the first claim, it is enough to show that
\begin{align} \label{LemmaKS5-eqn-03} \Ext_{\mc{O}_{\ms{X}}}^1(\mc{F}, \mc{H}) = 0 \end{align}
for any quasi-coherent 1-twisted $\mc{O}_{\ms{X}}$-module $\mc{H}$. We will show
\begin{align} \label{LemmaKS5-eqn-01} \ExtS_{\mc{O}_{\ms{X}}}^1(\mc{F}, \mc{H}) = 0 \end{align}
and
\begin{align} \label{LemmaKS5-eqn-02} \H^1(X, \HomS_{\mc{O}_{\ms{X}}}(\mc{F}, \mc{H})) = 0 \end{align}
which implies \labelcref{LemmaKS5-eqn-03} by the local-to-global spectral sequence for Ext. \par
For \labelcref{LemmaKS5-eqn-01}, let $X'$ be a finitely presented affine $X$-scheme with an $X$-morphism $X' \to \ms{X}$. If $\mc{F}$ has finite rank, then we may replace $X'$ by a Zariski cover so that $\mc{F}|_{X'}$ is a free $\mc{O}_{X'}$-module of finite rank; then $\Ext_{\mc{O}_{X'}}^{1}(\mc{F}|_{X'},\mc{H}|_{X'}) = 0$. If $\mc{F}$ has countably infinite rank, then $\mc{F}|_{X'}$ is in fact a free $\mc{O}_{X'}$-module by Bass' theorem, so $\Ext_{\mc{O}_{X'}}^{1}(\mc{F}|_{X'},\mc{H}|_{X'}) \simeq \prod_{\N} \H^{1}(X',\mc{H}|_{X'}) = 0$.

For \labelcref{LemmaKS5-eqn-02}, write \[ \textstyle \mc{F} = \colim_{i \in I} \mc{F}_i \] as a filtered colimit of coherent
1-twisted $\mc{O}_{\ms{X}}$-modules \cite[2.2.1.5]{LIEBLICH-MOTS2007}.
Since $\mc{F}$ is countably generated, we can assume the index set $I$ is countable; after further refinement, we may assume $I = \N$ with the usual ordering, i.e. we have
\[ \mc{F}_1 \to \mc{F}_2 \to \mc{F}_3 \to \dotsb \to \mc{F} \]
with colimit $\mc{F}$. Set $\mc{K}_{i} := \HomS_{\mc{O}_{X}}(\mc{F}_{i},\mc{H})$ and $\mc{K} := \lim_{i \in \N}\mc{K}_{i} \simeq \HomS_{\mc{O}_{X}}(\mc{F},\mc{H})$.

Take an \'etale surjection $W \to X$ where $W = \Spec B$ is affine and such that $\alpha|_{W}$ is trivial;
let $L$ be a 1-twisted invertible $\mc{O}_{\ms{X}_{W}}$-module;
then $\mc{F}' := \mc{F}|_{\ms{X}_{W}} \otimes_{\mc{O}_{\ms{X}_{W}}} L^{-1}$ is a 0-twisted locally projective $\mc{O}_{\ms{X}_{W}}$-module of countably infinite rank, hence $\mc{F}'$ is free by Bass' theorem. By \cite[059Z]{SP} we have that $\mc{F}'$ is a Mittag-Leffler module, hence the system $\{\HomS_{\mc{O}_{W}}(\mc{F}_i|_W, \mc{H}|_W)\}_{i \in \N}$
is Mittag-Leffler by \cite[059E]{SP} (1)$\Rightarrow$(4).
Since $W \to X$ is faithfully flat and $\HomS_{\mc{O}_{W}}(\mc{F}_i|_W, \mc{H}|_W) \simeq \mc{K}_{i}|_{W}$, the system $\{\mc{K}_{i}\}_{i \in \N}$ is Mittag-Leffler; thus we may construct another inverse system $\{\mc{K}_{i}'\}_{i \in \N}$
such that $\mc{K} \simeq \lim_{i \in \N} \mc{K}_{i}'$ and the transition maps $\mc{K}_{i+1}' \to \mc{K}_{i}'$ are surjective.
Then we have $\H^{1}(X,\mc{K}) = \H^{1}(X,\lim \mc{K}_{i}') = 0$ by \cite[0A0J]{SP} (3). 

For the second claim, since every quasi-coherent 1-twisted $\mc{O}_{\ms{X}}$-module $\mc{G}$ is the filtered colimit of coherent 1-twisted $\mc{O}_{\ms{X}}$-submodules, it suffices to show that every coherent 1-twisted $\mc{O}_{\ms{X}}$-module $\mc{G}$ admits a surjection from a direct sum of copies of $\mc{F}$. 
We show that, for a closed point $i : u \to X$, there exists an index set $I$ and a morphism $\mc{F}^{\oplus I} \to \mc{G}$ which is surjective in a neighborhood of $u$. By Nakayama's lemma, this is equivalent to requiring that the fiber $\mc{F}|_{\ms{X}_{u}}^{\oplus I} \to \mc{G}|_{\ms{X}_{u}}$ is surjective. Let $\mc{I}_u \subset \mc{O}_X$ be the ideal sheaf of $i$ and consider the exact sequence
\[ 0 \to \mc{I}_u \mc{G} \to \mc{G} \to \mc{G}/\mc{I}_u\mc{G} \to 0 \] of $\mc{O}_{\ms{X}}$-modules.
By the first claim, we have $\Ext^{1}(\mc{F},\mc{I}_u \mc{G}) = 0$, hence \[\Hom_{\mc{O}_{\ms{X}}}(\mc{F},\mc{G}) \to \Hom_{\mc{O}_{\ms{X}}}(\mc{F},\mc{G}/\mc{I}_{u}\mc{G})\] is surjective. Hence it suffices to
find an $\mc{O}_{\ms{X}}$-linear surjection $\mc{F}^{\oplus I} \to \mc{G}/\mc{I}_{u}\mc{G}$ for some index set $I$, i.e. an $\mc{O}_{\ms{X}_{u}}$-linear surjection $\mc{F}^{\oplus I}/\mc{I}_{u}\mc{F}^{\oplus I} \to \mc{G}/\mc{I}_{u}\mc{G}$. Since $\ms{X}_{u}$ is a $\G_{m}$-gerbe over the spectrum of a field, by Wedderburn's theorem there is a 1-twisted vector bundle $\mc{E}$ on $\ms{X}_{u}$ of minimal positive rank (equal to the index of $\ms{X}_{u}$) and every 1-twisted vector bundle on $\ms{X}_{u}$ is isomorphic to a direct sum of this $\mc{E}$. This gives the desired result. \end{proof}

\begin{lemma}[Eilenberg swindle] \label{LemmaKS4} Let $X$ be a Noetherian affine scheme, let $\ms{X} \to X$ be a $\G_{m}$-gerbe, let $\mc{F},\mc{G}$ be two countably generated locally projective 1-twisted $\mc{O}_{\ms{X}}$-modules of positive rank. Then $\mc{F}^{\oplus \N} \simeq \mc{G}^{\oplus \N}$. \end{lemma}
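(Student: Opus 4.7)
The plan is an Eilenberg swindle based on the abstract projectivity established in \Cref{LemmaKS5}, which says that $\mc{F}$ and $\mc{G}$ are projective objects of the category of quasi-coherent $1$-twisted $\mc{O}_{\ms{X}}$-modules. The strategy has two parts: first produce split surjections $\mc{F}^{\oplus \N} \twoheadrightarrow \mc{G}$ and $\mc{G}^{\oplus \N} \twoheadrightarrow \mc{F}$, then combine these via a symmetric swindle.

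For the first part, I would write $\mc{G}$ as a countable filtered colimit of coherent $1$-twisted submodules $\mc{G} = \colim_{n \in \N} \mc{G}_n$ (possible because $X$ is Noetherian and $\mc{G}$ is countably generated), and apply the fibrewise Wedderburn argument from the second half of the proof of \Cref{LemmaKS5} to each $\mc{G}_n$: since the minimal number of generators of a coherent sheaf is upper semicontinuous and $X$ is quasi-compact, the ``rank'' of each $\mc{G}_n$ is bounded, so there is a finite $k_n$ with a surjection $\mc{F}^{\oplus k_n} \twoheadrightarrow \mc{G}_n$. Assembling these yields $\mc{F}^{\oplus \N} \twoheadrightarrow \mc{G}$, which splits by projectivity of $\mc{G}$. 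Write $\mc{F}^{\oplus \N} \simeq \mc{G} \oplus \mc{H}_1$, and symmetrically $\mc{G}^{\oplus \N} \simeq \mc{F} \oplus \mc{H}_2$.

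The swindle step is then a formal computation using the bijection $\N \simeq \N \times \N$. One has
\[ \mc{F}^{\oplus \N} \simeq (\mc{F}^{\oplus \N})^{\oplus \N} \simeq (\mc{G} \oplus \mc{H}_1)^{\oplus \N} \simeq \mc{G}^{\oplus \N} \oplus \mc{H}_1^{\oplus \N}, \]
and symmetrically $\mc{G}^{\oplus \N} \simeq \mc{F}^{\oplus \N} \oplus \mc{H}_2^{\oplus \N}$. Substituting the first identity into the second gives $\mc{G}^{\oplus \N} \simeq \mc{G}^{\oplus \N} \oplus (\mc{H}_1 \oplus \mc{H}_2)^{\oplus \N}$, which in particular exhibits $\mc{H}_1^{\oplus \N}$ as a direct summand of $\mc{G}^{\oplus \N}$; writing $\mc{G}^{\oplus \N} \simeq \mc{H}_1^{\oplus \N} \oplus X$, one gets $\mc{G}^{\oplus \N} \oplus \mc{H}_1^{\oplus \N} \simeq \mc{H}_1^{\oplus \N} \oplus \mc{H}_1^{\oplus \N} \oplus X \simeq \mc{H}_1^{\oplus \N} \oplus X \simeq \mc{G}^{\oplus \N}$. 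Combining this with the display yields $\mc{F}^{\oplus \N} \simeq \mc{G}^{\oplus \N}$.

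The main technical obstacle is the first step: the second half of \Cref{LemmaKS5} only guarantees a surjection from $\mc{F}^{\oplus I}$ for some a priori large index set $I$ (indexed by closed points of $X$), so one must genuinely use the countable-generation and Noetherian hypotheses to cut $I$ down to $\N$. Once that surjection is in hand, everything else is a formal manipulation of direct sums.
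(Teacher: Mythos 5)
Your proposal is correct and follows essentially the same route as the paper: both rest on \Cref{LemmaKS5} (projectivity plus the ability to surject onto any quasi-coherent $1$-twisted module from copies of $\mc{F}$), produce a split surjection from a countable direct sum, and finish with an Eilenberg swindle. The only cosmetic differences are that the paper cuts the index set down to $\N$ directly from countable generation of $\mc{G}$ (rather than re-running the fibrewise argument on a coherent exhaustion to get finite $k_n$'s), and it organizes the swindle symmetrically via $\mc{F}' \simeq \mc{G}' \oplus \mc{F}'$ and $\mc{G}' \simeq \mc{G}' \oplus \mc{F}'$ instead of your absorption of $\mc{H}_1^{\oplus \N}$.
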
 \begin{proof} Set $\mc{F'} := \mc{F}^{\oplus \N}$ and $\mc{G}' := \mc{G}^{\oplus \N}$. By \Cref{LemmaKS5}, there exists an index set $I$ and a surjective $\mc{O}_{\ms{X}}$-linear map $(\mc{F}')^{\oplus I} \to \mc{G}$; since $\mc{G}$ is countably generated, we may assume $I = \N$. Thus we obtain a surjection $\mc{F}' \simeq ((\mc{F}')^{\oplus \N})^{\oplus \N} \to \mc{G}^{\oplus \N} = \mc{G}'$. Since $\mc{G}'$ is a projective object in the category of quasi-coherent 1-twisted $\mc{O}_{\ms{X}}$-modules by \Cref{LemmaKS5}, we obtain a decomposition \[ \mc{G}' \oplus \mc{Q} \simeq \mc{F}' \] which gives an isomorphism \[ \mc{G}' \oplus \mc{F}' \simeq \mc{G}' \oplus (\mc{G}' \oplus \mc{Q}) = (\mc{G}' \oplus \mc{G}') \oplus \mc{Q} \simeq \mc{G}' \oplus \mc{Q} \simeq \mc{F}' \] where we use that $\mc{G}' \oplus \mc{G}' \simeq \mc{G}'$. By symmetry, we have \[ \mc{G}' \simeq \mc{G}' \oplus \mc{F}' \] which implies $\mc{F}' \simeq \mc{G}'$. \end{proof}

\begin{question} \label{0026} In \Cref{LemmaKS4} (which is a twisted analogue of Bass' theorem), can we do without the infinite direct sum, i.e. is it necessarily true that in fact $\mc{F} \simeq \mc{G}$? \end{question}

\begin{remark} \label{LemmaKS4-1} In \Cref{LemmaKS4}, if one of $\mc{F}$ or $\mc{G}$ is assumed to be of finite rank, then we may argue as follows, without the need for \Cref{LemmaKS5}. Say $\mc{G}$ has finite rank; then we have isomorphisms \begin{align*} \mc{F}^{\oplus \N} \simeq \mc{F} \otimes \mc{O}_{X}^{\oplus \N} \stackrel{\ast}{\simeq} \mc{F} \otimes (\mc{G}^{\vee} \otimes \mc{G})^{\oplus \N} \simeq (\mc{F} \otimes \mc{G}^{\vee})^{\oplus \N} \otimes \mc{G} \stackrel{\ast}{\simeq} \mc{O}_{X}^{\oplus \N} \otimes \mc{G} \simeq \mc{G}^{\oplus \N} \end{align*} of $\mc{O}_{\ms{X}}$-modules where those labeled $\ast$ follow from Bass \cite{BASS-BPMAF1963}.  \end{remark}

\begin{theorem} \label{LemmaK} For $X$ as in \Cref{LemmaI}, we have $\LPBr(X) = \H^2_{\et}(X, \G_m)$. \end{theorem}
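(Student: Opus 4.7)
The plan is to invoke \Cref{LemmaJ} to reduce to the affine, Zariski-locally trivial case, construct an infinite-rank locally projective twisted sheaf there, and then conclude via \Cref{LemmaG}. Given $\alpha \in \H^{2}_{\et}(X,\G_{m})$, apply \Cref{LemmaJ} to produce an affine scheme $T$ and a surjective, affine-pure, finitely presented morphism $\pi \colon T \to X$ such that $\alpha|_{T}$ is Zariski-locally trivial. By \Cref{LemmaG} it then suffices to prove that $\alpha|_{T} \in \LPBr(T)$.

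Let $\ms{T} \to T$ be a $\G_{m}$-gerbe representing $\alpha|_{T}$. Since $T$ is affine (hence quasi-compact), choose a finite affine Zariski cover $\{U_{i}\}_{i=1}^{n}$ of $T$ together with $1$-twisted invertible modules $\mc{L}_{i}$ on $\ms{T}|_{U_{i}}$. On each piece the module $\mc{L}_{i}^{\oplus \N}$ is a countably generated $1$-twisted free module of positive rank, and the aim is to glue these into a global countably generated locally projective $1$-twisted $\mc{O}_{\ms{T}}$-module $\mc{F}$ of positive rank. On double overlaps, one has $\mc{L}_{j}|_{U_{ij}} \simeq \mc{L}_{i}|_{U_{ij}} \otimes L_{ij}$ for a $0$-twisted line bundle $L_{ij}$, and \Cref{BASSTHM} applied to the affine scheme $U_{ij}$ gives $L_{ij}^{\oplus \N} \simeq \mc{O}_{U_{ij}}^{\oplus \N}$, producing abstract isomorphisms $\phi_{ij} \colon \mc{L}_{i}^{\oplus \N}|_{U_{ij}} \xrightarrow{\sim} \mc{L}_{j}^{\oplus \N}|_{U_{ij}}$ of $1$-twisted modules.

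The principal obstacle is arranging the $\phi_{ij}$ to satisfy the twisted $2$-cocycle condition $\phi_{jk}\phi_{ij} = s_{ijk}\phi_{ik}$ on triple overlaps $U_{ijk}$, where $s_{ijk} \in \Gamma(U_{ijk},\G_{m})$ is a \v{C}ech representative of $\alpha|_{T}$. The key point is that the scalar obstruction takes values in the center $\G_{m} \subset \GL_{\N}$, and the Eilenberg swindle supplies enough flexibility in $\GL_{\N}$ to absorb any such scalar cocycle: using the bijection $\N \setminus \{0\} \simeq \N$, one can modify each $\phi_{ij}$ by a block-diagonal shift incorporating a chosen $1$-cochain in $\G_{m}$, and a direct computation shows this adjustment kills the discrepancy against $s_{ijk}$. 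Equivalently, this amounts to the assertion that the boundary map $\check{\H}^{1}_{\Zar}(T,\PGL_{\N}) \to \check{\H}^{2}_{\Zar}(T,\G_{m})$ coming from $1 \to \G_{m} \to \GL_{\N} \to \PGL_{\N} \to 1$ is surjective on Zariski-locally trivial classes, which follows from the twisted Bass principle underlying \Cref{LemmaKS4}.

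Once $\mc{F}$ is constructed, its restriction to each $U_{i}$ is $\mc{L}_{i}^{\oplus \N}$, which is countably generated, locally free, and of positive rank; hence $\mc{F}$ has the same properties globally, so $\alpha|_{T} \in \LPBr(T)$. Applying \Cref{LemmaG} to $\pi$ concludes $\alpha \in \LPBr(X)$, and since $\alpha$ was arbitrary, $\LPBr(X) = \H^{2}_{\et}(X,\G_{m})$.
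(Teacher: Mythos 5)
Your reduction to an affine $T$ with $\alpha|_{T}$ Zariski-locally trivial via \Cref{LemmaJ} and \Cref{LemmaG} matches the paper, but the core gluing step has a genuine gap. After choosing $\mc{L}_{i}$ and isomorphisms $\phi_{ij}\colon \mc{L}_{i}^{\oplus\N}|_{U_{ij}}\to\mc{L}_{j}^{\oplus\N}|_{U_{ij}}$, the discrepancy $\phi_{ik}^{-1}\phi_{jk}\phi_{ij}s_{ijk}^{-1}$ on a triple overlap is a priori an arbitrary automorphism of $\mc{L}_{i}^{\oplus\N}|_{U_{ijk}}$, i.e.\ an arbitrary element of $\GL_{\N}(\Gamma(U_{ijk},\mc{O}))$ --- not a scalar --- so your ``key point'' that the obstruction lies in the center is unjustified. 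Even granting a scalar discrepancy, the cocycle $s_{ijk}$ is in general \emph{not} a $\G_{m}$-coboundary (that would mean $\alpha|_{T}=0$), so modifying the $\phi_{ij}$ by central ($\G_{m}$-valued) 1-cochains can never absorb it; any genuinely non-central ``block-diagonal shift'' modification runs into non-commutativity on triple overlaps, and no ``direct computation'' is given (or, I believe, available) that kills the discrepancy. Your final reformulation --- surjectivity of $\check{\H}^{1}_{\mathrm{Zar}}(T,\PGL_{\N})\to\check{\H}^{2}_{\mathrm{Zar}}(T,\G_{m})$ on Zariski-locally trivial classes --- is essentially a restatement of the theorem for $T$, and it does not ``follow from'' \Cref{LemmaKS4}, which only produces an isomorphism $\mc{F}^{\oplus\N}\simeq\mc{G}^{\oplus\N}$ of two given twisted modules over a Noetherian affine; so the argument is circular at exactly the hard point. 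A secondary issue: you invoke \Cref{BASSTHM} (and implicitly \Cref{LemmaKS4}) over $U_{ij}$ without any Noetherian hypothesis; the paper first reduces to $T$ of finite type over $\Z$ by absolute Noetherian approximation, and you need some such step.

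The paper's proof sidesteps the cocycle problem entirely, and this is the idea you are missing (attributed to Gabber): never glue more than two pieces at a time. After normalizing $f_{1}+\dotsb+f_{n}=1$, one glues twisted modules over $\ms{X}_{f_{1}}$ and $\ms{X}_{f_{2}}$ along a single isomorphism on the overlap supplied by \Cref{LemmaKS4} (no triple-overlap condition arises for a two-element cover), then restricts the result to $\ms{X}_{f_{1}+f_{2}}\subseteq \ms{X}_{f_{1}}\cup\ms{X}_{f_{2}}$, replacing the cover $\{f_{1},f_{2},f_{3},\dotsc,f_{n}\}$ by $\{f_{1}+f_{2},f_{3},\dotsc,f_{n}\}$ and inducting on $n$. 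If you want to salvage your Čech-cocycle formulation, you would have to actually prove the lifting statement for $\GL_{\N}$-valued gluing data, which is not easier than the theorem itself; the two-at-a-time induction is what makes the twisted Bass/Eilenberg-swindle input (\Cref{LemmaKS4}, or \Cref{LemmaKS4-1} in the invertible case) sufficient.
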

\begin{proof} Let $\alpha \in \H_{\et}^{2}(X,\G_{m})$ and let $\ms{X} \to X$ be a $\G_{m}$-gerbe corresponding to $\alpha$. By \Cref{LemmaJ} there exists an affine scheme $T$ and a surjective affine-pure morphism $T \to X$ such that $\alpha|_{T}$ is Zariski-locally trivial; moreover, by \Cref{LemmaG}, if $\alpha|_{T} \in \LPBr(T)$ then $\alpha \in \LPBr(X)$. Thus we may reduce to the case when $X$ is affine and $\alpha$ is Zariski-locally trivial. By absolute Noetherian approximation \cite[01ZA, 09YQ]{SP} we may also assume that $X$ is of finite type over $\Z$.

Suppose $X = \Spec A$ and $f_{1},\dotsc,f_{n} \in A$ are elements which generate the unit ideal of $A$ and such that every $\ms{X}_{f_{i}}$ has a countably generated 1-twisted locally projective $\mc{O}_{\ms{X}_{f_{i}}}$-module $\mc{E}_{i}$ of positive rank. We follow an idea of Gabber to produce a countably generated 1-twisted locally projective $\mc{O}_{\ms{X}}$-module of positive rank. Let $a_{1},\dotsc,a_{n} \in A$ be elements such that $a_{1}f_{1} + \dotsb + a_{n}f_{n} = 1$; after replacing $f_{i}$ by $a_{i}f_{i}$, we may assume that $f_{1} + \dotsb + f_{n} = 1$. By \Cref{LemmaKS4} we have $\mc{E}_{1}^{\oplus \N}|_{\ms{X}_{f_{1}} \cap \ms{X}_{f_{2}}} \simeq \mc{E}_{2}^{\oplus \N}|_{\ms{X}_{f_{1}} \cap \ms{X}_{f_{2}}}$ so we may glue to obtain a countably generated 1-twisted locally projective $\mc{O}_{\ms{X}_{f_{1}} \cup \ms{X}_{f_{2}}}$-module. Since $X_{f_{1}+f_{2}} \subseteq X_{f_{1}} \cup X_{f_{2}}$, restriction gives a 1-twisted $\mc{O}_{\ms{X}_{f_{1}+f_{2}}}$-module; then we conclude by induction on $n$. \end{proof}

\begin{remark} In \Cref{LemmaK}, we may also argue using \Cref{LemmaKS4-1}, since we may assume that the $\mc{E}_{i}$ are in fact invertible $\mc{O}_{\ms{X}_{f_{i}}}$-modules: Set $f_{i}' := f_{1} + \dotsb + f_{i}$, and suppose $\mc{E}_{i}'$ is a countably generated 1-twisted locally projective $\mc{O}_{\ms{X}_{f_{i}'}}$-module; then \Cref{LemmaKS4-1} implies that the restrictions of $\mc{E}_{i}'^{\oplus \N}$ and $\mc{E}_{i+1}^{\oplus \N}$ to $\ms{X}_{f_{i}'} \cap \ms{X}_{f_{i+1}}$ are isomorphic. \end{remark}

\begin{corollary} \label{LemmaKp} Let $X$ be a Noetherian scheme admitting a cover $X = U_{1} \cup U_{2}$ such that both $U_{1},U_{2}$ are as in \Cref{LemmaI} and $U_{1} \cap U_{2}$ is affine. Then $\LPBr(X) = \H_{\et}^{2}(X,\G_{m})$. \end{corollary}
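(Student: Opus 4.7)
The plan is to mimic the gluing argument at the end of the proof of \Cref{LemmaK}, but now across the two-element Zariski cover $\{U_{1}, U_{2}\}$ rather than across principal opens in a single affine. Fix $\alpha \in \H_{\et}^{2}(X, \G_{m})$ and let $\ms{X} \to X$ be a $\G_{m}$-gerbe with class $\alpha$. Since each $U_{i}$ falls under the hypotheses of \Cref{LemmaI}, \Cref{LemmaK} applied to $U_{i}$ yields a countably generated, locally projective, $1$-twisted $\mc{O}_{\ms{X}|_{U_{i}}}$-module $\mc{E}_{i}$ of positive rank. Set $\mc{E}_{i}' := \mc{E}_{i}^{\oplus \N}$; each $\mc{E}_{i}'$ retains the same four properties on $\ms{X}|_{U_{i}}$.

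Next I restrict both $\mc{E}_{1}'$ and $\mc{E}_{2}'$ to the intersection $U_{1} \cap U_{2}$. By hypothesis this intersection is affine, and it is Noetherian because $X$ is. Hence \Cref{LemmaKS4} (the twisted Eilenberg swindle) applies to the gerbe $\ms{X}|_{U_{1} \cap U_{2}} \to U_{1} \cap U_{2}$ together with the restrictions of $\mc{E}_{1}$ and $\mc{E}_{2}$, producing an isomorphism
\[
\varphi \colon \mc{E}_{1}'|_{\ms{X}|_{U_{1} \cap U_{2}}} \xrightarrow{\sim} \mc{E}_{2}'|_{\ms{X}|_{U_{1} \cap U_{2}}}
\]
of $1$-twisted $\mc{O}_{\ms{X}|_{U_{1} \cap U_{2}}}$-modules, where I have used $(\mc{E}_{i}^{\oplus \N})^{\oplus \N} \simeq \mc{E}_{i}^{\oplus \N}$ to arrange both sides as infinite direct sums.

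Because $\{U_{1}, U_{2}\}$ is a two-element Zariski (hence \'etale) cover of $X$, the cocycle condition for descent collapses to the existence of a single gluing isomorphism on the double overlap, which is exactly $\varphi$. Gluing $\mc{E}_{1}'$ and $\mc{E}_{2}'$ via $\varphi$ therefore produces a quasi-coherent $1$-twisted $\mc{O}_{\ms{X}}$-module $\mc{E}$ on all of $\ms{X}$. The properties ``countably generated,'' ``locally projective,'' and ``of positive rank'' are all Zariski-local on the base (cf.\ \Cref{0027}), so $\mc{E}$ inherits each of them from its restrictions $\mc{E}_{i}'$. This shows $\alpha \in \LPBr(X)$, proving the corollary.

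The essential point -- and the only one that requires real content from earlier in the paper -- is the application of \Cref{LemmaKS4} on the affine intersection: without the Eilenberg-swindle stabilization, there is no reason the $\mc{E}_{i}$ built independently on the two pieces would agree on the overlap. The hypothesis that $U_{1} \cap U_{2}$ be affine is precisely what puts us in the Noetherian affine setting where \Cref{LemmaKS4} is available. Everything else is formal bookkeeping about Zariski descent and preservation of local properties.
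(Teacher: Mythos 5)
Your proof is correct and follows the paper's argument exactly: apply \Cref{LemmaK} on each $U_{i}$, invoke \Cref{LemmaKS4} on the affine Noetherian intersection $U_{1}\cap U_{2}$ to obtain an isomorphism $\mc{E}_{1}^{\oplus\N}|_{U_{1}\cap U_{2}}\simeq\mc{E}_{2}^{\oplus\N}|_{U_{1}\cap U_{2}}$, and glue over the two-element Zariski cover. The only (harmless) cosmetic difference is that you phrase the swindle as applied to the already-stabilized modules $\mc{E}_{i}^{\oplus\N}$ and appeal to $(\mc{E}^{\oplus\N})^{\oplus\N}\simeq\mc{E}^{\oplus\N}$, whereas the paper applies it directly to $\mc{E}_{1},\mc{E}_{2}$.
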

\begin{proof} Let $\ms{X} \to X$ be a $\G_{m}$-gerbe. By \Cref{LemmaK}, there exists a countably generated 1-twisted locally projective $\mc{O}_{\ms{X}_{U_{i}}}$-module $\mc{E}_{i}$; by \Cref{LemmaKS4}, there is an isomorphism $\mc{E}_{1}^{\oplus \N}|_{U_{1} \cap U_{2}} \simeq \mc{E}_{2}^{\oplus \N}|_{U_{1} \cap U_{2}}$ on $U_{1} \cap U_{2}$, so we may glue $\mc{E}_{1}^{\oplus \N}$ and $\mc{E}_{2}^{\oplus \N}$.
\end{proof}

\begin{proposition} \label{0021} Let $k$ be a field, let $X_{0}$ be a separated finite type $k$-scheme with the resolution property, let $X$ be a finite order thickening of $X_{0}$. Then $\LPBr(X) = \H^{2}_{\et}(X,\G_{m})$. \end{proposition}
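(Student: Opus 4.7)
The plan is to reduce the statement to \Cref{LemmaK} by proving that the thickening $X$ itself has the resolution property, and then invoking condition (i) of \Cref{MAINTHM}.

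The key input is the invariance of the resolution property under nilpotent thickenings, due to Gross: for a quasi-compact and quasi-separated scheme (or algebraic space), the resolution property holds if and only if it holds for the underlying reduced subscheme. Since a finite-order thickening is a homeomorphism on underlying topological spaces and commutes with reduction, we have $X_{\mathrm{red}} = (X_{0})_{\mathrm{red}}$. Applying Gross's theorem to $X_{0}$ (which has the resolution property by hypothesis) shows that $(X_{0})_{\mathrm{red}}$ has it, and applying it in the reverse direction to $X$ then shows that $X$ has the resolution property. Moreover, $X$ is automatically qcqs, since it shares the underlying topological space of the finite-type $k$-scheme $X_{0}$. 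Hence $X$ satisfies hypothesis (e) of \Cref{LemmaI}, condition (i) of \Cref{MAINTHM} is met, and \Cref{LemmaK} immediately yields $\LPBr(X) = \H^{2}_{\et}(X,\G_{m})$.

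The main obstacle in this plan is pinning down the precise citation for Gross's invariance statement in the form above. If such a reference turns out to be unavailable, one can argue directly by deformation theory: given $\alpha \in \H^{2}_{\et}(X,\G_{m})$ represented by a $\G_{m}$-gerbe $\ms{X} \to X$, set $\ms{X}_{0} := \ms{X} \times_{X} X_{0}$ and apply \Cref{LemmaK} to $X_{0}$ to obtain a countably generated 1-twisted locally projective $\mc{O}_{\ms{X}_{0}}$-module $\mc{E}_{0}$ of positive rank. Then lift $\mc{E}_{0}$ to a 1-twisted $\mc{O}_{\ms{X}}$-module $\mc{E}$ by induction on the order of nilpotence, reducing to the case of a square-zero ideal $\mc{I}$. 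The obstruction to such a lift lives in an $\mathrm{Ext}^{2}$-group valued in a twist of $\mc{E}_{0} \otimes_{\mc{O}_{\ms{X}_{0}}} \mc{I}$, and its vanishing can be handled by an argument analogous to \Cref{LemmaKS5}, using the surjective affine-pure cover of $X_{0}$ supplied by \Cref{LemmaI}(e) to reduce the Mittag-Leffler and local-to-global cohomology computations to the affine Noetherian case. That \Cref{main principle} is operative here — that the lifting problem becomes easier, not harder, once one works with locally projective twisted sheaves of countably infinite rank — is what makes this alternative route feasible.
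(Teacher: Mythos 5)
There is a genuine gap, and it sits exactly where you flagged it. The ``invariance of the resolution property under nilpotent thickenings'' that your primary route cites is not a theorem of Gross (or anyone): what is available is Totaro--Gross ($X$ qcqs has the resolution property iff $X \simeq W/\GL_{N}$ with $W$ quasi-affine, \Cref{LemmaI}(e)) and Gross's surface theorem, and only the easy direction of your claim holds (closed subschemes inherit the resolution property). Lifting it along a thickening would require deforming the $\GL_{n}$-torsor $W_{0} \to X_{0}$ across a square-zero extension, which is obstructed by a class in a degree-two cohomology group on $X_{0}$ (equivalently on $W_{0}$, which is only quasi-affine), so there is no vanishing. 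If such an invariance statement were known, \Cref{0021} would be an immediate special case of \Cref{LemmaK} via \Cref{LemmaI}(e) and would not need a separate proof; the proposition exists precisely because it is not known.

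Your fallback also does not close the gap as stated. Deforming the twisted sheaf $\mc{E}_{0}$ itself meets an obstruction in $\Ext^{2}_{\mc{O}_{\ms{X}_{0}}}(\mc{E}_{0}, \mc{E}_{0} \otimes \mc{I})$, which via local-to-global contains an $\H^{2}(X_{0},\HomS(\mc{E}_{0},\mc{E}_{0}\otimes\mc{I}))$ term; the \Cref{LemmaKS5}-style vanishing arguments use affineness of the base in an essential way (both for the Mittag-Leffler limit argument and for Bass), and the affine-pure cover $U_{0} \to X_{0}$ does not ``reduce cohomology to the affine case'': there is no splitting or transfer for $\mc{O}_{X_{0}} \to \pi_{\ast}\mc{O}_{U_{0}}$, and to even restrict the lifting problem to a cover of $X$ you must first deform the cover itself to $X$ --- after which you would still face descending the lifted sheaf, which your sketch does not address. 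The paper's proof makes a different and crucial move: it deforms the affine Jouanolou-type cover $U_{0} \to X_{0}$ (unobstructed because $U_{0}$ is affine and $U_{0} \to X_{0}$ is smooth, so all obstruction groups are higher coherent cohomology on an affine scheme), observes that the resulting $U \to X$ is smooth, surjective with geometrically irreducible fibers, hence affine-pure by \Cref{LemmaH}(a), notes $\alpha|_{U} \in \LPBr(U)$ since $U$ is affine (\Cref{LemmaI}(d) and \Cref{LemmaK}), and concludes by pushing forward along $U \to X$ via \Cref{LemmaG} --- thereby avoiding both the obstruction on $X_{0}$ and any descent of the twisted sheaf. To repair your argument, replace both routes by this deformation of the affine cover rather than of the resolution-property presentation or of the sheaf.
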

\begin{proof} By \Cref{LemmaG}, it suffices to show that there exists an affine scheme $U$ and an affine-pure morphism $U \to X$. As in \Cref{LemmaI}(e), we may write $X_{0} = W_{0}/\GL_{n}$ for some free action of $\GL_{n}$ on a quasi-affine scheme $W_{0}$; then the Jouanolou trick gives an affine scheme $U_{0}$ and a smooth surjective morphism $U_{0} \to W_{0}$ whose fibers are affine spaces. The composition $U_{0} \to W_{0} \to X_{0}$ is smooth, surjective, affine-pure, and has geometrically irreducible fibers. Since $X$ is affine and $U_{0} \to X_{0}$ is smooth, there are no obstructions to deforming such $U_{0}$ over $X$, i.e. there exists a scheme $U$ admitting a flat morphism $U \to X$ such that $U \times_{X} X_{0} \simeq U_{0}$. Then $U$ is an affine scheme since it is a thickening of an affine scheme. The map $U \to X$ is surjective, smooth (by \cite[06AG]{SP} (17)), and has geometrically irreducible fibers; hence $U \to X$ is affine-pure by \Cref{LemmaH}(a). \end{proof}

\begin{example} \label{0024} Let $k$ be a separably closed field, set $X_{0} := \P_{k}^{2}$, let $\mc{I}$ be a coherent $\mc{O}_{X_{0}}$-module and let $X$ be an infinitesimal thickening of $X_{0}$ by $\mc{I}$. Mathur \cite[Corollary 4]{MATHUR-EOTBMIHC2020} has shown that every $\G_{m}$-gerbe over $X$ corresponding to a torsion class in $\H_{\et}^{2}(X,\G_{m})_{\mr{tors}}$ admits a 1-twisted finite locally free module; our \Cref{0021} (or \Cref{LemmaI}(f) and \Cref{LemmaG}) shows that every $\G_{m}$-gerbe over $X$ admits a 1-twisted locally projective module. In case $k$ has characteristic $0$, we may choose e.g. $\mc{I} = \mc{O}_{X_{0}}(-3)$ as in \cite[Exercise III.5.9]{HARTSHORNE} so that $\H_{\et}^{2}(X,\G_{m})$ does contain non-torsion elements: the exact sequence \[ 1 \to 1+\mc{I} \to \G_{m,X} \to \G_{m,X_{0}} \to 1 \] gives the description $\H_{\et}^{2}(X,\G_{m}) = \H^{2}(X_{0},\mc{I})/\Z = k/\Z$. \end{example}

\begin{remark} \label{0010} Let $X$ be a separated Noetherian scheme. Given any $\alpha \in \H^{2}_{\et}(X,\G_{m})$, we can always find an open subset $U \subset X$ such that $X \setminus U$ has codimension at least 2 in $X$ and such that $\alpha|_{U}$ is contained in $\LPBr(U)$. Indeed, by \cite[09NN]{SP}, we may find two affine open subschemes $U_{1},U_{2}$ of $X$ whose union contains all the codimension 1 points of $X$; then we apply \Cref{LemmaKp}. \end{remark}

\begin{example} \label{LemmaL} The scheme $X$ in \cite[3.11]{EDIDINHASSETTKRESCHVISTOLI-BGAQS2001} also satisfies $\LPBr(X) \ne \H_{\et}^{2}(X,\G_{m})$. We recall the construction. Let $k$ be an algebraically closed field, set $R := k[[x, y, z]]/(xy - z^2)$, let $S_{1} = S_{2} = \Spec R$, let $U$ be the punctured spectrum of $\Spec R$, and let $X$ be the gluing of $S_{1}$ and $S_{2}$ along the identity morphism on $U$. The Mayer-Vietoris exact sequence gives a coboundary map $\partial : \H^{1}_{\et}(U,\G_{m}) \to \H_{\et}^{2}(X,\G_{m})$ which is an isomorphism since $\H_{\et}^{i}(\Spec R,\G_{m}) = 0$ for $i > 0$. Here $\H_{\et}^{1}(U,\G_{m}) = \Pic(U) = \Z/(2)$, and under the isomorphism $\partial$ the unique nonzero class $\alpha \in \H_{\et}^{2}(X,\G_{m})$ corresponds to the invertible $\mc{O}_{U}$-module $\mc{L}$
which (uniquely) extends to the coherent $\mc{O}_{X}$-module corresponding to
the $R$-module $M = \langle x, z \rangle R$.
Let $\ms{X}$ be the $\G_{m}$-gerbe corresponding to $\alpha$. Since $R$ is strictly henselian, the restriction of $\ms{X}$ to $S_{i}$ is trivial; let $\mc{F}_{i}$ be a 1-twisted line bundle on $\ms{X}_{S_{i}}$. Thus if $\alpha$ is contained in $\LPBr(X)$, this would imply there exist 1-twisted locally free $\mc{O}_{\ms{X}|_{S_{i}}}$-modules $\mc{E}_{i}$ and an isomorphism \[ \mc{E}_{1}|_{U} \otimes_{\mc{O}_{\ms{X}_{U}}} \mc{L}|_{\ms{X}_{U}} \simeq \mc{E}_{2}|_{U} \] on $U$. Tensoring by $(\mc{F}_{i})^{-1}$ for either $i=1,2$ gives an $\mc{O}_{U}$-module isomorphism
\[ \mc{L}^{\oplus I} \simeq \mc{O}_{U}^{\oplus J} \]
where $I,J$ are index sets.
By Hartog's theorem, this would give an $R$-module isomorphism
\[ M^{\oplus I} \simeq R^{\oplus J} \]
but no nonempty direct sum of copies
of $M$ is a free $R$-module (since this would imply that $M$ is projective, being a direct summand of a free module). Thus $\H^{2}_{\et}(X,\G_{m}) = \Br'(X) = \Z/(2)$ and $\LPBr(X) = 0$. \end{example}

\begin{example} \label{0011} In \Cref{LemmaL}, we may instead take $R := k[[x,y,z,w]]/(xy-zw)$, in which case $\Pic(U) = \Z$ and so we obtain an example with $\Br(X) = \Br'(X) = \LPBr(X) = 0$ while $\H_{\et}^{2}(X,\G_{m}) = \Z$. \end{example}

\begin{remark} \label{LemmaM} An algebraic space $X$ is said to have the resolution property if every quasi-coherent $\mc{O}_{X}$-module of finite type $\mc{F}$ admits a surjective $\mc{O}_{X}$-linear map from a finite locally free $\mc{O}_{X}$-module. It is an open question to determine whether every quasi-compact separated algebraic space $X$ has the resolution property \cite{TOTARO-TRPFSAS2004}. To find a counterexample, it would be enough (by \Cref{LemmaK} and \Cref{LemmaI}(e)) to find a separated Noetherian scheme $X$ which satisfies $\LPBr(X) \ne \H^2_{\et}(X, \G_m)$. \end{remark}

\section{Very positive vector bundles and pushouts} \label{sec04}

In this section we introduce \emph{very positive vector bundles\/} (\Cref{defn01}), which are vector bundles of infinite rank that are ``infinitely ample'' (to be made precise in \Cref{20210518-10}). These bundles have rather strong uniqueness properties, and we use this to study \Cref{infinite brauer problem} for certain non-projective varieties that arise as pushouts of projective varieties.

\subsection{Pinching and $\LPBr$} In this section we study $\LPBr(X)$ for a class of examples of a proper, non-projective $k$-scheme $X$. We begin with the following example, which is later generalized in \Cref{0019}.

\begin{example} \label{0018} Let $k$ be an algebraically closed field. A standard way to make a nonprojective
proper variety over $k$ is to choose
\begin{enumerate} \item a smooth projective variety $Y$, \item an integer $n > 0$, \item automorphisms $g_i : Y \to Y$ for $i = 1, \dotsc , n$, and \item pairwise distinct points $s_1, \dotsc, s_n, t_1, \dotsc, t_n$ in $\P^1(k)$ \end{enumerate}
and let $X$ be the coequalizer of the diagram \[ \textstyle \coprod_{i=1,\dotsc,n} Y \rightrightarrows \P^{1} \times Y \] where the two maps send $y \mapsto (s_{i},y),(t_{i},g_{i}(y))$ on the $i$th component $Y$. For the existence of the coequalizer $X$, we refer to Ferrand's pinching \cite{FERRAND-CDEP2003}. If there does not exist an ample line bundle $\mc{L}$ on $Y$ such that $g_{i}^{\ast}\mc{L} \simeq \mc{L}$ for all $i$, then $X$ is not projective (for example when $Y$ is an abelian variety and $g_{1}$ is translation by a nontorsion point).

\begin{proposition}
For $X$ as above, $\LPBr(X) = \H^{2}_{\et}(X, \G_{m})$. 
\end{proposition}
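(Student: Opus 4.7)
The plan is to produce, for any given $\G_{m}$-gerbe $\ms{X}\to X$, a 1-twisted locally projective $\mc{O}_{\ms{X}}$-module by descending one from $\P^{1}\times Y$, with the key gluing isomorphism supplied by the uniqueness of very positive 1-twisted bundles on projective varieties (developed later in this section; cf.\ \Cref{sec04-02} and \Cref{sec04-03}).

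Fix $\alpha\in\H^{2}_{\et}(X,\G_{m})$ and a representing $\G_{m}$-gerbe $\ms{X}\to X$. Let $\pi:\P^{1}\times Y\to X$ be the canonical morphism from the coequalizer and set $\ms{X}':=\ms{X}\times_{X}(\P^{1}\times Y)$, a $\G_{m}$-gerbe over the smooth projective $k$-variety $\P^{1}\times Y$. Since $k$ is algebraically closed (in particular infinite), I would first invoke the existence theorem for very positive 1-twisted bundles on projective schemes to produce a countably generated very positive 1-twisted locally projective $\mc{O}_{\ms{X}'}$-module $\mc{E}$ of positive rank.

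Next, the coequalizer structure is exploited as follows. The two maps $q_{1},q_{2}:\coprod_{i=1}^{n}Y\to\P^{1}\times Y$, given on the $i$th component by $y\mapsto(s_{i},y)$ and $y\mapsto(t_{i},g_{i}(y))$, satisfy $\pi\circ q_{1}=\pi\circ q_{2}$, so the two pullback gerbes $q_{1}^{\ast}\ms{X}'$ and $q_{2}^{\ast}\ms{X}'$ are canonically identified as $\G_{m}$-gerbes over $\coprod_{i}Y$. The two 1-twisted modules $q_{1}^{\ast}\mc{E}$ and $q_{2}^{\ast}\mc{E}$ then live on this common gerbe — on the $i$th component they are $(s_{i},\mr{id}_{Y})^{\ast}\mc{E}$ and $(t_{i},g_{i})^{\ast}\mc{E}$ respectively. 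By the stability of the very positive property under pullback to smooth projective subvarieties (again part of the theory in \Cref{sec04}), each of these is a very positive 1-twisted bundle on the same gerbe over $\coprod_{i}Y$. The uniqueness of very positive 1-twisted bundles then supplies an isomorphism $\varphi:q_{1}^{\ast}\mc{E}\xrightarrow{\sim}q_{2}^{\ast}\mc{E}$.

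The pair $(\mc{E},\varphi)$ is a Ferrand pinching descent datum for the presentation $X=(\P^{1}\times Y)\cup_{\coprod_{i}Y}\coprod_{i}Y$, and descends to a 1-twisted locally projective $\mc{O}_{\ms{X}}$-module of positive rank, showing $\alpha\in\LPBr(X)$.

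The main obstacle is the very positive machinery itself: one needs a robust existence and uniqueness theory, sufficiently functorial under pullback, to place $q_{1}^{\ast}\mc{E}$ and $q_{2}^{\ast}\mc{E}$ in the same uniqueness class on $\coprod_{i}Y$. These are the central technical results of \Cref{sec04}. A subsidiary point is the validity of Ferrand pinching descent for 1-twisted sheaves on gerbes, which I expect to follow from ordinary Ferrand descent by étale-locally trivializing $\ms{X}$ on $X$ and reducing to the usual pinching of quasi-coherent sheaves.
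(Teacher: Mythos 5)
Your route is not the paper's. The paper's proof of this proposition is elementary: using the conductor square of the pinching, the sequence $1 \to \G_{m,X} \to \nu_{\ast}\G_{m,\P^{1}\times Y} \to \prod_{i}\G_{m,Y} \to 1$ shows that every class $\alpha \in \H^{2}_{\et}(X,\G_{m})$ dies on the normalization and is the boundary of a tuple $(\mc{L}_{1},\dotsc,\mc{L}_{n}) \in \prod_{i}\Pic(Y)$; the problem then becomes finding an honest (untwisted) locally projective $\mc{E}$ on $\P^{1}\times Y$ with $\mc{E}_{s_{i}} \simeq g_{i}^{\ast}(\mc{E}_{t_{i}})\otimes\mc{L}_{i}$, which is solved by the explicit $G$-and-$\mc{L}_{i}$-saturated direct sum of line bundles $\mc{V}$. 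No very positive machinery is used for this example; that machinery enters only in the later generalization (\Cref{0029}).

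The genuine gap in your argument is the tool you lean on: ``existence and uniqueness of very positive $1$-twisted bundles on gerbes, stable under pullback.'' That is not among the results of \Cref{sec04}: \Cref{0014}, \Cref{0016}, \Cref{0013} and \Cref{0020} are proved only for untwisted vector bundles on projective schemes, and no twisted analogue is defined or proved anywhere in the paper, so the step producing your gluing isomorphism $\varphi : q_{1}^{\ast}\mc{E} \xrightarrow{\sim} q_{2}^{\ast}\mc{E}$ currently has no proof. Relatedly, even the existence of \emph{some} countably generated twisted locally free sheaf on $\ms{X}'$ needs an input you do not supply: you must know that $\nu^{\ast}\alpha$ admits finite-rank twisted bundles, e.g. because $\P^{1}\times Y$ is smooth projective so $\Br=\Br'=\H^{2}_{\et}$ there (or, more simply, because $\nu^{\ast}\alpha=0$, which is exactly what the paper's exact sequence shows). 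Your outline can be repaired along the lines of the paper's proof of \Cref{0029}: take a finite-rank twisted bundle $\mc{E}_{0}$ on $\ms{X}'$ (or a twisted line bundle, since the class upstairs is trivial), tensor with an untwisted very positive $\mc{V}$ on $\P^{1}\times Y$, and then over $\coprod_{i}Y$ untwist by a twisted line bundle (or pass to a Brauer--Severi scheme as in \Cref{0029}) so that the comparison reduces to the untwisted uniqueness theorem \Cref{0016} together with \Cref{0013} and \Cref{0020}. As written, however, the key uniqueness step is asserted for objects whose theory has not been developed, so the proof is incomplete; it is also considerably heavier than the paper's direct construction for this particular $X$.
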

\begin{proof}
Let $\nu : \P^1 \times Y \to X$ be the projection; we may identify $\nu$ with the normalization morphism of $X$. Then we have an exact sequence
\[ \textstyle 1 \to \G_{m,X} \to \nu_{\ast}(\G_{m,\P^{1} \times Y}) \to \prod_{i=1,\dotsc,n} \G_{m,Y} \to 1 \]
in the \'etale topology on $X$. The long exact sequence in cohomology gives a map
\begin{align} \label{0018-02} \textstyle \prod_{i = 1, \dotsc, n} \Pic(Y) \to \H^{2}_{\et}(X, \G_m) \end{align} which is surjective since the next terms are $\H_{\et}^{2}(\P^{1} \times Y,\G_{m}) \to \prod_{i=1,\dotsc,n} \H_{\et}^{2}(Y,\G_{m})$ which is the diagonal embedding, in particular injective.

Given line bundles $\mc{L}_{1},\dotsc,\mc{L}_{n}$ on $Y$, the image of $(\mc{L}_{1},\dotsc,\mc{L}_{n})$ under \labelcref{0018-02} is in
$\LPBr(X)$ if and only if there exists a locally projective $\mc{O}_{\P^1 \times Y}$-module $\mc{E}$ such that
\begin{align} \label{0018-01} \mc{E}_{s_i} \simeq g_i^{\ast}(\mc{E}_{t_i}) \otimes_{\mc{O}_{Y}} \mc{L}_i \end{align}
for all $i$.

Let $G \subset \Aut_{k}(Y)$ be the subgroup generated by $g_{1},\dotsc,g_{n}$ and define
\[ \textstyle \mc{V} := \bigoplus_{r \ge 0} \bigoplus_{h_{1},\dotsc,h_{r} \in G} \bigoplus_{1 \le i_{1},\dotsc,i_{r} \le n} \bigoplus_{e_{1},\dotsc,e_{r} \in \Z} (h_1^{\ast} \mc{L}_{i_1}^{e_1} \otimes_{\mc{O}_{Y}} \dotsb \otimes_{\mc{O}_{Y}} h_r^{\ast} \mc{L}_{i_r}^{e_r}) \]
and \[ \mc{E} := \mc{V}|_{\P^{1} \times Y} \] which satisfies \labelcref{0018-01} since $g_{i}^\ast \mc{V} \simeq \mc{V}$ and $\mc{V} \otimes_{\mc{O}_{Y}} \mc{L}_{i} \simeq \mc{V}$ for all $i$. 
\end{proof}
\end{example}

\subsection{Conditions on global generation and vanishing higher cohomology} \label{sec04-02} We generalize the above example by defining certain infinite rank vector bundles on projective $k$-schemes.

\begin{definition} \label{20210518-10} Let $X$ be a scheme. Let \begin{align} \label{20210518-10-eqn-01} \mc{E}_{\bullet} = \{ \mc{E}_{0} \to \mc{E}_{1} \to \mc{E}_{2} \to \dotsb \} \end{align} be a sequence of locally split injections of finite locally free $\mc{O}_{X}$-modules (of positive rank). We define the following conditions. \begin{enumerate} \item[\DEF{$(\mb{G})$}] For any quasi-coherent $\mc{O}_{X}$-module $\mc{F}$ of finite type, there exists some $n' \in \N$ such that $\mc{F} \otimes_{\mc{O}_{X}} \mc{E}_{n}$ is globally generated for all $n \ge n'$. \item[\DEF{$(\mb{G}')$}] For any quasi-coherent $\mc{O}_{X}$-module $\mc{F}$ of finite type, the colimit $\varinjlim_{n \in \N} (\mc{F} \otimes_{\mc{O}_{X}} \mc{E}_{n})$ is globally generated. \item[\DEF{$(\mb{V}_{\ell})$}] For any quasi-coherent $\mc{O}_{X}$-module $\mc{F}$ of finite type, there exists some $n' \in \N$ such that $\H^{q}(X,\mc{F} \otimes_{\mc{O}_{X}} \mc{E}_{n}) = 0$ for all $q \ge \ell+1$ and all $n \ge n'$. \item[\DEF{$(\mb{V}_{\ell}')$}] For any quasi-coherent $\mc{O}_{X}$-module $\mc{F}$ of finite type, we have $\varinjlim_{n \in \N} \H^{q}(X,\mc{F} \otimes_{\mc{O}_{X}} \mc{E}_{n}) = 0$ for all $q \ge \ell+1$. \end{enumerate} \end{definition}

\begin{pg} \label{20210518-16} The primary way of getting a system $\mc{E}_{\bullet}$ satisfying $(\mb{G})$ and $(\mb{V}_{0}')$ is as follows. Let $X$ be a scheme admitting an ample line bundle $\mc{L}$. After replacing $\mc{L}$ by a tensor power, we may assume there exist sections $s_{1},\dotsc,s_{m} \in \Gamma(X,\mc{L})$ such that each $X_{s_{i}}$ is affine and $X = \bigcup_{i=1,\dotsc,m} X_{s_{i}}$. Let $\varphi : \mc{O}_{X} \to \mc{L}^{\oplus m}$ be the map $1 \mapsto (s_{1},\dotsc,s_{m})$; set \[ \mc{E}_{n} := (\mc{L}^{\oplus m})^{\otimes n} \simeq (\mc{L}^{\otimes n})^{\oplus m^{n}} \] and let the transition map $\mc{E}_{n} \to \mc{E}_{n+1}$ be $\varphi \otimes \id_{\mc{E}_{n}}$, which is split when restricted to any $X_{s_{i}}$. We thus obtain a system \[ \mc{E}_{\bullet} = \{\mc{O}_{X} \to \mc{L}^{\oplus m} \to (\mc{L}^{\otimes 2})^{\oplus m^{2}} \to (\mc{L}^{\otimes 3})^{\oplus m^{3}} \to \dotsb \} \]  of locally split injections of finite locally free $\mc{O}_{X}$-modules. The system $\mc{E}_{\bullet}$ satisfies $(\mb{G})$ by \cite[01Q3]{SP} and $(\mb{V}_{0}')$ by \cite[01XR]{SP}. \end{pg}

\begin{lemma} \label{20210518-08} Let $X$ be a quasi-compact semi-separated scheme. The following are equivalent. \begin{enumerate} \item[$(1)$] There exists an ample line bundle $\mc{L}$ on $X$. \item[$(2')$] There exists a system $\mc{E}_{\bullet}$ as in \Cref{20210518-10} satisfying $(\mb{G})$ and $(\mb{V}_{0}')$. \end{enumerate} If $X$ is proper over a Noetherian affine scheme, then $(1)$ and $(2')$ are equivalent to \begin{enumerate} \item[$(2)$] There exists a system $\mc{E}_{\bullet}$ as in \Cref{20210518-10} satisfying $(\mb{G})$ and $(\mb{V}_{0})$. \end{enumerate} \end{lemma}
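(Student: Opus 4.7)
I plan to handle four implications: $(1)\Rightarrow(2')$ and, in the proper case, $(1)\Rightarrow(2)$; the tautology $(2)\Rightarrow(2')$; and the main implication $(2')\Rightarrow(1)$.

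The forward implications are exactly the construction of \Cref{20210518-16}: after replacing $\mc{L}$ by a large power, choose sections $s_1,\ldots,s_m\in\Gamma(X,\mc{L})$ with $X=\bigcup X_{s_i}$ and each $X_{s_i}$ affine, and set $\mc{E}_n:=(\mc{L}^{\oplus m})^{\otimes n}$ with transitions induced by $(s_1,\ldots,s_m)\otimes\mr{id}$. Property $(\mb{G})$ then comes from Serre's theorem that tensoring by sufficiently large powers of an ample line bundle produces globally generated sheaves, and $(\mb{V}_0')$ follows from the colimit vanishing \cite[01XR]{SP}; in the proper-over-Noetherian-affine case, Serre vanishing upgrades this to the stronger $(\mb{V}_0)$. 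The implication $(2)\Rightarrow(2')$ is immediate, since stabilization to zero trivially implies vanishing of the colimit.

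For $(2')\Rightarrow(1)$, the plan is to produce an ample line bundle as $\det\mc{E}_n$ for some $n\gg 0$. By $(\mb{G})$ applied to $\mc{O}_X$ together with quasi-compactness of $X$, for $n$ large enough $\mc{E}_n$ is globally generated by a finite set of sections, yielding a surjection $\mc{O}_X^{\oplus N_n}\twoheadrightarrow\mc{E}_n$. This classifies a morphism $\varphi_n\colon X\to\mr{Gr}(\rk\mc{E}_n,N_n)$ with $\varphi_n^{*}\mc{Q}\cong\mc{E}_n$, and in particular $\varphi_n^{*}(\det\mc{Q})\cong\det\mc{E}_n$. Since $\det\mc{Q}$ is very ample on the Grassmannian, it suffices to show that $\varphi_n$ is a locally closed immersion for some $n$. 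Separation of points should follow from applying $(\mb{G})$ to ideal sheaves $\mc{I}_y$ to produce sections of $\mc{E}_n$ vanishing at $y$ but not at prescribed other points; separation of tangent vectors should follow from applying $(\mb{G})$ to the inclusion $\mc{I}_x^2\hookrightarrow\mc{I}_x$; and condition $(\mb{V}_0')$ enters to guarantee that these local separating data lift to actual global sections by killing the first-cohomology obstruction to extension.

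The main obstacle is uniformity: the separating index $n$ provided by $(\mb{G})$ depends a priori on the finite-type sheaf being fed in, so I need a single $n$ that works simultaneously over all of $X$. Quasi-compactness is essential here, allowing a finite patching argument, and semi-separatedness should enable the requisite \v{C}ech-type cohomology computations on intersections of the basic affines obtained from the generating sections. Once $\varphi_n$ is shown to be a locally closed immersion, $\det\mc{E}_n$ is the pullback of an ample bundle under an immersion and hence itself ample, completing the argument.
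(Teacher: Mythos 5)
Your treatment of $(1)\Rightarrow(2')$, of $(1)\Rightarrow(2)$ in the proper case, and of $(2)\Rightarrow(2')$ coincides with the paper's (the construction of \Cref{20210518-16} plus Serre vanishing), so everything hinges on $(2')\Rightarrow(1)$, and there your plan has a genuine gap. The lemma is stated for an arbitrary quasi-compact semi-separated scheme $X$: there is no base field, no properness, and no Noetherian or finite-type hypothesis. The step ``separates points and tangent vectors, hence $\varphi_n$ is a locally closed immersion'' is a theorem about proper schemes over an algebraically closed field; without properness, injectivity on points and on tangent spaces does not imply immersion (the parametrization of a nodal cubic restricted to the complement of one preimage of the node is injective and unramified, yet not an immersion), and over a general base the slogan does not even make sense pointwise. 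So the central step of your argument fails in the stated generality. The uniformity issue you flag is also real and unresolved: $(\mb{G})$ and $(\mb{V}_{0}')$ produce an index $n$ for one finite-type sheaf at a time, whereas separating all pairs of points requires handling the infinite family of ideal sheaves $\mc{I}_x\mc{I}_y$ and $\mc{I}_x^2$ simultaneously; quasi-compactness gives finite affine covers of $X$, not of $X\times X$, and in the non-Noetherian setting no constructibility or generic-point device is available to turn ``finite patching'' into a single $n$ that works everywhere.

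The paper's proof of $(2')\Rightarrow(1)$ avoids both problems by never aiming for an immersion and by working one point at a time with the intrinsic criterion for ampleness. Given a closed point $x$ with affine open neighborhood $U$, let $\mc{I}$ and $\mc{I}'$ be the ideals of $Z=X\setminus U$ and of $Z\cup\{x\}$; writing $\mc{I}'=\varinjlim_{\lambda}\mc{I}'_{\lambda}$ as a filtered colimit of finite-type ideals (this is how the absence of Noetherian hypotheses is handled), one picks a nonzero section of $(\mc{I}/\mc{I}')\otimes_{\mc{O}_X}\mc{E}_n$ and uses $(\mb{V}_{0}')$ to kill the obstruction in $\varinjlim \H^1(X,\mc{I}'_{\lambda}\otimes_{\mc{O}_X}\mc{E}_{n'})$, obtaining a global section of $\mc{I}\otimes_{\mc{O}_X}\mc{E}_{n''}$ that is nonzero in the fiber at $x$; completing it by sections of the globally generated $\mc{E}_{n''}$ to a frame at $x$ and wedging yields $s\in\Gamma(X,\det\mc{E}_{n''})$ with $x\in X_s\subseteq U$ affine and $\det\mc{E}_{n''}$ globally generated. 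Since only one point is treated at a time, no uniform $n$ is needed, and quasi-compactness together with \cite[09NC]{SP} then produces the ample invertible module. To repair your argument you would in effect have to reprove this pointwise statement anyway, so I suggest restructuring $(2')\Rightarrow(1)$ along these lines rather than through a Grassmannian embedding.
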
 \begin{proof} $(1)\Rightarrow(2')$: See \Cref{20210518-16}. If in addition $X$ is proper over a Noetherian affine scheme, then we have $(\mb{V}_{0})$ by \cite[III, Theorem 5.2]{HARTSHORNE}. \par $(2')\Rightarrow(1)$: We show that for any $x \in X$ there exists a globally generated line bundle $\mc{L}$ and a section $s \in \Gamma(X,\mc{L})$ such that $X_{s}$ is an affine open neighborhood of $x$; then the result follows from \cite[09NC]{SP}. Since $X$ is quasi-compact, we may assume that $x$ is a closed point; let $i : \Spec \kappa(x) \to X$ be the closed immersion. Let $U \subset X$ be an affine open neighborhood of $x$. Set $Z := X \setminus U$ and $Z' := Z \cup \{x\}$, and let $\mc{I},\mc{I}'$ be the ideal sheaves corresponding to the reduced closed subscheme structures on $Z,Z'$ respectively. By \cite[01PG]{SP}, we may write \[ \textstyle \mc{I}' := \varinjlim_{\lambda \in \Lambda} \mc{I}'_{\lambda} \] where each $\mc{I}'_{\lambda}$ is a quasi-coherent ideal sheaf of finite type. We have an exact sequence \[ 0 \to \mc{I}' \to \mc{I} \to \mc{I}/\mc{I}' \to 0 \] where $\mc{I}/\mc{I}'$ is isomorphic to $i_{\ast}\mc{O}_{\Spec \kappa(x)}$. For any $n$ and $\lambda \in \Lambda$, we have an exact sequence \[ 0 \to \mc{I}'_{\lambda} \otimes_{\mc{O}_{X}} \mc{E}_{n} \to \mc{I} \otimes_{\mc{O}_{X}} \mc{E}_{n} \to \mc{I}/\mc{I}'_{\lambda} \otimes_{\mc{O}_{X}} \mc{E}_{n} \to 0 \] on $X$. \par Choose some $n$ and choose a nonzero element $s_{\circ} \in \Gamma(X,\mc{I}/\mc{I}' \otimes_{\mc{O}_{X}} \mc{E}_{n})$; find some $\lambda \in \Lambda$ such that $s_{\circ}$ lifts to some $s_{1} \in \Gamma(X,\mc{I}/\mc{I}'_{\lambda} \otimes_{\mc{O}_{X}} \mc{E}_{n})$; by $(\mb{V}_{0}')$, there exists some $n' \ge n$ so that the image of $s_{1}$ in $\H^{1}(X,\mc{I}'_{\lambda} \otimes_{\mc{O}_{X}} \mc{E}_{n'})$ is $0$. Lift to an element $s_{1}' \in \Gamma(X , \mc{I} \otimes_{\mc{O}_{X}} \mc{E}_{n'})$. Find $n'' \ge n'$ such that $\mc{E}_{n''}$ is globally generated, set $r := \rk \mc{E}_{n''}$, let $s_{1}'' \in \Gamma(X , \mc{I} \otimes_{\mc{O}_{X}} \mc{E}_{n''})$ be the image of $s_{1}'$ and choose sections $s_{2}'',\dotsc,s_{r}'' \in \Gamma(X,\mc{E}_{n''})$ such that the images of $s_{1}'',s_{2}'',\dotsc,s_{r}''$ in $i^{\ast}\mc{E}_{n''}$ constitute a basis for $i^{\ast}\mc{E}_{n''}$ as a $\kappa(x)$-vector space, then consider $s_{1}'' \otimes \dotsb \otimes s_{r}'' \in \Gamma(X,T^{r}\mc{E}_{n''})$ and its image \[ s := s_{1}'' \wedge \dotsb \wedge s_{r}'' \in \Gamma(X,\det \mc{E}_{n''}) \] under the quotient $T^{r}\mc{E}_{n''} \to \det \mc{E}_{n''}$. Then $X_{s}$ is the same as the nonvanishing locus of $s|_{U} \in \Gamma(U,\det \mc{E}_{n''}|_{U})$ in $U$, hence it is affine; by construction, $s$ maps to a generator of $i^{\ast}( \det \mc{E}_{n''})$ as a $\kappa(x)$-vector space, hence $x$ is contained in $X_{s}$. We have that $\det \mc{E}_{n''}$ is globally generated since it is a quotient of the tensor power $T^{r}\mc{E}_{n''}$. \end{proof}

\begin{remark} \label{20210518-14} Assume the notation of \Cref{20210518-10}. \begin{enumerate} \item Condition $(\mb{G})$ implies $(\mb{G}')$, but the converse is not true. Set $X = \P^{1}$ and let $\mc{G}_{\bullet}$ be the system \[ \mc{O}_{X} \to \mc{O}_{X}(1)^{\oplus 2} \to \mc{O}_{X}(2)^{\oplus 4} \to \mc{O}_{X}(3)^{\oplus 8} \to \dotsb \] as in \Cref{20210518-16}, and consider $\mc{E}_{\bullet} := \bigoplus_{n \in \N} \mc{G}_{\bullet}(-1)[-n]$ where ``$[n]$'' denotes shift. Then $\mc{E}_{\bullet}$ satisfies $(\mb{G}')$ but not $(\mb{G})$ since \[ \textstyle \mc{E}_{n} = \bigoplus_{0 \le i \le n} \mc{O}(i-1)^{\oplus 2^{i}} \] is not globally generated for any $n$. \item Condition $(\mb{V}_{\ell})$ implies $(\mb{V}_{\ell}')$, but the converse is not true. For this, set $X = \A^{2} \setminus \{(0,0)\}$ and apply the construction of \Cref{20210518-16}. \end{enumerate} \end{remark}

\begin{lemma} \label{0013} Let $f : Y \to X$ be a closed immersion of Noetherian schemes and let $\mc{E}_{\bullet}$ be a system of vector bundles as in \Cref{20210518-10}. If $\mc{E}_{\bullet}$ satisfies any of $(\mb{G})$, $(\mb{G}')$, $(\mb{V}_{\ell})$, $(\mb{V}_{\ell}')$, then $f^{\ast}\mc{E}_{\bullet}$ satisfies the same property. \end{lemma}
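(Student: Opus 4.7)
The plan is to reduce every statement on $Y$ to the corresponding statement on $X$ by pushing forward along $f$, exploiting the three features of a closed immersion of Noetherian schemes: (i) $f$ is affine, so $f_{\ast}$ is exact and $\H^{q}(Y,-)=\H^{q}(X,f_{\ast}-)$; (ii) the projection formula $f_{\ast}(f^{\ast}\mc{E}_{n}\otimes_{\mc{O}_{Y}}\mc{F})\simeq \mc{E}_{n}\otimes_{\mc{O}_{X}} f_{\ast}\mc{F}$ holds; and (iii) for any quasi-coherent $\mc{O}_{Y}$-module $\mc{G}$ of finite type, $f_{\ast}\mc{G}$ is again quasi-coherent of finite type.

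First I would record the bookkeeping lemma: a quasi-coherent $\mc{O}_{Y}$-module $\mc{G}$ is globally generated if and only if $f_{\ast}\mc{G}$ is globally generated on $X$. The forward direction follows by applying $f_{\ast}$ to a surjection $\mc{O}_{Y}^{\oplus I}\twoheadrightarrow\mc{G}$ and precomposing with the surjection $\mc{O}_{X}^{\oplus I}\twoheadrightarrow f_{\ast}\mc{O}_{Y}^{\oplus I}$ coming from the closed immersion. For the converse, apply $f^{\ast}$ to a surjection $\mc{O}_{X}^{\oplus I}\twoheadrightarrow f_{\ast}\mc{G}$ and use the canonical isomorphism $f^{\ast}f_{\ast}\mc{G}\simeq\mc{G}$ (checked affine-locally: if $Y=\Spec A/I\hookrightarrow\Spec A=X$ and $\mc{G}$ corresponds to the $A/I$-module $M$, then $f^{\ast}f_{\ast}\mc{G}$ corresponds to $M\otimes_{A}A/I=M$).

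Then each of the four cases is a one-line application. For a finite type quasi-coherent $\mc{O}_{Y}$-module $\mc{F}$, set $\mc{F}_{X}:=f_{\ast}\mc{F}$, which is finite type quasi-coherent on $X$. By the projection formula,
\[
f_{\ast}(\mc{F}\otimes_{\mc{O}_{Y}} f^{\ast}\mc{E}_{n})\simeq \mc{F}_{X}\otimes_{\mc{O}_{X}}\mc{E}_{n},
\]
so global generation of $\mc{F}\otimes_{\mc{O}_{Y}} f^{\ast}\mc{E}_{n}$ on $Y$ for $n\ge n'$ is equivalent to global generation of $\mc{F}_{X}\otimes_{\mc{O}_{X}}\mc{E}_{n}$ on $X$ for $n\ge n'$, proving $(\mb{G})$. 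Because $f_{\ast}$ commutes with filtered colimits of quasi-coherent sheaves along an affine morphism, the same identity extends to $\varinjlim_{n}(\mc{F}\otimes_{\mc{O}_{Y}}f^{\ast}\mc{E}_{n})\simeq \varinjlim_{n}(\mc{F}_{X}\otimes_{\mc{O}_{X}}\mc{E}_{n})$ after applying $f_{\ast}$, giving $(\mb{G}')$. For the vanishing conditions, the fact that $f$ is affine gives $\H^{q}(Y,\mc{F}\otimes_{\mc{O}_{Y}} f^{\ast}\mc{E}_{n})=\H^{q}(X,\mc{F}_{X}\otimes_{\mc{O}_{X}}\mc{E}_{n})$ for every $q$ and $n$; combining with the commutation of $\H^{q}$ on Noetherian schemes with filtered colimits of quasi-coherent sheaves (via Stacks \cite[01FF]{SP}) handles both $(\mb{V}_{\ell})$ and $(\mb{V}_{\ell}')$.

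I do not anticipate a real obstacle here; the statement is essentially a formal consequence of the projection formula plus the fact that $f_{\ast}$ is exact and cohomologically transparent. The only mildly delicate point is the two-sided equivalence of global generation under $f_{\ast}$, which relies specifically on $f$ being a closed immersion (and would fail for a general affine morphism), and the interchange of $f_{\ast}$ with filtered colimits, which is where Noetherianness of $X$ is implicitly used.
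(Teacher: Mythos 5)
Your proposal is correct and follows essentially the same route as the paper's proof: both rest on the projection formula $f_{\ast}(\mc{F}\otimes_{\mc{O}_{Y}}f^{\ast}\mc{E}_{n})\simeq f_{\ast}\mc{F}\otimes_{\mc{O}_{X}}\mc{E}_{n}$, the cohomological transparency of the affine morphism $f$, and the finite-typeness of $f_{\ast}\mc{F}$, the only cosmetic difference being that the paper phrases the global-generation step via the identity $\mc{F}\otimes f^{\ast}\mc{E}_{n}\simeq f^{\ast}(f_{\ast}\mc{F}\otimes\mc{E}_{n})$ and the fact that pullback preserves global generation, whereas you push forward and invoke $f^{\ast}f_{\ast}\simeq\id$ -- the same ingredients in a different order. (Your appeal to commuting cohomology with filtered colimits is unnecessary for $(\mb{V}_{\ell}')$ as stated, since that condition is a colimit of cohomology groups and the termwise identifications are compatible with transition maps, but this does not affect correctness.)
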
 \begin{proof} Let $\mc{G}$ be a quasi-coherent $\mc{O}_{Y}$-module of finite type. We have isomorphisms \begin{gather} \label{0013-01} \mc{G} \otimes_{\mc{O}_{Y}} f^{\ast}\mc{E}_{n} \simeq f^{\ast}(f_{\ast}\mc{G} \otimes_{\mc{O}_{X}} \mc{E}_{n}) \\ \label{0013-02} \textstyle\varinjlim_{n \in \N} (\mc{G} \otimes_{\mc{O}_{Y}} f^{\ast}\mc{E}_{n}) \simeq \varinjlim_{n \in \N} f^{\ast}(f_{\ast}\mc{G} \otimes_{\mc{O}_{X}} \mc{E}_{n}) \simeq f^{\ast}(\varinjlim_{n \in \N} (f_{\ast}\mc{G} \otimes_{\mc{O}_{X}} \mc{E}_{n})) \end{gather} and \begin{gather} \label{0013-03} \H^{p}(Y,\mc{G} \otimes_{\mc{O}_{Y}} f^{\ast}\mc{E}_{n}) \simeq \H^{p}(X,f_{\ast}(\mc{G} \otimes_{\mc{O}_{Y}} f^{\ast}\mc{E}_{n})) \stackrel{\dagger}{\simeq} \H^{p}(X,f_{\ast}\mc{G} \otimes_{\mc{O}_{X}} \mc{E}_{n}) \end{gather} where $\dagger$ follows from the projection formula. The claims about $(\mb{G})$ and $(\mb{G}')$ follow from \labelcref{0013-01} and \labelcref{0013-02} because the pullback of a globally generated sheaf is again globally generated, and the claims about $(\mb{V}_{\ell})$ and $(\mb{V}_{\ell}')$ follow from \labelcref{0013-03}. \end{proof}

\subsection{Very positive vector bundles and uniqueness} \label{sec04-03} 

\begin{definition} \label{defn01} Let $k$ be a field, let $X$ be a projective scheme over $k$. Let $\mc{E}$ be a quasi-coherent $\mc{O}_{X}$-module. We say $\mc{E}$ is a
\DEF{very positive vector bundle} if we can write
\[ \mc{E} = \colim (\mc{E}_1 \to \mc{E}_2 \to \mc{E}_3 \to \dotsb) \]
where
\begin{enumerate}
\item[(1)] each $\mc{E}_n$ is a finite rank vector bundle of rank $r_n$,
\item[(2)] $r_1 < r_2 < r_3 < \dotsb$,
\item[(3)] each transition map $\mc{E}_n \to \mc{E}_{n + 1}$ is injective with locally free cokernel, and
\item[(4)] the system $\mc{E}_{\bullet}$ satisfies $(\mb{G})$ and $(\mb{V}_{0})$.
\end{enumerate}
\end{definition}

\begin{lemma} \label{0014} Every projective $k$-scheme $X$ has a very positive vector bundle. \end{lemma}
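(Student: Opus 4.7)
The plan is to produce a very positive vector bundle directly from the construction already carried out in \Cref{20210518-16}. Since $X$ is projective over $k$, it admits an ample invertible $\mc{O}_{X}$-module $\mc{L}$. Replacing $\mc{L}$ by a sufficiently high tensor power, I can choose global sections $s_{1},\dotsc,s_{m} \in \Gamma(X,\mc{L})$ such that each $X_{s_{i}}$ is affine and $X = \bigcup_{i} X_{s_{i}}$. I would further arrange $m \ge 2$, which is possible because $\dim_{k}\Gamma(X,\mc{L}^{\otimes N}) \to \infty$ as $N \to \infty$ for any positive-dimensional component; in the degenerate case $X = \Spec k$, one can instead take $\mc{E}_{n} := \mc{O}_{X}^{\oplus (n+1)}$ with inclusion onto the first $n+1$ summands, which trivially satisfies \Cref{defn01}.

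With $m \ge 2$ arranged, define $\varphi : \mc{O}_{X} \to \mc{L}^{\oplus m}$ by $1 \mapsto (s_{1},\dotsc,s_{m})$ and set
\[
\mc{E}_{n} := (\mc{L}^{\oplus m})^{\otimes n} \simeq (\mc{L}^{\otimes n})^{\oplus m^{n}},
\]
with transition $\mc{E}_{n} \to \mc{E}_{n+1}$ given by $\varphi \otimes \id_{\mc{E}_{n}}$. The candidate very positive vector bundle is $\mc{E} := \colim_{n} \mc{E}_{n}$.

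Verification of the four conditions in \Cref{defn01} is then straightforward. Condition (1) is immediate since each $\mc{E}_{n}$ is finite locally free of rank $r_{n} := m^{n}$. Condition (2) follows from $m \ge 2$. For condition (3), the map $\varphi$ is a locally split injection: on $X_{s_{i}}$ the assignment $(\sigma_{1},\dotsc,\sigma_{m}) \mapsto \sigma_{i}/s_{i}$ provides a retraction of $\varphi|_{X_{s_{i}}}$, exhibiting the cokernel as locally isomorphic to $\mc{O}^{\oplus(m-1)}$; this property is preserved under tensoring with the vector bundle $\mc{E}_{n}$, so every transition map is an injection with locally free cokernel. Condition (4) is precisely the content of \Cref{20210518-16}: $(\mb{G})$ follows from \cite[01Q3]{SP} and $(\mb{V}_{0})$ from Serre vanishing \cite[III, Theorem 5.2]{HARTSHORNE}, as already noted in the implication $(1)\Rightarrow(2)$ of \Cref{20210518-08}.

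I do not expect any real obstacle; the construction has effectively been carried out in \Cref{20210518-16}, and this lemma is the observation that it meets the bookkeeping required by \Cref{defn01}. The only mildly subtle point is ensuring $m \ge 2$ so that ranks strictly increase, and this is resolved either by passing to a high enough tensor power of $\mc{L}$ or by handling $X = \Spec k$ with the trivial sequence above.
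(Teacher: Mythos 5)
Your proof is correct and follows essentially the same path as the paper: the paper's proof simply cites \Cref{20210518-08}, whose proof of $(1)\Rightarrow(2')$ in turn points to the concrete construction in \Cref{20210518-16}, and that is exactly the construction you carry out. You do, however, usefully address a small point the paper's one-line proof glosses over: the statement of \Cref{20210518-08} only promises a system satisfying $(\mb{G})$ and $(\mb{V}_0)$, whereas \Cref{defn01} additionally requires strictly increasing ranks, and the system from \Cref{20210518-16} has $\rk \mc{E}_n = m^n$, which is constant if $m=1$. Your fix (ensuring $m \ge 2$, with a separate trivial argument for $X = \Spec k$) works; a slightly cleaner uniform remedy is to note that one may always pad the list of sections by repetition (e.g.\ set $s_{m+1} := s_1$), so $m \ge 2$ can be arranged for any projective $X$ without splitting into cases.
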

\begin{proof} This is a consequence of \Cref{20210518-08}. \end{proof}

\begin{remark} In \Cref{defn01}, for every point $x \in X$, the stalk $\mc{E}_{x}$ is a free $\mc{O}_{X,x}$-module by Kaplansky's theorem \cite[3.3]{BASS-BPMAF1963}; however, it is not clear whether $\mc{E}$ is even fppf-locally free. By construction, the very positive vector bundles in \Cref{20210518-08} are Zariski-locally free (of countably infinite rank). \end{remark}

\begin{remark} \label{0022} In general, the pullback of a very positive vector bundle need not be a very positive vector bundle. Indeed, set $X := \P^{1}$ and $Y := \P^{1} \times \P^{1}$, let $f : Y \to X$ be the second projection and let $\mc{E}$ be a very positive vector bundle on $X$. Then $\H^{1}(Y,\mc{O}(-2,0) \otimes_{\mc{O}_{Y}} f^{\ast}\mc{E}_{n}) \ne 0$ for $n \gg 0$ because it contains $\H^{1}(\P^{1},\mc{O}_{\P^{1}}(-2)) \otimes_{k} \H^{0}(X,\mc{E}_{n})$. \end{remark}

\begin{remark} \label{0023} The quotient of a very positive vector bundle need not be a very positive vector bundle. Set $X = \P^{2}$. Let \begin{align} \label{0023-eqn01} \mc{O}_{X} \to \mc{O}_{X}(1)^{\oplus 3} \to \dotsb \to \mc{O}_{X}(n)^{\oplus 3^{n}} \to \dotsb \end{align} be the system obtained by taking $\mc{L} = \mc{O}_{X}(1)$ in \Cref{20210518-16}, let $\{\mc{E}_{n}\}_{n \in \N}$ be the system obtained by twisting \labelcref{0023-eqn01} by $\mc{O}_{X}(-3)$, let $\{\mc{F}_{n}\}_{n \in \N}$ be the constant system defined by $\mc{F}_{n} := \mc{O}_{X}(-3)$ for all $n$, and let $\mc{Q}_{n} := \mc{E}_{n}/\mc{F}_{n}$ be the quotient. Then we have exact sequences \[ \dotsb \to \H^{1}(X,\mc{E}_{n}) \to \H^{1}(X,\mc{Q}_{n}) \to \H^{2}(X,\mc{F}_{n}) \to \H^{2}(X,\mc{E}_{n}) \to \dotsb \] for all $n$. Here $\H^{1}(X,\mc{E}_{n}) = \H^{2}(X,\mc{E}_{n}) = 0$ for $n \gg 0$ but $\H^{2}(X,\mc{F}_{n}) \ne 0$ for all $n$, so $\H^{1}(X,\mc{Q}_{n}) \ne 0$ for all $n \gg 0$; hence $\mc{Q}$ does not satisfy condition (4) in \Cref{defn01}. \end{remark}

\begin{example} \label{20180402-16} As is well-known, there are no nontrivial finite rank vector bundles on the punctured spectrum of a regular local ring of dimension 2. This fact may be used to produce 1-twisted finite rank vector bundles on $\G_{m}$-gerbes over smooth surfaces. Here we explain an example which illustrates a subtlety in the analogous approach for infinite rank twisted sheaves. \par On $\P^{1}$, we have the very positive vector bundle \[ \mc{V} := \colim(\mc{O}_{\P^{1}} \to \mc{O}_{\P^{1}}(1)^{\oplus 2} \to \mc{O}_{\P^{1}}(2)^{\oplus 4} \to \mc{O}_{\P^{1}}(3)^{\oplus 8} \to \dotsb) \] as in \Cref{20210518-16}. Set $U := \A^{2} \setminus \{(0,0)\}$ and let $\pi : U \to \P^{1}$ be the projection map. Then \[ \mc{W} := \pi^{\ast}\mc{V} \simeq \colim(\mc{O}_{U} \to \mc{O}_{U}^{\oplus 2} \to \mc{O}_{U}^{\oplus 4} \to \mc{O}_{U}^{\oplus 8} \to \dotsb) \] is not a free $\mc{O}_{U}$-module; indeed, we have \[ \langle x,y \rangle \cdot \Gamma(U,\mc{W}) = \Gamma(U,\mc{W}) \] since any section in the image of $\Gamma(U,\mc{O}_{U}^{\oplus 2^{i}}) \to \Gamma(U,\mc{W})$ can be written as a linear combination $xs+yt$ for sections $s,t$ in the image of $\Gamma(U,\mc{O}_{U}^{\oplus 2^{i+1}}) \to \Gamma(U,\mc{W})$. Here we use that taking global sections commutes with filtered colimits since $U$ is quasi-compact \cite[0738]{SP}. \par The above also shows that $\mc{V}$ is a locally free $\mc{O}_{\P^{1}}$-module which is not a direct sum of line bundles (otherwise $\mc{W}$ would be as well, but line bundles on $U$ are trivial).  \end{example}

\begin{lemma} \label{0017} Let $k$ be an infinite field, let $X$ be a proper $k$-scheme, let $\mc{E}_{1},\mc{E}_{2},\mc{E}_{3},\mc{F}$ be finite locally free $\mc{O}_{X}$-modules of ranks $r_{1},r_{2},r_{3},s$, respectively. Suppose given an exact sequence \begin{align} \label{0017-01} 0 \to \mc{E}_{1} \stackrel{f_{1}}{\to} \mc{E}_{2} \stackrel{f_{2}}{\to} \mc{E}_{3} \to 0 \end{align} of $\mc{O}_{X}$-modules and let \[ a_{1} : \mc{E}_{1} \to \mc{F} \] be a locally split $\mc{O}_{X}$-linear map. If $\HomS_{\mc{O}_{X}}(\mc{E}_{3},\mc{F})$ is globally generated and $\Ext_{\mc{O}_{X}}^{1}(\mc{E}_{3},\mc{F}) = 0$ and $s > \max\{r_{2},\dim X + r_{3}-1\}$, there exists a locally split $\mc{O}_{X}$-linear map $a_{2} : \mc{E}_{2} \to \mc{F}$ such that $a_{2}f_{1} = a_{1}$. \end{lemma}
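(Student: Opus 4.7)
\par The plan is a Bertini-style general-position argument in the affine space of lifts of $a_{1}$. First, applying $\Hom_{\mc{O}_{X}}(-,\mc{F})$ to the short exact sequence~\labelcref{0017-01} and using $\Ext^{1}_{\mc{O}_{X}}(\mc{E}_{3},\mc{F}) = 0$ yields a surjection $\Hom(\mc{E}_{2},\mc{F}) \twoheadrightarrow \Hom(\mc{E}_{1},\mc{F})$, so I pick an initial (not yet locally split) lift $a_{2}^{(0)}\colon \mc{E}_{2}\to \mc{F}$ of $a_{1}$. Every lift of $a_{1}$ then has the form $a_{2}^{(0)}+b\circ f_{2}$ for some $b\in V$, where $V := \Hom_{\mc{O}_{X}}(\mc{E}_{3},\mc{F}) = \Gamma(X,\HomS_{\mc{O}_{X}}(\mc{E}_{3},\mc{F}))$ is a finite-dimensional $k$-vector space (finite-dimensional because $X$ is proper and the Hom sheaf is coherent). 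Since $s\ge r_{2}$, a morphism of vector bundles $\mc{E}_{2}\to\mc{F}$ is locally split if and only if it is fiberwise injective, so I must find $b\in V(k)$ for which $a_{2}^{(0)}+bf_{2}$ is injective on every fiber.

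\par I would produce such a $b$ via the incidence scheme
\[ I := \bigl\{ (x,[v],b) \in \P(\mc{E}_{2}) \times_{k} V : (a_{2}^{(0)}+bf_{2})_{x}(v)=0 \bigr\}. \]
Over a point $(x,[v])\in \P(\mc{E}_{1})\subset \P(\mc{E}_{2})$ the fiber is empty because $a_{1}$ is fiberwise injective, so $v\mapsto a_{1}(v)\neq 0$. Over a point $(x,[v])\in \P(\mc{E}_{2})\setminus \P(\mc{E}_{1})$, writing $w := f_{2}(v)\in \mc{E}_{3}(x)\setminus\{0\}$, the fiber is the affine subspace $\{b\in V : b(w) = -a_{2}^{(0)}(v)\}$. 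The hypothesis that $\HomS_{\mc{O}_{X}}(\mc{E}_{3},\mc{F})$ is globally generated is exactly what forces the evaluation map $V\to \mc{F}(x)$, $b\mapsto b(w)$, to be surjective (global generation at $x$ surjects $V$ onto $\Hom_{\kappa(x)}(\mc{E}_{3}(x),\mc{F}(x))$, and evaluation at a nonzero vector then surjects onto $\mc{F}(x)$), so this fiber has codimension exactly $s$ in $V$.

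\par Finally, $\dim \P(\mc{E}_{2}) = \dim X + r_{2}-1$, so combining with the fiber dimension gives
\[ \dim I \le \dim X + r_{2}-1 + \dim V - s, \]
which the numerical hypothesis on $s$ forces to be strictly less than $\dim V$. The image of the projection $I\to V$ is therefore contained in a proper closed subvariety of $V$, and since $k$ is infinite one can choose $b\in V(k)$ avoiding this subvariety. The resulting $a_{2} := a_{2}^{(0)}+bf_{2}$ is then fiberwise injective on every $\mc{E}_{2}(x)$, hence locally split. The main delicate step is the codimension count for the fibers of $I\to \P(\mc{E}_{2})$; it is precisely global generation of $\HomS_{\mc{O}_{X}}(\mc{E}_{3},\mc{F})$ that guarantees the pointwise evaluation $V\to \mc{F}(x)$ hits everything, giving the fibers the maximal possible codimension $s$ and making the numerical hypothesis on $s$ suffice.
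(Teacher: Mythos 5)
Your setup is sound up to the last line, but the final numerical step is a genuine gap: from $\dim I \le \dim X + r_{2} - 1 + \dim V - s$ you need $s > \dim X + r_{2} - 1$ to conclude that the image of $I \to V$ is proper, and this does \emph{not} follow from the stated hypothesis $s > \max\{r_{2}, \dim X + r_{3} - 1\}$, since $r_{2} = r_{1} + r_{3}$ can be much larger than $r_{3}$. For instance with $\dim X = 2$, $r_{1} = 5$, $r_{3} = 1$, $r_{2} = 6$, $s = 7$ the hypothesis holds but $s \not> \dim X + r_{2} - 1 = 7$, so your count does not rule out that the bad locus is all of $V$. (The paper reaches the stated bound by a different route: it first produces \emph{some} lift $a_{2}'$, observes all lifts are $a_{2}' + a_{3}f_{2}$, and then only asks that $a_{3} : \mc{E}_{3} \to \mc{F}$ be locally split, a determinantal condition of codimension $s - r_{3} + 1$ in the bundle of Homs, whence the requirement $s - r_{3} + 1 > \dim X$.)

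This is not a slip you can repair inside your framework, and in fact your count is the honest one for the condition you (correctly) insist on, namely fiberwise injectivity of the full map $\mc{E}_{2} \to \mc{F}$: over the open locus $f_{2}(v) \ne 0$ your incidence $I$ has dimension exactly $\dim X + r_{2} - 1 + \dim V - s$ and projects generically finitely to $V$, so the bad locus really has codimension about $s - r_{2} + 1 - \dim X$, not $s - r_{3} + 1 - \dim X$. Indeed the statement with the weaker bound fails: take $X = \P^{2}$, the split sequence $0 \to \mc{O}_{X} \to \mc{O}_{X}^{\oplus 2} \to \mc{O}_{X} \to 0$, $\mc{F} = \mc{O}_{X} \oplus \mc{O}_{X}(1)^{\oplus 2}$ and $a_{1}$ the inclusion of the first summand; then $\HomS_{\mc{O}_{X}}(\mc{E}_{3},\mc{F}) = \mc{F}$ is globally generated, $\Ext^{1}_{\mc{O}_{X}}(\mc{E}_{3},\mc{F}) = 0$, and $s = 3 > \max\{2,2\}$, yet any lift $a_{2}$ of $a_{1}$ induces in $\coker(a_{1}) \simeq \mc{O}_{X}(1)^{\oplus 2}$ a pair of linear forms, which have a common zero, so no lift is fiberwise injective. (Correspondingly, the paper's reduction ``if $a_{3}$ is locally split then $a_{2}' + a_{3}f_{2}$ is locally split'' is exactly where the two exponents diverge, and it fails for the same reason: injectivity of $a_{1}$ and of $a_{3}$ on fibers does not force injectivity of the sum on $\mc{E}_{2}$.) So what your argument actually proves is the lemma with the hypothesis $s > \dim X + r_{2} - 1$ (which is what the later applications, where ranks grow without bound, can in any case supply), not the lemma as stated; you should flag the needed change of hypothesis rather than assert that the stated one suffices.
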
 \begin{proof} Applying $\Hom_{\mc{O}_{X}}(-,\mc{F})$ to \labelcref{0017-01}, the obstruction to the existence of an $a_{2}'$ satisfying $a_{2}'f_{1} = a_{1}$ is an element of $\Ext_{\mc{O}_{X}}^{1}(\mc{E}_{3},\mc{F})$, which is $0$ by assumption; however, such an $a_{2}'$ may not be locally split. Any other $a_{2}''$ satisfying $a_{2}''f_{1} = a_{1}$ is of the form $a_{2}'' = a_{2}' + a_{3}f_{2}$ for a unique $a_{3} : \mc{E}_{3} \to \mc{F}$. If $a_{3}$ is locally split, then $a_{2}''$ is also locally split since the composition $a_{2}''f_{1}$ is locally split (we may locally choose a section of $f_{2}$). Hence we are reduced to the task of producing a morphism $a_{3} : \mc{E}_{3} \to \mc{F}$ which is locally split. \par Set $\mc{H}_{3} := \HomS_{\mc{O}_{X}}(\mc{E}_{3},\mc{F})$ and $H_{3} := \Spec_{X} \Sym_{\mc{O}_{X}}^{\bullet}\mc{H}_{3}^{\vee}$; then $H_{3}$ represents the functor $(\Sch/X)^{\op} \to \Set$ sending $T \mapsto \Hom_{\mc{O}_{T}}(\mc{E}_{3}|_{T},\mc{F}|_{T}) = \Gamma(T,\mc{H}_{3}|_{T})$. Let $\xi_{\mr{univ}} : \mc{E}_{3}|_{H_{3}} \to \mc{F}|_{H_{3}}$ be the universal morphism and let $K \subset H_{3}$ be the closed subscheme defined by the condition that $\xi_{\mr{univ}}|_{T} : \mc{E}_{3}|_{T} \to \mc{F}|_{T}$ is not locally split. The sequence $K \to H_{3} \to X$ is locally on $X$ isomorphic to the pullback of $V_{r_{3}} \to \A_{\Z}^{r_{3}s} \to \Spec \Z$, where $V_{r_{3}}$ is the determinantal variety defined by the maximal minors of a $r_{3} \times s$ matrix with indeterminate coefficients. By Eagon-Northcott \cite[Exercise 10.10]{EISENBUD-CAWAVTAG2004}, the codimension of $V_{r_{3}}$ in $\A_{\Z}^{r_{3}s}$ (and of $K$ in $H_{3}$) is $s-r_{3}+1$. \par Set $N := \dim_{k} \Gamma(X,\mc{H}_{3})$ and let $\mc{O}_{X}^{\oplus N} \to \mc{H}_{3}$ be a surjection; we have a morphism $\A_{k}^{N} \times_{k} X \to H_{3}$ which is locally on $H_{3}$ isomorphic to $\A_{\Z}^{N-r_{3}s} \to \Spec \Z$. Set $I := (\A_{k}^{N} \times_{k} X) \times_{H_{3}} K$; then the codimension of $I$ in $\A_{k}^{N} \times_{k} X$ is also $s-r_{3}+1$, hence $\dim I = N + \dim X - (s-r_{3}+1)$. Since $s > \dim X + r_{3} - 1$, the projection $I \to \A_{k}^{N}$ is not surjective. Since $k$ is infinite, there exists a rational point $p \in \A_{k}^{N}(k)$ which is not in the image of $I \to \A_{k}^{N}$, for which the corresponding fiber $\xi_{\mr{univ}}|_{p} : \mc{E}_{2} \to \mc{F}$ is locally split. \end{proof}

\begin{theorem} \label{0016} Let $k$ be an infinite field, let $X$ be a projective $k$-scheme. Any two very positive vector bundles on $X$ are isomorphic. \end{theorem}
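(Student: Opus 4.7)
The plan is to construct an interleaved direct system of locally split injections
\[ \mc{E}_{n_1} \hookrightarrow \mc{F}_{m_1} \hookrightarrow \mc{E}_{n_2} \hookrightarrow \mc{F}_{m_2} \hookrightarrow \mc{E}_{n_3} \hookrightarrow \dotsb \]
such that for each $i$ the composition $\mc{E}_{n_i} \to \mc{F}_{m_i} \to \mc{E}_{n_{i+1}}$ equals the transition map $\mc{E}_{n_i}\to \mc{E}_{n_{i+1}}$ of the system defining $\mc{E}$, and similarly $\mc{F}_{m_i} \to \mc{E}_{n_{i+1}} \to \mc{F}_{m_{i+1}}$ equals the transition map of the system defining $\mc{F}$. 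Given this, the colimit of the interleaved system restricts to cofinal subsystems of both $\{\mc{E}_n\}$ and $\{\mc{F}_m\}$, so $\mc{E} \simeq \colim_i \mc{E}_{n_i} \simeq \colim_i \mc{F}_{m_i} \simeq \mc{F}$.

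Each extension step is an application of \Cref{0017}. To produce $\psi_i : \mc{F}_{m_i} \to \mc{E}_{n_{i+1}}$ extending the given locally split injection $\phi_i : \mc{E}_{n_i} \to \mc{F}_{m_i}$, apply \Cref{0017} to the short exact sequence
\[ 0 \to \mc{E}_{n_i} \xrightarrow{\phi_i} \mc{F}_{m_i} \to \mc{F}_{m_i}/\mc{E}_{n_i} \to 0 \]
(whose last term is locally free since $\phi_i$ is a locally split injection) with target $\mc{E}_{n_{i+1}}$ and with $a_1$ the transition map $\mc{E}_{n_i} \to \mc{E}_{n_{i+1}}$, which is locally split by clause (3) of \Cref{defn01}. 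The three hypotheses of \Cref{0017} --- global generation of $\HomS(\mc{F}_{m_i}/\mc{E}_{n_i}, \mc{E}_{n_{i+1}})$, vanishing of $\Ext^1(\mc{F}_{m_i}/\mc{E}_{n_i}, \mc{E}_{n_{i+1}})$, and the rank bound $\rk \mc{E}_{n_{i+1}} > \max\{\rk \mc{F}_{m_i}, \dim X + \rk(\mc{F}_{m_i}/\mc{E}_{n_i}) - 1\}$ --- can all be arranged by choosing $n_{i+1}$ sufficiently large: the first two hold for all large $n_{i+1}$ by applying $(\mb{G})$ and $(\mb{V}_0)$ to the coherent sheaf $(\mc{F}_{m_i}/\mc{E}_{n_i})^{\vee}$ on $\mc{E}_{\bullet}$, and the rank bound holds because $r_n = \rk \mc{E}_n$ is strictly increasing. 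Swapping the roles of $\mc{E}_{\bullet}$ and $\mc{F}_{\bullet}$, the same argument produces the next locally split injection $\phi_{i+1} : \mc{E}_{n_{i+1}} \to \mc{F}_{m_{i+1}}$. To start the induction, take $n_1 = 1$ and apply \Cref{0017} with $\mc{E}_1 = 0$, $\mc{E}_2 = \mc{E}_3 = \mc{E}_{n_1}$ and $a_1 = 0$ to produce the initial $\phi_1 : \mc{E}_{n_1} \to \mc{F}_{m_1}$.

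The main technical point is verifying that the hypotheses of \Cref{0017} really are satisfied at each step, but this is exactly the content of conditions $(\mb{G})$ and $(\mb{V}_0)$ in \Cref{defn01} together with the fact that the ranks $r_n$ tend to infinity. The compositional identities $\psi_i \phi_i = (\text{transition in }\mc{E}_{\bullet})$ and $\phi_{i+1}\psi_i = (\text{transition in }\mc{F}_{\bullet})$ are automatic from the conclusion $a_2 f_1 = a_1$ of \Cref{0017}, so the colimit of the zigzag is canonically identified with both $\mc{E}$ and $\mc{F}$.
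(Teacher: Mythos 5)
Your proposal is correct and follows essentially the same route as the paper: the paper's proof also builds the interleaved zigzag of locally split injections whose compositions recover the transition maps, producing each new map by applying \Cref{0017} to the cokernel $\mc{Q}$ of the previous map, with the hypotheses of \Cref{0017} arranged via $(\mb{G})$ and $(\mb{V}_{0})$ applied to $\mc{Q}^{\vee}$ and the strictly increasing ranks, and with the base case treated as a map from $0$.
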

\begin{proof} Suppose
\begin{equation} \label{0016-02}
\begin{aligned}
\mc{E} &= \colim (\mc{E}_1 \to \mc{E}_2 \to \mc{E}_3 \to \dotsb) \\
\mc{F} &= \colim (\mc{F}_1 \to \mc{F}_2 \to \mc{F}_3 \to \dotsb)
\end{aligned}
\end{equation}
are very positive vector bundles on $X$.
We are going to inductively find a sequence of integers $n_{1} < n_{2} < n_{3} < \dotsb$ and maps
\begin{align*}
  a_{2i-1} &: \mc{E}_{n_{2i-1}} \to \mc{F}_{n_{2i}} \\
  a_{2i} &: \mc{F}_{n_{2i}} \to \mc{E}_{n_{2i+1}}
\end{align*}
for $i \ge 1$ such that
\begin{enumerate}
\item[(i)] each $a_{\ell}$ is injective and $\coker(a_{\ell})$ is locally free of positive rank, and
\item[(ii)] the compositions \begin{align*} a_{2i} \circ a_{2i-1} &: \mc{E}_{n_{2i-1}} \to \mc{E}_{n_{2i+1}} \\ a_{2i+1} \circ a_{2i} &: \mc{F}_{n_{2i}} \to \mc{F}_{n_{2i+2}} \end{align*} are equal to the transition maps in \labelcref{0016-02}.
\end{enumerate}

Given a collection of such morphisms $\{a_{\ell}\}_{\ell \in \N}$, condition (ii) implies the existence of morphisms $f : \mc{E} \to \mc{F}$ and $g : \mc{F} \to \mc{E}$ such that $f \circ g = \id_{\mc{F}}$ and $g \circ f = \id_{\mc{E}}$.

For the induction hypothesis, suppose given integers $n_{\ell-1} < n_{\ell}$ and a morphism \[ a_{\ell-1} : \mc{E}_{n_{\ell-1}} \to \mc{F}_{n_{\ell}} \] satisfying (i). (Here, by symmetry we have assumed $\ell$ is even, and we view the base case as a map $a_{0} : 0 \to \mc{E}_{n_{1}}$.) We find an integer $n_{\ell+1}$ and a morphism \[ a_{\ell} : \mc{F}_{n_{\ell}} \to \mc{E}_{n_{\ell+1}} \] satisfying (i) and making the diagram
\begin{equation} \label{0016-01} \begin{tikzpicture}[>=angle 90, baseline=(current bounding box.center)] 
\matrix[matrix of math nodes,row sep=2em, column sep=2em, text height=1.7ex, text depth=0.5ex] { 
|[name=11]| \mc{E}_{n_{\ell-1}} & |[name=12]| \mc{F}_{n_{\ell}} \\ 
|[name=21]| \mc{E}_{n_{\ell+1}} & |[name=22]| \mc{E}_{n_{\ell+1}} \\
}; 
\draw[-,font=\scriptsize,transform canvas={yshift= 0.5pt}](21) edge (22);
\draw[-,font=\scriptsize,transform canvas={yshift=-0.5pt}](21) edge (22);
\draw[->,font=\scriptsize]
(11) edge node[above=0pt] {$a_{\ell-1}$} (12) (11) edge (21) (12) edge[dotted] node[right=0pt] {$a_{\ell}$} (22); \end{tikzpicture} \end{equation} commute.

Set $\mc{Q} := \coker(a_{\ell-1})$. By condition (4) for $\mc{E}$, we may choose $n_{\ell+1} \gg n_{\ell}$ so that $\rk \mc{E}_{n_{\ell+1}} > \max\{\rk \mc{F}_{n_{\ell}},\dim X + \rk \mc{Q} - 1\}$ and $\HomS_{\mc{O}_{X}}(\mc{Q},\mc{E}_{n_{\ell+1}}) \simeq \mc{Q}^{\vee} \otimes_{\mc{O}_{X}} \mc{E}_{n_{\ell+1}}$ is globally generated and $\Ext_{\mc{O}_{X}}^{1}(\mc{Q},\mc{E}_{n_{\ell+1}}) = \H^{1}(\mc{O}_{X},\mc{Q}^{\vee} \otimes \mc{E}_{n_{\ell+1}}) = 0$. Then we conclude using \Cref{0017}.
\end{proof}

\begin{corollary} \label{0020} Let $k$ be an infinite field, let $\mc{E}$ be a very positive vector bundle on a projective $k$-scheme $X$. \begin{enumerate}
\item[(a)] For every finite locally free $\mc{O}_{X}$-module $\mc{V}$ of positive rank, we have $\mc{E} \otimes \mc{V} \simeq \mc{E}$.
\item[(b)] For every invertible $\mc{O}_{X}$-module $\mc{L}$, we have $\mc{E} \otimes \mc{L} \simeq \mc{E}$.
\item[(c)] For every automorphism $g$ of $X$, we have $g^{\ast}\mc{E} \simeq \mc{E}$.
\end{enumerate} \end{corollary}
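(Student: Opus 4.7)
The strategy is to reduce each of (a), (b), (c) to the uniqueness theorem \Cref{0016}: it suffices to show that $\mc{E} \otimes \mc{V}$, $\mc{E} \otimes \mc{L}$, and $g^{\ast}\mc{E}$ are themselves very positive vector bundles on $X$, since \Cref{0016} then forces them to be isomorphic to $\mc{E}$. Note that (b) is the special case of (a) where $\mc{V} = \mc{L}$, so only (a) and (c) require argument.

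For (a), write $\mc{E} = \colim (\mc{E}_1 \to \mc{E}_2 \to \dotsb)$ as in \Cref{defn01} and consider the system $\mc{E}_n \otimes_{\mc{O}_X} \mc{V}$. Conditions (1)--(3) of \Cref{defn01} are routine: tensoring with the finite locally free $\mc{V}$ of positive rank preserves local freeness, multiplies each $r_n$ by a positive locally constant function (keeping ranks strictly increasing), and preserves injections with locally free cokernel (since $\mc{V}$ is flat and cokernels of the transition maps remain locally free after tensoring). For condition (4), the key observation is that for any quasi-coherent $\mc{O}_X$-module $\mc{F}$ of finite type, we have a natural isomorphism
\[
\mc{F} \otimes_{\mc{O}_X} (\mc{E}_n \otimes_{\mc{O}_X} \mc{V}) \simeq (\mc{F} \otimes_{\mc{O}_X} \mc{V}) \otimes_{\mc{O}_X} \mc{E}_n,
\]
and $\mc{F} \otimes_{\mc{O}_X} \mc{V}$ is again quasi-coherent of finite type. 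So $(\mb{G})$ and $(\mb{V}_0)$ for $\mc{E} \otimes \mc{V}$ follow immediately from the corresponding properties for $\mc{E}$ applied to the sheaf $\mc{F} \otimes \mc{V}$.

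For (c), form the system $g^{\ast}\mc{E}_{\bullet}$. Since $g$ is an automorphism, $g^{\ast}$ is exact and preserves rank, local freeness, injections, and local freeness of cokernels, so conditions (1)--(3) hold. For condition (4), given any finite-type $\mc{F}$, the projection formula (or more simply the fact that $g^{\ast}$ commutes with tensor products and is invertible) gives
\[
\mc{F} \otimes_{\mc{O}_X} g^{\ast}\mc{E}_n \simeq g^{\ast}\!\bigl((g^{-1})^{\ast}\mc{F} \otimes_{\mc{O}_X} \mc{E}_n\bigr).
\]
Because $g$ is an isomorphism, global generation and cohomology are preserved: if $(g^{-1})^{\ast}\mc{F} \otimes \mc{E}_n$ is globally generated then so is its $g^{\ast}$-pullback, and $\H^q(X, g^{\ast}(-)) \simeq \H^q(X,-)$. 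Since $(g^{-1})^{\ast}\mc{F}$ is again quasi-coherent of finite type, conditions $(\mb{G})$ and $(\mb{V}_0)$ for $g^{\ast}\mc{E}$ reduce to those for $\mc{E}$.

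There is no substantive obstacle here; the proof is a straightforward verification of the definition, with the content entirely contained in \Cref{0016}. The only mildly delicate point is ensuring the ranks in (a) remain strictly increasing, which is exactly why the positive rank hypothesis on $\mc{V}$ is needed.
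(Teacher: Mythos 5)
Your proof is correct and follows essentially the same route as the paper: reduce everything to the uniqueness theorem (\Cref{0016}) by checking that $\mc{E} \otimes \mc{V}$ is very positive, with condition (4) for $\mc{E}\otimes\mc{V}$ obtained by applying condition (4) for $\mc{E}$ to the coherent sheaf $\mc{F}\otimes\mc{V}$, and with (b) a special case of (a). The only difference is cosmetic: you spell out the verification for (c) (which the paper dismisses as clear) and explicitly note the minor rank bookkeeping in (a), both of which are fine.
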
 \begin{proof} For (a), it suffices by \Cref{0016} to check that $\mc{E} \otimes \mc{V}$ is a very positive vector bundle. We have \[ \mc{E} \otimes \mc{V} = \colim(\mc{E}_{1} \otimes \mc{V} \to \mc{E}_{2} \otimes \mc{V} \to \mc{E}_{3} \otimes \mc{V} \to \dotsb) \] since tensor products commute with colimits. To show that $\{\mc{E}_{n} \otimes \mc{V}\}_{n \in \N}$ satisfies condition (4) of \Cref{defn01} for a given coherent $\mc{O}_{X}$-module $\mc{F}$, we use that the system $\{\mc{E}_{n}\}_{n \in \N}$ satisfies condition (4) for the coherent $\mc{O}_{X}$-module $\mc{F} \otimes \mc{V}$. \par Claim (b) is a special case of (a), and (c) is clear. \end{proof}

\begin{example} \label{0019} Let $k$ be an infinite field, let $Z, Y$ be smooth projective $k$-schemes, let $a_{1},a_{2} : Z \to Y$ be two closed immersions such that $a_{1}(Z) \cap a_{2}(Z) = \emptyset$. The coequalizer of $a_{1},a_{2}$ exists as a scheme $X$, and the canonical map $\nu : Y \to X$ may be identified with the normalization morphism. 
  
  \begin{proposition} \label{0029}
    For $X$ as above, we have $\LPBr(X)=\H_{\et}^2(X,\G_m)$.
  \end{proposition}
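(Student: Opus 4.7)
My plan is to adapt the argument of \Cref{0018} to the current pinching: the role of the ``group-averaged'' bundle is played by the tensor product of a finite-rank twisted bundle on $Y$ (provided by Gabber) with a very positive vector bundle on $Y$, and the key descent step is handled by \Cref{0020}(a). From the Mayer--Vietoris short exact sequence
\[ 1 \to \G_{m,X} \to \nu_{\ast}\G_{m,Y} \to j_{\ast}\G_{m,Z} \to 1 \]
(with $j : Z \to X$ the common inclusion $\nu a_{1} = \nu a_{2}$, and the second map given by $f \mapsto a_{1}^{\ast}f/a_{2}^{\ast}f$), or equivalently from the pushout presentation $X = Y \sqcup_{Z \sqcup Z} Z$, producing a $1$-twisted quasi-coherent module on $\ms{X}$ reduces to producing a $1$-twisted $\mc{O}_{\ms{Y}}$-module $\mc{E}$ together with an isomorphism $a_{1}^{\ast}\mc{E} \simeq a_{2}^{\ast}\mc{E}$ on $\ms{Z} := j^{\ast}\ms{X}$, where $\ms{Y} := \nu^{\ast}\ms{X}$; the two pullbacks are naturally $1$-twisted sheaves on the common gerbe $\ms{Z}$ because $\nu a_{1} = \nu a_{2}$. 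Since $Y$ is smooth and projective over $k$, Gabber's theorem gives $\nu^{\ast}\alpha \in \Br(Y)$, so I may fix a $1$-twisted locally free $\mc{O}_{\ms{Y}}$-module $\mc{E}_{0}$ of some positive finite rank $r$.

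Let $\mc{V}$ be a very positive vector bundle on $Y$ (which exists by \Cref{0014}) and set $\mc{E} := \mc{E}_{0} \otimes_{\mc{O}_{\ms{Y}}} \pi_{Y}^{\ast}\mc{V}$, a countably generated $1$-twisted locally projective $\mc{O}_{\ms{Y}}$-module of positive rank. By \Cref{0013} both $a_{i}^{\ast}\mc{V}$ are very positive vector bundles on $Z$, so by \Cref{0016} they are isomorphic to a common $\mc{V}_{Z}$; writing $\mc{W}_{i} := a_{i}^{\ast}\mc{E}_{0}$, the task reduces to constructing an isomorphism
\[ \mc{W}_{1} \otimes \pi_{Z}^{\ast}\mc{V}_{Z} \;\simeq\; \mc{W}_{2} \otimes \pi_{Z}^{\ast}\mc{V}_{Z} \]
of $1$-twisted sheaves on $\ms{Z}$.

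The main step is to verify this last isomorphism, and the key point is that although $\mc{W}_{1}$ and $\mc{W}_{2}$ themselves need not be isomorphic, the sheaves $\mc{H} := \HomS_{\mc{O}_{\ms{Z}}}(\mc{W}_{1},\mc{W}_{2}) \simeq \mc{W}_{1}^{\vee} \otimes \mc{W}_{2}$ and $\mc{A} := \EndS_{\mc{O}_{\ms{Z}}}(\mc{W}_{1}) \simeq \mc{W}_{1} \otimes \mc{W}_{1}^{\vee}$ are $0$-twisted, hence descend to finite locally free $\mc{O}_{Z}$-modules $H$ and $E$ of positive rank. I will evaluate the triple tensor $\mc{W}_{1} \otimes \mc{H} \otimes \pi_{Z}^{\ast}\mc{V}_{Z}$ in two ways: grouping the last two factors yields $\mc{W}_{1} \otimes \pi_{Z}^{\ast}(H \otimes \mc{V}_{Z}) \simeq \mc{W}_{1} \otimes \pi_{Z}^{\ast}\mc{V}_{Z}$ by \Cref{0020}(a), while grouping the first two factors via the canonical identification $\mc{W}_{1} \otimes \mc{H} \simeq \mc{A} \otimes \mc{W}_{2}$ (valid because $\mc{W}_{1}$ is finite locally free) yields $\pi_{Z}^{\ast}(E \otimes \mc{V}_{Z}) \otimes \mc{W}_{2} \simeq \mc{W}_{2} \otimes \pi_{Z}^{\ast}\mc{V}_{Z}$, again by \Cref{0020}(a). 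This yields $a_{1}^{\ast}\mc{E} \simeq a_{2}^{\ast}\mc{E}$ and so, after descent, produces the required $1$-twisted locally projective $\mc{O}_{\ms{X}}$-module of positive rank.

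The main obstacle is the bookkeeping of twists: since the tensor of two $1$-twisted modules is $2$-twisted, one must verify that the auxiliary sheaves $\mc{H}$ and $\mc{A}$ are genuinely $0$-twisted (so that \Cref{0020}(a) applies on $Z$); once this is noticed the computation collapses to repeated application of the absorbing property of the very positive bundle $\mc{V}_{Z}$, the infiniteness doing all of the real work.
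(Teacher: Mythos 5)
Your proof is correct, and it takes a genuinely different route from the paper's. The paper trivializes the gerbe over $Z$ by passing to a Brauer--Severi scheme $f : P \to Z$ (which uses $\Br(Z) = \Br'(Z)$), applies the absorbing property of a very positive bundle on $P$, and then pushes forward along $f$ --- a step that requires the nontrivial \Cref{20210518-15} asserting that very positive bundles are preserved by flat proper pushforward. You instead stay on $Z$ and never trivialize the gerbe: the observation that $\mc{H} = \mc{W}_1^{\vee}\otimes\mc{W}_2$ and $\mc{A} = \mc{W}_1\otimes\mc{W}_1^{\vee}$ are $0$-twisted, hence descend to honest finite-rank positive bundles $H,E$ on $Z$ that $\mc{V}_Z$ absorbs via \Cref{0020}(a), lets you regroup the triple tensor $\mc{W}_1\otimes\mc{W}_1^{\vee}\otimes\mc{W}_2\otimes\pi_Z^{\ast}\mc{V}_Z$ in two ways and conclude $\mc{W}_1\otimes\pi_Z^{\ast}\mc{V}_Z\simeq\mc{W}_2\otimes\pi_Z^{\ast}\mc{V}_Z$. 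This buys you a shorter dependency chain (only \Cref{0013}, \Cref{0016}, \Cref{0020}), avoids invoking $\Br(Z)=\Br'(Z)$ and the Brauer--Severi machinery of \Cref{sec02-03}, and sidesteps \Cref{20210518-15} entirely. One small point worth making explicit in a write-up: the chain only produces \emph{some} isomorphism $a_1^{\ast}\mc{E}\simeq a_2^{\ast}\mc{E}$; as in the paper's proof, because the two closed immersions have disjoint images, a single such isomorphism is the full descent datum for the pinching, so no cocycle condition needs checking.
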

  \begin{proof}
Let $\ms{X} \to X$ be a $\G_{m}$-gerbe with corresponding class $\alpha \in \H_{\et}^{2}(X,\G_{m})$, and let $\beta := \nu^{\ast}\alpha$ and $\gamma := a_{1}^{\ast}\beta = a_{2}^{\ast}\beta$ be the pullbacks to $Y$ and $Z$. Since $Y$ is projective and smooth, we have $\Br(Y) = \Br'(Y) = \H_{\et}^{2}(Y,\G_{m})$ so there exists a 1-twisted finite locally free $\mc{O}_{\ms{X}_{Y}}$-module $\mc{E}$. We have similarly $\Br(Z) = \Br'(Z) = \H_{\et}^{2}(Z,\G_{m})$; let $f : P \to Z$ be a Brauer-Severi scheme corresponding to $\gamma$; we note that $\ms{X}_{P}$ is the trivial $\G_{m}$-gerbe. Consider the following diagram: 
\begin{center} \begin{tikzpicture}[>=angle 90] 
\matrix[matrix of math nodes,row sep=2.5em, column sep=2.5em, text height=1.7ex, text depth=0.5ex] { 
|[name=11]| P & |[name=12]| & |[name=13]|  \\ 
|[name=21]| Z & |[name=22]| Y & |[name=23]| X \\
}; 
\draw[->,font=\scriptsize,transform canvas={yshift= 2.5pt}](21) edge node[above=-1pt] {$a_{1}$} (22);
\draw[->,font=\scriptsize,transform canvas={yshift=-2.5pt}](21) edge node[below=-1pt] {$a_{2}$} (22);
\draw[->,font=\scriptsize] (11) edge node[left=0pt] {$f$} (21) (22) edge node[above=0pt] {$\nu$} (23); \end{tikzpicture} \end{center} 
Let $\mc{V}$ be a very positive vector bundle on $P$; then by \Cref{0020} (a) we have an isomorphism \[ f^{\ast}a_{1}^{\ast}\mc{E} \otimes_{\mc{O}_{\ms{X}_{P}}} \mc{V}|_{\ms{X}_{P}} \simeq f^{\ast}a_{2}^{\ast}\mc{E} \otimes_{\mc{O}_{\ms{X}_{P}}} \mc{V}|_{\ms{X}_{P}} \] of 1-twisted $\mc{O}_{\ms{X}_{P}}$-modules; pushing forward gives an isomorphism \begin{align} \label{0029-01} a_{1}^{\ast}\mc{E} \otimes_{\mc{O}_{\ms{X}_{Z}}} f_{\ast}\mc{V}|_{\ms{X}_{Z}} \simeq a_{2}^{\ast}\mc{E} \otimes_{\mc{O}_{\ms{X}_{Z}}} f_{\ast}\mc{V}|_{\ms{X}_{Z}} \end{align} of 1-twisted $\mc{O}_{\ms{X}_{Z}}$-modules. We have that $f_{\ast}\mc{V}$ is very positive by \Cref{20210518-15}. Let $\mc{W}$ be a very positive vector bundle on $Y$; then $a_{1}^{\ast}\mc{W},a_{2}^{\ast}\mc{W}$ are very positive by \Cref{0013}; hence we have \[ f_{\ast}\mc{V} \simeq a_{1}^{\ast}\mc{W} \simeq a_{2}^{\ast}\mc{W} \] by \Cref{0016}; substituting this into \labelcref{0029-01} gives an isomorphism \[ a_{1}^{\ast}(\mc{E} \otimes_{\mc{O}_{\ms{X}_{Y}}} \mc{W}|_{\ms{X}_{Y}}) \simeq a_{2}^{\ast}(\mc{E} \otimes_{\mc{O}_{\ms{X}_{Y}}} \mc{W}|_{\ms{X}_{Y}}) \] which means $\mc{E} \otimes_{\mc{O}_{\ms{X}_{Y}}} \mc{W}|_{\ms{X}_{Y}}$ descends to $\mc{X}$. \end{proof}
\end{example}

\begin{lemma} \label{20210518-15} Let $k$ be an infinite field, let $f : X \to Y$ be a flat proper morphism between projective $k$-schemes. If $\mc{E}$ is a very positive vector bundle on $X$, then $f_{\ast}\mc{E}$ is a very positive vector bundle on $Y$. \end{lemma}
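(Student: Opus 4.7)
The plan is to present $\mc{E} = \colim_n \mc{E}_n$ as in \Cref{defn01} and show that, after discarding finitely many initial terms, $f_*\mc{E} = \colim_n f_*\mc{E}_n$ is very positive on $Y$. Since $f$ is quasi-compact and quasi-separated, $f_*$ commutes with filtered colimits of quasi-coherent sheaves, so the colimit presentation is automatic; it remains to verify the four conditions of \Cref{defn01} for the system $\{f_*\mc{E}_n\}$.

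The main technical step is to show that for $n$ sufficiently large, $R^if_*\mc{E}_n = 0$ for all $i > 0$; local freeness of $f_*\mc{E}_n$ (with fiber $H^0(X_y, \mc{E}_n|_{X_y})$) then follows by cohomology and base change for the flat proper morphism $f$. To establish this vanishing, I would fix an ample $\mc{O}_Y(1)$ on $Y$, apply condition $(\mb{V}_0)$ on $X$ to the coherent sheaf $f^*\mc{O}_Y(-m)$, and combine the projection formula $Rf_*(\mc{E}_n \otimes f^*\mc{O}_Y(-m)) \simeq Rf_*\mc{E}_n \otimes \mc{O}_Y(-m)$ with the Leray spectral sequence to deduce that the negative twist of $R^if_*\mc{E}_n$ has no cohomology on $Y$ for $m \gg 0$ and then $n \gg 0$, forcing $R^if_*\mc{E}_n$ itself to vanish. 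With this in hand, I would push the short exact sequence $0 \to \mc{E}_n \to \mc{E}_{n+1} \to \mc{Q}_n \to 0$ (with locally free cokernel $\mc{Q}_n$) forward to get
\[
0 \to f_*\mc{E}_n \to f_*\mc{E}_{n+1} \to f_*\mc{Q}_n \to 0,
\]
verifying conditions (1)--(3) of \Cref{defn01}; the strict rank increase comes from positivity of $\rk f_*\mc{Q}_n$, itself obtained by applying $(\mb{G})$ to $\mc{E}_\bullet$.

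For condition (4) on the pushforward system, I would use the identifications $\mc{F} \otimes f_*\mc{E}_n \simeq f_*(f^*\mc{F} \otimes \mc{E}_n)$ and $H^q(Y, \mc{F} \otimes f_*\mc{E}_n) \simeq H^q(X, f^*\mc{F} \otimes \mc{E}_n)$ for coherent $\mc{F}$ on $Y$ and $n \gg 0$ (from the projection formula and vanishing of higher direct images), combined with $(\mb{G})$ and $(\mb{V}_0)$ on $X$ applied to $f^*\mc{F}$. Transferring global generation additionally uses $f_*\mc{O}_X = \mc{O}_Y$, which holds in the envisaged applications (e.g.\ for the Brauer--Severi morphism in \Cref{0029}) where $f$ has geometrically connected fibers. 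The main obstacle is the uniform vanishing of higher direct images in the first step: $(\mb{V}_0)$ provides a threshold $n'$ depending on the test sheaf, so the delicate point is to choose a single test sheaf $f^*\mc{O}_Y(-m)$ and a single $m$ for which combining Serre vanishing on $Y$ with $(\mb{V}_0)$ on $X$ yields the desired coherent-sheaf vanishing on $Y$ for all $n \gg 0$.
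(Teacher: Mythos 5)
Your skeleton agrees with the paper's (vanishing of higher direct images, local freeness via cohomology and base change, pushing forward the cokernel sequences, and Leray plus the projection formula to transfer $(\mb{V}_{0})$), but your central step has a genuine gap. To get $R^{i}f_{\ast}\mc{E}_{n} = 0$ for $i \ge 1$ and $n \gg 0$ you twist by $f^{\ast}\mc{O}_{Y}(\pm m)$ and hope to combine $(\mb{V}_{0})$ on $X$ with Serre-type vanishing on $Y$; however the two thresholds run in opposite directions. Killing the off-diagonal Leray terms (and detecting nonvanishing of $R^{q}f_{\ast}\mc{E}_{n}$ via global generation) requires $m \gg 0$ depending on the coherent sheaves $R^{q}f_{\ast}\mc{E}_{n}$, hence on $n$, while $(\mb{V}_{0})$ applied to the fixed test sheaf $f^{\ast}\mc{O}_{Y}(\pm m)$ only gives vanishing for $n \gg 0$ depending on $m$. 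You flag this circularity yourself, but it is not a point to be smoothed over later: from the axioms of \Cref{defn01} applied to an arbitrary presentation of $\mc{E}$ there is no visible way to break it. (Also, the negative twists are the wrong direction: vanishing of the cohomology of $R^{i}f_{\ast}\mc{E}_{n} \otimes \mc{O}_{Y}(-m)$ is not what Serre vanishing provides and does not obviously force $R^{i}f_{\ast}\mc{E}_{n} = 0$; one detects a nonzero coherent sheaf with positive twists and global generation.)

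The missing idea is the paper's first reduction: by \Cref{20210518-08} and the uniqueness theorem \Cref{0016} one may replace the given presentation by the standard one, $\mc{E}_{n} = \mc{O}_{X}(n)^{\oplus r_{n}}$, where $\mc{O}_{X}(1)$ is $f$-relatively ample and has been twisted by a large power of $f^{\ast}\mc{O}_{Y}(1)$ so that $f_{\ast}\mc{O}_{X}(n) \simeq \mc{V}_{n} \otimes_{\mc{O}_{Y}} \mc{O}_{Y}(n)$ with $\mc{V}_{n}$ globally generated. Then relative Serre vanishing gives $R^{>0}f_{\ast}\mc{E}_{n} = 0$ and $R^{>0}f_{\ast}(\mc{E}_{n} \otimes f^{\ast}\mc{G}) = 0$ for $n \gg 0$ with a single threshold and no circularity, and $(\mb{G})$ for $f_{\ast}\mc{E}_{\bullet}$ falls out of the explicit description $f_{\ast}\mc{E}_{n} \simeq (\mc{V}_{n} \otimes \mc{O}_{Y}(n))^{\oplus r_{n}}$. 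Your alternative route to $(\mb{G})$ -- transferring global generation through $f_{\ast}$ using $f_{\ast}\mc{O}_{X} = \mc{O}_{Y}$ -- both imports a hypothesis absent from the statement (the lemma assumes only that $f$ is flat and proper between projective $k$-schemes) and is insufficient even when it holds: global generation of $f^{\ast}\mc{F} \otimes \mc{E}_{n}$ does not pass to its pushforward without controlling $R^{1}f_{\ast}$ of the kernel of an evaluation map, which is yet another uniformity problem of the same kind. So the proof needs the reduction via \Cref{0016} (or an equivalent new idea) before the rest of your outline can be carried out.
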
 \begin{spg} \label{20210518-15-0002} Let $\mc{O}_{X}(1)$ and $\mc{O}_{Y}(1)$ be globally generated very ample line bundles on $X$ and $Y$, respectively. By \cite[0C4L]{SP}, we have that $\mc{O}_{X}(1)$ is $f$-relatively ample, hence by \cite[0B5T (1), 0EGH]{SP} we may replace $\mc{O}_{X}(1)$ by $\mc{O}_{X}(n)$ for $n \gg 0$ to assume that the map \begin{align} \label{20210518-15-eqn-04} f_{\ast}(\mc{O}_{X}(1)) \otimes_{\mc{O}_{Y}} f_{\ast}(\mc{O}_{X}(m)) \to f_{\ast}(\mc{O}_{X}(m+1)) \end{align} is surjective for all $m \ge 1$. We have \[ f_{\ast}(\mc{O}_{X}(m_{1}) \otimes_{\mc{O}_{X}} f^{\ast}(\mc{O}_{Y}(m_{2}))) \simeq f_{\ast}(\mc{O}_{X}(m_{1})) \otimes_{\mc{O}_{Y}} \mc{O}_{Y}(m_{2}) \] by the projection formula, so after replacing $\mc{O}_{X}(1)$ by $\mc{O}_{X}(1) \otimes_{\mc{O}_{X}} f^{\ast}(\mc{O}_{Y}(m_{2}))$ for some $m_{2} \gg 0$ we may assume that $f_{\ast}(\mc{O}_{X}(1))$ is globally generated (this does not change that \labelcref{20210518-15-eqn-04} is surjective since we tensor both sides by $\mc{O}_{Y}(m_{2})$, and the new line bundle is still ample by \cite[0892]{SP}). By surjectivity of \labelcref{20210518-15-eqn-04}, we have that $f_{\ast}(\mc{O}_{X}(n))$ is globally generated for all $n$. After replacing $\mc{O}_{X}(1)$ by $\mc{O}_{X}(1) \otimes_{\mc{O}_{X}} f^{\ast}(\mc{O}_{Y}(1))$ once more, we may assume that $f_{\ast}(\mc{O}_{X}(n)) \simeq \mc{V}_{n} \otimes_{\mc{O}_{Y}} \mc{O}_{Y}(n)$ for some globally generated coherent $\mc{O}_{Y}$-module $\mc{V}_{n}$. \par By \Cref{20210518-08} and \Cref{0016}, we may assume that $\mc{E} = \varinjlim_{n \in \N} \mc{E}_{n}$ with $\mc{E}_{n} = \mc{O}_{X}(n)^{\oplus r_{n}}$ for some $r_{n}$. \end{spg} \begin{spg}[$f_{\ast}\mc{E}_{n}$ are locally free] Since $\mc{O}_{X}(1)$ is $f$-relatively ample, there exists some $N_{1}$ such that $\mb{R}^{i}f_{\ast}\mc{E}_{n} = 0$ for $n \ge N_{1}$ and $i \ge 1$. By cohomology and base change, we have that $f_{\ast}\mc{E}_{n}$ is finite locally free for $n \ge N_{1}$. \end{spg} \begin{spg}[$f_{\ast}\mc{E}_{n} \to f_{\ast}\mc{E}_{n+1}$ are locally split] Let $\mc{Q}_{n}$ be the cokernel of $\mc{E}_{n} \to \mc{E}_{n+1}$ so that we have an exact sequence \[ 0 \to \mc{E}_{n} \to \mc{E}_{n+1} \to \mc{Q}_{n} \to 0 \] of $\mc{O}_{X}$-modules; the pushforward \[ 0 \to f_{\ast}\mc{E}_{n} \to f_{\ast}\mc{E}_{n+1} \to f_{\ast}\mc{Q}_{n} \to 0 \] is exact since $\mb{R}^{1}f_{\ast}\mc{E}_{n} = 0$. Since $\mc{E}_{n} \to \mc{E}_{n+1}$ is locally split, we have that $\mc{Q}_{n}$ is a finite locally free $\mc{O}_{X}$-module, hence flat over $S$; moreover $\mb{R}^{i}f_{\ast}\mc{Q}_{n} = 0$ for $n \ge N_{1}$ and $i \ge 1$ as well; hence $f_{\ast}\mc{Q}_{n}$ is finite locally free, so $f_{\ast}\mc{E}_{n} \to f_{\ast}\mc{E}_{n+1}$ is locally split for $n \ge N_{1}$. \end{spg} \begin{spg}[$f_{\ast}\mc{E}_{\bullet}$ satisfies $(\mb{V}_{0})$] Let $\mc{G}$ be a coherent $\mc{O}_{Y}$-module. We want to show that $\H^{i}(Y,f_{\ast}\mc{E}_{n} \otimes_{\mc{O}_{X}} \mc{G}) = 0$ for $i \ge 1$ and $n \gg 0$. We have the Leray spectral sequence \[ \mr{E}_{2}^{p,q} = \H^{p}(Y,\mb{R}^{q}f_{\ast}(\mc{E}_{n} \otimes_{\mc{O}_{X}} f^{\ast}\mc{G})) \Rightarrow \H^{p+q}(X,\mc{E}_{n} \otimes_{\mc{O}_{X}} f^{\ast}\mc{G}) \] with differentials $\mr{E}_{2}^{p,q} \to \mr{E}_{2}^{p+2,q-1}$. By Serre vanishing \cite[02O1]{SP} there exists some $N_{2} \ge N_{1}$ such that $\mb{R}^{q}f_{\ast}(\mc{E}_{n} \otimes_{\mc{O}_{X}} f^{\ast}\mc{G}) = 0$ for all $q \ge 1$ and $n \ge N_{2}$. Thus \begin{align} \label{20210518-15-eqn-02}  \H^{p}(Y,f_{\ast}(\mc{E}_{n} \otimes_{\mc{O}_{X}} f^{\ast}\mc{G})) \to \H^{p}(X,\mc{E}_{n} \otimes_{\mc{O}_{X}} f^{\ast}\mc{G}) \end{align} is an isomorphism for all $p \ge 0$ if $n \ge N_{2}$. Since $\mc{E}_{\bullet}$ satisfies $(\mb{V}_{0})$, there exists some $N_{3} \ge N_{2}$ such that $\H^{p}(X,\mc{E}_{n} \otimes_{\mc{O}_{X}} f^{\ast}\mc{G}) = 0$ for all $p \ge 1$ and $n \ge N_{3}$. \par For $n \ge N_{2}$, we have $\mb{R}^{q}f_{\ast}\mc{E}_{n} = 0$ and $\mb{R}^{q}f_{\ast}(\mc{E}_{n} \otimes_{\mc{O}_{X}} f^{\ast}\mc{G}) = 0$ for $q \ge 1$, so the projection formula \cite[08EU]{SP} simplifies to an isomorphism \[ f_{\ast}\mc{E}_{n} \otimes_{\mc{O}_{Y}} \mc{G} \to f_{\ast}(\mc{E}_{n} \otimes_{\mc{O}_{X}} f^{\ast}\mc{G}) \] since $f_{\ast}\mc{E}_{n}$, $\mc{E}_{n}$, and $f$ are flat. Combining this with \labelcref{20210518-15-eqn-02}, we have $\H^{p}(Y,f_{\ast}\mc{E}_{n} \otimes_{\mc{O}_{X}} \mc{G}) = 0$ for $p \ge 1$ and $n \ge N_{3}$. \end{spg} \begin{spg}[$f_{\ast}\mc{E}_{\bullet}$ satisfies $(\mb{G})$] \label{20210518-15-0001} We prove that $f_{\ast}\mc{E}_{n} \otimes_{\mc{O}_{X}} \mc{G}$ is globally generated for $n \gg 0$. Choose a surjection $\mc{O}_{Y}(-N_{4})^{\oplus r} \to \mc{G}$ for some $N_{4}$. Tensoring with $f_{\ast}\mc{E}_{n}$ gives a surjection $f_{\ast}\mc{E}_{n} \otimes_{\mc{O}_{Y}} \mc{O}_{Y}(-N_{4})^{\oplus r} \to f_{\ast}\mc{E}_{n} \otimes_{\mc{O}_{Y}} \mc{G}$; the first term is isomorphic to $(\mc{V}_{n} \otimes_{\mc{O}_{Y}} \mc{O}_{Y}(n-N_{4}))^{\oplus r}$ where $\mc{V}_{n}$ is as in the first paragraph \labelcref{20210518-15-0002}; the latter is globally generated for $n > N_{4}$. \end{spg}

\subsection{Derived category} \label{sec04-04} 

\begin{lemma} \label{20210506-02} Let $k$ be an infinite field, let $X$ be a proper $k$-scheme, let \[ \mc{F} = \colim (\mc{F}_{1} \to \mc{F}_{2} \to \mc{F}_{3} \to \dotsb) \] be a very positive vector bundle on $X$. Suppose given finite rank vector bundles $\mc{E}_{1},\mc{E}_{2}$ and locally split injections $\mc{E}_{1} \to \mc{E}_{2}$ and $\mc{E}_{1} \to \mc{F}_{n_{1}}$ for some $n_{1}$. Then there exists $n_{2} \gg n_{1}$ and a locally split injection $\mc{E}_{2} \to \mc{F}_{n_{2}}$ making the diagram \begin{center} \begin{tikzpicture}[>=angle 90] 
\matrix[matrix of math nodes,row sep=2em, column sep=2em, text height=1.7ex, text depth=0.5ex] { 
|[name=11]| \mc{E}_{1} & |[name=12]| \mc{E}_{2} \\ 
|[name=21]| \mc{F}_{n_{1}} & |[name=22]| \mc{F}_{n_{2}} \\
}; 
\draw[->,font=\scriptsize]
(11) edge (12) (21) edge (22) (11) edge (21) (12) edge[dotted] (22); \end{tikzpicture} \end{center} commute. \end{lemma}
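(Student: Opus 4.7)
The plan is to reduce this directly to \Cref{0017}, using the extra flexibility afforded by the freedom to enlarge $n_1$ to a suitable $n_2$. Let $\mc{Q} := \coker(\mc{E}_1 \to \mc{E}_2)$. Since $\mc{E}_1 \to \mc{E}_2$ is locally split, $\mc{Q}$ is a finite locally free $\mc{O}_X$-module, and we have a short exact sequence
\[ 0 \to \mc{E}_1 \to \mc{E}_2 \to \mc{Q} \to 0. \]
For any $n_2 \ge n_1$, let $a_1^{(n_2)} : \mc{E}_1 \to \mc{F}_{n_2}$ denote the composition of the given $\mc{E}_1 \to \mc{F}_{n_1}$ with the transition map $\mc{F}_{n_1} \to \mc{F}_{n_2}$ of the system. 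Because that transition map is locally split and the given $\mc{E}_1 \to \mc{F}_{n_1}$ is locally split, so is $a_1^{(n_2)}$.

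The strategy is to choose $n_2 \gg n_1$ so that the hypotheses of \Cref{0017} are satisfied with $\mc{E}_3 = \mc{Q}$ and $\mc{F} = \mc{F}_{n_2}$, and $a_1 = a_1^{(n_2)}$. Since $\mc{Q}$ is finite locally free, we have canonical isomorphisms
\[ \HomS_{\mc{O}_X}(\mc{Q},\mc{F}_{n_2}) \simeq \mc{Q}^{\vee} \otimes_{\mc{O}_X} \mc{F}_{n_2}, \qquad \Ext^1_{\mc{O}_X}(\mc{Q},\mc{F}_{n_2}) \simeq \H^1(X, \mc{Q}^{\vee} \otimes_{\mc{O}_X} \mc{F}_{n_2}). \]
Now $\mc{Q}^{\vee}$ is a coherent $\mc{O}_X$-module of finite type, so condition (4) in \Cref{defn01} (namely $(\mb{G})$ and $(\mb{V}_0)$, applied to $\mc{F} = \mc{Q}^{\vee}$) gives some $N_1$ such that for all $n_2 \ge N_1$ the sheaf $\HomS_{\mc{O}_X}(\mc{Q},\mc{F}_{n_2})$ is globally generated and $\Ext^1_{\mc{O}_X}(\mc{Q},\mc{F}_{n_2}) = 0$. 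Since the ranks $r_n$ of the $\mc{F}_n$ are strictly increasing in $n$, we may also pick $N_2$ so large that $r_{n_2} > \max\{\rk \mc{E}_2,\ \dim X + \rk \mc{Q} - 1\}$ for every $n_2 \ge N_2$.

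Choose $n_2 \ge \max\{n_1, N_1, N_2\}$. Then \Cref{0017} applies to the short exact sequence above, the map $a_1^{(n_2)} : \mc{E}_1 \to \mc{F}_{n_2}$, and the bundle $\mc{F}_{n_2}$, producing a locally split $\mc{O}_X$-linear map $a_2 : \mc{E}_2 \to \mc{F}_{n_2}$ with $a_2|_{\mc{E}_1} = a_1^{(n_2)}$, which is exactly the required commutative square. There is no serious obstacle here: once one recognizes that the hypotheses of \Cref{0017} are precisely what condition (4) on a very positive vector bundle guarantees in the limit, the statement follows immediately, and the only mild subtlety is to ensure simultaneously the Ext-vanishing, the global generation of $\HomS$, and the rank bound, all of which are open conditions on $n_2 \to \infty$.
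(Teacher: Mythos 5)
Your proposal is correct and follows essentially the same route as the paper's own proof: set $\mc{Q} = \coker(\mc{E}_1 \to \mc{E}_2)$, use condition (4) of \Cref{defn01} applied to $\mc{Q}^{\vee}$ (together with the strictly increasing ranks) to choose $n_2$ so that the hypotheses of \Cref{0017} hold for $\mc{F}_{n_2}$, and conclude. Your added remarks that the composite $\mc{E}_1 \to \mc{F}_{n_1} \to \mc{F}_{n_2}$ stays locally split and that $\mc{Q}$ is finite locally free are implicit in the paper's argument and are correct.
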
 \begin{proof} Set $\mc{E}_{3} := \coker(\mc{E}_{1} \to \mc{E}_{2})$. By condition (4) for $\mc{E}$, we may choose $n_{2} \gg n_{1}$ so that $\rk \mc{F}_{n_{2}} > \max\{\rk \mc{E}_{2},\dim S + \rk \mc{E}_{3} - 1\}$ and $\HomS_{\mc{O}_{S}}(\mc{E}_{3},\mc{F}_{n_{2}}) \simeq \mc{E}_{3}^{\vee} \otimes_{\mc{O}_{S}} \mc{F}_{n_{2}}$ is globally generated and $\Ext_{\mc{O}_{S}}^{1}(\mc{E}_{3},\mc{F}_{n_{2}}) = \H^{1}(\mc{O}_{S},\mc{E}_{3}^{\vee} \otimes \mc{F}_{n_{2}}) = 0$. Then we conclude using \Cref{0017}. \end{proof}

\begin{lemma}[``dimension shifting''] \label{20210508-01} Let $k$ be an infinite field, let $X$ be a proper $k$-scheme, let $\ell \ge 1$ and let $\mc{F}_{1}^{[\ell]} \to \mc{F}_{2}^{[\ell]} \to \dotsb$ be a sequence of locally split injections of finite rank vector bundles on $X$ satisfying $(\mb{V}_{\ell})$. Let $\mc{E}$ be a very positive vector bundle on $X$. By \Cref{20210506-02}, there exists an increasing sequence $m_{1} < m_{2} < \dotsb$ and locally split injections $\mc{F}_{i}^{[\ell]} \to \mc{E}_{m_{i}}$ which commute with the transition maps in $\{\mc{F}_{i}^{[\ell]}\}_{i \in \N}$ and $\{\mc{E}_{i}\}_{i \in \N}$. Set $\mc{F}_{i}^{[\ell-1]} := \mc{E}_{m_{i}}/\mc{F}_{i}^{[\ell]}$; then $\{\mc{F}_{i}^{[\ell-1]}\}_{i \in \N}$ satisfies $(\mb{V}_{\ell-1})$. \end{lemma}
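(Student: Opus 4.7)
The plan is to extract the conclusion from the short exact sequence
\[
0 \to \mc{F}_i^{[\ell]} \to \mc{E}_{m_i} \to \mc{F}_i^{[\ell-1]} \to 0
\]
of finite rank vector bundles on $X$, which is locally split since $\mc{F}_i^{[\ell]} \to \mc{E}_{m_i}$ is a locally split injection by the construction invoked from \Cref{20210506-02}.

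Before applying $(\mb{V}_{\ell-1})$, I would first verify that $\{\mc{F}_i^{[\ell-1]}\}_{i \in \N}$ really is a sequence of locally split injections of finite rank vector bundles, so that the condition even applies. Each $\mc{F}_i^{[\ell-1]}$ is finite locally free as the cokernel of a locally split injection. Injectivity of the transition map $\mc{F}_i^{[\ell-1]} \to \mc{F}_{i+1}^{[\ell-1]}$ follows from the snake lemma applied to the commuting square of short exact sequences for $i$ and $i+1$: the first two vertical kernels vanish, so the third does too. For local freeness of the new cokernels, I would use the induced exact sequence
\[
0 \to \mc{F}_{i+1}^{[\ell]}/\mc{F}_i^{[\ell]} \to \mc{E}_{m_{i+1}}/\mc{E}_{m_i} \to \mc{F}_{i+1}^{[\ell-1]}/\mc{F}_i^{[\ell-1]} \to 0,
\]
whose first two terms are finite rank vector bundles (being cokernels of locally split injections in the original systems); locally this sequence of free modules splits, so the third term is locally free and $\mc{F}_i^{[\ell-1]} \to \mc{F}_{i+1}^{[\ell-1]}$ is locally split.

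With this in hand, let $\mc{G}$ be a coherent $\mc{O}_X$-module of finite type. Tensoring the defining short exact sequence with $\mc{G}$ preserves exactness by local splitness, and the long exact sequence in cohomology gives
\[
\cdots \to \H^q(X, \mc{G} \otimes_{\mc{O}_X} \mc{E}_{m_i}) \to \H^q(X, \mc{G} \otimes_{\mc{O}_X} \mc{F}_i^{[\ell-1]}) \to \H^{q+1}(X, \mc{G} \otimes_{\mc{O}_X} \mc{F}_i^{[\ell]}) \to \cdots
\]
For $q \geq \ell$, the left-hand term vanishes once $m_i$ is sufficiently large, because $\mc{E}$ is very positive and hence satisfies $(\mb{V}_0)$, which gives vanishing for all $q \geq 1$; the right-hand term vanishes once $i$ is sufficiently large, because $q+1 \geq \ell+1$ and $\{\mc{F}_i^{[\ell]}\}$ satisfies $(\mb{V}_\ell)$ by hypothesis. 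Picking $i$ large enough that both simultaneously vanish forces $\H^q(X, \mc{G} \otimes_{\mc{O}_X} \mc{F}_i^{[\ell-1]}) = 0$ for all $q \geq \ell$, which is exactly $(\mb{V}_{\ell-1})$.

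The only step that requires any care is the preliminary verification that $\{\mc{F}_i^{[\ell-1]}\}$ is itself a legitimate system in the sense of \Cref{20210518-10} (locally split injections of finite locally free sheaves); once that is established, the cohomological conclusion is a one-line diagram chase on the long exact sequence, using the hypothesis on $\mc{F}^{[\ell]}$ and the $(\mb{V}_0)$ sandwich provided by $\mc{E}$.
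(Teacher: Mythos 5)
Your cohomological argument is exactly the paper's proof: tensor the locally split sequence $0 \to \mc{F}_i^{[\ell]} \to \mc{E}_{m_i} \to \mc{F}_i^{[\ell-1]} \to 0$ with a coherent $\mc{G}$, take the long exact sequence, and squeeze $\H^q(X,\mc{G}\otimes\mc{F}_i^{[\ell-1]})$ for $q \ge \ell$ between $\H^q(X,\mc{G}\otimes\mc{E}_{m_i})$ (zero for $i \gg 0$ by $(\mb{V}_0)$ for the very positive bundle, since $q \ge \ell \ge 1$) and $\H^{q+1}(X,\mc{G}\otimes\mc{F}_i^{[\ell]})$ (zero for $i \gg 0$ by $(\mb{V}_{\ell})$, since $q+1 \ge \ell+1$). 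That is the whole of the paper's proof, and it is correct.

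Your preliminary verification, however, is flawed, both in its logic and in its conclusion. From injectivity of the two left-hand vertical maps the snake lemma gives only an injection $\ker(\mc{F}_i^{[\ell-1]} \to \mc{F}_{i+1}^{[\ell-1]}) \hookrightarrow \coker(\mc{F}_i^{[\ell]} \to \mc{F}_{i+1}^{[\ell]})$, not its vanishing, and injectivity of the induced maps on cokernels can genuinely fail: take (locally, as modules) $\mc{F}_i^{[\ell]} = \mc{O} \subset \mc{O}^{\oplus 2} = \mc{E}_{m_i}$ via the first coordinate, $\mc{F}_{i+1}^{[\ell]} = \mc{O}^{\oplus 2} \subset \mc{O}^{\oplus 4} = \mc{E}_{m_{i+1}}$ via the first two coordinates, with transition maps the evident coordinate inclusions; all squares commute, all maps are split injections of positive-rank free modules, yet the induced map $\mc{E}_{m_i}/\mc{F}_i^{[\ell]} \to \mc{E}_{m_{i+1}}/\mc{F}_{i+1}^{[\ell]}$ is zero. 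Mere commutativity does not make $\{\mc{F}_i^{[\ell-1]}\}$ a system of locally split injections; one would need the stronger compatibility $\mc{F}_{i+1}^{[\ell]} \cap \mc{E}_{m_i} = \mc{F}_i^{[\ell]}$ inside $\mc{E}_{m_{i+1}}$, which is not provided by \Cref{20210506-02}. (Your second displayed sequence also presupposes this.) Fortunately this step is not needed for the statement being proved: $(\mb{V}_{\ell-1})$ is a vanishing condition on the individual terms $\mc{F}_i^{[\ell-1]} \otimes \mc{G}$ and makes no use of the transition maps, and the paper's proof accordingly omits any such check; the question of whether the quotient system is again a system of locally split injections only becomes relevant when the lemma is iterated in \Cref{20210508-03}, and is not part of this lemma. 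So: keep the long-exact-sequence argument, and delete (or substantially repair) the preliminary claim.
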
 \begin{proof} Let $\mc{H}$ be a coherent $\mc{O}_{X}$-module. Let $n'$ be such that $i \ge n'$ implies $\H^{q}(X,\mc{F}_{i}^{[\ell]} \otimes_{\mc{O}_{X}} \mc{H}) = 0$ for $q \ge \ell+1$, and let $n''$ be such that $i \ge n''$ implies $\H^{q}(X,\mc{E}_{m_{i}} \otimes_{\mc{O}_{X}} \mc{H}) = 0$ for all $q \ge 1$. We have a (locally split) exact sequence \begin{align} \label{20210508-01-eqn-01} 0 \to \mc{F}_{i}^{[\ell]} \otimes_{\mc{O}_{X}} \mc{H} \to \mc{E}_{m_{i}} \otimes_{\mc{O}_{X}} \mc{H} \to \mc{F}_{i}^{[\ell-1]} \otimes_{\mc{O}_{X}} \mc{H} \to 0 \end{align} for all $i$, hence an exact sequence \[ \H^{q-1}(X,\mc{E}_{m_{i}} \otimes_{\mc{O}_{X}} \mc{H}) \to \H^{q-1}(X,\mc{F}_{i}^{[\ell-1]} \otimes_{\mc{O}_{X}} \mc{H}) \to \H^{q}(X,\mc{F}_{i}^{[\ell]} \otimes_{\mc{O}_{X}} \mc{H}) \] for all $i$. Since $\H^{q}(X,\mc{F}_{i}^{[\ell]} \otimes_{\mc{O}_{X}} \mc{H}) = 0$ for all $i \ge n'$ and $q \ge \ell+1$, if $i \ge \max\{n',n''\}$ then $\H^{q-1}(X,\mc{F}_{i}^{[\ell-1]} \otimes_{\mc{O}_{X}} \mc{H}) = 0$ for $q \ge \ell$. This implies $\mc{F}_{i}^{[\ell-1]}$ satisfies $(\mb{V}_{\ell-1})$. \end{proof}

\begin{proposition} \label{20210508-03} In the setup of \Cref{20210508-01}, assume that $d := \dim X \ge 1$. For any finite rank vector bundle $\mc{F}$ on $X$, there exists an exact sequence \[ 0 \to \mc{F} \to \mc{E}^{[d]} \to \dotsb \to \mc{E}^{[0]} \to 0 \] where each $\mc{E}^{[i]}$ is a very positive vector bundle. \end{proposition}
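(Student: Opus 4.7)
The plan is to iterate \Cref{20210508-01} exactly $d$ times, starting from the trivial (constant) system at level $d$.

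\textbf{Base case.} Define $\mc{F}_{i}^{[d]} := \mc{F}$ for all $i$, with identity transition maps. This is a system of locally split injections of finite rank vector bundles, and it satisfies $(\mb{V}_{d})$ by Grothendieck vanishing: for any coherent $\mc{O}_{X}$-module $\mc{H}$ we have $\H^{q}(X,\mc{F} \otimes_{\mc{O}_{X}} \mc{H}) = 0$ for $q > d = \dim X$. It also trivially satisfies $(\mb{G})$ since $\mc{F}$ is coherent on a projective scheme.

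\textbf{Iteration.} Inductively, suppose we have constructed a system $\{\mc{F}_{i}^{[\ell]}\}_{i \in \N}$ of locally split injections of finite rank vector bundles satisfying $(\mb{V}_{\ell})$ and $(\mb{G})$, with $\ell \ge 1$. Apply \Cref{20210508-01} to obtain a subsequence $m_{i}^{(\ell)}$ and locally split injections $\mc{F}_{i}^{[\ell]} \hookrightarrow \mc{E}_{m_{i}^{(\ell)}}$ compatible with transitions, and set $\mc{F}_{i}^{[\ell-1]} := \mc{E}_{m_{i}^{(\ell)}}/\mc{F}_{i}^{[\ell]}$. By \Cref{20210508-01} the system $\{\mc{F}_{i}^{[\ell-1]}\}$ satisfies $(\mb{V}_{\ell-1})$; condition $(\mb{G})$ is preserved because for any coherent $\mc{H}$ the locally split exact sequence
\[ 0 \to \mc{F}_{i}^{[\ell]} \otimes_{\mc{O}_{X}} \mc{H} \to \mc{E}_{m_{i}^{(\ell)}} \otimes_{\mc{O}_{X}} \mc{H} \to \mc{F}_{i}^{[\ell-1]} \otimes_{\mc{O}_{X}} \mc{H} \to 0 \]
exhibits the right-hand term as a quotient of the globally generated middle term (for $i \gg 0$). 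Locally splitting the short exact sequence above shows each $\mc{F}_{i}^{[\ell-1]}$ is finite locally free, and applying this argument at indices $i$ and $i+1$ together with the snake lemma shows the transition maps of $\{\mc{F}_{i}^{[\ell-1]}\}$ are locally split injections with locally free cokernels. After $d$ steps, we arrive at $\{\mc{F}_{i}^{[0]}\}$ satisfying $(\mb{V}_{0})$ and $(\mb{G})$, i.e.\ satisfying condition (4) of \Cref{defn01}. Passing to a suitable subsequence at each stage (possible since $(\mb{V}_{\ell})$ and $(\mb{G})$ are preserved under taking subsequences), we may also assume the ranks of $\mc{F}_{i}^{[0]}$ are strictly increasing.

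\textbf{Assembly.} Set $\mc{E}^{[j]} := \colim_{i} \mc{E}_{m_{i}^{(j)}}$ for $j = 1,\dotsc,d$; these are very positive vector bundles as subsystems of the very positive $\mc{E}$. Set $\mc{E}^{[0]} := \colim_{i} \mc{F}_{i}^{[0]}$, which is very positive by construction. Taking colimits in $i$ of the short exact sequences above (colimits are exact in $\mc{O}_{X}$-modules) yields, writing $\mc{F}^{[j]} := \colim_{i} \mc{F}_{i}^{[j]}$,
\[ 0 \to \mc{F}^{[j+1]} \to \mc{E}^{[j+1]} \to \mc{F}^{[j]} \to 0 \]
for $j = 0,\dotsc,d-1$, with $\mc{F}^{[d]} = \mc{F}$ and $\mc{F}^{[0]} = \mc{E}^{[0]}$. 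Splicing via the compositions $\mc{E}^{[j+1]} \twoheadrightarrow \mc{F}^{[j]} \hookrightarrow \mc{E}^{[j]}$ produces the desired sequence
\[ 0 \to \mc{F} \to \mc{E}^{[d]} \to \mc{E}^{[d-1]} \to \dotsb \to \mc{E}^{[1]} \to \mc{E}^{[0]} \to 0, \]
whose exactness at $\mc{E}^{[j]}$ follows from $\ker(\mc{E}^{[j]} \to \mc{E}^{[j-1]}) = \mc{F}^{[j]} = \operatorname{im}(\mc{E}^{[j+1]} \to \mc{E}^{[j]})$.

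\textbf{Main obstacle.} The substantive input is \Cref{20210508-01} itself, which is granted; the rest is bookkeeping. The only non-trivial points to verify in the iteration are (a) that $(\mb{G})$ descends through quotients (immediate), and (b) that the quotient systems $\{\mc{F}_{i}^{[\ell-1]}\}$ retain the structural hypotheses of \Cref{20210508-01} (locally split injections of finite rank vector bundles), which follows from local splitting plus the snake lemma. Finally, arranging strict rank growth for $\mc{E}^{[0]}$ costs nothing since $(\mb{V}_{\ell})$ and $(\mb{G})$ are inherited by any cofinal subsequence.
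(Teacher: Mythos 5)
Your overall route is the same as the paper's: view $\mc{F}$ as the constant system at level $d$, get $(\mb{V}_{d})$ from Grothendieck vanishing, iterate \Cref{20210508-01} down to level $0$, observe that global generation of the quotient systems comes from $(\mb{G})$ for $\mc{E}_{\bullet}$ (quotients of globally generated sheaves are globally generated), and splice the resulting short exact sequences. Two of your auxiliary claims need comment. First, the assertion that the constant system $\{\mc{F}\}$ satisfies $(\mb{G})$ is false as stated -- $(\mb{G})$ would require $\mc{F} \otimes_{\mc{O}_{X}} \mc{H}$ to be globally generated for \emph{every} coherent $\mc{H}$ -- but this is harmless, since neither \Cref{20210508-01} nor your preservation argument ever uses $(\mb{G})$ of the input system, only $(\mb{G})$ of $\mc{E}_{\bullet}$.

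Second, and more substantively, the snake lemma does not give what you claim about the quotient transition maps. Applying it to the map of short exact sequences in rows $i$ and $i+1$, with vertical transition maps $t_{i} : \mc{F}_{i}^{[\ell]} \to \mc{F}_{i+1}^{[\ell]}$, $T_{i} : \mc{E}_{m_{i}} \to \mc{E}_{m_{i+1}}$ and the induced $c_{i}$ on quotients, one only gets an exact sequence identifying $\ker(c_{i})$ with a subsheaf of $\coker(t_{i})$; injectivity of $c_{i}$ is \emph{not} automatic. At the first step this is fine, because the base system is constant, so $\coker(t_{i}) = 0$, and then $\coker(c_{i}) \simeq \coker(T_{i})$ is locally free and the injection is locally split. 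But from the second step on (so for $d \ge 2$), $\coker(t_{i})$ is a nonzero locally free sheaf and your argument does not establish that the quotient system again consists of locally split injections -- which is exactly what is needed both to re-apply \Cref{20210508-01} (via \Cref{20210506-02}, whose hypothesis is a locally split injection with locally free cokernel) and to verify condition (3) of \Cref{defn01} for $\mc{E}^{[0]}$ at the end. In fairness, the paper's own proof is silent on this iterability point, so your write-up matches its level of detail while attaching an incorrect justification to it; genuinely closing the gap would require a refinement of the extension step (\Cref{0017}/\Cref{20210506-02}), e.g.\ a dimension count ensuring the chosen extension meets the image of $\mc{E}_{m_{i}}$ only in the prescribed subbundle, which is plausible since the ranks there are taken large.
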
 \begin{proof} We view $\mc{F}$ as a constant system $\{\mc{F}_{i}^{[d]}\}_{i \in \N}$; it satisfies $(\mb{V}_{d})$ by \cite[III, Theorem 2.7]{HARTSHORNE}. By \Cref{20210508-01}, we may obtain a sequence of exact sequences \[ 0 \to \mc{F}^{[\ell]} \to \mc{E} \to \mc{F}^{[\ell-1]} \to 0 \] where $\mc{E}$ is a very positive vector bundle and each $\{\mc{F}_{i}^{[\ell]}\}_{i \in \N}$ satisfies $(\mb{V}_{\ell})$. Let $\mc{H}$ be a coherent $\mc{O}_{X}$-module; in view of \labelcref{20210508-01-eqn-01}, whenever $\mc{E}_{m_{i}} \otimes_{\mc{O}_{X}} \mc{H}$ is globally generated, so is $\mc{F}_{i}^{[\ell-1]} \otimes_{\mc{O}_{X}} \mc{H}$. In particular the system $\{\mc{F}_{i}^{[0]}\}_{i \in \N}$ is a very positive vector bundle. \end{proof}

\begin{definition} Let $\mc{D}$ be a triangulated category with arbitrary direct sums. Let $E$ be an
object of $\mc{D}$. \begin{enumerate} 
\item Let \DEF{$\langle E \rangle$} be the strictly full triangulated subcategory of $\mc{D}$ containing $E$ and closed
under taking direct summands.
\item Let \DEF{$\langle E \rangle^{\mr{big}}$} be the strictly full triangulated subcategory of $\mc{D}$ containing $E$ and
closed under taking arbitrary direct sums and under taking direct summands. \end{enumerate} \end{definition}

\begin{lemma} \label{gen-lemma01} Let $X$ be a smooth projective variety over an infinite field $k$, and let $\mc{E}$ be a very positive vector bundle on $X$. Then the essential image of $\mc{D}^b_{Coh}(X) \to \mc{D}^{b}_{QCoh}(X)$ is contained in $\langle \mc{E} \rangle$. \end{lemma}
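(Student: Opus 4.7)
The plan is to reduce, via standard devissage, to showing that every finite rank vector bundle $\mc{F}$ on $X$ lies in $\langle \mc{E} \rangle$, and then to invoke the forward resolution of \Cref{20210508-03} together with the uniqueness \Cref{0016}.

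Since $X$ is smooth and projective, every coherent $\mc{O}_X$-module admits a finite resolution by finite rank locally free $\mc{O}_X$-modules. Hence any object of $\mc{D}^b_{\mathrm{Coh}}(X)$ is quasi-isomorphic, as an object of $\mc{D}^b_{\mathrm{QCoh}}(X)$, to a bounded complex of finite rank vector bundles. By the usual stupid truncation triangles, such a complex is built by finitely many cones from its terms, each shifted to the appropriate degree. Since $\langle \mc{E} \rangle$ is a triangulated subcategory closed under shifts, it suffices to show $\mc{F} \in \langle \mc{E} \rangle$ for an arbitrary finite rank vector bundle $\mc{F}$ on $X$.

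Set $d := \dim X$. Assuming $d \ge 1$, \Cref{20210508-03} yields an exact sequence
\[ 0 \to \mc{F} \to \mc{E}^{[d]} \to \dotsb \to \mc{E}^{[0]} \to 0 \]
of $\mc{O}_X$-modules in which each $\mc{E}^{[i]}$ is a very positive vector bundle, and \Cref{0016} identifies each $\mc{E}^{[i]}$ with $\mc{E}$. Placing $\mc{E}^{[i]}$ in cohomological degree $-i$, the complex $(\mc{E}^{[d]} \to \dotsb \to \mc{E}^{[0]})$ has cohomology concentrated in degree $-d$ with value $\mc{F}$; it is therefore quasi-isomorphic to $\mc{F}[d]$ and, being a bounded complex with every term equal to $\mc{E}$, lies in $\langle \mc{E} \rangle$ by closure under finitely many cones. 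Closure under shifts then gives $\mc{F} \in \langle \mc{E} \rangle$. In the degenerate case $d = 0$, $X$ is the spectrum of a finite product of finite separable extensions of $k$, every very positive vector bundle is free of countably infinite rank, and $\mc{F}$ is a direct summand of such a bundle, so $\mc{F} \in \langle \mc{E} \rangle$ by closure under summands.

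The main point to guard against is confusing $\langle \mc{E} \rangle$ with $\langle \mc{E} \rangle^{\mr{big}}$: because the forward resolution has length only $d+1$, the argument requires only finitely many cones (and, in the $d=0$ case, closure under direct summands), never the infinite direct sums allowed only in $\langle \mc{E} \rangle^{\mr{big}}$.
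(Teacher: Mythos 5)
Your proof is correct and follows essentially the same route as the paper's: reduce, using smoothness, to a finite rank vector bundle and then apply the forward resolution of \Cref{20210508-03} together with the uniqueness statement \Cref{0016}, concluding by closure of $\langle \mc{E} \rangle$ under shifts and finitely many cones. Your extra treatment of the $d=0$ case (where \Cref{20210508-03} does not literally apply) is a small but welcome addition beyond what the paper writes.
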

\begin{proof} Since $X$ is smooth, we can represent any object of $\mc{D}^b_{Coh}(X)$ by a bounded complex of
finite locally free modules. Hence it suffices to show that any finite
locally free $\mc{O}_{X}$-module $F$ is in $\langle \mc{E} \rangle$.
By \Cref{20210508-03} and \Cref{0016} we may find an exact sequence \[ 0 \to F \to \mc{E} \to \dotsb \to \mc{E} \to 0 \] of length $\dim X$. \end{proof}

\begin{proposition} \label{gen-lemma02} Let $X$ be a projective variety over an infinite field $k$, and let $\mc{E}$ be a very positive vector bundle on $X$. Then $\mc{D}_{QCoh}(X) = \langle \mc{E} \rangle^{\mr{big}}$. In particular, $\mc{E}$ is a generator of $\mc{D}_{QCoh}(X)$. \end{proposition}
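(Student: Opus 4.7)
The plan is to deduce the proposition from \Cref{gen-lemma01}, which gives $\mc{D}^b_{Coh}(X) \subseteq \langle \mc{E} \rangle$, by showing that the localizing subcategory $\langle \mc{E} \rangle^{\mr{big}}$ exhausts all of $\mc{D}_{QCoh}(X)$. The strategy proceeds in three stages.

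First, I would show every quasi-coherent sheaf $\mc{F}$ on $X$ lies in $\langle \mc{E} \rangle^{\mr{big}}$. Since $X$ is Noetherian, we can write $\mc{F} = \varinjlim_{i \in I} \mc{F}_i$ as a filtered colimit of its coherent subsheaves. Filtered colimits in $\mr{QCoh}(X)$ are exact, so they compute the homotopy colimit in $\mc{D}_{QCoh}(X)$, which sits in a distinguished triangle of the form $\bigoplus_i \mc{F}_i \to \bigoplus_i \mc{F}_i \to \mc{F} \to [1]$. Each $\mc{F}_i$ lies in $\langle \mc{E} \rangle$ by \Cref{gen-lemma01}, the direct sum stays in $\langle \mc{E} \rangle^{\mr{big}}$ by definition, and so $\mc{F}$ does too by closure under cones.

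Second, I would extend to bounded complexes $K \in \mc{D}^b_{QCoh}(X)$ by induction on the amplitude of $K$ using the canonical truncation triangles $\tau_{<n}K \to K \to \mc{H}^n(K)[-n]$ and the previous step applied to each $\mc{H}^n(K)$.

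Third, to reach arbitrary objects of $\mc{D}_{QCoh}(X)$, the cleanest route is to invoke compact generation: for the Noetherian projective scheme $X$, the category $\mc{D}_{QCoh}(X)$ is compactly generated by its perfect complexes (Bondal--Van den Bergh). Every perfect complex on $X$ lies in $\mc{D}^b_{Coh}(X) \subseteq \langle \mc{E} \rangle$ by \Cref{gen-lemma01}. Since the smallest localizing subcategory of a compactly generated triangulated category containing its compact generators equals the whole category, we conclude $\mc{D}_{QCoh}(X) \subseteq \langle \mc{E} \rangle^{\mr{big}}$; the reverse inclusion is trivial. The main obstacle is exactly the passage from bounded to unbounded complexes: the naive hocolim-of-canonical-truncations approach reduces arbitrary complexes to bounded-above ones, but further reducing to bounded is subtle because the truncations $\tau_{\geq m} K$ for $m \to -\infty$ form a projective rather than a directed system. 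Compact generation elegantly bypasses this. The in-particular statement that $\mc{E}$ generates $\mc{D}_{QCoh}(X)$ is then immediate: if $\Hom(\mc{E}[n], K) = 0$ for all $n$, taking cones, shifts, and direct sums shows $\Hom(-, K)$ vanishes on all of $\langle \mc{E} \rangle^{\mr{big}} = \mc{D}_{QCoh}(X)$, so in particular $\Hom(K, K) = 0$ and $K = 0$.
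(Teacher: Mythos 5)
Your overall architecture (reduce to perfect complexes via compact generation, then use the earlier resolution result) is the same as the paper's, but as written there is a genuine gap: you invoke the \emph{statement} of \Cref{gen-lemma01}, which carries a smoothness hypothesis, whereas \Cref{gen-lemma02} is asserted for an arbitrary (possibly singular) projective variety. This is not just a citation mismatch. For singular $X$ the inclusions you use are actually false: every object of $\langle \mc{E} \rangle$ is built from the flat module $\mc{E}$ by finitely many shifts, cones and summands, hence has finite Tor-amplitude, while a coherent sheaf such as the residue field at a singular point has infinite Tor-dimension. So your Step 1 (``each coherent $\mc{F}_i$ lies in $\langle \mc{E} \rangle$'') and the inclusion $\mc{D}^b_{Coh}(X) \subseteq \langle \mc{E} \rangle$ in Step 3 cannot hold in the stated generality; only $\langle \mc{E} \rangle^{\mr{big}}$ can absorb such objects, and that containment is what is being proved.

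The fix is to use only the \emph{proof} of \Cref{gen-lemma01}, applied to the compact objects: a perfect complex on a projective scheme is globally quasi-isomorphic to a bounded complex of finite locally free sheaves (the ample line bundle gives the resolution property), and each finite locally free term admits a finite forward resolution $0 \to \mc{F} \to \mc{E} \to \dotsb \to \mc{E} \to 0$ by \Cref{20210508-03} together with the uniqueness statement \Cref{0016}; hence every perfect complex lies in $\langle \mc{E} \rangle$, with no smoothness needed. Your compact-generation step then goes through verbatim (this is exactly the paper's argument, which uses a single perfect generator $G$ with $\langle G \rangle^{\mr{big}} = \mc{D}_{QCoh}(X)$), and your Steps 1--2 should simply be deleted: they are redundant given Step 3 and, as noted, incorrect in the singular case. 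Your derivation of the ``in particular'' statement from $\langle \mc{E} \rangle^{\mr{big}} = \mc{D}_{QCoh}(X)$ is fine.
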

\begin{proof} It is enough to show that a perfect generator $G$ is contained
in $\langle \mc{E} \rangle^{\mr{big}}$ because we know that $\langle G \rangle^{\mr{big}} = \mc{D}_{QCoh}(X)$. This follows
from the proof of \Cref{gen-lemma01} and \cite[0BQT]{SP}. \end{proof}

\section{Surjective ring map induces surjection on $\GL_{\infty}$} \label{sec05}

In this section we prove that infinite-dimensional invertible matrices lift under any surjective ring map. This surprising fact was discovered while thinking about how to lift twisted vector bundles from a curve to an ambient surface (see \Cref{20210615-17}).

\subsection{Definition and theorem statement}

\begin{definition} \label{20210615-12} Let $I$ be an index set. For any ring $A$, we denote \DEF{$\GL_{I}(A)$} the group of invertible elements of $\Hom_{A}(A^{\oplus I},A^{\oplus I})$. We may identify elements of $\Hom_{A}(A^{\oplus I},A^{\oplus I})$ with $\Mat_{I \times I}^{\mr{cf}}(A)$, matrices whose rows and columns are indexed by $I$ and such that every column has only finitely many nonzero entries; then elements of $\GL_{I}(A)$ correspond to matrices which admit a two-sided inverse. Given a ring homomorphism $\varphi : A \to B$, we obtain a group homomorphism $\GL_{I}(A) \to \GL_{I}(B)$ by applying $\varphi$ to each element in the matrix, and this gives a functor $\mr{Ring} \to \mr{Grp}$. \end{definition}

\begin{theorem} \label{20210615-08} Let $A \to B$ be a surjective ring map. The group homomorphism \[ \GL_{\N}(A) \to \GL_{\N}(B) \] is surjective, i.e. any automorphism of the free $B$-module $B^{\oplus \N}$ lifts to an automorphism of the free $A$-module $A^{\oplus \N}$. \end{theorem}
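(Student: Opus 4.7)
The plan is to combine a Whitehead-style factorization with an Eilenberg-swindle argument.

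Let $J := \ker(A \to B)$. Given $M \in \GL_\N(B)$ with inverse $N$, I first pick arbitrary column-finite lifts $\widetilde{M}, \widetilde{N} \in \Mat_{\N \times \N}^{\mr{cf}}(A)$ and set $E := I - \widetilde{M}\widetilde{N}$ and $F := I - \widetilde{N}\widetilde{M}$; both have entries in $J$ and satisfy the intertwining identities $\widetilde{M}F = E\widetilde{M}$ and $\widetilde{N}E = F\widetilde{N}$. The classical Whitehead identity
\[
\begin{pmatrix} M & 0 \\ 0 & N \end{pmatrix} = \begin{pmatrix} I & M \\ 0 & I \end{pmatrix}\begin{pmatrix} I & 0 \\ -N & I \end{pmatrix}\begin{pmatrix} I & M \\ 0 & I \end{pmatrix}\begin{pmatrix} 0 & -I \\ I & 0 \end{pmatrix}
\]
in $\GL_{\N \sqcup \N}(B)$ exhibits $M \oplus N$ as a product of four $2 \times 2$ block matrices, each admitting an obvious invertible lift to $\GL_{\N \sqcup \N}(A)$: the three block-unipotent triangulars lift entrywise using $\widetilde{M}, \widetilde{N}$, while the signed permutation lifts to itself. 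Multiplying these yields an explicit $\widetilde{L} \in \GL_{\N \sqcup \N}(A) \cong \GL_\N(A)$ with $\widetilde{L} \bmod J = M \oplus N$.

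Next I apply the Eilenberg swindle. Fix an identification $\N \cong \bigsqcup_{k \ge 0} \N_k$ with each $\N_k \cong \N$, and set
\[
X := \bigoplus_{j \ge 0} \widetilde{L}\big|_{\N_{2j} \sqcup \N_{2j+1}}, \qquad Y := I_{\N_0} \oplus \bigoplus_{j \ge 0} \widetilde{L}\big|_{\N_{2j+1} \sqcup \N_{2j+2}}
\]
in $\GL_\N(A)$. Blockwise on the $\N_k$'s, $X \bmod J$ and $Y \bmod J$ have diagonal patterns $(M, N, M, N, \ldots)$ and $(I, M, N, M, N, \ldots)$ respectively, so the product $XY \bmod J$ has pattern $(MI, NM, MN, NM, \ldots) = (M, I, I, I, \ldots)$. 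Thus every ``stabilized'' element $M \oplus I \oplus I \oplus \cdots \in \GL_\N(B)$ admits a lift to $\GL_\N(A)$.

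Finally, to conclude that an arbitrary $M$ lifts---not merely its stabilization---one must bridge the gap between the stabilized element and $M$ itself, since under the chosen decomposition these are genuinely different elements of $\GL_\N(B)$. This is the main obstacle. I expect the actual proof to proceed either by (i) a direct construction of an infinite column-finite invertible $A$-matrix with entries built from $\widetilde{M}, \widetilde{N}, E, F$ in a telescoping pattern that is manifestly congruent to $M$ modulo $J$, bypassing stabilization entirely, or (ii) combining the Eilenberg-swindle lift with a factorization of $M$ into liftable pieces (e.g.\ elementary matrices, permutations, and stabilized factors). In either scenario the intertwining $\widetilde{M}F = E\widetilde{M}$ should play the central algebraic role, making the analogue of the series $\sum_k E^k$ for the non-nilpotent case meaningful via the extra column-finite ``room'' afforded by the countable index set.
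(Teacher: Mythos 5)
Your stable half is correct, and it coincides with the endgame of the paper's proof: the paper also lifts block matrices via Whitehead's lemma (\Cref{20210615-15}) and an Eilenberg swindle. But the gap you flag at the end is genuine, and it is exactly where the paper's main input lies. What your argument establishes is that every \emph{stabilized} element $M \oplus \ml{Id} \oplus \ml{Id} \oplus \dotsb$ (with respect to a chosen splitting $\N \cong \coprod_{k} \N_{k}$) lies in the image of $\GL_{\N}(A) \to \GL_{\N}(B)$; it does not yet produce a lift of $M$ itself, and there is no formal way to pass from the stabilized element to $M$ without a further factorization.

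The paper bridges this gap without using the intertwining relation $\widetilde{M}F = E\widetilde{M}$ or any series; the tools are finite elementary column operations and column-finiteness. Since $\ml{P} \in \GL_{\N}(B)$ is invertible, $\ml{P}(\ml{e}_{1}) = \sum_{i \le n} a_{i}\ml{e}_{i}$ is a unimodular vector of finite support, and \Cref{20210615-07} gives a liftable $\ml{T} \in \GL_{n+1}(B)$ (a product of elementary matrices) such that $\mr{diag}(\ml{T},1,1,\dotsc)$ agrees with $\ml{P}$ on $\ml{e}_{1}$ (\Cref{20210615-10}). Iterating produces an infinite block-diagonal matrix $\mr{diag}(\ml{T}_{1},\ml{T}_{2},\dotsc)$ with liftable finite blocks --- hence itself liftable, blockwise --- agreeing with $\ml{P}$ on an infinite set of basis vectors (\Cref{20210615-11}). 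Replacing $\ml{P}$ by $\mr{diag}(\ml{T}_{1},\ml{T}_{2},\dotsc)^{-1}\ml{P}$ and applying a (liftable) permutation, one may assume $\ml{P} = \BM{\ml{Id} & \ml{T} \\ 0 & \ml{U}}$; multiplying by $\mr{diag}(\ml{Id},\ml{U}^{-1})$ leaves an elementary, hence liftable, matrix, so the problem reduces to lifting $\mr{diag}(\ml{Id}_{\N},\ml{U})$ (\Cref{20210615-16}) --- which is precisely the stabilized situation your Whitehead-plus-swindle argument handles. So your guess (ii) has the right shape: $M$ is indeed factored into liftable pieces (a block diagonal of finite liftable blocks, a permutation, an elementary matrix, and a stabilized factor); but the decisive missing lemma is the unimodular-column reduction that produces this factorization, exploiting that each column of $\ml{P}$ has finite support so each stage only involves a finite block while leaving infinite room for later stages.
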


\subsection{Applications} Before the proof, we discuss some applications.

\begin{remark} \label{20210615-13} Recall that \Cref{20210615-08} is false when $\N$ is replaced by a finite index set $I$; for example if $|I| = 1$, the induced map $\Z^{\times} \to \F_{p}^{\times}$ is not surjective for any prime $p \ge 5$. \end{remark}

\begin{example} Let $A := \Z[x,y]$ and $B := \Z[u^{\pm}]$ and let $A \to B$ be the ring map sending $(x,y) \mapsto (u,u^{-1})$. Then \Cref{20210615-08} implies there exists an invertible matrix $\ml{M} \in \GL_{\N}(A)$ whose image in $\GL_{\N}(B)$ is the diagonal matrix $u \ml{Id}_{\N}$. \end{example}

\begin{remark} One might ask whether there exists a reasonable notion of ``determinant'' for infinite dimensional invertible matrices. By \Cref{20210615-08}, we know at least that there does not exist a natural transformation $\GL_{\N}(-) \to (-)^{\times}$ between functors $\mr{Ring} \to \mr{Grp}$, since every ring $A$ admits a surjection from a polynomial ring $\Z[\{x_{i}\}_{i \in I}]$, but the only units of the latter are $\pm 1$. \end{remark}

\begin{remark} \label{20210615-17} Let $X$ be a separated Noetherian scheme, let $Y \to X$ be a closed subscheme which admits a covering $Y \subset U_{1} \cup U_{2}$ by two affine open subsets of $X$. Then any infinite rank vector bundle on $Y$ extends to $U_{1} \cup U_{2}$. In particular, suppose $X$ is a surface (any quasi-projective $k$-scheme of dimension 2) and $Y$ is a curve in $X$, and let $\mc{E}$ be a countably generated vector bundle on $Y$; then $\mc{E}|_{Y \times_{X} U_{i}}$ are trivial by Bass' theorem; on $Y \times_{X} (U_{1} \cap U_{2})$, the two trivializations differ by a transition map $\varphi \in \GL_{\N}(\Gamma(Y \times_{X} (U_{1} \cap U_{2}) , \mc{O}_{Y}))$ which lifts to some $\varphi' \in \GL_{\N}(\Gamma(U_{1} \cap U_{2} , \mc{O}_{X}))$ by \Cref{20210615-08}. This invertible matrix $\varphi'$ defines a countably generated vector bundle $\mc{E}'$ on $U_{1} \cup U_{2}$ whose restriction to $Y$ is $\mc{E}$. Here we may choose $U_{1},U_{2}$ suitably so that the complement $X \setminus (U_{1} \cup U_{2})$ consists of a finite collection of closed points. Furthermore the vector bundle $\mc{E}'$ may not extend to the entire surface $X$ (see \Cref{20180402-16}). \end{remark}

\begin{question} Is the twisted analogue of \Cref{20210615-08} true? Namely, let $Y \to X$ be a closed immersion of affine schemes, let $\mc{G} \to X$ be a $\G_{m}$-gerbe, and let $\mc{E}$ be an infinite rank 1-twisted vector bundle on $\mc{G}$. Is the map \[ \Aut_{\mc{O}_{\mc{G}}}(\mc{E}) \to \Aut_{\mc{O}_{\mc{G}_{Y}}}(\mc{E}|_{\mc{G}_{Y}}) \] surjective? \end{question}

\subsection{Proof} We begin the proof of \Cref{20210615-08}.

\begin{definition} \label{20210615-05} Let $I$ be an index set. An \DEF{elementary matrix} indexed by $I$ over a ring $A$ is a matrix in $\Mat_{I \times I}(A)$ of the form $\id_{I} + \ml{M}$ where $\ml{M}$ is a matrix in $\Mat_{(I \setminus J) \times J}(A) \simeq \Hom_{A}(A^{\oplus J} , A^{\oplus I \setminus J})$ for some subset $J \subseteq I$. \end{definition}

\begin{remark} \label{20210615-06} Any elementary matrix $\id_{I} + \ml{M}$ is invertible; its inverse is $\id_{I} - \ml{M}$ which is itself an elementary matrix. \end{remark}

\begin{remark} \label{20210615-14} Let $I$ be an index set; for any $i \in \N$, let $\ml{e}_{i} \in \Z^{\oplus \N}$ denote the $i$th basis vector; let $\sigma : I \to I$ be a bijection and let $\varphi_{\sigma} \in \GL_{I}(\Z)$ be the \DEF{permutation matrix} corresponding to $\sigma$, which sends $\ml{e}_{i} \mapsto \ml{e}_{\sigma(i)}$. Since permutation matrices are defined over $\Z$, they lift via any ring map. \end{remark}

\begin{lemma} \label{20210615-07} Let $A$ be a ring, let $\BM{a_{1} & \dotsb & a_{n} } \in A^{\oplus n}$ be a unimodular vector. There exist elementary matrices $\ml{A}_{1},\dotsc,\ml{A}_{m} \in \GL_{n+1}(A)$ such that \[ \ml{A}_{1} \dotsb \ml{A}_{m} \BM{ a_{1} & \dotsb & a_{n} & 0}^{T} = \BM{ 1 & 0 & \dotsb & 0}^{T} \] in $\GL_{n+1}(A)$. \end{lemma}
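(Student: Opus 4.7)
The plan is an explicit three-step row reduction that crucially uses the ``extra slot'' provided by the adjoined $0$. This is a variant of the standard Whitehead lemma argument from algebraic $K$-theory, adapted to the definition of ``elementary matrix'' in \Cref{20210615-05} where an entire block of rows (indexed by $I \setminus J$) may be filled in simultaneously.

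First, using unimodularity, I would choose $b_1, \dotsc, b_n \in A$ with $\sum_{j=1}^n a_j b_j = 1$ and then apply three elementary matrices in the sense of \Cref{20210615-05}. The first, with $J = \{1, \dotsc, n\}$ and $\ml{M} \in \Mat_{\{n+1\} \times J}(A)$ having entries $M_{n+1,j} = b_j$, fixes the first $n$ coordinates and sends the last coordinate $0$ to $\sum_j b_j a_j = 1$, producing $\BM{a_1 & \dotsb & a_n & 1}^T$. The second, with $J = \{n+1\}$ and $\ml{M} \in \Mat_{\{1,\dotsc,n\} \times \{n+1\}}(A)$ having entries $1 - a_1, -a_2, \dotsc, -a_n$, fixes the last coordinate and uses the newly-created $1$ to turn $(a_1, \dotsc, a_n)$ into $(1, 0, \dotsc, 0)$, producing $\BM{1 & 0 & \dotsb & 0 & 1}^T$. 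The third, with $J = \{1\}$ and $\ml{M}$ having its only nonzero entry $-1$ at position $(n+1, 1)$, subtracts the first entry from the last, yielding $\BM{1 & 0 & \dotsb & 0}^T$.

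Since the lemma writes the composition as $\ml{A}_1 \dotsb \ml{A}_m$ with $\ml{A}_m$ acting first on the column, I would take $m = 3$ with $\ml{A}_3, \ml{A}_2, \ml{A}_1$ equal to the three matrices constructed above, in that order. Verification is a direct computation from the action $(\id_I + \ml{M})v$; the only item to check is the support condition of \Cref{20210615-05}, which holds by construction in each case (one row block in the first matrix, one column block in the second, and a single entry in the third). There is no serious obstacle: the argument is essentially formal, but the role of the extra zero slot is essential, since (as noted in \Cref{20210615-13}) $\GL_n(A)$ is generally not generated by elementary matrices and one cannot in general pass from $(a_1,\dotsc,a_n)^T$ to $(1,0,\dotsc,0)^T$ by elementary operations in $\GL_n$ alone.
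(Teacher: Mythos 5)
Your proof is correct and takes essentially the same route as the paper's: exploit the adjoined zero slot by adding multiples of the first $n$ entries to it to create a unit there, then use that unit to fix the remaining entries with further elementary matrices (your bundling of the $n$ additions into one block elementary matrix is permitted by \Cref{20210615-05}). In fact your variant, using $1-a_{1},-a_{2},\dotsc,-a_{n}$ and then clearing the auxiliary slot, lands exactly on $(1,0,\dotsc,0)^{T}$, whereas the paper's two-step sketch as literally written ends with the $1$ in the last coordinate and implicitly needs one more elementary move to match the stated conclusion.
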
 \begin{proof} If $c_{1}a_{1} + \dotsb + c_{n}a_{n} = 1$, perform the column operations $\mr{C}_{n+1} \texttt{ += } c_{i}\mr{C}_{i}$ for $i=1,\dotsc,n$ (after which $\mr{C}_{n+1} = 1$) and $\mr{C}_{i} \texttt{ -= } a_{i}\mr{C}_{n+1}$ for $i=1,\dotsc,n$. \end{proof}

\begin{pg}[Proof of \Cref{20210615-08}] We say that a matrix $\ml{M} \in \GL_{I}(B)$ is \DEF{liftable} if it is in the image of $\GL_{I}(A) \to \GL_{I}(B)$. Any elementary matrix $\id_{I} + \ml{M} \in \GL_{I}(B)$ lifts to an elementary matrix in $\GL_{I}(A)$ (here we must be careful to lift any $0$s as $0$, to ensure that every column contains only finitely many nonzero entries). We note that if $\ml{M}' \in \GL_{I}(A)$ is a lift of $\ml{M} \in \GL_{I}(B)$, then the inverse $\ml{M}'^{-1}$ is a lift of $\ml{M}^{-1}$; furthermore, if $\ml{M}_{1},\ml{M}_{2} \in \GL_{I}(B)$ are invertible matrices such that $\ml{M}_{1}$ is liftable, then $\ml{M}_{2}$ is liftable if and only if their product $\ml{M}_{1} \ml{M}_{2}$ is liftable.

\begin{spg} \label{20210615-10} Let $\ml{P} \in \GL_{\N}(B)$ be an automorphism of $B^{\oplus \N}$. Write $\ml{P}(\ml{e}_1) = \sum_{i = 1, \dotsc, n} a_i \ml{e}_i$ for $a_{i} \in B$. Then we see that $(a_1, \dotsc, a_n)$ is a unimodular vector. By \Cref{20210615-07} we can find a liftable invertible matrix $\ml{T} \in \GL_{n+1}(B)$ such that the operator $\mr{diag}(\ml{T}, 1, 1, \dotsc) \in \GL_{\N}(B)$ sends $\ml{e}_1$ to the same vector as $\ml{P}$. \end{spg}

\begin{spg} \label{20210615-11} Repeating the argument of \Cref{20210615-10}, we can find a sequence of positive integers $r_1, r_2, r_3, \dotsc$ and liftable invertible $r_i \times r_i$ matrices $\ml{T}_i \in \GL_{r_{i}}(B)$ such that $\mr{diag}(\ml{T}_1, \ml{T}_2, \ml{T}_3, \dotsc) \in \GL_{\N}(B)$ sends $\ml{e}_1, \ml{e}_{r_1 + 1}, \ml{e}_{r_1 + r_2 + 1}, \dotsc$ to the same vectors as $\ml{P}$ does. Moreover $\mr{diag}(\ml{T}_1, \ml{T}_2, \ml{T}_3, \dotsc)$ is liftable: if $\ml{T}_{i}' \in \GL_{r_{i}}(A)$ is a lift of $\ml{T}_{i}$, then $\mr{diag}(\ml{T}_1', \ml{T}_2', \ml{T}_3', \dotsc) \in \GL_{\N}(A)$ is a lift of $\mr{diag}(\ml{T}_1, \ml{T}_2, \ml{T}_3, \dotsc)$. \end{spg}

\begin{spg} \label{20210615-16} By \Cref{20210615-11}, after replacing $\ml{P}$ by $\mr{diag}(\ml{T}_1, \ml{T}_2, \ml{T}_3, \dotsc)^{-1} \cdot \ml{P}$, we may assume $\ml{P}(\ml{e}_i) = \ml{e}_i$ for infinitely many $i \in \N$. After possibly rearranging the $\ml{e}_{i}$, we may assume there exists some countable set $I$ such that $\ml{P}$ may be written in block matrix form as $\ml{P} = \BM{\ml{Id}_{\N} & \ml{T} \\ 0 & \ml{U}}$ for some $\ml{T} \in \Mat_{\N \times I}(B)$ and $\ml{U} \in \GL_{I}(B)$. If we can show that $\mr{diag}(\ml{Id}_{\N}, \ml{U}) = \BM{\ml{Id}_{\N} & 0 \\ 0 & \ml{U}}$ is liftable, then we are done since $\BM{\ml{Id}_{\N} & \ml{T} \\ 0 & \ml{U}} \BM{\ml{Id}_{\N} & 0 \\ 0 & \ml{U}^{-1}} = \BM{\ml{Id}_{\N} & \ml{T}\ml{U}^{-1} \\ 0 & \ml{Id}_{I}}$ is liftable (because it is an elementary matrix). \end{spg}

\begin{spg}[Whitehead's lemma] \label{20210615-15} Given invertible matrices $\ml{A},\ml{B} \in \GL_{I}(A)$, we have that $\BM{\ml{A} & 0 \\ 0 & \ml{B}}$ is liftable if and only if $\BM{\ml{A}\ml{B} & 0 \\ 0 & \ml{Id}_{I}}$ is liftable. Namely, perform the following (block) row/column operations by multiplying by appropriate elementary matrices and permutation matrices on the left/right, respectively \cite[I, 5.1]{LAM-SPOPM2006}: \[ \BM{\ml{A} & 0 \\ 0 & \ml{B}} \stackrel{\mr{C}_{1} \texttt{+=} \mr{C}_{2}\ml{B}^{-1}}{\to} \BM{\ml{A} & 0 \\ \ml{Id}_{I} & \ml{B}} \stackrel{\mr{C}_{2} \texttt{-=} \mr{C}_{1}\ml{B}}{\to} \BM{\ml{A} & -\ml{A}\ml{B} \\ \ml{Id}_{I} & 0} \stackrel{\substack{\mr{C}_{1} \Leftrightarrow \mr{C}_{2} \\ \mr{C}_{1} \texttt{*=} -1}}{\to} \BM{\ml{A}\ml{B} & \ml{A} \\ 0 & \ml{Id}_{I}} \stackrel{\mr{R}_{1} \texttt{-=} \ml{A} \mr{R}_{2}}{\to} \BM{\ml{A}\ml{B} & 0 \\ 0 & \ml{Id}_{I}} \] \end{spg}

\begin{spg}[Eilenberg swindle] Let $I$ and $\ml{U}$ be as in \Cref{20210615-16}. If $I$ is empty, there is nothing to show. Suppose $I$ is nonempty; by \Cref{20210615-15} we see that \[ \ml{D} := \mr{diag}(\dotsc, \ml{U}, \ml{U}^{-1}, \ml{U}, \ml{U}^{-1}) \] is liftable; hence we see that \[ \mr{diag}(\ml{Id}_{\N}, \ml{U}) \] is liftable if and only if \[ \mr{diag}(\ml{D},\ml{U}) = \mr{diag}(\dotsc, \ml{U}, \ml{U}^{-1}, \ml{U}, \ml{U}^{-1}, \ml{U}) \] is liftable, which it is by the same argument with $\ml{U}$ replaced by $\ml{U}^{-1}$. \end{spg}
\end{pg}

\appendix
\section{Infinite matrix rings} \label{sec06}

\subsection{Skolem-Noether}
In this section we prove the following theorem, which is closely related to a theorem of Courtemanche--Dugas \cite[2.2]{COURTEMANCHEDUGAS-AOTEAOAFM2016}. We remove their hypotheses on idempotents and indecomposable projective modules, but require that the automorphism is of the entire sheaf of algebras (as opposed to just the global sections). For the finite rank case, see for example \cite[IV, 1.4]{MILNE-EC}.

\begin{theorem} \label{20180329-aaga} Let $\mc{C}$ be a locally ringed site with a final object $S$, let $I$ be an index set, let $V := \bigoplus_{i \in I} \mc{O}_{\mc{C}}\ml{e}_{i}$ be a free $\mc{O}_{\mc{C}}$-module with basis $\{\ml{e}_{i}\}_{i \in I}$, set $\mc{A} := \EndS_{\mc{O}_{\mc{C}}}(V)$ and let $\varphi : \mc{A} \to \mc{A}$ be an $\mc{O}_{\mc{C}}$-algebra automorphism. Then there exists a covering $\{S_{\xi} \to S\}_{\xi \in \Xi}$ and units $\ml{U}_{m} \in \Gamma(S_{\xi},\mc{A}^{\times})$ such that $\varphi|_{S_{\xi}} : \mc{A}|_{S_{\xi}} \to \mc{A}|_{S_{\xi}}$ is the conjugation-by-$\ml{U}_{m}$ morphism. \end{theorem}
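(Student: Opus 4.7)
The plan is to reduce to showing that $V$ and its $\varphi$-twist $V^{\varphi}$ are isomorphic as $\mc{A}$-modules on a suitable cover of $S$, where $V^{\varphi}$ denotes the underlying $\mc{O}_{\mc{C}}$-module $V$ equipped with the new action $a \cdot_{\varphi} v := \varphi(a)(v)$. Any $\mc{A}$-linear isomorphism $\ml{U} \colon V \to V^{\varphi}$ is an $\mc{O}_{\mc{C}}$-linear automorphism of $V$ satisfying $\ml{U} \circ a = \varphi(a) \circ \ml{U}$, and hence $\varphi(a) = \ml{U} a \ml{U}^{-1}$ for every $a \in \mc{A}$. This produces the required conjugating unit.

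Set $e := e_{11}$ and $g := \varphi^{-1}(e)$; both are idempotents of $\mc{A}$. There is an isomorphism $V \cong \mc{A} e$ of left $\mc{A}$-modules sending $a e \mapsto a(\ml{e}_{1})$, and similarly $V^{\varphi} \cong \mc{A} g$, since the annihilator of the cyclic generator $\ml{e}_{1} \in V^{\varphi}$ is $\{a : \varphi(a) e = 0\} = \varphi^{-1}(\mc{A}(1-e)) = \mc{A}(1-g)$. It thus suffices to produce, locally on $S$, elements $\alpha \in e \mc{A} g$ and $\beta \in g \mc{A} e$ with $\alpha \beta = e$ and $\beta \alpha = g$, as right multiplication by $\alpha$ then defines an isomorphism $\mc{A} e \xrightarrow{\sim} \mc{A} g$ with inverse given by right multiplication by $\beta$.

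The key structural identity is $e \mc{A} e = \mc{O}_{\mc{C}} \cdot e$, since $e$ projects $V$ onto $\mc{O}_{\mc{C}} \ml{e}_{1}$. Applying the $\mc{O}_{\mc{C}}$-linear automorphism $\varphi^{-1}$ gives $g \mc{A} g = \mc{O}_{\mc{C}} \cdot g$. Under the identification $g \mc{A} g = \EndS_{\mc{O}_{\mc{C}}}(gV)$ (coming from the fact that $gV$ is a direct summand of $V$ via the idempotent $g$), this says $\EndS_{\mc{O}_{\mc{C}}}(gV) \cong \mc{O}_{\mc{C}}$. Together with the local projectivity of $gV$ (as a summand of the free sheaf $V$), this forces $gV$ to be locally free of rank one: over any stalk Kaplansky's theorem makes $gV$ free, and the commutative-endomorphism-ring condition pins the rank at $1$. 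After refining, choose a cover $\{S_{\xi} \to S\}_{\xi \in \Xi}$ over which $gV$ is trivialized, noting that one can take a generator of the form $g(\ml{e}_{i_{\xi}})$ for some index $i_{\xi} \in I$ since the $g(\ml{e}_{i})$ together span $gV$.

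On $S_{\xi}$, the trivialization of $gV|_{S_{\xi}}$ and the canonical trivialization of $eV|_{S_{\xi}} = \mc{O}_{\mc{C}} \ml{e}_{1}|_{S_{\xi}}$ yield mutually inverse $\mc{O}|_{S_{\xi}}$-isomorphisms between $gV|_{S_{\xi}}$ and $eV|_{S_{\xi}}$; extending each by zero on the complementary summand $(1-g)V$ or $(1-e)V$ produces the required $\alpha_{\xi} \in \Gamma(S_{\xi}, e\mc{A}g)$ and $\beta_{\xi} \in \Gamma(S_{\xi}, g\mc{A}e)$. The accompanying isomorphism $\mc{A} e \xrightarrow{\sim} \mc{A} g$ gives the unit $\ml{U}_{\xi}$ conjugating $\varphi|_{S_{\xi}}$ to the identity on $\mc{A}|_{S_{\xi}}$. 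The main obstacle is the step establishing that $gV$ is locally free of rank one: because $V$ has infinite rank, the stalk of the sheaf $\EndS_{\mc{O}_{\mc{C}}}(V)$ can be strictly smaller than the full endomorphism ring of the stalk of $V$, so one must work with the global sheaf identity $\EndS_{\mc{O}_{\mc{C}}}(gV) \cong \mc{O}_{\mc{C}}$ and carefully track ranks on a trivializing cover rather than argue purely at stalks.
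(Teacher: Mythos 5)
Your reduction to exhibiting a local $\mc{A}$-linear isomorphism $V\xrightarrow{\sim}V^{\varphi}$, and the Morita-style reformulation via the cyclic modules $\mc{A}e\cong V$ and $\mc{A}g\cong V^{\varphi}$ with $g=\varphi^{-1}(\ml{E}_1)$, is sound and is really a repackaging of what the paper does: the paper works with the dual family $q_i=\varphi(\ml{E}_i)$ and their images $W_i=q_i(V)$ and builds $\ml{U}$ directly from compatible trivializations of the $W_i$ (\Cref{20210514-09}), while you build it from the iso $\mc{A}e\to\mc{A}g$. Both routes bottom out at the same hard question, which is your ``main obstacle'': show that the summand $gV$ (equivalently each $W_i$) is an \emph{invertible} $\mc{O}_{\mc{C}}$-module. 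That is where your argument has a genuine gap.

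The step you propose --- ``$gV$ locally projective and $\EndS_{\mc{O}_{\mc{C}}}(gV)\cong\mc{O}_{\mc{C}}$, therefore $gV$ is locally free of rank one'' --- is \emph{false as an abstract statement}, not merely delicate. Take $R=\prod_{n\in\N}k$ for a field $k$, $V=R^{\oplus\N}$, and $g=\mathrm{diag}(e_1,e_2,\dotsc)\in\End_R(V)$ where $e_n$ is the $n$th coordinate idempotent. Then $gV=\bigoplus_n ke_n$ is a projective direct summand of $V$, and one computes directly that $g\End_R(V)g=Rg\cong R$, so $\End_R(gV)\cong R$ as $R$-algebras. Yet $gV$ is not invertible: it vanishes after tensoring with $R/\mf{p}$ for any non-principal ultrafilter prime $\mf{p}$, so it is not even of positive rank. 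The stalkwise Kaplansky argument you mention does not rescue this, precisely because for a non-finitely-presented module the stalk of $\EndS(gV)$ can be a proper subring of $\End_{\mc{O}_x}((gV)_x)$; the displayed example shows there is simply no implication to salvage without further input. (This $g$ is of course not of the form $\varphi^{-1}(\ml{E}_1)$ for any automorphism $\varphi$ --- that is consistent with the theorem, but it means you must actually \emph{use} $\varphi$ to rule such $g$ out.)

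What the paper does to close this gap is the content of \Cref{20210514-02}, and it is not a cosmetic lemma. To bound the rank above by $1$, the paper passes to quotients $R/\mf{a}_\lambda$ by \emph{finitely generated} subideals of a maximal ideal, invokes \Cref{20180329-aafx} to transport $\varphi$ to an automorphism of $\End_{R/\mf{a}_\lambda}(V\otimes R/\mf{a}_\lambda)$ (this is where finite presentation of $R/\mf{a}_\lambda$ as an $R$-module is essential), and then uses the idempotent-lifting analysis in \Cref{20180329-aagd} applied to $\varphi^{-1}$ to obtain a contradiction from a rank-$\ge 2$ summand. Separately it proves finite generation of $W_i$ by faithfully flat descent, nonvanishing of the fibers by an argument that again uses $\varphi^{-1}$ and $R$-linearity, and column-finiteness of the assembled change-of-basis so that $\bigoplus_i W_i\to V$ is an isomorphism. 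None of these steps follow from your two hypotheses alone. So while your framing is correct and arguably cleaner in its Morita packaging, the proof is incomplete precisely at its load-bearing step, and filling it requires engaging with $\varphi$ at the level the paper does rather than only with the algebra $g\mc{A}g$.
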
 \begin{proof} Let $\ml{E}_{i} \in \Gamma(S,\mc{A})$ be the projection onto the $i$th summand, and let $W_{i} \subset V$ be the image of $\ml{E}_{i}$. The formation of $W_{i}$ is compatible with localization since the projections $\ml{E}_{i}$ are. By \Cref{20210514-02}, the $W_{i}$ are pairwise-isomorphic invertible $\mc{O}_{\mc{C}}$-modules. Since $\mc{C}$ is locally ringed, there exists a covering $\{S_{\xi} \to S\}_{\xi \in \Xi}$ such that each $W_{i}|_{S_{\xi}}$ is a trivial $\mc{O}_{S_{\xi}}$-module; then we conclude by \Cref{20210514-09}. \end{proof}

\begin{lemma} \label{20210514-02} Let $R$ be a ring, let $I$ be an index set, let $V := \bigoplus_{i \in I} R\ml{e}_{i}$ be a free $R$-module with basis $\{\ml{e}_{i}\}_{i \in I}$, set $\mc{A} := \End_{R}(V)$ and let $\varphi : \mc{A} \to \mc{A}$ be an $R$-algebra automorphism. Let $\ml{E}_{i} \in \mc{A}$ be the projection onto the $i$th summand, let $q_{i} := \varphi(\ml{E}_{i})$ be the image of $\ml{E}_{i}$, and let $W_{i} := q_{i}(V)$ be the image of $q_{i}$. Then \begin{enumerate} \item[(i)] the $W_{i}$ are pairwise isomorphic, \item[(ii)] each $W_{i}$ is an invertible $R$-module, and \item[(iii)] the natural map \begin{align} \label{20180329-aaga-eqn-01} \textstyle \bigoplus_{i \in I} W_{i} \to V \end{align} is an isomorphism. \end{enumerate} \end{lemma}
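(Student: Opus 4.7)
The plan is to prove (i), (ii), and (iii) in turn, working throughout with the full matrix unit system of $\mc{A}$. Extend the primitive idempotents to matrix units $\ml{E}_{ij} \in \mc{A}$ characterized by $\ml{E}_{ij}(\ml{e}_k) = \delta_{jk}\ml{e}_i$ (so $\ml{E}_{ii} = \ml{E}_i$ and $\ml{E}_{ij}\ml{E}_{kl} = \delta_{jk}\ml{E}_{il}$), and set $q_{ij} := \varphi(\ml{E}_{ij})$; the $q_{ij}$ obey the same relations. Claim (i) then follows directly: the identities $q_i q_{ij} = q_{ij} = q_{ij} q_j$ show $q_{ij}$ carries $W_j = q_j V$ into $W_i = q_i V$, while $q_{ji}q_{ij} = q_j$ and $q_{ij}q_{ji} = q_i$ exhibit $q_{ij}|_{W_j}$ and $q_{ji}|_{W_i}$ as mutually inverse $R$-linear isomorphisms.

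For (ii), two observations combine. First, $q_i \mc{A} q_i = \varphi(\ml{E}_i\mc{A}\ml{E}_i) \cong R$ as $R$-algebras, using the canonical identification $\ml{E}_i\mc{A}\ml{E}_i = \End_R(R\ml{e}_i) \cong R$. Second, the restriction-to-$W_i$ map $q_i\mc{A} q_i \to \End_R(W_i)$ is an $R$-algebra isomorphism: injectivity is clear, and any $R$-endomorphism of $W_i$ extends to an element of $\mc{A}$ by acting as zero on the complement $(1-q_i)V$. Combining, $\End_R(W_i) \cong R$. Since $W_i$ is the image of the idempotent $q_i$, it is an $R$-direct summand of $V$, hence $R$-projective. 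Localizing at any prime $\mf{p}$: $(W_i)_\mf{p}$ is free over $R_\mf{p}$ by Kaplansky, say $(W_i)_\mf{p} \cong R_\mf{p}^{(J)}$. Repeating the restriction argument after localization (using that $(q_i)_\mf{p}\mc{A}_\mf{p}(q_i)_\mf{p}$ is still $\cong R_\mf{p}$) identifies $\End_{R_\mf{p}}((W_i)_\mf{p})$ with $R_\mf{p}$, and comparison with the column-finite $J\times J$ matrix algebra forces $|J| = 1$. Hence $W_i$ is invertible.

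For (iii), injectivity of $\bigoplus_i W_i \to V$ is immediate: applying $q_j$ to a finite relation $\sum_k w_k = 0$ with $w_k \in W_{i_k}$ and $i_k$ distinct extracts each $w_k = 0$. For surjectivity I construct the $R$-linear map $\Psi : V \otimes_R W_1 \to V$ by $\ml{e}_j \otimes w \mapsto q_{j1}(w)$. A direct calculation using $a\ml{E}_{j1} = \sum_l a_{lj}\ml{E}_{l1}$ (where $a(\ml{e}_j) = \sum_l a_{lj}\ml{e}_l$) shows that $\Psi$ intertwines the natural $\mc{A}$-action on $V \otimes_R W_1$ (via the first factor) with the $\varphi$-twisted action $a\cdot_\varphi v := \varphi(a)(v)$ on $V$. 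To verify $\Psi$ is bijective, I reduce to the case of local $R$: by (ii), $W_1 \cong R$, so pick a generator $w_1$. Since $W_1$ is an $R$-direct summand of $V$, the element $w_1 \notin \mf{m}V$ has a unit coefficient in the basis $\{\ml{e}_j\}$, and a direct construction produces, for each $v \in V$, some $b \in \mc{A}$ with $b(w_1) = v$; hence $\mc{A} w_1 = V$. The image of $\Psi$ is $\mc{A}$-stable (under the twisted action) and contains $\Psi(\ml{e}_1 \otimes w_1) = w_1$, so it contains $\varphi(\mc{A})w_1 = \mc{A} w_1 = V$; on the other hand, the image of $\Psi$ is manifestly contained in $\sum_j W_j = \bigoplus_j W_j \subseteq V$, forcing $\bigoplus_j W_j = V$.

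The main obstacle is the surjectivity in (iii): the decomposition $V = \bigoplus_i R\ml{e}_i$ reflects a \emph{local finiteness} of the family $\{\ml{E}_i\}$ acting on $V$ (a property of the $\mc{A}$-action, not of the algebra $\mc{A}$ itself), and $\varphi$ respects only finite sums in $\mc{A}$. The map $\Psi$ --- essentially the Morita equivalence isomorphism $V_\varphi \cong V \otimes_R \ml{E}_1 V_\varphi = V \otimes_R W_1$ --- is the device that transports this finiteness property across $\varphi$ by using the $\mc{A}$-action on the source to generate the entire target from a single element.
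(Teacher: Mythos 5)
Your part (i) is correct and is essentially the paper's argument (the paper conjugates the $\ml{E}_{i}$ by permutation matrices; your matrix units $q_{ij}=\varphi(\ml{E}_{ij})$ encode the same isomorphisms $W_{j}\simeq W_{i}$). Your part (iii) is a genuinely different route: the paper first proves a column-finiteness statement (for fixed $i_{0}$, $q_{j}(\ml{e}_{i_{0}})=0$ for all but finitely many $j$, using the finite-generation statement applied to $\varphi^{-1}$) and then shows $\bigcap_{i}\ker q_{i}=0$, whereas your Morita-type map $\Psi\colon V\otimes_{R}W_{1}\to V$, $\ml{e}_{j}\otimes w\mapsto q_{j1}(w)$, together with the $\varphi$-twisted equivariance, gives surjectivity directly and avoids that lemma. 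It is correct \emph{granted} (ii), with one caution: ``reduce to the case of local $R$'' must mean localizing the $R$-linear map $\Psi$ and the already-proved equivariance identity, acting only by elements in the image of $\mc{A}\otimes_{R}R_{\mf{m}}$ (which suffices, since a one-nonzero-column matrix sending $w_{1}$ to any prescribed $v$ lies in that image); it cannot mean rerunning the construction over $R_{\mf{m}}$, because $\varphi$ does not induce an automorphism of $\End_{R_{\mf{m}}}(V_{\mf{m}})$.

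The genuine gap is in (ii), which is where the paper does its real work. The step ``repeating the restriction argument after localization (using that $(q_{i})_{\mf{p}}\mc{A}_{\mf{p}}(q_{i})_{\mf{p}}\cong R_{\mf{p}}$) identifies $\End_{R_{\mf{p}}}((W_{i})_{\mf{p}})$ with $R_{\mf{p}}$'' is unjustified. It is true that $q_{i}(\mc{A}\otimes_{R}R_{\mf{p}})q_{i}\cong R_{\mf{p}}$, but identifying this with $\End_{R_{\mf{p}}}((W_{i})_{\mf{p}})$ fails: for infinite-rank modules $\End$ does not commute with the base change $R\to R_{\mf{p}}$ (neither for $V$ nor for $W_{i}$), your surjectivity argument --- extend an endomorphism of $(W_{i})_{\mf{p}}$ by zero on the complement --- produces an element of $\End_{R_{\mf{p}}}(V_{\mf{p}})$ that need not lie in the image of $\mc{A}\otimes_{R}R_{\mf{p}}$, and $\varphi$ does not extend to $\End_{R_{\mf{p}}}(V_{\mf{p}})$. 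This is exactly why the paper's base-change lemma \Cref{20180329-aafx} requires the algebra to be a finitely presented $R$-module, and why its proof of rank $\le 1$ passes to quotients $R/\mf{a}_{\lambda}$ by finitely generated subideals of $\mf{m}$ rather than to $R_{\mf{p}}$ or $R/\mf{m}$ (\labelcref{20210514-03}). The gap is not cosmetic: what your argument actually establishes is ``$W_{i}$ is projective and $\End_{R}(W_{i})\cong R$ via scalars,'' and that alone does not imply invertibility. For $R=\prod_{n\in\N}k$ and $P=\bigoplus_{n}Re_{n}$ (with $e_{n}$ the coordinate idempotents), $P$ is projective and every $R$-endomorphism of $P$ is scalar multiplication, so $\End_{R}(P)\cong R$ via scalars, yet $P$ is not finitely generated and $P_{\mf{m}}=0$ (so $\End_{R_{\mf{m}}}(P_{\mf{m}})=0\ne R_{\mf{m}}$) at any maximal ideal $\mf{m}$ containing all $e_{n}$. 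So further input from $\varphi$ is indispensable; in the paper it enters through the positivity step (\labelcref{20210514-06}: the matrix of $q_{i}$ cannot have all entries in $\mf{m}$, via $\varphi^{-1}$ and finite generation) and the finite-generation step (\labelcref{20210514-05}). Note also that even with rank exactly one at every prime you would still owe a proof that $W_{i}$ is finitely generated before concluding invertibility --- a point your proposal never addresses.
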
 

\begin{spg} For $i_{1},i_{2} \in I$, let $\ml{E}_{i_{1},i_{2}}$ be the $(i_{1},i_{2})$th matrix unit (the only nonzero entry is a 1 in the $(i_{1},i_{2})$th entry) so that $\ml{E}_{i} = \ml{E}_{i,i}$. For any unit $\ml{U} \in \mc{A}^{\times} \simeq \GL_{I}(R)$ we have \[ \varphi(\ml{U}^{-1} \cdot \ml{E}_{i} \cdot \ml{U}) = (\varphi(\ml{U}))^{-1} \cdot q_{i} \cdot \varphi(\ml{U}) \] in $\mc{A}$. Since the $\ml{E}_{i}$ are pairwise conjugate (let $\ml{U}_{i_{1},i_{2}} \in \mc{A}^{\times}$ be the $R$-automorphism switching $\ml{e}_{i_{1}}$ and $\ml{e}_{i_{2}}$; then $\ml{E}_{i_{2}} = \ml{U}_{i_{1},i_{2}}^{-1} \cdot \ml{E}_{i_{1}} \cdot \ml{U}_{i_{1},i_{2}}$), the $q_{i}$ are also pairwise conjugate; namely $(\varphi(\ml{U}_{i_{1},i_{2}}))^{-1}$ defines an isomorphism $W_{i_{1}} \simeq W_{i_{2}}$ by \Cref{20180329-aagz}. This proves (i). \end{spg} 

\begin{spg} \label{20210514-08} Let \[ \ml{N}_{i} \in \Mat_{I \times I}^{\mr{cf}}(R) \] be the (column-finite) infinite matrix corresponding to $q_{i}$. Then $W_{i}$ is equal to the column space $\operatorname{col}(\ml{N}_{i})$. Since $W_{i}$ is the image of an idempotent endomorphism of $V$, we have that $W_{i}$ is a direct summand of $V$, hence $W_{i}$ is a projective $R$-module. \end{spg} 

\begin{spg} \label{20210514-03} Given a matrix $\ml{N} \in \Mat_{I \times I}^{\mr{cf}}(R)$ and any subsets $J_{1},J_{2} \subset I$, let $\ml{N}|_{J_{1},J_{2}}$ denote the $|J_{1}| \times |J_{2}|$ submatrix of $\ml{N}$ associated to $J_{1},J_{2}$. \par Let $\mf{m}$ be a maximal ideal of $R$. By Kaplansky \cite[3.3]{BASS-BPMAF1963}, we have that $W_{i} \otimes_{R} R_{\mf{m}}$ is a free $R_{\mf{m}}$-module. We show that $\rk_{R_{\mf{m}}}(W_{i} \otimes_{R} R_{\mf{m}}) \le 1$. It is enough to show $\dim_{R/\mf{m}} (W_{i} \otimes_{R} R/\mf{m}) \le 1$. Suppose $\dim_{R/\mf{m}} (W_{i} \otimes_{R} R/\mf{m}) \ge 2$; then there exist finite subsets $J_{1},J_{2} \subset I$ of size $|J_{1}| = |J_{2}| = 2$ such that $\det(\ml{N}_{i}|_{J_{1},J_{2}}) \not\in \mf{m}$. After permuting rows and columns, we may assume that $J := J_{1} = J_{2}$. \par Write $\mf{m}$ as a directed colimit $\mf{m} = \varinjlim_{\lambda \in \Lambda} \mf{a}_{\lambda}$ of finitely generated ideals $\mf{a}_{\lambda}$ of $R$; then $R/\mf{m} \simeq \varinjlim_{\lambda \in \Lambda} R/\mf{a}_{\lambda}$ is a directed colimit of finitely presented $R$-algebras and we have an isomorphism \[ \textstyle W_{i} \otimes_{R} R/\mf{m} \simeq \varinjlim_{\lambda \in \Lambda} (W_{i} \otimes_{R} R/\mf{a}_{\lambda}) \] as $R$-modules. There exists $\lambda \in \Lambda$ such that $\det(\ml{N}_{i}|_{J,J})$ is a unit of $R/\mf{a}_{\lambda}$. After multiplying by a matrix whose $J \times J$ submatrix is $(\ml{N}_{i}|_{J,J})^{-1}$, we may assume that $\ml{N}_{i}|_{J,J} = \id_{2}$. By doing row operations and column operations, we may assume that in fact the $i$th row of $\ml{N}_{i}$ for $i \in J$ is the $i$th standard basis vector of $V$ (and similarly for columns). Thus we have a splitting \[ W_{i} \otimes_{R} R/\mf{a}_{\lambda} \simeq R/\mf{a}_{\lambda} \oplus R/\mf{a}_{\lambda} \oplus W_{i}' \] for some submodule $W_{i}' \subset W_{i} \otimes_{R} R/\mf{a}_{\lambda}$. \par Let $q_{i} = q_{i}' + q_{i}''$ be the decomposition of $q_{i}$ (over $R/\mf{a}_{\lambda}$) corresponding to projections onto $R/\mf{a}_{\lambda}$ and $R/\mf{a}_{\lambda} \oplus W_{i}'$, respectively. Since $\mf{a}_{\lambda}$ is finitely generated, we have by \Cref{20180329-aafx} an induced $R/\mf{a}_{\lambda}$-algebra automorphism of $\End_{R/\mf{a}_{\lambda}}(V \otimes_{R} R/\mf{a}_{\lambda})$. Then \Cref{20180329-aagd} implies that this decomposition comes from a pair of orthogonal idempotents on $R/\mf{a}_{\lambda}$, contradiction. \end{spg} 

\begin{spg} \label{20210514-05} We show that $W_{i}$ is finitely generated. We know from \labelcref{20210514-03} that $W_{i} \otimes_{R} R_{\mf{m}}$ is free of rank either $0$ or $1$. In particular $W_{i}$ is finitely generated on a faithfully flat cover of $R$, so $W_{i}$ is finitely generated by \cite[03C4]{SP}. \end{spg} 

\begin{spg} \label{20210514-06} We show that $W_{i} \otimes_{R} R_{\mf{m}}$ is nonzero. It is enough to show that $W_{i} \otimes_{R} R/\mf{m}$ is nonzero. Suppose that there is a maximal ideal $\mf{m}$ containing all the coefficients of $\ml{N}_{i}$. We have that $W_{i}$ is finitely generated by \Cref{20210514-05}, hence the ideal $\mf{a}$ generated by all the coefficients of $\ml{N}_{i}$ is finitely generated. Thus there exist matrices $\ml{K}_{1},\dotsc,\ml{K}_{\ell} \in \Mat_{I \times I}^{\mr{cf}}(R)$ and $x_{1},\dotsc,x_{\ell} \in \mf{m}$ such that $\ml{N}_{i} = x_{1}\ml{K}_{1} + \dotsb + x_{\ell}\ml{K}_{\ell}$. Since $\varphi$ is $R$-linear, the matrix corresponding to $\ml{E}_{i} = \varphi^{-1}(q_{i})$ is $x_{1}\varphi^{-1}(\ml{K}_{1}) + \dotsb + x_{\ell}\varphi^{-1}(\ml{K}_{\ell})$, in particular it has coefficients contained in $\mf{m}$, contradiction. \end{spg}

\begin{spg} \label{20210514-07} Combining \labelcref{20210514-08}, \labelcref{20210514-03}, \labelcref{20210514-05}, \labelcref{20210514-06} gives (ii). \end{spg}

\begin{spg} \label{20210514-04} For (iii), we recall the argument of \cite[2.2]{COURTEMANCHEDUGAS-AOTEAOAFM2016}. We first show that, for a fixed $i_{0} \in I$, we have $q_{j}(\ml{e}_{i_{0}}) = 0$ for all but finitely many $j \in I$ (i.e. there are only finitely many nonzero columns in $\ml{N}_{i_{0}}$). If $q_{j}(\ml{e}_{i_{0}}) \ne 0$, then $q_{j} \cdot \ml{E}_{i_{0}} \ne 0$; thus $\ml{E}_{j} \cdot \varphi^{-1}(\ml{E}_{i_{0}}) \ne 0$; hence the only possibilities are for $j$ in the support of $\varphi^{-1}(\ml{E}_{i_{0}})$. By \labelcref{20210514-05} applied to $\varphi^{-1}$, we have that $\im (\varphi^{-1}(\ml{E}_{i_{0}}))$ is finitely generated, hence its support is finite. \end{spg} 

\begin{spg} The injectivity of \labelcref{20180329-aaga-eqn-01} follows from the fact that the $q_{i}$ are idempotent and $q_{i_{1}} \cdot q_{i_{2}} = 0$ if $i_{1} \ne i_{2}$. For surjectivity, it suffices to show that for every $i_{0} \in I$ the basis element $\ml{e}_{i_{0}}$ is in the image of \labelcref{20180329-aaga-eqn-01}. By \labelcref{20210514-04}, the set $S_{i_{0}} := \{i \in I \;:\; q_{i}(\ml{e}_{i_{0}}) \ne 0\}$ is finite. Set $v_{i_{0}} := \ml{e}_{i_{0}} - \sum_{i \in S_{i_{0}}} q_{i}(\ml{e}_{i_{0}})$. If $i \in I \setminus S_{i_{0}}$, then $q_{i}(v_{i_{0}}) = 0$; if $i \in S_{i_{0}}$, then $q_{i}(v_{i_{0}}) = q_{i}(\ml{e}_{i_{0}}) - q_{i}(\ml{e}_{i_{0}}) = 0$ since $q_{i_{1}} \cdot q_{i_{2}} = 0$ if $i_{1} \ne i_{2}$; thus $v_{i_{0}} \in \bigcap_{i \in I} (\ker q_{i})$. \par It remains to show that $\bigcap_{i \in I} (\ker q_{i}) = 0$. Suppose $v \in V$ is an element such that $q_{i}(v) = 0$ for all $i \in I$; define $\pi : V \to V$ sending $\ml{e}_{i} \mapsto v$ for all $i \in I$; then $q_{i} \cdot \pi = 0$ for all $i \in I$; then $\ml{E}_{i} \cdot \varphi^{-1}(\pi) = 0$ for all $i \in I$, i.e. $\varphi^{-1}(\pi) = 0$, i.e. $\pi = 0$. \end{spg}

\begin{pg} \label{20180329-aagd} Let $R$ be a ring, let $V = \bigoplus_{i \in I} R\ml{e}_{i}$ be a free $R$-module, set $\mc{A} := \End_{R}(V)$, and let $\varphi : \mc{A} \to \mc{A}$ be an $R$-algebra isomorphism. Let $p_{i} \in \mc{A}$ be the projection onto the $i$th summand and set $q_{i} := \varphi(p_{i})$. Suppose $\im q_{i}$ has a decomposition; then $q_{i} = q_{i}' + q_{i}''$ for some nonzero orthogonal idempotents $q_{i}',q_{i}''$; then applying $\varphi^{-1}$ gives a decomposition of $p_{i} = p_{i}' + p_{i}''$ into nonzero orthogonal idempotents; then there exist nonzero idempotents $u',u'' \in R$ such that $u' + u'' = 1$ and $u' u'' = 0$ and $p_{i}' = u_{i}' p_{i}$ and $p_{i}'' = u_{i}'' p_{i}$; since $\varphi$ is an $R$-linear, we have $q_{i}' = u_{i}' q_{i}$ and $q_{i}'' = u_{i}'' q_{i}$ also. \end{pg}

\begin{lemma} \label{20210514-09} In the notation of \Cref{20210514-02}, if each $W_{i}$ is a free $R$-module, then there exists some $\ml{U} \in \mc{A}^{\times}$ such that $\varphi$ is equal to the conjugation-by-$\ml{U}$ map. \end{lemma}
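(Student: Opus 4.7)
The plan is to construct $\ml{U}$ from the new basis furnished by the $W_{i}$, and then modify by a diagonal correction. First, by parts (ii) and (iii) of \Cref{20210514-02}, the natural map $\bigoplus_{i \in I} W_{i} \to V$ is an isomorphism and each $W_{i}$ is an invertible $R$-module; under the hypothesis that each $W_{i}$ is free, it must be free of rank one, so I may pick a generator $\ml{f}_{i} \in W_{i}$ to get a new $R$-basis $\{\ml{f}_{i}\}_{i \in I}$ of $V$. Define $\ml{U} \in \mc{A}^{\times}$ to be the automorphism sending $\ml{e}_{i} \mapsto \ml{f}_{i}$. A direct computation on basis vectors shows $\ml{U} \ml{E}_{i} \ml{U}^{-1} = q_{i} = \varphi(\ml{E}_{i})$, so the composite $\psi := \operatorname{conj}_{\ml{U}^{-1}} \circ \varphi$ is an $R$-algebra automorphism of $\mc{A}$ that fixes every $\ml{E}_{i}$.

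Next, I describe $\psi$ on matrix units $\ml{E}_{i_{1}, i_{2}} \in \mc{A}$ (the endomorphism with a single $1$ in position $(i_{1}, i_{2})$). The identity $\ml{E}_{i_{1}} \ml{E}_{i_{1}, i_{2}} \ml{E}_{i_{2}} = \ml{E}_{i_{1}, i_{2}}$ together with the observation $\ml{E}_{i_{1}} \mc{A} \ml{E}_{i_{2}} = R \cdot \ml{E}_{i_{1}, i_{2}}$ forces $\psi(\ml{E}_{i_{1}, i_{2}}) = r_{i_{1}, i_{2}} \ml{E}_{i_{1}, i_{2}}$ for some $r_{i_{1}, i_{2}} \in R$. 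Applying $\psi$ to the relations $\ml{E}_{i_{1}, i_{2}} \ml{E}_{i_{2}, i_{1}} = \ml{E}_{i_{1}}$ and $\ml{E}_{i_{1}, i_{2}} \ml{E}_{i_{2}, i_{3}} = \ml{E}_{i_{1}, i_{3}}$ yields the cocycle conditions $r_{i_{1}, i_{2}} r_{i_{2}, i_{1}} = 1$ and $r_{i_{1}, i_{2}} r_{i_{2}, i_{3}} = r_{i_{1}, i_{3}}$. Fixing some $i_{0} \in I$ (the case $I = \emptyset$ is trivial) and setting $s_{i} := r_{i, i_{0}} \in R^{\times}$ then gives $r_{i, j} = s_{i} s_{j}^{-1}$.

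The final step, which I expect to be the main obstacle, is to extend this description from matrix units to all of $\mc{A}$. The subtlety is that $\mc{A} = \operatorname{End}_{R}(V)$ is not spanned by the $\ml{E}_{i, j}$ as an $R$-module when $I$ is infinite, so $\psi$ cannot be computed term-by-term on the formal matrix sum. Instead I let $\ml{D} \in \mc{A}^{\times}$ be the diagonal automorphism $\ml{e}_{i} \mapsto s_{i} \ml{e}_{i}$ and exploit the key identity $\ml{E}_{i} f \ml{E}_{j} = f_{i,j} \ml{E}_{i, j}$ for any $f \in \mc{A}$ with matrix entries $f_{i,j}$. Applying $\psi$ yields
\[ \ml{E}_{i} \, \psi(f) \, \ml{E}_{j} = \psi(\ml{E}_{i} f \ml{E}_{j}) = f_{i,j} \, r_{i, j} \, \ml{E}_{i, j} = s_{i} s_{j}^{-1} f_{i,j} \, \ml{E}_{i, j}, \]
and the analogous computation for $\ml{D} f \ml{D}^{-1}$ produces the identical $(i, j)$-entry. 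Since the matrix entries determine the element of $\mc{A}$, this forces $\psi = \operatorname{conj}_{\ml{D}}$, and therefore $\varphi = \operatorname{conj}_{\ml{U} \ml{D}}$ with $\ml{U} \ml{D} \in \mc{A}^{\times}$ the desired unit.
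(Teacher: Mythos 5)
Your proof is correct and takes essentially the same route as the paper's: both change basis to the generators of the $W_i$ to reduce to the case where $\varphi$ fixes each $\ml{E}_i$, then determine the action on matrix units via the identity $\ml{E}_{i_1}\,\mc{A}\,\ml{E}_{i_2} = R\cdot\ml{E}_{i_1,i_2}$, construct the diagonal correction from the resulting cocycle, and finish by comparing $(i,j)$-entries using $\ml{E}_i f \ml{E}_j = f_{i,j}\ml{E}_{i,j}$ (the content of the paper's \Cref{20210514-10}). The only cosmetic difference is that the paper performs the two conjugation corrections sequentially by replacing $\varphi$, whereas you name the composite $\psi$ and identify it directly with $\operatorname{conj}_{\ml{D}}$.
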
 \begin{proof}[Proof (from \cite{COURTEMANCHEDUGAS-AOTEAOAFM2016})] Choose generators $\ml{w}_{i} \in W_{i}$ and define $\ml{U}' : V \to V$ sending $\ml{e}_{i} \mapsto \ml{w}_{i}$ for all $i \in I$. Since \labelcref{20180329-aaga-eqn-01} is an isomorphism, we have that $\ml{U}'$ is an isomorphism. Given $u = \sum_{i \in I} u_{i}\ml{e}_{i}$, we have $(q_{i_{0}}\ml{U}')(u) = q_{i_{0}}(\sum_{i \in I} u_{i} \ml{w}_{i}) = u_{i_{0}} \ml{w}_{i_{0}}$ and $(\ml{U}' \ml{E}_{i_{0}})(u) = \ml{U}'(u_{i_{0}} \ml{e}_{i_{0}}) = u_{i_{0}} \ml{w}_{i_{0}}$; hence $q_{i} = \ml{U}' \ml{E}_{i} \ml{U}'^{-1}$ for all $i \in I$. After replacing $\varphi$ by $\varphi \cdot (\ml{U}'^{-1}(-)\ml{U}')$, we may assume that \begin{align} \label{20180329-aaga-eqn-02} \varphi(\ml{E}_{i}) = \ml{E}_{i} \end{align} for all $i \in I$. \par Fix $i_{1},i_{2}$ and set $\varphi(\ml{E}_{i_{1},i_{2}})(\ml{e}_{j_{1}}) = \sum_{j_{2} \in I} s^{i_{1},i_{2}}_{j_{1},j_{2}}\ml{e}_{j_{2}}$ for some $s^{i_{1},i_{2}}_{j_{1},j_{2}} \in R$. Given $u = \sum_{i \in I} u_{i}\ml{e}_{i}$, we have \[ \textstyle (\ml{E}_{i_{1}} \cdot \varphi(\ml{E}_{i_{1},i_{2}}) \cdot \ml{E}_{i_{2}})(u) = (\ml{E}_{i_{1}} \cdot \varphi(\ml{E}_{i_{1},i_{2}}))(u_{i_{2}}\ml{e}_{i_{2}}) = \ml{E}_{i_{1}}(u_{i_{2}}\sum_{i \in I} s^{i_{1},i_{2}}_{i_{2},i}\ml{e}_{i}) = u_{i_{2}}s^{i_{1},i_{2}}_{i_{2},i_{1}}\ml{e}_{i_{1}} \] and \[ \textstyle \ml{E}_{i_{1},i_{2}}(u) = u_{i_{2}}\ml{e}_{i_{1}} \] hence \[ \ml{E}_{i_{1}} \cdot \varphi(\ml{E}_{i_{1},i_{2}}) \cdot \ml{E}_{i_{2}} = s^{i_{1},i_{2}}_{i_{2},i_{1}}\ml{E}_{i_{1},i_{2}} \] for all $i_{1},i_{2}$. Since $\ml{E}_{i_{1}} \cdot \ml{E}_{i_{1},i_{2}} \cdot \ml{E}_{i_{2}} = \ml{E}_{i_{1},i_{2}}$, applying $\varphi$ and \labelcref{20180329-aaga-eqn-02} gives $\varphi(\ml{E}_{i_{1},i_{2}}) = s^{i_{1},i_{2}}_{i_{2},i_{1}}\ml{E}_{i_{1},i_{2}}$ for all $i_{1},i_{2}$ (and $s^{i_{1},i_{2}}_{j_{1},j_{2}} = 0$ if $(j_{1},j_{2}) \ne (i_{2},i_{1})$). We set $s_{i_{1},i_{2}} := s^{i_{1},i_{2}}_{i_{2},i_{1}}$. Applying the above argument to $\varphi^{-1}$ implies that every $s_{i_{1},i_{2}}$ is a unit. Applying $\varphi$ to $\ml{E}_{i_{1},i_{2}} \cdot \ml{E}_{i_{2},i_{3}} = \ml{E}_{i_{1},i_{3}}$ gives $s_{i_{1},i_{2}} s_{i_{2},i_{3}} = s_{i_{1},i_{3}}$ (and in particular $s_{i,i} = 1$). We choose an arbitrary $t \in I$, define $\ml{U}'' : V \to V$ by $\ml{U}'' = \sum_{i} s_{t,i}\ml{E}_{i}$; after replacing $\varphi$ by $\varphi \cdot (\ml{U}''^{-1}(-)\ml{U}'')$, we may assume \begin{align} \label{20180329-aaga-eqn-03} \varphi(\ml{E}_{i_{1},i_{2}}) = \ml{E}_{i_{1},i_{2}} \end{align} for all $i_{1},i_{2}$. Then we conclude by \Cref{20210514-10}. \end{proof}

\begin{lemma} \label{20210514-10} Let $R$ be a ring, let $I$ be an index set, set $\mc{A} := \Mat_{I \times I}^{\mr{cf}}(R)$ and let $\xi : \mc{A} \to \mc{A}$ be an $R$-algebra automorphism. For $i_{1},i_{2} \in I$, let $\ml{E}_{i_{1},i_{2}} \in \mc{A}$ denote the $(i_{1},i_{2})$th matrix unit. If $\xi(\ml{E}_{i_{1},i_{2}}) = \ml{E}_{i_{1},i_{2}}$ for all $i_{1},i_{2} \in I$, then $\xi = \id_{\mc{A}}$. \end{lemma}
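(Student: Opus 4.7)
The plan is to reduce the claim to an entry-by-entry comparison via ``matrix-unit entry extraction.'' Given a column-finite matrix $\ml{M} = (m_{i_{1},i_{2}})_{i_{1},i_{2} \in I} \in \mc{A}$, the product $\ml{E}_{i_{1},i_{1}} \cdot \ml{M} \cdot \ml{E}_{i_{2},i_{2}}$ lies in $\mc{A}$ and equals $m_{i_{1},i_{2}} \ml{E}_{i_{1},i_{2}}$: left multiplication by $\ml{E}_{i_{1},i_{1}}$ kills every row of $\ml{M}$ except the $i_{1}$-th, and right multiplication by $\ml{E}_{i_{2},i_{2}}$ kills every column except the $i_{2}$-th, leaving only the $(i_{1},i_{2})$-entry. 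This product is manifestly well-defined in $\mc{A}$ because a matrix unit appears on each side, so no infinite sums are ever formed.

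First I would apply $\xi$ to the identity $\ml{E}_{i_{1},i_{1}} \cdot \ml{M} \cdot \ml{E}_{i_{2},i_{2}} = m_{i_{1},i_{2}} \ml{E}_{i_{1},i_{2}}$. Since $\xi$ is an $R$-algebra homomorphism fixing every matrix unit, the left-hand side becomes $\ml{E}_{i_{1},i_{1}} \cdot \xi(\ml{M}) \cdot \ml{E}_{i_{2},i_{2}}$, which equals $m'_{i_{1},i_{2}} \ml{E}_{i_{1},i_{2}}$, where $m'_{i_{1},i_{2}}$ denotes the $(i_{1},i_{2})$-entry of $\xi(\ml{M})$. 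The right-hand side becomes $m_{i_{1},i_{2}} \xi(\ml{E}_{i_{1},i_{2}}) = m_{i_{1},i_{2}} \ml{E}_{i_{1},i_{2}}$ by $R$-linearity and the hypothesis. Comparing gives $m'_{i_{1},i_{2}} = m_{i_{1},i_{2}}$ for all $i_{1},i_{2} \in I$, and since a column-finite matrix is determined by its entries, $\xi(\ml{M}) = \ml{M}$.

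There is essentially no obstacle in this argument; the only conceptual point worth emphasizing is that when $I$ is infinite the matrix units $\{\ml{E}_{i_{1},i_{2}}\}$ do \emph{not} generate $\mc{A}$ as an $R$-algebra (a generic column-finite matrix can have infinitely many nonzero entries overall, hence cannot be written as a finite $R$-linear combination of matrix units). This is why one cannot simply say ``$\xi$ agrees with $\id$ on a generating set''; the entry-extraction identity sidesteps this subtlety by producing each coefficient of $\ml{M}$ individually from a single product carried out within $\mc{A}$.
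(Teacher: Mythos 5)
Your proof is correct and is essentially identical to the paper's: both extract the $(i_1,i_2)$-entry of an arbitrary $\ml{M}$ via $\ml{E}_{i_1,i_1}\cdot\ml{M}\cdot\ml{E}_{i_2,i_2} = m_{i_1,i_2}\ml{E}_{i_1,i_2}$, apply $\xi$, and conclude entrywise equality. Your closing remark about the matrix units not generating $\mc{A}$ when $I$ is infinite is a helpful clarification of why the direct argument is needed, but the underlying strategy is the same as the paper's.
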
 \begin{proof} Let $\ml{N} \in \mc{A}$ be a matrix and let $a_{i_{1},i_{2}}$ be the $(i_{1},i_{2})$th entry of $\ml{N}$. Then \[ \ml{E}_{i_{1}} \cdot \ml{N} \cdot \ml{E}_{i_{2}} = a_{i_{1},i_{2}} \ml{E}_{i_{1},i_{2}} \] where we set $\ml{E}_{i} := \ml{E}_{i,i}$; applying $\xi$ gives \[ \ml{E}_{i_{1}} \cdot \xi(\ml{N}) \cdot \ml{E}_{i_{2}} = a_{i_{1},i_{2}} \ml{E}_{i_{1},i_{2}} \] which implies that the $(i_{1},i_{2})$th entries of $\xi(\ml{N})$ and $\ml{N}$ are equal for all $i_{1},i_{2} \in I$, i.e. $\xi(\ml{N}) = \ml{N}$. \end{proof}

\begin{lemma} \label{20180329-aafx} Let $R$ be a ring, let $V$ be a free $R$-module, let $S$ be an $R$-algebra which is finitely presented as an $R$-module. The map \[ \End_{R}(V) \otimes_{R} S \to \End_{S}(V \otimes_{R} S) \] is an isomorphism. \end{lemma}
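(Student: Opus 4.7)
The plan is to reduce everything to a calculation with infinite matrices and then invoke the standard fact that, because $S$ is finitely presented as an $R$-module, the functor $- \otimes_R S$ commutes with arbitrary direct products. Write $V = R^{\oplus I}$ for some index set $I$. Since a morphism from a direct sum is determined by its restrictions to each summand, we have canonical $R$-module identifications
\[ \End_R(V) \;=\; \Hom_R\Bigl(\bigoplus_{i \in I} R, \bigoplus_{i \in I} R\Bigr) \;\cong\; \prod_{i \in I} R^{\oplus I}, \]
and similarly $\End_S(V \otimes_R S) \cong \prod_{i \in I} S^{\oplus I}$ (these are just the identifications of endomorphism rings of free modules with rings of column-finite matrices).

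The key ingredient is the following standard lemma: if $S$ admits a finite presentation $R^{\oplus m} \to R^{\oplus n} \to S \to 0$, then for any family $\{M_j\}_{j \in J}$ of $R$-modules the natural map $(\prod_{j} M_j) \otimes_R S \to \prod_j (M_j \otimes_R S)$ is an isomorphism. One proves this by tensoring the presentation against $\prod_j M_j$ and against each $M_j$ separately, comparing the two resulting right-exact sequences, and using that finite direct sums commute with arbitrary products (so both sides agree in the first two terms, and one concludes by right exactness of tensor product).

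Applying this to the family $\{R^{\oplus I}\}_{i \in I}$, together with the fact that $- \otimes_R S$ always commutes with arbitrary direct sums (being a left adjoint), we compute
\[ \End_R(V) \otimes_R S \;\cong\; \Bigl(\prod_{i \in I} R^{\oplus I}\Bigr) \otimes_R S \;\cong\; \prod_{i \in I} (R^{\oplus I} \otimes_R S) \;\cong\; \prod_{i \in I} S^{\oplus I} \;\cong\; \End_S(V \otimes_R S). \]
It is then straightforward to verify that the composite of these canonical isomorphisms agrees with the natural map $\varphi \otimes s \mapsto (v \otimes t \mapsto \varphi(v) \otimes st)$ of the statement, by tracking what each side does on a basis element $\ml{e}_i \otimes 1$.

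The only non-formal step is the commutation of $- \otimes_R S$ with products, and that is the reason the finitely presented hypothesis on $S$ is needed; apart from this, the proof is a tautological unwinding of definitions. I expect this step to be the only real obstacle, and it is a textbook result.
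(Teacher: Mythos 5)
Your proposal is correct and takes essentially the same route as the paper: the paper's entire proof is a citation to \cite[059K]{SP}, which is exactly your key lemma that tensoring with a finitely presented module commutes with arbitrary direct products. Your identification of $\End_{R}(V)$ with column-finite matrices $\prod_{i \in I} R^{\oplus I}$ and the check against the natural map simply make explicit the unwinding that the paper leaves implicit in the citation.
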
 \begin{proof} This follows from \cite[059K]{SP}. \end{proof}

\begin{lemma} \label{20180329-aagz} Let $\mb{A}$ be an abelian category, let $\mc{E} \in \mb{A}$ be an object, and let $q_{1},q_{2} \in \Hom_{\mb{A}}(\mc{E},\mc{E})$ and $u_{1},u_{2} \in \Aut_{\mb{A}}(\mc{E})$ be morphisms such that $u_{1}q_{1} = q_{2}u_{2}$. Then $\im q_{1} \simeq \im q_{2}$. \end{lemma}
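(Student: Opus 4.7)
The plan is to use the first isomorphism theorem in the abelian category $\mb{A}$: for any morphism $f : \mc{E} \to \mc{E}$, the image $\im f$ is canonically isomorphic to the coimage $\mc{E}/\ker f$. The strategy is to compute $\im(u_1 q_1)$ and $\im(q_2 u_2)$ in terms of $\im q_1$ and $\im q_2$ respectively; since the hypothesis $u_1 q_1 = q_2 u_2$ forces these two images to be equal, the result will follow.

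First, I would observe that since $u_1$ is an automorphism, it is in particular a monomorphism, so $\ker(u_1 q_1) = \ker q_1$. Hence
\[ \im(u_1 q_1) \;\cong\; \mc{E}/\ker(u_1 q_1) \;=\; \mc{E}/\ker q_1 \;\cong\; \im q_1. \]
(Concretely, if $q_1 = i_1 \circ p_1$ is the image factorization with $p_1$ epi and $i_1$ mono, then $u_1 q_1 = (u_1 \circ i_1) \circ p_1$ is another image factorization since $u_1 \circ i_1$ is still mono; the codomain of $p_1$ is $\im q_1$.)

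Second, since $u_2$ is an automorphism, it induces an isomorphism of quotients $\mc{E}/u_2^{-1}(\ker q_2) \;\xrightarrow{\sim}\; \mc{E}/\ker q_2$. But $\ker(q_2 u_2) = u_2^{-1}(\ker q_2)$, so
\[ \im(q_2 u_2) \;\cong\; \mc{E}/\ker(q_2 u_2) \;\cong\; \mc{E}/\ker q_2 \;\cong\; \im q_2. \]

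Finally, the hypothesis gives $\im(u_1 q_1) = \im(q_2 u_2)$ as subobjects (in fact as honest morphisms) of $\mc{E}$. Combining this equality with the two isomorphisms above yields $\im q_1 \cong \im q_2$. There is no real obstacle here; the only thing to be mindful of is distinguishing isomorphism of abstract objects from equality of subobjects, which is handled uniformly by passing through the coimage description $\mc{E}/\ker(-)$.
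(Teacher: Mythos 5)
Your argument is correct. The paper in fact gives no proof of this lemma at all — it is stated as an unproved auxiliary fact in the appendix — so there is nothing to compare against. Your reduction to the coimage $\mc{E}/\ker(-)$ is sound: $u_1$ mono gives $\ker(u_1 q_1)=\ker q_1$, and $u_2$ iso gives $\mc{E}/\ker(q_2 u_2)=\mc{E}/u_2^{-1}(\ker q_2)\cong\mc{E}/\ker q_2$, so both sides of $u_1 q_1 = q_2 u_2$ have images isomorphic to $\im q_1$ and $\im q_2$ respectively. A marginally slicker packaging of the same idea: take image (epi--mono) factorizations $q_1 = i_1 p_1$ and $q_2 = i_2 p_2$; then $u_1 q_1 = (u_1 i_1)\circ p_1$ and $q_2 u_2 = i_2\circ(p_2 u_2)$ are both epi--mono factorizations of the same morphism (since $u_1 i_1$ is mono and $p_2 u_2$ is epi), and uniqueness of such factorizations in an abelian category produces the desired isomorphism $\im q_1\xrightarrow{\sim}\im q_2$ directly, without invoking the coimage comparison map. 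Either way the content is identical, and you correctly flagged the one subtlety (subobject equality versus abstract isomorphism).
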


\subsection{The center of endomorphism rings of projective modules} \label{sec07}

Given a ring $A$, it is well-known that the center of the matrix ring $\Mat_{n \times n}(A)$ consists of matrices of the form $f \id_{n}$ for some $f \in A$. In this section we prove an extension of this fact to endomorphism rings of projective modules of possibly infinite rank.

\begin{lemma} \label{20210804-12} Let $\mc{C}$ be a locally ringed site, let $\mc{E}$ be a locally projective $\mc{O}_{\mc{C}}$-module of positive rank, let $\varphi : \mc{E} \to \mc{E}$ be an $\mc{O}_{\mc{C}}$-linear endomorphism. Suppose that, for all $U \in \mc{C}$, the restriction $\varphi|_{U}$ is contained in the center of $\End_{\mc{O}_{U}}(\mc{E}|_{U})$. There exists a unique $f \in \Gamma(\mc{C},\mc{O}_{\mc{C}})$ such that $\varphi = f \id_{\mc{E}}$. \end{lemma}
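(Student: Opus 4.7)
My plan is to reduce to a local calculation on a covering of $\mc{C}$ and then glue, with uniqueness doing the work of showing that the local scalars agree on overlaps.

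\emph{Step 1 (Uniqueness).} First I would observe that if $f_1 \id_{\mc{E}} = f_2 \id_{\mc{E}}$, then $(f_1 - f_2)\id_{\mc{E}} = 0$; applying any local surjection $\pi : \mc{E}|_V \to \mc{O}_V$ coming from the positive-rank hypothesis forces $(f_1-f_2)|_V = 0$ on a cover, hence $f_1 = f_2$. Consequently it suffices to produce $f$ locally, since the uniqueness will automatically make the local sections compatible and glue to a global $f \in \Gamma(\mc{C},\mc{O}_\mc{C})$.

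\emph{Step 2 (Local construction of the scalar).} Fix $U \in \mc{C}$. Combining the coverings from \Cref{0027}, I would choose a refinement so that on each piece $V$ one has simultaneously (a) a decomposition $\mc{E}|_V \oplus \mc{G}|_V \simeq \mc{O}_V^{\oplus I}$ and (b) a surjection $\pi : \mc{E}|_V \to \mc{O}_V$. Extending $\pi$ by zero on $\mc{G}|_V$ presents $\pi$ via a tuple of sections of $\mc{O}_V$ generating the unit ideal locally, so after refining $V$ further one can arrange that $\pi$ admits a section $s : \mc{O}_V \to \mc{E}|_V$. Setting $p := s \circ \pi$ and $e_0 := s(1)$, the centrality of $\varphi|_V$ gives $\varphi \circ p = p \circ \varphi$, so $\varphi$ preserves the summand $s(\mc{O}_V) \simeq \mc{O}_V$; thus $\varphi(e_0) = f_V e_0$ for a unique $f_V \in \Gamma(V,\mc{O}_V)$.

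\emph{Step 3 (Propagation by rank-one endomorphisms).} To upgrade this from the distinguished line to all of $\mc{E}|_V$, I would use the standard trick from the classical ``center equals scalars'' argument. For any $W \to V$ and section $e \in \Gamma(W,\mc{E})$, form the endomorphism $\psi_e \in \End_{\mc{O}_W}(\mc{E}|_W)$ defined by $x \mapsto \pi(x) \cdot e$. Then $\psi_e(e_0|_W) = e$, and centrality of $\varphi|_W$ gives
\[
\varphi(e) \;=\; \varphi(\psi_e(e_0|_W)) \;=\; \psi_e(\varphi(e_0)|_W) \;=\; \psi_e(f_V|_W \cdot e_0|_W) \;=\; f_V|_W \cdot e.
\]
Since this holds for every $W$ and every local section $e$, we conclude $\varphi|_V = f_V \cdot \id_{\mc{E}|_V}$. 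By Step 1 the $f_V$ glue to the desired unique global $f$.

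The only mildly delicate point is the refinement in Step 2 guaranteeing that $\pi$ has an actual section $s$ (as opposed to merely being a surjection of sheaves); this is purely local and standard. Everything after that is formal: it is essentially the elementary fact that the center of $\End_A(A \oplus N)$ acts by scalars, translated into the sheaf-theoretic setting via the projector $p$ and the rank-one operators $\psi_e$.
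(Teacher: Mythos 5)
Your proof is correct, and it is genuinely simpler than the argument given in the paper. Both proofs begin the same way: localize so that $\mc{E}$ splits off a free module and the positive-rank surjection has an actual section, produce a unimodular element $e_0$ with retraction $\pi$, and use centrality against the idempotent $s\circ\pi$ to extract a scalar $f_V$ with $\varphi(e_0)=f_V e_0$.

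The divergence is in how one deduces $\varphi|_V = f_V\id$. The paper's route is to first prove that the scalar associated to a unimodular element is independent of the choice of unimodular element (using the rank-one operator $s_{\ml{v}_1}\circ\pi_{\ml{v}_2}$, but only for unimodular $\ml{v}_1,\ml{v}_2$), and then to show that an \emph{arbitrary} local section $\ml{w}$ is, after a covering, a finite linear combination of unimodular sections. That last step is the heavy part of the paper's proof: it invokes Kaplansky's theorem to free up stalks, spreads out over a principal localization, and approximates the maximal ideal by finitely generated ideals. Your Step 3 avoids all of this. You observe that once a single retraction $\pi$ is in hand, the operator $\psi_e = s_e\circ\pi|_W$ makes sense for \emph{every} local section $e$, unimodular or not; then $\psi_e(e_0)=e$ and centrality immediately yield $\varphi(e)=f_V\,e$. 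In other words, the operator used in the paper's ``independence'' step is precisely the tool needed to handle arbitrary sections directly, and your proof exploits this while the paper does not. Your Step 1 (extracting gluing from uniqueness) is a clean way to organize the descent; the only point worth stating a bit more carefully in Step 2 is that surjectivity of $\pi:\mc{E}|_V\to\mc{O}_V$ as a sheaf map gives, on a refinement, a lift of $1$ and hence a section — but this is exactly what you meant, and it is correct.
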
 \begin{proof} For any $\ml{v} \in \Gamma(\mc{C},\mc{E})$, let $s_{\ml{v}} : \mc{O}_{\mc{C}} \to \mc{E}$ denote the $\mc{O}_{\mc{C}}$-linear map sending $1 \mapsto \ml{v}$. We say that $\ml{v}$ is a \DEF{unimodular} element of $\mc{E}$ if $s_{\ml{v}}$ admits an $\mc{O}_{\mc{C}}$-linear retraction $\pi_{\ml{v}} : \mc{E} \to \mc{O}_{\mc{C}}$; in this case evaluating $\varphi \circ (s_{\ml{v}} \circ \pi_{\ml{v}}) = (s_{\ml{v}} \circ \pi_{\ml{v}}) \circ \varphi$ at $\ml{v}$ implies $\varphi(\ml{v}) = \pi_{\ml{v}}(\varphi(\ml{v})) \cdot \ml{v}$, namely $\varphi(\ml{v})$ is a scalar multiple of $\ml{v}$. \par Given two unimodular elements $\ml{v}_{1},\ml{v}_{2}$ of $\mc{E}$ and retractions $\pi_{\ml{v}_{1}},\pi_{\ml{v}_{2}}$ of $s_{\ml{v}_{1}},s_{\ml{v}_{2}}$ respectively, evaluating $\varphi \circ (s_{\ml{v}_{i}} \circ \pi_{\ml{v}_{2}}) = (s_{\ml{v}_{1}} \circ \pi_{\ml{v}_{2}}) \circ \varphi$ at $\ml{v}_{2}$ implies $\varphi(\ml{v}_{1}) = \pi_{\ml{v}_{2}}(\varphi(\ml{v}_{2})) \cdot \ml{v}_{1}$; applying $\pi_{\ml{v}_{1}}$ gives $\pi_{\ml{v}_{1}}(\varphi(\ml{v}_{1})) = \pi_{\ml{v}_{2}}(\varphi(\ml{v}_{2}))$; hence this constant $\pi_{\ml{v}}(\varphi(\ml{v}))$ is independent of choice of $\ml{v}$ and retraction $\pi_{\ml{v}}$. \par After a localization of $\mc{C}$, we may assume that $\mc{E}$ is a globally a direct summand of a free module, so that we have $\mc{O}_{\mc{C}}$-linear maps $\iota : \mc{E} \to \mc{O}_{\mc{C}}^{\oplus I}$ and $\pi : \mc{O}_{\mc{C}}^{\oplus I} \to \mc{E}$ such that $\pi \circ \iota = \id_{\mc{E}}$; let $\ml{M} \in \Mat_{I \times I}^{\mr{cf}}(\Gamma(\mc{C},\mc{O}_{\mc{C}}))$ be the matrix corresponding to the $\mc{O}_{\mc{C}}$-linear map $\iota \circ \pi : \mc{O}_{\mc{C}}^{\oplus I} \to \mc{E} \to \mc{O}_{\mc{C}}^{\oplus I}$, and let $\ml{v}_{i}$ denote the $i$th column of $\ml{M}$. If $\ml{v}_{i}$ contains an entry which is a unit, then $\ml{v}_{i}$ is unimodular. The assumption that $\mc{E}$ has positive rank means that the entries of $\ml{M}$ generate the unit ideal of $\Gamma(\mc{C},\mc{O}_{\mc{C}})$: given an object $U \in \mc{C}$ for which there exists a surjection $p : \mc{E}|_{U} \to \mc{O}_{U}$, we have that $p \circ \pi$ is a surjection which remains surjective after precomposition by $\iota \circ \pi$. Let $U \in \mc{C}$ be an object; by \cite[04ES]{SP}, there exists a covering $\{U_{\lambda} \to U\}_{\lambda \in \Lambda}$ such that for each $\lambda \in \Lambda$, there exists an entry of $\ml{M}$ which is a unit of $\Gamma(U_{\lambda},\mc{O}_{\mc{C}})$; say that $\ml{v}_{i_{\lambda}}$ contains such an entry. By the above, there exists a constant $f_{\lambda} \in \Gamma(U_{\lambda},\mc{O}_{\mc{C}})$ such that $\varphi(\ml{v}_{i_{\lambda}}) = f_{\lambda} \ml{v}_{i_{\lambda}}$ in $\Gamma(U_{\lambda},\mc{E})$; moreover, we have $f_{\lambda_{1}}|_{U_{\lambda_{1}} \times_{U} U_{\lambda_{2}}} = f_{\lambda_{2}}|_{U_{\lambda_{1}} \times_{U} U_{\lambda_{2}}}$ for all $\lambda_{1},\lambda_{2} \in \Lambda$; hence there exists a unique $f \in \Gamma(U,\mc{O}_{\mc{C}})$ such that $f|_{U_{\lambda}} = f_{\lambda}$ for all $\lambda \in \Lambda$. \par Now it remains to show that, for any object $U \in \mc{C}$ and any element $\ml{w} \in \Gamma(U,\mc{E})$, there exists a covering $\{U_{\lambda} \to U\}_{\lambda \in \Lambda}$ such that for each $\lambda \in \Lambda$, the restriction $\ml{w}|_{U_{\lambda}} \in \Gamma(U_{\lambda},\mc{E})$ is an $\Gamma(U_{\lambda},\mc{O}_{\mc{C}})$-linear combination of unimodular elements of $\mc{E}|_{U_{\lambda}}$. Set $A := \Gamma(U,\mc{O}_{\mc{C}})$, let $P$ be the image of the $A$-linear map $\ml{M}|_{U} : A^{\oplus I} \to A^{\oplus I}$; since $\ml{M}$ is idempotent, we have that $P$ is a projective $A$-module. Let $\mf{p}$ be a prime of $A$; by Kaplansky \cite[3.3]{BASS-BPMAF1963}, we have that $P_{\mf{p}}$ is a free $A_{\mf{p}}$-module; we may choose finitely many elements $\ml{u}_{1}',\dotsc,\ml{u}_{n}' \in P_{\mf{p}}$ such that $\{\ml{u}_{1}',\dotsc,\ml{u}_{n}'\}$ is part of an $A_{\mf{p}}$-basis for $P_{\mf{p}}$ and such that $\ml{w}$ is an $A_{\mf{p}}$-linear combination of the $\ml{u}_{\ell}'$, say $\ml{w} = c_{1}'\ml{u}_{1}' + \dotsb + c_{n}'\ml{u}_{n}'$ with $c_{\ell}' \in A_{\mf{p}}$. We may find some $a \in A \setminus \mf{p}$ such that $c_{1}',\dotsc,c_{n}'$ lift to $c_{1}'',\dotsc,c_{n}'' \in A_{a}$ and $\ml{u}_{1}',\dotsc,\ml{u}_{n}'$ lift to $\ml{u}_{1}'',\dotsc,\ml{u}_{n}'' \in P_{a}$; we may multiply $a$ by an element of $A \setminus \mf{p}$ if necessary so that $\ml{w} = c_{1}''\ml{u}_{1}'' + \dotsb + c_{n}''\ml{u}_{n}''$ in $P_{a}$. Since each $\ml{u}_{\ell}'$ is unimodular in $P_{\mf{p}}$, each $\ml{u}_{\ell}''$ contains an entry which is not contained in $\mf{p}A_{a}$; thus we may again multiply $a$ by an element of $A \setminus \mf{p}$ so that these entries become units of $A_{a}$; then each $\ml{u}_{\ell}''$ is unimodular in $\mc{E}|_{U'}$ for any morphism $U' \to U$ such that $a|_{U'}$ is invertible in $\Gamma(U',\mc{O}_{\mc{C}})$. \par Since $\mc{C}$ is locally ringed, there exists a covering $\{U_{\lambda_{\mf{p}}} \to U\}_{\lambda_{\mf{p}} \in \Lambda_{\mf{p}}}$ such that for each $\lambda_{\mf{p}} \in \Lambda_{\mf{p}}$ either $a|_{U_{\lambda_{\mf{p}}}}$ or $(1-a)|_{U_{\lambda_{\mf{p}}}}$ is invertible in $\Gamma(U_{\lambda_{\mf{p}}},\mc{O}_{\mc{C}})$. In this way, since $\Spec A$ is quasi-compact, we obtain elements $a_{1},\dotsc,a_{n} \in A$ which generate the unit ideal of $A$ and coverings $\{U_{\lambda_{\ell}} \to U\}_{\lambda_{\ell} \in \Lambda_{\ell}}$ such that either $a_{\ell}|_{U_{\lambda_{\ell}}}$ or $(1-a_{\ell})|_{U_{\lambda_{\ell}}}$ is invertible in $\Gamma(U_{\lambda_{\ell}},\mc{O}_{\mc{C}})$; taking the fiber product of these coverings gives the result. \end{proof}

\bibliography{allbib}
\bibliographystyle{alpha}
\end{document}